\declaretheoremstyle[headfont=\normalsize\normalfont\bfseries,notefont=\mdseries, notebraces={(}{)},bodyfont=\normalfont,postheadspace=0.5em]{basicstyle}
\declaretheorem[style=basicstyle,name=Theorem,numberwithin=section]{theorem}
\declaretheorem[name=Definition,style=basicstyle,sibling=theorem]{defn}
\declaretheorem[name=Remark,style=basicstyle,sibling=theorem]{remark}
\declaretheorem[style=basicstyle,name=Corollary,sibling=theorem]{cor}
\declaretheorem[style=basicstyle,name=Claim,sibling=theorem]{claim}
\declaretheorem[style=basicstyle,name=Proposition,sibling=theorem]{prop}
\declaretheorem[style=basicstyle,name=Lemma,sibling=theorem]{lemma}
\renewenvironment{proof}{\preproof}{\endpreproof}
\newenvironment{dmatrix}{\left[\,\begin{matrix}}{\end{matrix}\,\right]}
\newcommand{\abs}[1]{\left|#1\right|}
\newcommand{\bd}{\partial}
\newcommand{\cl}[1]{\bar{#1}}
\newcommand{\C}{\mathbb{C}}
\renewcommand{\d}{\mathrm{d}}
\newcommand{\intprod}{\mathbin{{\tikz{\draw(-0.1,0)--(0.1,0)--(0.1,0.2)}\hspace{0.5mm}}}}
\newcommand{\ip}[1]{\langle#1\rangle}
\newcommand{\norm}[1]{\left\lVert#1\right\rVert}
\newcommand{\pd}[2]{\frac{\partial #1}{\partial #2}}
\newcommand{\pr}{\mathrm{pr}}
\newcommand{\qs}{\partial_s Q_n}
\newcommand{\qt}{\partial_t Q_n}
\newcommand{\R}{\mathbb{R}}
\renewcommand{\setminus}{\mathbin{\tikz[baseline={(0,0.033)}]{\draw[line width=0.4, rounded corners=0.5,line cap=round,scale=0.7] (0,0.3)--(0.25,0.1);}}}
\newcommand{\set}[1]{\left\{#1\right\}}
\renewcommand\section{\@startsection{section}{1}{0pt}{-3.5ex \@plus -1ex \@minus -.2ex}{2.3ex \@plus.2ex}{\centering\bfseries}}
\renewcommand{\subsection}{\@startsection{subsection}{2}%
  \z@{.5\linespacing\@plus.7\linespacing}{-.5em}%
  {\normalfont\itshape}}
\newcommand{\Z}{\mathbb{Z}}
\title[Adiabatic compactness for Nearby Lagrangians]{Adiabatic compactness for holomorphic curves with boundary on nearby Lagrangians}
\author{Dylan Cant}
\author{Daren Chen}
\date{\today}
\begin{document}
\maketitle
\begin{abstract}
  In the paper \cite{FloerW}, Floer established a connection between holomorphic strips with boundary on a Lagrangian $L$ and a small Hamiltonian push-off $L_{f}$, and gradient flow lines for the function $f$. The present paper studies the compactness theory for holomorphic curves $u_{n}$ whose boundary components lie on Hamiltonian perturbations $L_{n}^{1},\dots,L^{N}_{n}$ of a fixed Lagrangian $L$, where each sequence of nearby Lagrangians $L^{j}_{n}$ converges to $L$ as $n\to\infty$. Generalizing earlier work of Oh, Fukaya, Ekholm, and Zhu, we prove that the limit of a sequence of such holomorphic maps is a configuration consisting of holomorphic curves with boundary on $L$ joined by gradient flow lines connecting points on the boundary of holomorphic pieces. The key new result is an exponential estimate analyzing the interface between the holomorphic parts and the gradient flow line parts.
\end{abstract}

\tableofcontents

\section{Introduction}
\label{sec:intro}
The main result of this paper is a compactness result for sequences of holomorphic curves taking boundary values on a particular kind of degenerating sequence of Lagrangians. The limit object in the compactness statement will be a nodal holomorphic curve together with gradient flow lines joining points on the boundary of the holomorphic curve. The compactness result is similar to those in \cite{fukaya-oh, ekholm} which assert that holomorphic curves converge to gradient flow trees. Our result allows for more general symplectic manifolds, and non-constant holomorphic curves may appear in the limit. Holomorphic curves connected by gradient flow lines have been studied in \cite{oh_relative,cornea_lalonde,biran-cornea,biran_cornea_CRM,biran_cornea_lagrangian_topology,seidel_homological,sheridan_homological,charest_2012,alston,charest_woodward_2017,alston_bao}. The ``adiabatic'' phenomenon of holomorphic curves (and, more generally, solutions to certain non-linear elliptic PDE defined on surfaces) degenerating to gradient flow lines has been studied in the closed string case in \cite{oh_duke_05,MT,ct,OZ1,OZ2,cant2021perturbed}.

To set the stage, suppose that $(W,\omega)$ is a symplectic manifold, and $L^{1},\dots,L^{k}$ are embedded compact Lagrangians in $W$ so that $L^{i}\cap L^{j}$ is an isolated set for each $i\ne j$.

A \emph{degenerating sequence of Lagrangian boundary conditions} for this data is a collection of sequences $L^{1}_{n},\dots,L^{\ell}_{n}$, $n\in \mathbb{N}$ and a function $\pi:\set{1,\dots,\ell}\to \set{1,\dots,k}$ so that each $L^{i}_{n}$ converges to $L^{\pi(i)}$. For example, when $k=1$, all the Lagrangians in the sequence converge to the same Lagrangian $L$.

Given $i\ne j$ so that $\pi(i)=\pi(j)$, the two sequences $L^{i}_{n},L^{j}_{n}$ are said to be of \emph{adiabatic type} relative to their common limit $L$ provided there exists a Weinstein neighbourhood of $L$ so that:
\begin{equation*}
  L^{i}_{n}=\text{graph of $\epsilon_{n}\mathfrak{a}_{n}$, and }
  L^{j}_{n}=\text{graph of $\epsilon_{n}(\mathfrak{a}_{n}+\d f_{n})$},
\end{equation*}
where $\mathfrak{a}_{n}$ is a closed one-form, $\mathfrak{a}_{n}$ converges, $\d f_{n}$ converges to $\d f_{\infty}$ for a \emph{Morse} function $f_{\infty}:L\to \R$, and $\epsilon_{n}\to 0$.

A $C^{\infty}$-convergent sequence of $\omega$-tame complex structures $J_{n}$ is said to be \emph{admissible} provided that $J_{n}|L^{i}=J|L^{i}$ is $\omega$-compatible for each $i=1,\dots,k$. Denote the limit of $J_{n}$ by $J$. The data of the fixed metric $g=\omega(-,J-)$ on each Lagrangian $L_{i}$ induces a gradient vector field for any function $f:L^{i}\to \R$. 

For the next definition, fix the following data: $L^{1}_{n},\dots,L^{\ell}_{n}$ is a degenerating sequence of Lagrangians, with possible limits $L^{1},\dots,L^{k}$ and associated function $\pi$, as above, and suppose that for every $i\ne j$ with $\pi(i)=\pi(j)$, the sequences $L^{i}_{n},L^{j}_{n}$ are of adiabatic type, with limiting Morse function $f^{ij}$.

\begin{defn}\label{defn:generalized_curve}
  A \emph{generalized $J$-holomorphic curve} for the data of $L^{1},\dots,L^{k}$, $\pi$, and $\set{f^{ij}:\pi(i)=\pi(j)}$, is the data of a finite graph\footnote{The graph is allowed to have multiple edges between the same vertices, self-edges, and edges with a free end. Edges with a free end are called exterior.} $\Gamma$ with the following labels:
  \begin{enumerate}
  \item a vertex $v$ of $\Gamma$ is labeled by a punctured holomorphic curve $(u_{v},\Sigma_{v})$, together with an identification of the incident edges of $\Gamma$ with the punctures on the domain.\footnote{Self-edges, which join $v$ to $v$, count twice, i.e., count for two of the punctures on the domain of $v$.} Each connected component of $\bd\Sigma_{v}$ is mapped onto one of the Lagrangians in the collection $L^{1},\dots,L^{k}$. Each holomorphic curve has (continuously) removable singularities at its punctures.
  \item an edge $e$ of $\Gamma$ is labelled by a \emph{type}: which can be either \emph{interior node}, \emph{boundary node}, \emph{intersection}, or \emph{adiabatic}. Each type has a different kind of label. Interior nodes are labeled by a point in $W$, boundary nodes are labeled by a point in $L^{1}\sqcup \dots \sqcup L^{k}$, intersection points are labeled by an unordered pair $i\ne j$ and a point in the finite set $L^{i}\cap L^{j}$, and adiabatic type edges are labeled by an unordered pair $i\ne j$ satisfying $\pi(i)=\pi(i)$ together with a \emph{(potentially broken) Morse flow line} for the function $f^{ij}$. These flow lines lie on the Lagrangian $L^{\pi(i)}=L^{\pi(j)}$, and do not necessarily start or end at critical points.
  \end{enumerate}

  \begin{figure}[H]
  \centering
  \begin{tikzpicture}
    \begin{scope}[shift={(6.5,0)}]

      \draw (1,1) coordinate (Pz) (Pz) circle (0.5) (Pz)+(43:0.5) coordinate(XL) (Pz)+(-40:0.5)coordinate(X) (Pz)+(170:0.5) coordinate(DD);
      \draw[decoration={random steps,segment length=2mm},decorate] (DD)--+(150:0.8) coordinate(DA);
      \draw[decoration={random steps,segment length=2mm},decorate] (DA)--+(110:0.8)node[above]{$x$} (DA)node[draw,circle,black,inner sep=1pt,fill]{}--+(210:0.8)node[below]{$y$};
      
      
      \draw (3,0.4) circle (0.8); \coordinate (Y) at ($(3,0.4)+(-0.8,0)$);
      \coordinate (Z) at ($(3,0.4)+(0.8,0)$);
      \path (3,0.4)--+(60:1.6) coordinate (B);
      \path (B)--+(120:0.4) coordinate (C) --+(0:0.5) coordinate (D); 
      \draw (B) circle (0.2) (C) circle (0.2) (D) circle (0.3);

      \draw[decoration={random steps,segment length=2mm},decorate] (X)--(Y);

      \draw (5,0.3)circle(0.5);
      \draw (5,0.3)+(110:0.5) coordinate(AL) (D)+(-20:0.3) coordinate(BL) (C)+(180:.2) coordinate (YL);
      \coordinate (W) at (4.5,0.3);\coordinate (P) at (5.5,0.3);
      \draw[decoration={random steps,segment length=2mm},decorate] (Z)--(W) (AL)--(BL) (XL)--(YL);

      \node[shift={(0.1,0)},draw,circle,shading=ball,ball color=white,inner sep=3pt] at (D){};

      \node[shift={(0.1,0)},draw,circle,shading=ball,ball color=white,inner sep=5pt] at (3.2,0.1){};
      \node[shift={(0.1,0)},draw,circle,shading=ball,ball color=white,inner sep=3pt] at (3.3,-0.2){};
      \node[shift={(0.1,0)},draw,circle,shading=ball,ball color=white,inner sep=3pt] at (3.5,-0.3){};
      \node[shift={(0.1,0)},draw,circle,shading=ball,ball color=white,inner sep=3pt] at (3.1,-0.3){};

      \node[shift={(0.1,0)},draw,circle,shading=ball,ball color=white,inner sep=4pt] at (1.0,1.1){};

      \node[shift={(0.1,0)},draw,circle,shading=ball,ball color=white,inner sep=4pt] at (4.6,0){};
    \end{scope}
  \end{tikzpicture}
  \caption{A generalized holomorphic curve.} 
  \label{fig:genus2-boundary}
\end{figure}
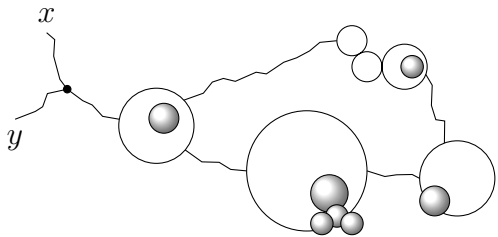
  
  The labels should be following compatible in the obvious ways:
  \begin{enumerate}[label=(\alph*)]
  \item\label{item:requirement-A} For a non-exterior edge $e$, call the two incident vertices' holomorphic curves $u_{0}$ and $u_{1}$ (it is possible that $u_{0}=u_{1}$). If $e$ has interior node type, the requirement is that $e$ corresponds to interior punctures $p_{0},p_{1}$ on the domains of $u_{0},u_{1}$, and
    \begin{equation}\label{eq:incident}
      u_{0}(p_{0})=u_{1}(p_{1})=p(e),
    \end{equation}
    where $p(e)\in W$ is the label corresponding to $e$. For boundary nodes $e$, suppose that $p(e)\in L^{i}$; the edge must correspond to boundary punctures $p_{0},p_{1}$, on $u_{0},u_{1}$, the four incident Lagrangian boundary conditions must be all $L^{i}$, and the incidence relation \eqref{eq:incident} must hold. The requirement is the same when $e$ has intersection point type, except, if $p(e)\in L^{i}\cap L^{j}$, then the incident boundary conditions at $p_{0}$ and $p_{1}$ must be $L^{i}$ and $L^{j}$; moreover, the order in which $L^{i}$ and $L^{j}$ appear is opposite at the two punctures $p_{0},p_{1}$ (i.e., if $L^{i}$ comes before $L^{j}$ at $p_{0}$, then $L^{j}$ comes before $L^{i}$ at $p_{1}$). When $e$ has adiabatic type, corresponding to a pair $i\ne j$ with $\pi(i)=\pi(j)$, then $p_{0}$ and $p_{1}$ are boundary punctures on $u$ with all four incident Lagrangians equal to $L^{\pi(i)}=L^{\pi(j)}$, and $u(p_{0})$ and $u(p_{1})$ are the endpoints of the Morse flow line which $e$ is labeled by. 
  \item For an exterior edge, the labels of $e$ should be consistent with the corresponding holomorphic curve, similarly to \ref{item:requirement-A}. The only non-standard aspect is when $e$ has adiabatic type: in this case, we require that the flow line joins the removable singularity on the holomorphic curve to a critical point for the Morse function. 
  \end{enumerate}
  Examples of such a generalized holomorphic curve is shown in Figures \ref{fig:genus2-boundary} and \ref{fig:convergence_illustration}.
\end{defn}
\begin{figure}[H]
  \centering
  {
    \def\svgwidth{2in}
    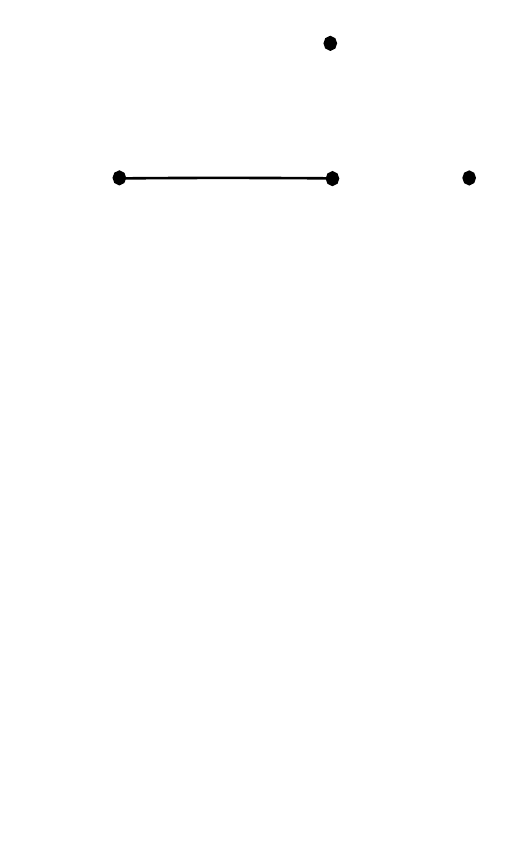
  }     
  \caption{Convergence illustration. Bear in mind that the holomorphic maps may be constant, e.g., $v_{4}$ could be a constant map. For exact Lagrangians, all the holomorphic curves will be constant, and one recovers the gradient flow trees studied in \cite{fukaya-oh,ekholm}}
  \label{fig:convergence_illustration}
\end{figure}

The next definition explains how a sequence of maps $u_{n}$ to converges to a generalized holomorphic curve.
\begin{defn}\label{defn:convergence_defn}
  Let $(u_{n},\Sigma_{n},\bd\Sigma_{n})$ be a sequence of maps defined on punctured Riemann surfaces with boundary $(\Sigma_{n},\bd\Sigma_{n})$, and let $\Gamma$ be a labeled graph describing a generalized holomorphic curve. Say that $u_{n}$ converges to $\Gamma$ provided the following data exists:
  \begin{enumerate}
  \item A decomposition of the underlying domain $(\Sigma_{n},\bd\Sigma_{n})$ into \emph{compact partial domains}, as defined in \S\ref{sec:compact-partial}. Briefly, this involves embedding long necks into the Riemann surface, and cutting the domain along these necks. Let $\set{\Sigma^{a}_{n} \vert a\in A_{n}}$ denote the resulting pieces. 
  \item A bijection between $V(\Gamma)\sqcup E(\Gamma)$ and $A_{n}$; in other words, each piece of the cut-up surface corresponds to either a vertex or edge of $\Gamma$.
  \end{enumerate}
  This data is required to satisfy the following properties:
  \begin{enumerate}[label=(\alph*)]
  \item The bijection between $V(\Gamma)\sqcup E(\Gamma)$ and $A$ should respect incidence relations; i.e., if two pieces $\Sigma^{a}_{n}$ and $\Sigma^{b}_{n}$, $a,b\in A$ are connected in $\Sigma_{n}$, then the corresponding vertex and edge should be attached. In particular, the pieces $\Sigma_{n}^{a}$ are always ``alternating'' between vertex pieces and edge pieces.
  \item Each piece $\Sigma^{a}_{n}$ corresponding to an edge is required to be a strip or cylinder (biholomorphic to $[a_{n},b_{n}]\times S$, where $S=[0,1]$ or $S=\R/\Z$, depending on the type of the edge). The modulus $b_{n}-a_{n}$ is also required to converge to infinity. Moreover, if $e$ is not adiabatic, then $u_{n}|_{\Sigma_{n}^{a}}$ converges uniformly to the point $p(e)$. If $e$ is adiabatic, then $u_{n}$ converges to the broken flow line which $e$ is labeled by, in the sense described in \S\ref{sec:digression-flow-line} and \S\ref{sec:low-energy-strips}.
  \item Each piece $(u_{n}|_{\Sigma_{n}^{a}},\Sigma^{a}_{n})$ corresponding to a vertex $v$ converges uniformly to $(u_{v},\Sigma_{v})$, in the sense explained in \S\ref{sec:compact-partial}. Briefly, uniform convergence asserts the existence of approximately holomorphic embeddings $\psi_{n}:\Sigma_{n}^{a}\to \Sigma_{v}$, which exhaust all of $\Sigma_{v}$ as $n\to\infty$, so that the uniform distance between $u_{v}\circ \psi_{n}$ and $u_{n}\vert_{\Sigma^a_n}$ converges to zero. 
  \end{enumerate}  
\end{defn}

Then main result is:
\begin{theorem}\label{theorem:main_theorem}
  Suppose that $L_{n}^{i}\to L^{\pi(i)}$, $i=1,\dots,\ell$, is a degenerating sequence of Lagrangians in a symplectic manifold $(W,\omega)$, so that whenever $\pi(i)=\pi(j)$, $i\ne j$, the pair $L^{i}_{n},L^{j}_{n}$ are of adiabatic type with limiting functions $f^{ij}$. Let $J_{n}\to J$ be an admissible sequence of $\omega$-tame complex structures.

  If $(u_{n},\Sigma_{n},\bd\Sigma_{n})$ is a sequence of $J_{n}$-holomorphic curves defined on boundary punctured domains which satisfies:
  \begin{enumerate}
  \item $u_{n}(\Sigma_{n})\subset K$ for some compact set $K\subset W$,
  \item $\sup_{n}\int_{n} u_{n}^{*}\omega<\infty$, 
  \item the domains $\Sigma_{n}$ have bounded topology (i.e., the number of punctures and Euler characteristic is bounded), and
  \item each component of $\bd\Sigma_{n}$ is mapped by $u_{n}$ onto one of the Lagrangians $L^{i}_{n}$,
  \end{enumerate}
  then, after passing to a subsequence, $u_{n}$ converges to a generalized holomorphic curve with boundary for the data $(L^{i},\pi, f^{ij})$.
\end{theorem}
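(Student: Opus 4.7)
The plan is to establish compactness in three stages: a standard bubble/neck analysis, isolation of ``adiabatic thin'' regions where Floer-type strips collapse to gradient trajectories, and an exponential interface estimate that glues the two pictures into a generalized holomorphic curve in the sense of Definition \ref{defn:generalized_curve}. Throughout I would work in a fixed compact neighborhood of $K$, using that $J_n\to J$ in $C^\infty$, that $J_n=J$ and is $\omega$-compatible along each $L^i$, and that the $L_n^i$ converge smoothly to $L^{\pi(i)}$.

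\textbf{Stage 1 (bubbling and coarse convergence).} After passing to a subsequence, run the standard rescaling argument: points $z_n\in\Sigma_n$ where $|\d u_n|$ blows up are collected into a finite set of bubble points, at each of which a bubble tree of $J$-holomorphic spheres (at interior points) or disks with boundary on some $L^{\pi(i)}$ (at boundary points) is extracted. Away from the bubble points $C^0$ gradient bounds give $C^\infty_{\mathrm{loc}}$ subsequential convergence of $u_n$ to a $J$-holomorphic map with boundary on $\bigsqcup L^{\pi(i)}$. Quantization of energy for spheres and disks, which uses only the tameness of $\omega$, boundedness of $K$, and the fact that the limit Lagrangians are fixed and smooth, bounds the number of bubbles via the uniform area bound and bounded topology. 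The possible interior-node and boundary-node edges of $\Gamma$ are recorded at this stage.

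\textbf{Stage 2 (identifying adiabatic thin regions).} Even on the complement of bubble points there remain regions where $u_n$ lies in a shrinking neighborhood of some limit Lagrangian $L=L^{\pi(i)}$ with its boundary on a pair $L_n^i,L_n^j$ of adiabatic type. In a Weinstein chart for $L$ the Lagrangians are graphs of $\epsilon_n\mathfrak{a}_n$ and $\epsilon_n(\mathfrak{a}_n+\d f_n)$; the relevant component of $u_n$ becomes a section of $T^*L\to L$ whose fibre-component is $O(\epsilon_n)$, and after rescaling the fibre direction by $\epsilon_n^{-1}$ the Floer equation turns into a perturbation of the gradient flow equation $\partial_s u=\nabla f_\infty(u)$ on $L$. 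I would pick a small threshold $\delta>0$, declare ``thin'' the union of components of $u_n^{-1}(U_\delta(L))$ whose boundary lies on such an adiabatic pair, and ``thick'' the complement. A diagonal/subsequence argument makes each thin component biholomorphic to a long strip $[a_n,b_n]\times[0,1]$ (or cylinder) with $b_n-a_n\to\infty$, producing the edge pieces of the decomposition required in Definition \ref{defn:convergence_defn}.

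\textbf{Stage 3 (the exponential interface estimate).} The crux, and the main obstacle, is to show that the thick/thin decomposition is essentially canonical: on any thin strip, $u_n$ decays \emph{exponentially} away from the interface into the region where the gradient-flow approximation holds. Concretely, writing $u_n=(x_n,\eta_n)$ in the Weinstein chart with $\eta_n$ the fibre coordinate, one wants a bound of the form $|\eta_n(s,t)-\epsilon_n\mathfrak{a}_n(x_n)-s\text{-dependent linear term}|\le C e^{-c\,\mathrm{dist}(s,\partial)/\epsilon_n}\cdot\|\eta_n\|_{L^\infty}$ on the thin strip. The argument I would carry out is a mean-value / iteration estimate for a linearized Cauchy--Riemann operator on a strip with totally real boundary, whose symbol is governed by the Hessian of $f_\infty$ and whose spectral gap is uniform because $f_\infty$ is Morse. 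Tangential (slow) and normal (fast) directions must be decoupled, and the tameness of $J_n$ used to derive the required local coercivity. This is the direct analogue of the monotonicity/decay estimates of \cite{OZ1,OZ2} adapted to totally real boundary conditions, and it is the step that requires the most care; all the other parts reduce, modulo bookkeeping, to standard compactness arguments.

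\textbf{Assembly.} With the interface estimate, one can choose the cuts defining compact partial domains inside regions of definite exponential decay; this guarantees that on thick pieces $u_n$ converges uniformly to the vertex maps $u_v$ (condition (c) of Definition \ref{defn:convergence_defn}), while on thin pieces the rescaled maps converge to (possibly broken) flow lines of $f^{ij}$ joining the boundary values of the adjacent thick pieces (condition (b)). The endpoints of the flow lines automatically match the removable-singularity values of the vertex curves, so compatibility (a) of Definition \ref{defn:generalized_curve} follows. Labels for the non-adiabatic edges (interior/boundary nodes, intersection points) were already fixed in Stage 1. This produces a labelled graph $\Gamma$ and a convergence datum verifying all conditions of Definitions \ref{defn:generalized_curve} and \ref{defn:convergence_defn}.
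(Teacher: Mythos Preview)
Your three-stage architecture is close in spirit to the paper's, but the mechanism you invoke in Stage~3 is not the one that actually works, and this is a genuine gap rather than a cosmetic difference.

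You attribute the exponential interface decay to a spectral gap of the linearized operator ``governed by the Hessian of $f_\infty$'', uniform because $f_\infty$ is Morse. But at the interface between a thick holomorphic piece and an adiabatic thin strip, the curve is typically \emph{not} near a critical point of $f_\infty$, so there is no Hessian gap to appeal to. The paper's estimate (Lemmas~\ref{lemma:statementA} and~\ref{lemma:c1estimates}) uses no Morse hypothesis at all: one studies the modified fibre coordinate $\tilde P_n=P_n-\epsilon_n t\,\mathfrak{b}_n\circ Q_n-\epsilon_n(1-t)\,\mathfrak{a}_n\circ Q_n$, shows that $\gamma_n(s)=\tfrac12\int_0^1|\tilde P_n|^2\,dt$ satisfies $\ddot\gamma_n-\delta^2\gamma_n\ge -K_n\epsilon_n^2$ with $\delta^2=1/(3c_{\mathrm{pc}})$ coming from the Poincar\'e inequality on the $t$-interval $[0,1]$, and then bootstraps to $C^1$. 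The Morse condition enters only \emph{after} this, in Theorem~\ref{theorem:main_morse}, to guarantee that the resulting broken flow line has finitely many pieces (via a ``Morse energy'' quantization: each nonconstant flow segment joining critical points costs at least $\hbar=\min\{f(y_+)-f(y_-)\}>0$).

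Two smaller structural differences: the paper organizes the decomposition by energy rather than by a geometric threshold $\delta$. Necks are first produced by the compactness theory of the \emph{domains} (hyperbolic geometry, \S\ref{sec:compactness_for_domains}), then each neck is cut into alternating high/low energy pieces relative to a quantum $\hbar$ (Lemma~\ref{lemma:energy-D}); only low-energy strips with adiabatic boundary are candidates for flow-line limits. Your proposal to take components of $u_n^{-1}(U_\delta(L))$ would require an additional argument that these are conformally long strips, which the energy-based route sidesteps. Also, the paper does domain compactness first and bubbling last (on the high-energy/vertex pieces), the reverse of your Stage~1; this ordering is what makes the bookkeeping of the graph $\Gamma$ clean.
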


\subsubsection*{Outline of paper}
\label{sec:outline}
The overall strategy of the paper is rather straightforward: prove that low energy holomorphic strips with adiabatic boundary conditions converge uniformly to broken Morse flow lines; paying close attention to the endpoints of the interval. Roughly speaking, the convergence of a holomorphic strip to a flow line is only seen after rescaling, i.e., the reparametrization $v_{n}(s,t)=u_{n}(s/\epsilon_{n},t/\epsilon_{n})$ is what converges. Since $\epsilon_{n}\to 0$, gradient bounds on $u_{n}$ do not imply gradient bounds on $v_{n}$ and hence we cannot apply Arzel\`a-Ascoli to $v_{n}$. The technical core of the paper, \S\ref{sec:exponential-estimates}, concerns a delicate exponential decay estimate needed to bound the derivatives of $v_{n}$; the hardest place to control $v_{n}$ is at the ends of the strip, which is where the limit flow line will connect to the holomorphic curves in the limiting configuration.

In \S\ref{sec:lccoordinates}, we review the Levi-Civita coordinate system for cotangent bundles, and in \S\ref{sec:apriori} we review the basic a priori estimates for holomorphic curves; both sections can be safely skipped by any experts.

In \S\ref{sec:compact-partial}, we give a fairly detailed account of the compactness theory for Riemann surfaces, essentially to make precise the informal idea that, in order to analyze the compactness phenomena of holomorphic curves, one only needs to understand the behaviour of strips/cylinders (necks) with low energy and large modulus. These low energy regions are analyzed in \S\ref{sec:low-energy-regions}. 

\subsubsection*{Acknowledgements}
\label{sec:ack}
This work was completed while the authors were graduate students at Stanford University. Both authors benefited greatly from interactions with Yasha Eliashberg, Eleny Ionel, Umut Varolgunes, and the other graduate students in our field. The authors also wish to thank Ke Zhu and Octav Cornea for useful comments during the preparation of the text.

\subsection{Floer's 1989 paper}
\label{sec:floers-argument}
The analytical arguments in this paper are inspired by Floer's paper \cite{FloerW}. Floer's argument can be summarized as follows: consider a single Morse function $f$ and holomorphic strips $u:\R\times [0,1]\to T^{*}L$ for a compact Lagrangian $L$ with $u(\R\times \set{0})\in L_{0}$ and $u(\R\times \set{1})\in L_{f}$. Here $L_{f}$ is the graph of $\d f$. The map $u$ is asymptotic to intersection points $x$ and $y$ between $L$ and $L_{f}$, as in Figure \ref{fig:FloerStrip}.
\begin{figure}[H]
  \centering
  \begin{tikzpicture}
    \draw (0,1)--(5,1);
    \draw (0,0)--(5,0);
    \draw[->] (2.5,-0.1)--(2.5,1.5) node [right] {$t$};
    \draw[->] (1.9,0.5)--(3.5,0.5) node [right] {$s$};
    \draw[->] (5.5,0.5)--node[above]{$u$}(6.5,0.5);
    \begin{scope}[shift={(9,0.3)}]    
      \path[name path={P1}] (-2,0.2)--(2,0.2);
      \path[name path={P2}] (-2,-0.2)to[out=45,in=135](2,-.2);    
      \path[name intersections={of=P1 and P2}];
      \begin{scope}
        \clip (intersection-1) rectangle ($(intersection-2)+(0,0.6)$);
        \fill[pattern={Lines[angle=90]},pattern color=black!50!white] (-2,-0.2)to[out=45,in=135](2,-.2)--cycle;
      \end{scope}
      \draw (-2,0.2)--node[below]{$L_{0}$}(2,0.2) ;
      \draw (-2,-0.2)to[out=45,in=135]node[pos=0.5,above]{$L_{f}$}(2,-.2);

      \node[draw,circle,fill=red,inner sep=1pt] (A)at (intersection-1) {};
      \node[draw,circle,fill=green!60!black,inner sep=1pt] (B)at (intersection-2) {};    
      \node at (A)[below]{$x$};
      \node at (B)[below]{$y$};
    \end{scope}
  \end{tikzpicture}
  \caption{A holomorphic strip $u$ in the cotangent bundle asymptotic to intersection points $x,y$. The lower boundary of $u$ maps to the zero section $L_{0}=L$, and the upper boundary of $u$ maps to the Lagrangian section $L_{f}$ (the graph of $\d f$). The intersection points $x,y$ are critical points of $f$.}
  \label{fig:FloerStrip}
\end{figure}
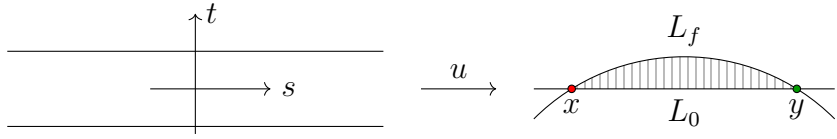
Floer considered maps $u:\R\times [0,1]\to T^{*}L$ which were holomorphic for a particular $t$-dependent almost complex structure $J_{f}$ associated with $f$. This complex structure $J_{f}$ is defined by conjugating a fixed complex structure $J_{0}$ with the flow $\varphi_{f}$ generated by the Hamiltonian vector field $X_{f}$:\footnote{In words, $J_{f,t}(u)$ is defined by first flowing the tangent space $T(T^{*}L)_{u}$ backwards by time $t$, then applying $J_{0}$, then flowing back up. Our convention for the Hamiltonian vector field $X_{f}$ is that it satisfies the equation $X_{f}\intprod \omega:=\omega(X_{f},-)=-\pr^{*}\d f$, where $\pr:T^{*}L\to L$ denotes the projection.}
\begin{equation}\label{eq:floersJf}
  J_{f,t}(u)=\d\varphi_{t}(\varphi_{-t}(u))J_{0}(\varphi_{-t}(u))\d \varphi_{-t}(u).
\end{equation}

Floer took $J_{0}$ to be the \emph{Levi-Civita complex structure} associated to an arbitrary Riemannian metric $g$ on $L$ (see \S \ref{sec:J0} for the definition of $J_{0}$).

A straightforward computation showed that if $u$ is $J_{f}$ holomorphic, in the sense that $\bd_{s}u+J_{f}(t,u)\bd_{t}u=0$, then the modified map $\tilde{u}(s,t)=\varphi_{-t}(u(s,t))$ satisfies:
\begin{equation}\label{eq:floersholoequation}
  \bd_{s}\tilde{u}+J_{0}(\tilde{u})\bd_{t}\tilde{u}+J_{0}(\tilde{u})X_{f}(\tilde{u})=0.
\end{equation}
Notice that, by construction, this modified map has both boundary components lying on the zero section. 

Floer then analyzed $\tilde{u}$ by considering the integral quantity $\gamma(s)$ defined by
\begin{equation}\label{eq:floers-integral-quantity}
  \gamma(s)=\int_{0}^{1}g(\tilde{u}(s,t),\tilde{u}(s,t))\d t
\end{equation}
Notice that $\tilde{u}(s,t)$ lives in \emph{some} fiber of the vector bundle $T^{*}L$, and so it makes sense to insert $\tilde{u}$ into the metric $g$.

Using \eqref{eq:floersholoequation}, Floer showed that $\gamma$ satisfies the differential inequality
\begin{equation}
  \label{eq:differential-inequality-1}
  \gamma^{\prime\prime}-\delta^{2}\gamma\ge 0,
\end{equation}
for some constant $\delta>0$, provided that the $C^{2}$ size of the Morse function $f$ is sufficiently small. In particular, $\gamma^{\prime\prime}$ is always non-negative, and hence $\gamma$ cannot attain a positive maximum. 
The asymptotic behaviour of $\gamma$ implies it is identically zero and so $\tilde{u}$ lies entirely in the zero section.

Having established this, we now reconsider the differential equation \eqref{eq:floersholoequation}. Observe that $X_{f}(\tilde{u})$ points in the $T^{*}L$ directions and $\bd_{s}\tilde{u},\bd_{t}\tilde{u}$ both point in the $TL$ directions with respect to the decomposition of $T(T^{*}L)|_{L}=TL\oplus T^{*}L$. One of the properties of the Levi-Civita complex structure $J_{0}$ is that $J_{0}(T^{*}L)=TL$ (this is built into the definition of $J_{0}$). Therefore both sides of the equation
\begin{equation*}
  \bd_{s}\tilde{u}+J_{0}(\tilde{u})X_{f}(\tilde{u})=-J_{0}(\tilde{u})\bd_{t}\tilde{u}
\end{equation*}
are zero, as the left hand side is valued in $TL$ while the right side is valued in $T^{*}L$. Therefore $\tilde{u}(s,t)=Q(s)$ where $Q:\R\to L$ is a smooth curve.

Another property of the Levi-Civita complex structure $J_{0}$ is that $J_{0}(\tilde{u})X_{f}(\tilde{u})$ is the negative gradient of $f$ with respect to the chosen metric $g$. This follows from the equation $X_{f}\intprod \omega=-\pr^{*}\d f$ defining $X_{f}$, and the \emph{almost K\"ahler} relationship $\omega(-,J_{0}-)=g$ established in \S \ref{sec:compatibility}). In particular we conclude that
\begin{equation*}
  \bd_{s}Q=\text{gradient of $f$},
\end{equation*}
i.e., $s\mapsto Q(s)$ is a gradient flow line for the function $f$. 

We return to the original holomorphic map $u$. As $\tilde{u}(s,t)=\varphi_{-t}(u(s,t))$ we conclude that $u(s,t)=\varphi_{t}(\tilde{u}(s,t))=\varphi_{t}(Q(s))$.

We can summarize the conclusion of Floer's argument in the following theorem:
\begin{theorem}[Floer's Theorem]\label{theorem:floer}
  For every Riemannian metric $g$ on a compact Lagrangian $L$, there is a constant $\epsilon_{0}>0$ with the following property: if $f$ is a Morse function such that $f,\d f$ and $\nabla \d f$ are all less than $\epsilon_{0}$ and $u:\R\times [0,1]\to T^{*}L$ is a $J_{f}$-holomorphic curve with boundary conditions as in Figure \ref{fig:FloerStrip}, then
  \begin{equation}\label{eq:exact-formula}
    u(s,t)=\varphi_{t}(Q(s))
  \end{equation}
  where $Q(s)$ is a gradient flow line for $f$ and $\varphi_{t}$ is the flow of the Hamiltonian vector field for $f$.
\end{theorem}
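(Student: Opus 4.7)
The plan is to implement the four-step scheme already sketched in the paragraphs preceding the statement, filling in the analytic details and making precise where the smallness assumptions are used. First I would carry out the change of variables $\tilde{u}(s,t) = \varphi_{-t}(u(s,t))$ and verify by direct differentiation, using \eqref{eq:floersJf} together with the Hamiltonian equation $X_f\intprod\omega=-\pr^*\d f$, that $\tilde{u}$ satisfies \eqref{eq:floersholoequation}. Because $\varphi_0=\id$ sends $L_0$ to $L$ and $\varphi_{-1}$ sends $L_f$ back to $L$, both boundary components of $\tilde{u}$ lie on the zero section $L\subset T^*L$, so $\tilde{u}(s,0)$ and $\tilde{u}(s,1)$ are zero as covectors.

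The main obstacle, and the entire quantitative content of the theorem, is the differential inequality \eqref{eq:differential-inequality-1}. With $\gamma$ defined as in \eqref{eq:floers-integral-quantity}, I would differentiate twice under the integral, substitute \eqref{eq:floersholoequation} to express $\bd_s\tilde{u}$ in terms of $\bd_t\tilde{u}$ and $J_0 X_f(\tilde{u})$, and integrate by parts in $t$. The boundary terms vanish because the metric $g(\tilde{u},\tilde{u})$ on the fibres of $T^*L$ vanishes on the zero section. Working in Levi-Civita coordinates from \S\ref{sec:lccoordinates}, the resulting integrand splits into a principal term that, using the compatibility $\omega(-,J_0-)=g$ from \S\ref{sec:compatibility}, dominates a positive constant multiple of $\gamma(s)$, plus cross terms that are pointwise bounded by $\|f\|_{C^2}\cdot\gamma$ times universal constants. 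Choosing $\epsilon_0$ small enough absorbs the cross terms into the principal term and produces the desired $\delta>0$. This second-order bookkeeping is where all the genuine work lies.

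Given the subsolution property, I would then observe that $\tilde{u}$ is asymptotic as $s\to\pm\infty$ to the intersection points $x,y$, which lie in $L$ because $\d f$ vanishes at critical points; hence $\gamma(s)\to 0$. A standard maximum principle for $\gamma^{\prime\prime}\ge \delta^2\gamma$ with vanishing limits at $\pm\infty$ forces $\gamma\equiv 0$, so $\tilde{u}$ takes values in the zero section. At this point the splitting $T(T^*L)|_L = TL\oplus T^*L$ and the property $J_0(T^*L)=TL$ of the Levi-Civita complex structure (\S\ref{sec:J0}) make the two sides of $\bd_s\tilde{u} + J_0(\tilde{u})X_f(\tilde{u}) = -J_0(\tilde{u})\bd_t\tilde{u}$ lie in complementary subbundles, forcing both to vanish. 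Hence $\tilde{u}(s,t)=Q(s)$ depends only on $s$, and the almost Kähler identity combined with $X_f\intprod\omega=-\pr^*\d f$ identifies $-J_0(\tilde{u})X_f(\tilde{u})$ with the gradient of $f$ on $L$; so $Q$ is a gradient flow line. Undoing the change of variables gives $u(s,t)=\varphi_t(Q(s))$, which is \eqref{eq:exact-formula}.
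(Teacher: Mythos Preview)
Your proposal is correct and follows exactly the argument the paper presents in the paragraphs preceding the theorem statement (which in fact constitute the paper's entire proof of this result, as no separate proof environment is given). You have faithfully reproduced each of the steps: the substitution $\tilde u=\varphi_{-t}\circ u$ yielding \eqref{eq:floersholoequation}, the derivation of the differential inequality \eqref{eq:differential-inequality-1} for $\gamma$ with the smallness of $\|f\|_{C^2}$ absorbing the error terms, the maximum-principle argument forcing $\gamma\equiv 0$, the splitting argument along the zero section, and the identification of $Q$ as a gradient flow line.
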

\begin{remark}
  One idea which motivated this paper is that, in order to analyze more general sequences $u_{n}$, we should understand (i) what happens for general almost complex structures $J$ (or at least complex structures which are allowed to be independent of $n$ and $t$) and (ii) what happens if we have a sequence of finite-length strips $[-R_{n},R_{n}]\times [0,1]$ (necks) or half-infinte strips $[0,\infty)\times [-1,1]$ (ends), rather than only infinite strips.
\end{remark}

\section{The Levi-Civita coordinate system near a Lagrangian}
\label{sec:lccoordinates}

In this section, we define a special class of coordinate systems on tubular neighbourhoods of compact Lagrangian submanifolds $L$ called \emph{Levi-Civita coordinates}. We explain how a metric on $L$ induces an almost K\"ahler structure on $T^{*}L$ extending the canonical symplectic form. We prove certain standard PDE estimates for holomorphic curves using these coordiantes. Similar coordinate systems and results can be found in \cite{FloerW,oh_relative,fukaya-oh,ekholm}.

\subsection{Levi-Civita connection and the cotangent bundle}
Consider a smooth manifold $L$ as the Lagrangian zero section inside its cotangent bundle $\pr:T^{*}L\to L$. Given a Riemannian metric $g$ on $L$, we obtain a Levi-Civita connection $\nabla$ on $TL$, uniquely determined by the properties that $\nabla$ is torsion-free and $g$-compatible. This connection determines a coordinate system and almost complex structure on $T^{*}L$ which are well-adapted to computations involving holomorphic curves.

\subsubsection{The splitting of $T^{*}L$ induced by the connection}
\label{sec:splitting}
The Levi-Civita connection on $TL$ induces a connection on $T^{*}L$ (also denoted by $\nabla$) uniquely determined by the property that for every pair of sections $\mu,X$ of $T^{*}L$ and $TL$, respectively, we have
\begin{equation*}
  \d\ip{\mu,X}=\ip{\nabla \mu,X}+\ip{\mu,\nabla X}.
\end{equation*}
This connection on $T^{*}L$ induces a splitting of the short-exact sequence
\begin{equation}\label{eq:connection-split}
  \begin{tikzcd}
    {0}\arrow[r,"{}"] &{\pr^{*}T^{*}L}\arrow[r,"{}"] &{T(T^{*}L)}\arrow[r,"{\d\pr}"] &{\pr^{*}TL}\arrow[l,out=230,in=-50]\arrow[r,"{}"] &{0}
  \end{tikzcd}
\end{equation}
where the summand $\pr^{*}TL\subset T(T^{*}L)$ is called the \emph{horizontal distribution}. A section $\sigma$ passing through $(p,v)\in T^{*}L$ is tangent to the horizontal distribution at $(p,v)$ if and only if
\begin{equation*}
  \nabla \sigma(p)=0;
\end{equation*}
this property uniquely determines the splitting of \eqref{eq:connection-split}. We denote by $$\Pi:T(T^{*}L)\to \pr^{*}T^{*}L$$ the induced projection onto the vertical sub-bundle (i.e., $\Pi$ is the unique projection which vanishes on the horizontal distribution).

\begin{remark}
  Let $E\to L$ be a vector bundle over a smooth manifold $L$, let $\nabla$ be a linear connection on $E$, and let $\Pi$ be the induced vertical projection.
  
  Then $\nabla$ can be recovered from the vertical projection $\Pi$ by the formula
  \begin{equation}\label{eq:nablaproj}
    \nabla s = \Pi(s)\d s
  \end{equation}
  for all sections $s:L\to E$.
\end{remark}

\subsubsection{The induced Riemannian metric on $T(T^{*}L)$}
\label{sec:metricontotalspace}
Let $g_{*}:TL\to T^{*}L$ be the linear isomorphism:
\begin{equation}\label{eq:giso}
  g_{*}:X\mapsto g(X,-).
\end{equation}
This induces a metric $g$ on $T^{*}L$ uniquely determined by the property that \eqref{eq:giso} is a linear isometry. Define a metric $g$ on $T(T^{*}L)$ by the property:
\begin{equation*}
  g=\begin{dmatrix}
    {g}&{0}\\
    {0}&{g}
  \end{dmatrix}\text{ with respect to the splitting }T(T^{*}L)=\pr^{*}(TL)\oplus \pr^{*}(T^{*}L).
\end{equation*}

\subsection{The Levi-Civita complex structure}
\label{sec:J0}
Introduce the complex structure $J_{0}$ on $T(T^{*}L)$ by the requirement\footnote{Here the $\cdot$ notation appearing in $\Pi\cdot J_0$ signifies the bilinear composition of linear homomorphisms (i.e., $J_0$ and $\Pi$ are both sections of homomorphism bundles, and hence can be composed). We will use the $\cdot$ notation throughout the paper to indicate various bilinear operations.}
\begin{equation}\label{eq:cx1}
  \Pi\cdot J_{0}=g_{*}\d \pr.
\end{equation}
The other component $\d\pr J_{0}$ is determined by the constraint that $J_{0}^{2}=0$.

\subsubsection{Compatibility with the symplectic form}
\label{sec:compatibility}
The metric from \S\ref{sec:metricontotalspace}, the Levi-Civita almost complex structure from \S\ref{sec:J0}, and the canonical symplectic structure on the cotangent bundle always form an \emph{almost K\"ahler triple} in the sense that $\omega(-,J_{0}-)=g$. The verification of this fact is left to the reader. The key step is showing that the horizontal distribution in $T^{*}L$ determined by the Levi-Civita connection is a Lagrangian distribution, which is a consequence of the symmetry of the connection.

\subsubsection{Decomposing a map into horizontal and vertical components}
\label{sec:PandQ}
Let $\Sigma$ be a smooth manifold, and let $u:\Sigma\to T^{*}L$ be a smooth map. We define $Q:\Sigma\to L$ as the projection $Q=\pr(u)$. Then $u$ can be recovered by the section $P\in \Gamma(Q^{*}T^{*}L)$ defined so that $P(z)=u(z)$. We call $Q$ the \emph{horizontal component} of $u$, and $P$ the \emph{vertical component} of $u$.


The derivatives of $u$ can be expressed in terms of the derivatives of $Q$ and the covariant derivatives of $P$. In order to state the precise relationship, recall the definition of the pullback connection:
\begin{defn}\label{defn:pbc}
  Given any vector bundle $E\to L$ with a connection $\nabla$ and any smooth map $Q:\Sigma\to L$, the \emph{pullback connection} on $Q^{*}E$, also denoted $\nabla$, is the unique linear connection which satisfies the following property: if $\sigma:L\to E$ is a section, and $s(z)=\sigma(Q(z))$ is the induced ``pulled back'' section of $Q^{*}E$, then
  \begin{equation*}
    \nabla s(z)\cdot v=\nabla\sigma(Q(z))\cdot \d Q_z(v),
  \end{equation*}
  for all $z\in \Sigma$, $v\in T\Sigma_{z}$. Note that some may write $\nabla s(z)\cdot v$ as $\nabla_{v}s(z)$.
\end{defn}

\begin{lemma}[Derivatives of $u$ in terms of $Q$ and $P$]\label{lemma:horvertp}
  Let $u:\Sigma\to T^{*}L$ be a smooth map. Considering $\d u$ as a section of $$\mathrm{Hom}(T\Sigma,u^{*}T(T^{*}L))\simeq \mathrm{Hom}(T\Sigma,u^{*}\pr^{*}TL\oplus u^{*}\pr^{*}T^{*}L),$$ we have
  \begin{equation}\label{eq:horvertp}
    \d \pr\cdot \d u=\d Q\text{ and }\Pi(u)\cdot \d u=\nabla P,
  \end{equation}
  where $\nabla P$ is computed using the pullback connection defined in Definition \ref{defn:pbc}.
\end{lemma}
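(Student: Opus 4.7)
The first identity is immediate. Since $Q = \pr \circ u$, the chain rule gives $\d \pr \cdot \d u = \d Q$.

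For the second identity, my plan is to apply equation \eqref{eq:nablaproj} to $P$ regarded as a section of the pullback bundle $Q^{*}T^{*}L$. In this setting the formula reads $\nabla P = \tilde{\Pi}(P) \cdot \d P$, where $\tilde{\Pi}$ is the vertical projection on $T(Q^{*}T^{*}L)$ induced by the pullback connection, and $\d P$ is the differential of $P$ viewed as a map $\Sigma \to Q^{*}T^{*}L$. There is a canonical bundle map $\Phi : Q^{*}T^{*}L \to T^{*}L$ satisfying $\Phi \circ P = u$, so $\d\Phi \cdot \d P = \d u$. The defining property of the pullback connection is exactly that horizontal sections on $L$ pull back to horizontal sections on $\Sigma$, which translates into the intertwining relation $\Pi \cdot \d\Phi = \d\Phi \cdot \tilde{\Pi}$, with $\d\Phi$ acting as the canonical identification of vertical summands. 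Combining these yields
\[
\Pi(u) \cdot \d u = \Pi \cdot \d\Phi \cdot \d P = \d\Phi \cdot \tilde{\Pi}(P) \cdot \d P = \nabla P
\]
after identifying the vertical summand of $T(T^{*}L)$ with $\pr^{*}T^{*}L$.

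As a direct check, one can perform the computation in local coordinates $(x^{i})$ on $L$ with induced fiber coordinates $(x^{i}, p_{i})$ on $T^{*}L$. The horizontal distribution is spanned by $\partial_{x^{j}} + \Gamma^{k}_{ji}p_{k}\partial_{p_{i}}$, where $\Gamma^{k}_{ij}$ are the Christoffel symbols of $g$, so writing $u = (Q^{i}, P_{i})$ one computes $\Pi(u) \cdot \d u = (\d P_{i} - \Gamma^{k}_{ji}(Q) P_{k} \d Q^{j}) \partial_{p_{i}}$. The parallel computation of $\nabla P = \nabla(P_{i} \d x^{i}|_{Q})$ using $\nabla \d x^{i} = -\Gamma^{i}_{jk} \d x^{j} \otimes \d x^{k}$ and the pullback connection yields the same tensor, under the canonical identification $\partial_{p_{i}} \leftrightarrow \d x^{i}|_{Q}$ between the vertical fiber and the cotangent space.

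The main obstacle is purely notational bookkeeping: three bundles ($TL$, $T^{*}L$, $Q^{*}T^{*}L$) each carry compatible connections with their own vertical projections, and one must take care to keep straight which projection is being applied at each step. No deeper obstruction is anticipated, and either the abstract argument or the coordinate verification is routine once the conventions are fixed.
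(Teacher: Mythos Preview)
Your proposal is correct. The paper does not actually supply a proof of this lemma; it simply states that ``the proof of Lemma~\ref{lemma:horvertp} is an exercise in differential calculus and is left to the reader.'' Your argument---the chain rule for the first identity, and for the second either the abstract compatibility of the pullback connection's vertical projection with $\Pi$ via the bundle map $\Phi$, or the direct coordinate check with Christoffel symbols---is precisely the routine verification the authors have in mind, and your coordinate computation matches the conventions used later in the paper (cf.\ the Hessian computations in the proof of Lemma~\ref{lemma:elliptic estimate}).
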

The proof of Lemma \ref{lemma:horvertp} is an exercise in differential calculus and is left to the reader.

\subsubsection{The holomorphic curve equation in Levi-Civita coordinates}
\label{sec:holocurveeq}
Let $\Sigma$ be a Riemann surface with complex structure $j$. Combining \eqref{eq:horvertp} and \eqref{eq:cx1} shows that a map $u:\Sigma\to T^{*}L$ is $J_{0}$-holomorphic if and only if
\begin{equation*}
  g_{*}\d Q=\nabla P\cdot j.
\end{equation*}
In particular, if $\Sigma$ carries holomorphic coordinates $s+it$, then $u$ is $J_{0}$-holomorphic if and only if
\begin{equation}\label{eq:CReqs}
  g_{*}\pd{Q}{s}=\nabla_{t}P\text{ and }g_{*}\pd{Q}{t}=-\nabla_{s}P.
\end{equation}

\subsubsection{Interchanging derivatives}
\label{sec:interchange}
The following lemma will play a key role in later computations.
\begin{lemma}\label{lemma:interchange}
  Let $\Sigma$ be a Riemann surface with holomorphic coordinates $s+it$. Let $Q:\Sigma\to L$ be a smooth map. Then
  \begin{equation}\label{eq:interchange}
    \nabla_{s}(g_{*}\pd{Q}{t})=\nabla_{t}(g_{*}\pd{Q}{s}).
  \end{equation}  
\end{lemma}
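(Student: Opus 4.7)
The plan is to reduce the claim to two standard facts about the Levi-Civita connection: (a) it is torsion-free, and (b) it is compatible with $g$ (which translates to $g_*$ being parallel).

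First I would observe that $\partial_sQ$ and $\partial_tQ$ are sections of the pullback bundle $Q^*TL$, and invoke the torsion-free property in its form for smooth maps into $L$: for any smooth $Q:\Sigma\to L$ and any pair of commuting vector fields $X,Y$ on $\Sigma$, one has
\begin{equation*}
  \nabla_X(\d Q\cdot Y)-\nabla_Y(\d Q\cdot X)=\d Q\cdot [X,Y].
\end{equation*}
This is the classical identity obtained by writing $Q$ in local coordinates and using $\nabla_X Y - \nabla_Y X = [X,Y]$ of the Levi-Civita connection on $L$. Applied to $X=\partial_s$ and $Y=\partial_t$, which commute as coordinate vector fields on $\Sigma$, this gives
\begin{equation*}
  \nabla_{s}\pd{Q}{t}=\nabla_{t}\pd{Q}{s}\quad\text{in }Q^{*}TL.
\end{equation*}

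Second, I would show that $g_{*}:TL\to T^{*}L$ is parallel, i.e., commutes with the covariant derivative: for any section $X$ of $Q^{*}TL$, $\nabla(g_{*}X)=g_{*}(\nabla X)$. This is a direct consequence of the fact that the induced connection on $T^{*}L$ is defined by $\d\ip{\mu,X}=\ip{\nabla\mu,X}+\ip{\mu,\nabla X}$, combined with $g$-compatibility $\nabla g=0$ of the Levi-Civita connection: expanding $\d(g(X,Y))$ both directly (via $\nabla g = 0$) and via the pairing gives $\ip{\nabla(g_{*}X),Y}=g(\nabla X,Y)=\ip{g_{*}\nabla X,Y}$ for all test fields $Y$, forcing $\nabla(g_{*}X)=g_{*}\nabla X$. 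This property passes to pullback connections on $Q^{*}TL$ and $Q^{*}T^{*}L$ by Definition \ref{defn:pbc}.

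Combining the two steps, apply $g_{*}$ to the torsion-free identity and commute it through the covariant derivatives:
\begin{equation*}
  \nabla_{s}\bigl(g_{*}\pd{Q}{t}\bigr)=g_{*}\bigl(\nabla_{s}\pd{Q}{t}\bigr)=g_{*}\bigl(\nabla_{t}\pd{Q}{s}\bigr)=\nabla_{t}\bigl(g_{*}\pd{Q}{s}\bigr),
\end{equation*}
which is exactly \eqref{eq:interchange}.

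I do not expect any serious obstacle here; the only point requiring care is being explicit that the torsion-free symmetry for maps into $L$ and the parallel transport of $g_{*}$ survive the passage to pullback connections, but both facts are essentially tautological from Definition \ref{defn:pbc}. The lemma is in this sense a coordinate-free repackaging of $\nabla_{s}\partial_{t}=\nabla_{t}\partial_{s}$ together with $\nabla g=0$.
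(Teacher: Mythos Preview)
Your proof is correct and follows exactly the approach the paper intends: the paper's own proof simply says ``This follows easily from the axioms for the Levi-Civita connection, and is left to the reader,'' and your argument spells out precisely those two axioms (torsion-freeness giving $\nabla_s\partial_tQ=\nabla_t\partial_sQ$, and $g$-compatibility making $g_*$ parallel).
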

\begin{proof}
  This follows easily from the axioms for the Levi-Civita connection, and is left to the reader.
\end{proof}

\subsubsection{The energy density in Levi-Civita coordinates}
\label{sec:energy-density}
Let $u:\Sigma\to (W,\omega)$ be a smooth map valued in a symplectic manifold. We define the \emph{$\omega$-energy} of $u$ to be the integral of $\omega$:
\begin{equation*}
  \text{$\omega$-energy of $u$}=\int_{\Sigma}u^{*}\omega.
\end{equation*}
If $(\omega,J,g)$ is an almost K\"ahler triple, in the sense that $\omega(-,J-)=g$ is a Riemannian metric, $u:\Sigma\to W$ is $J$-holomorphic, and $s+it$ are holomorphic coordinates on $\Sigma$, then it is straightforward to see that
\begin{equation}\label{eq:omega-energy}
  \text{$\omega$-energy of $u$}=\int_{\Sigma}\abs{\bd_{s}u}^{2}\d s\wedge \d t=\int_{\Sigma}\abs{\bd_{t}u}^{2}\d s\wedge \d t.
\end{equation}
where the norms $|-|$ are measured using $g$. More generally, if $(W,J,g)$ is an almost-complex manifold with a Riemannian metric $g$, then we define the \emph{$g$-energy} of a holomorphic curve $u:\Sigma\to W$ to be the integral
\begin{equation*}
  \text{$g$-energy of $u$}=\int_{\Sigma}\abs{\bd_{s}u}^{2}\d s\wedge \d t=\int_{\Sigma}\abs{J\bd_{t}u}^{2}\d s\wedge \d t.
\end{equation*}
In this setting, we call the function $\abs{\bd_{s}u}^{2}$ the $g$-\emph{energy density}. If $J$ acts by isometries of $J$, then we can replace $\abs{\bd_{s}u}$ by $\abs{\bd_{t}u}$ in the above integrand.

Note that \eqref{eq:omega-energy} states that the $\omega$-energy equals the $g$-energy when we use an almost K\"ahler triple $(\omega,J,g)$; in this case we will just call the quantity in \eqref{eq:omega-energy} the \emph{energy}, denoted by $E(u)$.

Suppose that $u:\Sigma\to T^{*}L$ is a $J_{0}$-holomorphic curve, and $s+it$ are holomorphic coordinates on $\Sigma$. In \S\ref{sec:compatibility} we established that $(\omega,J_{0},g)$ formed an almost K\"ahler triple. We can use the orthogonal splitting of $T(T^{*}L)$ to express the energy density of $u$ in terms of its $Q,P$ components:
\begin{equation*}
  \abs{\bd_{s}{u}}^{2}=\abs{\d\pr(\bd_{s}u)}^{2}+\abs{\Pi (\bd_{s}u)}^{2}=\abs{\bd_{s}Q}^{2}+\abs{\nabla_{s}P}^{2}.
\end{equation*}

\subsubsection{Comparison with other complex structures}
\label{sec:other-complex-structures}
Let $J$ be a complex structure on $T^{*}L$ so that $J$ agrees with $J_{0}$ along $L$, where $J_{0}$ is the Levi-Civita almost complex structure for some Riemannian metric $g$. The first goal in this section is to write down the equation for a map $u$ being $J$-holomorphic in terms of the $(Q,P)$ coordinates of $u$. 
\begin{remark}
  Note that the condition that $\omega(-,J-)$ is a Riemannian metric on $TW|_{L}$ says that $J$ is $\omega$-\emph{compatible} along $L$. Clearly $\omega$-compatibility along $L$ is a necessary condition in order for $J$ to agree with $J_{0}$ along $L$, since $J_{0}$ is known to be $\omega$-compatible.
\end{remark}
We can generalize this slightly and suppose that $J_{n}\to J$ is a convergent sequence of complex structures so that $J_{n}|_{L}=J|_{L}$ is constant. The more general scenario where $J_{n}|_{L}$ is allowed to vary is more complicated, as our methods would require $n$-dependent families of Weinstein neighbourhoods $\iota_{n}$. It is highly likely that our methods work without too much additional difficulty to include the case where $J_{n}|_{L}$ is non-constant, although we do not pursue this in this paper.

Suppose that $\Omega$ be a domain with holomorphic coordinates $s+it$, and $u:\Omega\to T^{*}L$ satisfies $\bd_{s}u+J_{n}(u)\bd_{t}u=0,$ and $J_{n}|_{L}=J_{0}|_{L}$ for all $n$. Our goal is to express this equation as a perturbation of the $J_{0}$-holomorphic curve equation. Rearranging yields
\begin{equation*}
  \bd_{s}u+J(u)\bd_{t}u=0\implies \bd_{s}u+J_{0}(u)\bd_{t}u=(J_{n}(u)-J_{0}(u))\bd_{t}u.
\end{equation*}
Applying the two projections $g_{*}\d\pr$ and $\Pi$:
\begin{equation*}
  \begin{aligned}
    g_{*}\bd_{s}Q-\nabla_{t} P&=g_{*}\d\pr\cdot (J_{n}(u)-J_{0}(u))\bd_{t}u\\
    \nabla_{s}P+g_{*}\bd_{t} Q&=\Pi\cdot (J_{n}(u)-J_{0}(u))\bd_{t}u,
  \end{aligned}
\end{equation*}
using the fact that $\Pi \cdot J_{0}=g_{*}\d\pr$ (see \eqref{eq:cx1}). Since $J_{n}=J_{0}$ along $L$ there are smooth sections $u\mapsto A_{n}(u),B_{n}(u)$ so that\footnote{To be precise, $A_{n},B_{n}$ are smooth sections of $\mathrm{Hom}(\pr^{*}T^{*}L\otimes T(T^{*}L),\pr^{*}T^{*}L))$, which is a bundle over $T^{*}L$.}
\begin{equation*}
  \begin{aligned}
    g_{*}\d\pr\cdot (J_{n}(u)-J_{0}(u))\bd_{t}u&=A_{n}(u)\cdot P\cdot \bd_{t}u\\
    \Pi\cdot (J_{n}(u)-J_{0}(u))\bd_{t}u&=B_{n}(u)\cdot P\cdot \bd_{t}u.
  \end{aligned}
\end{equation*}
Moreover, $A_{n}$ and $B_{n}$ converge to maps $A$, $B$. As a consequence the $Q,P$ coordinates of a $J$-holomorphic curve $u$ satisfy the following system of equations:
\begin{equation}\label{eq:perturbJ}
  \begin{aligned}
    g_{*}\bd_{s}Q-\nabla_{t} P&=A_{n}(u)\cdot P\cdot \bd_{t}u\\
    \nabla_{s}P+g_{*}\bd_{t} Q&=B_{n}(u)\cdot P\cdot \bd_{t}u.
  \end{aligned}  
\end{equation}
Consider \eqref{eq:perturbJ} as a perturbation of \eqref{eq:CReqs}.

\subsubsection{Tubular neighborhoods adapted to a choice of complex structure}
\label{sec:tubnub_thm}

The next lemma shows that the condition that $J_{n}$ agrees with $J_{0}$ along $L$ is not very restrictive. Indeed, it can always be achieved by the correct choice of Weinstein neighbourhood and Riemannian metric $g$.
\begin{restatable}{lemma}{tubnub}\label{lemma:tubnub}
  Let $L$ be a closed Lagrangian in a symplectic manifold $(W,\omega)$. Suppose that $J|_{L}\in \mathrm{End}(TW|_{L})$ is an $\omega$-compatible almost complex structure along $L$. Then there is an tubular neighbourhood: $$\iota:T^{*}L\supset N\to W$$ so that $\iota^{*}\omega=-\d\lambda_{\mathrm{can}}$ and $\d\iota^{-1}J\d\iota$ agrees with $J_{0}$ along $L$ for the metric $g=\omega(-,J-)$ restricted to $L$.
\end{restatable}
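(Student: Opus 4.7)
The plan is to first construct, pointwise along $L$, a symplectic-linear identification of $T(T^{*}L)|_{L}$ with $TW|_{L}$ which is the identity on $TL$ and intertwines $J_{0}|_L$ with $J|_L$, and then to realize this identification as the linearization along $L$ of a symplectomorphism from a neighbourhood of the zero section onto a neighbourhood of $L$ in $W$. The second step is a Weinstein tubular neighbourhood argument with prescribed linear part along $L$.

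For the pointwise step, let $g=\omega(-,J-)|_{L}$ and let $J_{0}$ be the associated Levi-Civita almost complex structure on $T^{*}L$ as in \S\ref{sec:J0}. Along $L$ the splittings $T(T^{*}L)|_{L}=TL\oplus T^{*}L$ and $TW|_{L}=TL\oplus J(TL)$ (the latter using that $L$ is Lagrangian and $J$ is $\omega$-compatible, so $J(TL)$ is Lagrangian and transverse to $TL$) are intrinsic. Define
\[
\Phi:T(T^{*}L)|_{L}\to TW|_{L},\qquad \Phi|_{TL}=\id,\qquad \Phi(\xi)=J\,g_{*}^{-1}\xi\text{ for }\xi\in T^{*}L.
\]
Using $g(X,Y)=\omega(X,JY)$ on $TL$ and the convention $\omega_{\mathrm{can}}=-\d\lambda_{\mathrm{can}}$, which gives $\omega_{\mathrm{can}}((X,0),(0,\eta))=\eta(X)$ along the zero section, one checks that $\Phi$ is a fibrewise symplectic isomorphism. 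The intertwining $J\circ \Phi=\Phi\circ J_{0}$ reduces to $\Phi(g_{*}X)=JX$ on $TL$, and on $T^{*}L$ to $J(J g_{*}^{-1}\xi)=-g_{*}^{-1}\xi=\Phi(J_{0}\xi)$.

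For the extension step, choose any smooth embedding $\iota_{0}$ of a neighbourhood of the zero section into $W$ with $\iota_{0}|_{L}=\id$ and $\d\iota_{0}|_{L}=\Phi$; for example, exponentiate via a Riemannian metric on $W$ for which $J(TL)$ is the geodesic normal bundle to $L$. Since $\Phi$ is fibrewise symplectic, the forms $\iota_{0}^{*}\omega$ and $-\d\lambda_{\mathrm{can}}$ coincide on $T(T^{*}L)|_{L}$, and their difference therefore admits a primitive $\alpha$ vanishing to second order on $L$ (built by a fibrewise Poincar\'e homotopy towards the zero section). Apply Moser's trick to $\omega_t=-\d\lambda_{\mathrm{can}}+t(\iota_{0}^{*}\omega+\d\lambda_{\mathrm{can}})$, solving $X_{t}\intprod \omega_{t}=-\alpha$: because $\alpha$ vanishes to second order on $L$, the vector field $X_{t}$ vanishes to first order on $L$, so its time-one flow $\psi$ fixes $L$ pointwise and satisfies $\d\psi|_{L}=\id$. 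Set $\iota=\iota_{0}\circ\psi$. Then $\iota^{*}\omega=-\d\lambda_{\mathrm{can}}$ on a neighbourhood of $L$ and $\d\iota|_{L}=\Phi$, so $\d\iota^{-1}J\d\iota=\Phi^{-1}J\Phi=J_{0}$ along $L$, as required.

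The linear algebra is essentially forced once one notices the preferred Lagrangian complement $J(TL)\subset TW|_{L}$ and the canonical isomorphism $g_{*}:TL\to T^{*}L$. The one bit of genuine care needed is the second-order vanishing of $\alpha$ along $L$; this is the minor obstacle that distinguishes the argument from the standard Weinstein neighbourhood theorem and guarantees that the Moser correction preserves the prescribed linearization $\Phi$.
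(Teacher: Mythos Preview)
Your argument is correct and follows precisely the Moser deformation approach the paper indicates; the paper itself omits all details, stating only that ``the proof is a straightforward application of the Moser deformation argument, and is left to the reader.'' One small remark: since $\alpha$ vanishes to second order along $L$ and $\omega_t$ is nondegenerate there, $X_t$ in fact vanishes to second order as well (not merely first), and it is this stronger vanishing that yields $\d\psi|_{L}=\id$.
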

The proof is a straightforward application of the Moser deformation argument, and is left to the reader.

\subsection{The Poincar\'e inequality for arcs of $L$.}
\label{sec:poincare}
Recall that an arc of $L$ is a map $\gamma:[0,1]\to T^{*}L$ with both boundary points on $L$. Consider the horizontal $Q=\pr(\gamma)$ and vertical $P\in \Gamma(Q^{*}T^{*}L)$ components. Then $P$ is a section which vanishes at both endpoints.
\begin{figure}[H]
  \centering
  \begin{tikzpicture}
    \draw[->] (-3,0)--(3,0) node[right]{$Q$};
    \draw[->] (0,-0.5)--(0,2) node[right]{$P$};
    \draw[line width=1pt,postaction={decorate,decoration={
        markings,
        mark=between positions 0.1 and .9 step 0.2 with {\arrow{>};},
      },
    }] plot[smooth,tension=1.6] coordinates {(-1,0) (-.8,1.2) (0,0.5) (0.8,0.9) (1,0)};
    \node at (0.5,0.9)[above right]{$\gamma$};
    \node at (-1,0) [below] {$x$};
  \end{tikzpicture}
  \caption{An arc of $L$ inside of $T^{*}L$.}
  \label{fig:1}
\end{figure}
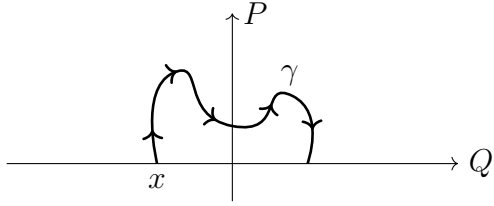

\begin{lemma}[The Poincar\'e inequality]\label{lemma:poincare}
  Let $\gamma$ be an arc of $L$, and let $Q(t),P(t)$ be its horizontal and vertical components. There is a universal constant $c_{\mathrm{pc}}>0$ with the following property:
  \begin{equation*}
    \int_{0}^{1} \abs{P(t)}^{2}\d t\le c_{\mathrm{pc}}\int_{0}^{1}\abs{\nabla_{t}P(t)}^{2}\d t,
  \end{equation*}
  where $\abs{-}$ is measured with respect to the Levi-Civita metric on $T^{*}L$.
\end{lemma}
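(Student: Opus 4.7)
The plan is to reduce this to the classical one-dimensional Poincar\'e inequality for real-valued functions on $[0,1]$ by choosing a parallel frame for $Q^{*}T^{*}L$ and trivializing the bundle.

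First I would observe that $Q^{*}T^{*}L \to [0,1]$ is a vector bundle over an interval and hence trivializable. More usefully, I would pick an orthonormal basis $\{e_{1}(0), \dots, e_{n}(0)\}$ of $(Q^{*}T^{*}L)_{0}$ and extend it to a frame $\{e_{1}(t), \dots, e_{n}(t)\}$ along $[0,1]$ by parallel transport with respect to the pullback connection $\nabla$. Because the Levi-Civita connection on $T^{*}L$ is metric-compatible (as $g$ is the metric induced from the splitting of \S\ref{sec:metricontotalspace} together with the fact that $g_{*}$ is a fiberwise isometry and $\nabla$ is Levi-Civita), the pullback connection on $Q^{*}T^{*}L$ is compatible with the pullback metric. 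Consequently parallel transport is an isometry and the frame $\{e_{i}(t)\}$ remains orthonormal for every $t$.

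Now write $P(t)=\sum_{i} p_{i}(t)\, e_{i}(t)$ with $p_{i}:[0,1]\to \R$. Since $\nabla_{t} e_{i}=0$ by construction, the Leibniz rule gives $\nabla_{t}P = \sum_{i} p_{i}'(t)\, e_{i}(t)$. Orthonormality of the frame then yields the pointwise identities
\begin{equation*}
  \abs{P(t)}^{2} = \sum_{i} p_{i}(t)^{2}, \qquad \abs{\nabla_{t}P(t)}^{2} = \sum_{i} p_{i}'(t)^{2}.
\end{equation*}
The hypothesis that $\gamma$ is an arc of $L$ means $\gamma(0),\gamma(1)\in L$, i.e.\ $P(0)=P(1)=0$, and hence $p_{i}(0)=p_{i}(1)=0$ for every $i$.

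At this point the problem is the standard scalar Poincar\'e inequality: for $p\in C^{1}([0,1])$ with $p(0)=p(1)=0$, one has $\int_{0}^{1} p(t)^{2}\,\d t \le c \int_{0}^{1} p'(t)^{2}\,\d t$, which follows in a couple of lines from the fundamental theorem of calculus $p(t)=\int_{0}^{t} p'(s)\,\d s$ and the Cauchy-Schwarz inequality (any $c\ge 1$ suffices; the sharp constant is $1/\pi^{2}$). Applying this to each $p_{i}$ and summing gives the lemma with $c_{\mathrm{pc}}=1$. There is no real obstacle here; the only thing to verify carefully is the metric-compatibility step that makes the parallel frame orthonormal, which is immediate from the definition of the Levi-Civita connection and of the induced metric on $T^{*}L$ given in \S\ref{sec:metricontotalspace}.
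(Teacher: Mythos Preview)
Your proof is correct and is essentially identical to the paper's: both pick an orthonormal frame at $t=0$, parallel-transport it along $Q$, use metric compatibility to keep the frame orthonormal, and thereby reduce to the scalar Poincar\'e inequality for functions on $[0,1]$ vanishing at the endpoints. The only cosmetic difference is the constant: the paper notes one can take $c_{\mathrm{pc}}=1/4$, while you settle for $c_{\mathrm{pc}}=1$; neither constant matters for the application.
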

\begin{proof}
  Let $x=Q(0)$ denote the starting basepoint of $\gamma$. Pick an orthonormal frame $F$ for the fiber $(T^{*}L)_{x}$, and extend $F$ to an frame $F(t)$ for the fiber over $Q(t)$ by the requirement that $\nabla_{t} F(t)=0$. 

  Write $P(t)=\sum_{i=1}^{n}p_{i}(t)F_{i}(t)$ where $F_{i}(t)$ are the basis vectors of $F(t)$. Then:
  \begin{equation*}
    \abs{P(t)}^{2}=\sum_{i=1}^{n} \abs{p_{i}(t)}^{2},\text{ and }\abs{\nabla_{t}P(t)}^{2}=\sum_{i=1}^{n} \abs{\pd{p_{i}(t)}{t}}^{2}.
  \end{equation*}
  In particular, if $c_{\mathrm{pc}}$ is a constant with the property that:
  \begin{equation}\label{eq:normalpc}
    \int_{0}^{1}\abs{p_{i}(t)}^{2}\d t\le c_{\mathrm{pc}}\int_{0}^{1}\abs{\pd{p_{i}(t)}{t}}^{2}\d t,
  \end{equation}
  for all $p_{i}$ satisfying $p_{i}(0)=p_{i}(1)=0$, then the lemma will hold with the same constant $c_{\mathrm{pc}}$. However \eqref{eq:normalpc} only involves $\R$-valued functions and the usual absolute value function. It is a standard fact of real analysis that there is some constant $c_{\mathrm{pc}}$ satisfying \eqref{eq:normalpc} (one can take $c_{\mathrm{pc}}=1/4$). This completes the proof.
\end{proof}

\subsubsection{Elliptic estimates for the Levi-Civita Laplacian}
\label{sec:elliptic-estimates}
Let $\Omega(r)=[r,r]\times [0,1]$, and let $Q:\Omega(r)\to L$ and let $P\in \Gamma(Q^{*}T^{*}L)$. Define the \emph{Levi-Civita Laplacian} of $P$ by the formula:
\begin{equation*}
  \Delta^{\mathrm{lc}}P:=\nabla_{s}\nabla_{s}P+\nabla_{t}\nabla_{t}P,
\end{equation*}
where $\nabla$ is the pullback connection in Definition \ref{defn:pbc}. Our goal in this section is to prove the following elliptic estimate:
\begin{lemma}\label{lemma:elliptic estimate}
  Pick $r>0$, $\delta>0$, $k\ge 1$ and $N> 0$. There exist $c=c(r,\delta)>0$ and $C_{k}=C(r,\delta,k,N)>0$ such that, if $Q:\Omega(r+\delta)\to L$ satisfies:
  \begin{equation*}
    \abs{\bd_{s}Q}+\abs{\bd_{t}Q}\le c\text{ and }\sum_{\ell=0}^{k}\abs{\nabla^{\ell}\bd_{s}Q}+\sum_{\ell=0}^{k}\abs{\nabla^{\ell}\bd_{t}Q}\le N,
  \end{equation*}
  and $P\in \Gamma(Q^{*}T^{*}L)$ satisfies $P(s,0)=P(s,1)=0$ then:
  \begin{equation}\label{eq:elliptic-estimate}
    \norm{P}_{W^{k+1,2}(\Omega(r))}\le C_{k}(\norm{\Delta^{\mathrm{lc}}P}_{W^{k-1,2}(\Omega(r+\delta))}+\norm{P}_{W^{k,2}(\Omega(r+\delta))}).
  \end{equation}
\end{lemma}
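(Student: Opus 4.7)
The plan is to trivialize the pullback bundle $Q^{*}T^{*}L$ on the rectangle, thereby reducing $\Delta^{\mathrm{lc}}$ to a second-order perturbation of the flat Laplacian $\Delta=\partial_{s}^{2}+\partial_{t}^{2}$ with $C^{k-1}$-bounded coefficients, and then invoke the classical Dirichlet elliptic estimate for $\Delta$ on a rectangle, combined with a cutoff argument to handle the free left/right boundary.

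First I would choose $c$ small enough (depending on $r,\delta$ and the injectivity radius of $L$) so that $Q(\Omega(r+\delta))$, which has diameter at most $c\cdot\mathrm{diam}(\Omega(r+\delta))$, fits inside a single geodesic chart of $L$. On such a chart, fix a smooth orthonormal frame $e_{1},\dots,e_{n}$ for $T^{*}L$ and pull back along $Q$ to obtain a frame $E_{i}(s,t)=e_{i}(Q(s,t))$ for $Q^{*}T^{*}L$. Writing $P=\sum_{i}p_{i}E_{i}$ converts $P$ into an $\R^{n}$-valued function $p=(p_{1},\dots,p_{n})$ with Dirichlet data on $t=0,1$. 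The covariant derivatives read $\nabla_{s}P=\partial_{s}p+\Gamma(Q)(\partial_{s}Q)\cdot p$ and similarly for $\nabla_{t}$, where $\Gamma$ is the connection matrix of the chosen frame, and a direct computation yields
\begin{equation*}
\Delta^{\mathrm{lc}}P = \Delta p + A_{1}\cdot\nabla p + A_{0}\cdot p,
\end{equation*}
where $A_{0},A_{1}$ are matrix-valued functions of $(Q,\partial Q,\nabla\partial Q)$ whose $C^{k-1}$ norms on $\Omega(r+\delta)$ are bounded in terms of $N$ and the (fixed) geometry of $L$.

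Next I would establish the base case $k=1$. Pick a cutoff $\chi\in C^{\infty}_{c}((-(r+\delta),r+\delta))$ with $\chi\equiv 1$ on $[-r,r]$ and $|\chi'|+|\chi''|\le C(\delta)$. Because $p$ vanishes on $t=0,1$ and $\chi$ vanishes near $s=\pm(r+\delta)$, the product $\chi p$ vanishes on the entire boundary of the rectangle $\Omega(r+\delta)$. Applying the classical Dirichlet estimate $\|u\|_{W^{2,2}}\le C(r,\delta)\|\Delta u\|_{L^{2}}$ to each component of $\chi p$, expanding $\Delta(\chi p)=\chi\,\Delta p+2\chi'\partial_{s}p+\chi''p$, and substituting $\Delta p=\Delta^{\mathrm{lc}}P-A_{1}\nabla p-A_{0}p$, yields the bound
\begin{equation*}
\|P\|_{W^{2,2}(\Omega(r))}\le C\bigl(\|\Delta^{\mathrm{lc}}P\|_{L^{2}(\Omega(r+\delta))}+\|P\|_{W^{1,2}(\Omega(r+\delta))}\bigr),
\end{equation*}
after folding the $A_{i}$-terms and the cutoff commutators into the $W^{1,2}$ norm on the right-hand side.

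For $k\ge 2$ I would bootstrap. Differentiating the trivialized equation once in $s$ preserves the Dirichlet boundary condition, and shows that $\partial_{s}p$ satisfies an analogous equation whose source term is $\partial_{s}(\Delta^{\mathrm{lc}}P)$ plus commutators of the form $(\partial_{s}A_{i})\cdot \nabla^{\le 1}p$, which are controlled in $W^{k-2,2}$ by $N\|P\|_{W^{k,2}}$. Iterating on a nest of rectangles shrinking from $\Omega(r+\delta)$ to $\Omega(r)$ in $j$ steps of size $\delta/k$ handles all $s$-derivatives. To recover $t$-derivatives of order $\ge 2$, use the equation itself, $\partial_{t}^{2}p=\Delta^{\mathrm{lc}}P-\partial_{s}^{2}p-A_{1}\nabla p-A_{0}p$, to trade $t$-derivatives for $s$-derivatives plus lower order. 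The main technical obstacle will be bookkeeping in this bootstrap: ensuring that the coefficient bounds on $A_{0},A_{1}$ (which require $C^{k-1}$ control on $\partial Q$, hence $C^{k}$ control on $Q$, both supplied by $N$) line up with the Sobolev indices at every stage, and that the shrinking cutoffs only lose constants depending on $r,\delta,k,N$. The smallness of $c$ is used only in the initial global trivialization step.
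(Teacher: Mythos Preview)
Your approach is correct and essentially the same as the paper's: both trivialize $Q^{*}T^{*}L$ by a frame after using the smallness of $c$ to confine $Q(\Omega(r+\delta))$ to a single chart, express $\Delta^{\mathrm{lc}}P$ as $\Delta p$ plus lower-order terms with coefficients controlled by $N$, and then invoke the standard interior Dirichlet estimate for the flat Laplacian on the strip. The only procedural difference is that the paper cites the higher-$k$ Dirichlet estimate $\|p\|_{W^{k+1,2}(\Omega(r))}\le E(\|\Delta p\|_{W^{k-1,2}(\Omega(r+\delta))}+\|p\|_{W^{k,2}(\Omega(r+\delta))})$ as a black box (from \cite{robbinsalamon}) and then bounds $\|\Delta^{\mathrm{lc}}P-\Delta p\|_{W^{k-1,2}}$ directly, whereas you rederive that black box by hand via cutoffs, $s$-differentiation, and trading $\partial_{t}^{2}$ for $\partial_{s}^{2}$ through the equation.
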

\begin{proof}
  The proof bears some similarities with our proof of the Poincar\'e inequality (Lemma \ref{lemma:poincare}); we express $P$ in terms of a travelling frame: $$P(s,t)=\sum_{i=1}^{n} p_{i}(s,t)F_{i}(s,t)\in \Gamma^{*}(Q^{*}T^{*}L).$$
  On this two-dimensional domain, one cannot expect to be able to find a parallel frame $F_{i}(s,t)$ due to curvature obstructions. For this proof, we prefer to use \emph{coordinate frames}, i.e., $F_{i}=\d x_{i}$ where $x_{1},\cdots,x_{d}$ form a coordinate chart.

  Pick finitely many coordinate charts $x:\cl{U}\to \cl{B}(1)^{n}$ whose interiors $U$ cover $L$. Let $\lambda>0$ be a Lebesgue number for the open cover induced by these charts.

  By choosing $c$ sufficiently small we can ensure that $\mathrm{diam}(Q(\Omega(r+\delta)))<\lambda$, and hence the image of $Q$ lies inside one of our finitely many coordinate charts, say $x:U\to B(1)$.

  Therefore we may write:
  \begin{equation*}
    P=\sum_{i=1}^{n}p_{i} \d x_{i}\circ Q.
  \end{equation*}
  We directly compute the derivatives of $P$. Recalling the definition of the Hessian: $$\mathrm{H}(f):=\nabla \d f\in \Gamma(Q^{*}\mathrm{Hom}(TL,T^{*}L)),$$  
  and the definition of the pull-back connection, we conclude that:
  \begin{equation*}
    \begin{aligned}
      \nabla_{s} P&=\sum_{i=1}^{n}\pd{p_{i}}{s} \d x_{i}\circ Q+\sum_{i=1}^{n}p_{i} \cdot(\mathrm{H}(x_{i})\circ Q)\pd{Q}{s},\\
      \nabla_{t} P&=\sum_{i=1}^{n}\pd{p_{i}}{t}\d x_{i}\circ Q+\sum_{i=1}^{n}p_{i} \cdot(\mathrm{H}(x_{i})\circ Q)\pd{Q}{t}.
    \end{aligned}
  \end{equation*}
  Differentiating once again, we obtain the impressive looking, but rather straightforward, formulas for the second derivatives appearing in $\Delta^{\mathrm{lc}}$.
  \begin{equation}\label{eq:2nd-derivs-lc}
    \begin{aligned}
      \nabla_{s}\nabla_{s}P&=\sum_{i=1}^{n}\pd{^{2}p_{i}}{s^{2}}\cdot \d x_{i}\circ Q+2\sum_{i=1}^{n}\pd{p_{i}}{s}\cdot (\mathrm{H}(x_{i})\circ Q)\cdot \pd{Q}{s}\\ &\hspace{10pt}+\sum_{i=1}^{n}p_{i} \cdot ((\nabla \mathrm{H}(x_{i}))\circ Q)\cdot \pd{Q}{s}\cdot \pd{Q}{s}+\sum_{i=1}^{n}p_{i} \cdot (\mathrm{H}(x_{i})\circ Q)\cdot \nabla_{s}\pd{Q}{s}.\\
      \nabla_{t}\nabla_{t}P&=\sum_{i=1}^{n}\pd{^{2}p_{i}}{t^{2}}\cdot \d x_{i}\circ Q+2\sum_{i=1}^{n}\pd{p_{i}}{t}\cdot (H(x_{i})\circ Q)\cdot \pd{Q}{t}\\ &\hspace{10pt}+\sum_{i=1}^{n}p_{i} \cdot ((\nabla \mathrm{H}(x_{i}))\circ Q)\cdot \pd{Q}{t}\cdot \pd{Q}{t}+\sum_{i=1}^{n}p_{i} \cdot (\mathrm{H}(x_{i})\circ Q)\cdot \nabla_{t}\pd{Q}{t}.
    \end{aligned}
  \end{equation}
  Since there are only finitely many coordinate charts, there are $K_{k}>0$ so that
  \begin{equation*}
    \sum_{\ell=0}^{k}\abs{\nabla^{\ell}\d x_{i}}+\sum_{\ell=0}^{k}\abs{\nabla^{\ell}\bd x_{i}}\le K_{k}
  \end{equation*}
  holds regardless which coordinate chart $Q$ lies inside. Note that we bound the frame $F_{i}=\d x_{i}$ and its dual frame $F_{i}^{\vee}=\bd x_{i}$.

  Then a straightforward computation shows that there is a constant $L_{k}>0$ (depending on $K_{k}$ and $N$) so that whenever $P=\sum p_{i}\cdot \d x_{i}$ we have\footnote{This involves taking $k$th derivatives of $P=\sum p\cdot \d x\circ Q$ or $p_{i}=P\cdot \bd x_{i}\circ Q$ and bounding the result.}
  \begin{equation*}
    \begin{aligned}
      L_{k}^{-1}\norm{p}_{W^{k,2}(\Omega(\rho))}&\le\norm{P}_{W^{k,2}(\Omega(\rho))}\le L_{k}\norm{p}_{W^{k,2}(\Omega(\rho))}.
    \end{aligned}
  \end{equation*}
  To simplify the notation, let us abbreviate $W^{k,2}(\Omega(\rho))=W^{k,2}(\rho)$.

  Apply the elliptic estimate for the regular Laplacian with Dirichlet boundary conditions (see \cite[Lemma C.2]{robbinsalamon}), so there is a constant $E=E(r,\delta,k)$ so that:
  \begin{equation*}
    \begin{aligned}
      \norm{p}_{W^{k+1,2}(r)}&\le E(\norm{\Delta p}_{W^{k-1,2}(r+\delta)}+\norm{p}_{W^{k,2}(r+\delta)})\\&\le E(L_{k-1}\norm{\sum \Delta p_{i}\cdot \d x_{i}\circ Q}_{W^{k-1,2}(r+\delta)}+L_{k}\norm{P}_{W^{k,2}(r+\delta)}).
    \end{aligned}
  \end{equation*}
  From \eqref{eq:2nd-derivs-lc} conclude that:
  \begin{equation*}
    \norm{\Delta^{\mathrm{lc}}P-\sum\Delta p_{i}\cdot \d x_{i}\circ Q}_{W^{k-1,2}(r+\delta)}\le \kappa(N+N^{2})K_{k+1}L_{k}\norm{P}_{W^{k,2}(r+\delta)},
  \end{equation*}
  where $\kappa$ is a combinatorial constant related to the number of terms appearing in \eqref{eq:2nd-derivs-lc}. Therefore:
  \begin{equation*}
    \norm{p}_{W^{k+1,2}(r)}\le E(L_{k-1}\norm{\Delta^{\mathrm{lc}}P}_{W^{k-1,2}(r+\delta)}+(1+\kappa(N+N^{2})K_{k+1})L_{k}\norm{P}_{W^{k,2}(r+\delta)}).
  \end{equation*}
  In particular, setting:
  \begin{equation*}
    L^{-1}_{k+1}C=\max\set{EL_{k-1},(1+\kappa(N+N^{2})K_{k+1})L_{k}},
  \end{equation*}
  then we conclude:
  \begin{equation*}
    \norm{P}_{W^{k+1,2}(r)}\le L_{k+1}\norm{p}_{W^{k+1,2}(r)}\le C(\norm{\Delta^{\mathrm{lc}}P}_{W^{k-1,2}(r+\delta)}+\norm{P}_{W^{k,2}(r+\delta)}),
  \end{equation*}
  as desired.
\end{proof}

\section{A priori estimates for holomorphic curves}
\label{sec:apriori}
This section concerns certain a priori estimates for holomorphic curves with boundary on a totally real submanifold $L$. The first estimate, \S\ref{sec:mvp}, ensures that low energy curves satisfy a gradient bound. The second estimate, \S\ref{sec:elliptic-bootstrapping}, ensures that all the higher derivatives can be bounded in terms of the first derivatives.

\subsection{The mean-value property for the $g$-energy density}
\label{sec:mvp}
\begin{lemma}\label{lemma:mvp}
  Let $(W,J)$ be an almost complex manifold with a Riemannian metric $g$, and let $L$ be a compact totally real submanifold. Let $K\subset W$ be a compact neighbourhood of $L$. Then there exist constants $\epsilon_{0}>0$, $c>0$ depending only on $(K,J,g,L)$ with the following properties:

  If $J_{n}$ be a sequence of almost complex structures which converge to $J$ in the $C^{\infty}$ topology, and $L_{n}\subset K$ a sequence of $J_{n}$-totally real submanifolds which converge to $L$ in the $C^{\infty}$ topology, and
  \begin{equation*}
    u_{n}:D(z,r)\cap \cl{\mathbb{H}}\to (K,L_{n})
  \end{equation*}
  is a sequence of $J_{n}$-holomorphic curves then
  \begin{equation}\label{eq:mvp-inequality}
    \int_{D(z,r)\cap \cl{\mathbb{H}}} \abs{\bd_{s} u_{n}}^{2}\d s\wedge \d t<\epsilon_{0}\implies \abs{\bd_{s}u_{n}(z)}^{2}\le \frac{c}{r^{2}}\int_{D(z,r)\cap \cl{\mathbb{H}}} \abs{\bd_{s}u_{n}}^{2}\d s\wedge\d t,
  \end{equation}
  for $n$ sufficiently large. Here $D(z,r)$ is the disk of radius $r$ centered at $z$ in $\mathbb{C}$, and $\cl{\mathbb{H}}$ is the upper half plane. 
\end{lemma}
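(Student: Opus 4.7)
The plan is to combine the classical Heinz iteration for the $g$-energy density $e_n := \abs{\bd_s u_n}^2$ with a Schwarz reflection across the totally real submanifolds $L_n$, reducing the boundary case to the interior case.

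\emph{Interior step.} When $D(z, r) \subset \mathbb{H}$, a standard Weitzenb\"ock-type computation using the $J_n$-holomorphic equation together with the Levi-Civita connection of $g$ produces a sub-mean-value inequality
\begin{equation*}
  \Delta e_n \ge -A\,e_n^2 - B\,e_n
\end{equation*}
on the interior of $D(z, r)$, with constants $A, B$ depending only on $C^2$-bounds of $J_n$ and on the curvature of $g$ over $K$; these are uniform for large $n$ because $J_n \to J$ in $C^\infty$. Heinz's iteration then produces the required $\epsilon_0, c$: pick a maximizer $z^\star \in D(z, r)$ of $(r - \abs{\cdot - z})^2\, e_n$, assume for contradiction that $e_n(z^\star)\,r^2$ is very large compared to $\int e_n$, rescale around $z^\star$ to a unit disk on which the rescaled energy density at the origin equals $1$ but the total rescaled energy is less than $\epsilon_0$, and derive a contradiction from the sub-mean-value inequality.

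\emph{Boundary step.} For $z$ close to $\bd\mathbb{H}$, I would extend $u_n$ to a full disk by Schwarz reflection across $L_n$. Since $L_n \to L$ in $C^\infty$ and each $L_n$ is $J_n$-totally real, for $n$ large there exist charts $\phi_n$ on a neighbourhood of $u_n(z) \in W$, converging to a fixed chart $\phi$, that map $L_n$ onto $\R^n \subset \mathbb{C}^n$ and carry $\phi_{n*}J_n|_{L_n}$ to the standard complex structure $J_{\mathrm{std}}$. Define $\tilde u_n$ on $D(z, r)$ by $\tilde u_n(w) = \phi_n\circ u_n(w)$ when $\mathrm{Im}(w) \ge 0$ and $\tilde u_n(w) = \overline{\phi_n\circ u_n(\bar w)}$ when $\mathrm{Im}(w) < 0$. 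Then $\tilde u_n$ is holomorphic for a reflected complex structure $\tilde J_n$ which equals $\phi_{n*}J_n$ on the upper half-space and its complex-conjugate reflection on the lower; these pieces match continuously along $\R^n$ since both agree with $J_{\mathrm{std}}$ there. Applying the interior estimate to $\tilde u_n$ on the full disk, and using
\begin{equation*}
  \int_{D(z, r)} \abs{\bd_s \tilde u_n}^2 \le 2\int_{D(z, r) \cap \overline{\mathbb{H}}} \abs{\bd_s u_n}^2
\end{equation*}
(up to bounded metric distortion from $\phi_n$), yields \eqref{eq:mvp-inequality}.

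\emph{Main obstacle.} The principal technical point is that $\tilde J_n$ is only Lipschitz across the target reflection plane $\R^n$, so the sub-mean-value inequality established in the interior step does not hold classically at points of the domain $w$-axis $\R \subset \mathbb{C}$. The standard resolution is to verify the inequality distributionally on the full disk: interior elliptic regularity for $C^{0,1}$ almost complex structures places $\tilde u_n \in W^{2,p}_{\mathrm{loc}} \cap C^{1, \alpha}$; the totally real condition $TL_n \oplus J_n TL_n = TW|_{L_n}$ forces the first derivatives of $u_n$ along $\bd\mathbb{H}$ to match their reflections, so $e_n$ is continuous across the $w$-axis; and integration by parts against a non-negative test function verifies the inequality in the sense of distributions. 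Heinz's iteration then runs verbatim in this weak setting, and uniformity of $\epsilon_0, c$ in $n$ follows from the $C^\infty$-convergence of $J_n$ and $L_n$.
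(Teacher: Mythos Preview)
Your proposal is correct but takes a different route to the boundary case than the paper. Following \cite{mcduffsalamon} \S4.3 and \cite{robbinsalamon}, the paper first straightens $L_n$ to $L$ by a small diffeomorphism $\varphi_n\to\mathrm{id}$ (absorbing the change into $J_n$), and then chooses an auxiliary metric $g_n$ adapted to $(L,J_n)$ so that the \emph{scalar} energy density $e_n=\abs{\d u_n}_{g_n}^2$ has vanishing normal derivative along $\bd\mathbb{H}$. One then reflects $e_n$ itself---not the map---evenly across the real axis; the reflected function still satisfies $\Delta e_n\ge -c\,e_n^2$ on the full disk, and the Heinz mean-value argument runs without any regularity issues. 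A final comparison of $g_n$ with the original $g$ gives \eqref{eq:mvp-inequality}.

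Your approach instead reflects the map $u_n$ via local charts, which produces a map holomorphic for a merely Lipschitz $\tilde J_n$ and forces you into the distributional version of the sub-mean-value inequality. This is a legitimate and well-known alternative, but it trades the construction of the adapted metric $g_n$ for the $C^{0,1}$-coefficient elliptic regularity step you identify as the main obstacle. The paper's route sidesteps that obstacle entirely: once the Neumann condition $\partial_\nu e_n=0$ is arranged, the reflection is of a single smooth scalar function and no low-regularity PDE theory is needed.
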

\begin{remark}
  Recall that $L_{n}$ converges to $L$ provided $L_{n}$ can be expressed as a $C^{\infty}$ section $\alpha_{n}$ of the normal bundle of $L$ and $\alpha_{n}$ converges to $0$.
\end{remark}
\begin{proof}
  See \cite[Appendix A,B]{robbinsalamon} and \cite[\S4.3]{mcduffsalamon} for the proof in the case when $L_{n}=L$, $J_{n}=J$, $u_{n}=u$. Their argument works with minor modifications in our setting. We should note that we can reduce to the case when $L_{n}=L$ by applying a small $n$-dependent diffeomorphism $\varphi_{n}$ taking $L_{n}$ to $L$ (which changes the complex structure $J_{n}$). Arranging things so that $\varphi_{n}\to \mathrm{id}$ as $n\to\infty$ will preserve the fact that $J_{n}\to J$. As in \cite{robbinsalamon} and \cite{mcduffsalamon}, the crux of the argument is to show that the $g_{n}$-energy density $e_{n}:=\abs{\d u_{n}}_{g_{n}}^{2}$ satisfies $\Delta e_{n}\ge -ce_{n}^{2}$ for some constant $c$ (for $n$ sufficiently large). Here the metric $g_{n}$ can be chosen to be a convergent sequence $g_{n}\to g_{\infty}$. The choice of metric $g_{n}$ is specially adapted to $(L,J_{n})$, and is, in particular, arranged so that the normal derivative to $e_{n}=\abs{\d u_{n}}^{2}_{g_{n}}$ along the boundary vanishes. Consequently, $e_{n}$ can be extended to the full disk $D(z,r)$. Then a generalized mean-value theorem shows that $e_{n}$ satisfies a mean-value property of the type:
  \begin{equation}\label{eq:almost-there}
    \int_{D(z,r)} e_{n}\d s\d t<\epsilon'\implies e_{n}(z)\le \frac{c^{\prime}}{r^{2}}\int_{D(z,r)}e_{n}\d s\d t.
  \end{equation}
  Finally, using the fact that the ratio $g_{x,\infty}(v,v)/g_{x}(v,v)$ is bounded for $x\in K$ and $v\ne 0$, we conclude from \eqref{eq:almost-there} the desired result \eqref{eq:mvp-inequality}.

  The exposition of the argument in \cite[\S4.3]{mcduffsalamon} is very clear, and so we simply refer to them for the rest of the details.
\end{proof}

\subsection{Elliptic bootstrapping to bound higher derivatives}
\label{sec:elliptic-bootstrapping}
\begin{lemma}\label{lemma:bootstrappingone}
  Let $(W,J)$ be an almost complex manifold with a Riemannian metric $g$, and let $L$ be a compact totally real submanifold. Let $K\subset W$ be a compact neighbourhood of $L$.

  Suppose that $J_{n}$ is a sequence of almost complex structures which converge to $J$ in the $C^{\infty}$ topology, and $L_{n}\subset K$ is a sequence of $J_{n}$-totally real submanifolds which converge to $L$. Fix $r>0$. If $u_{n}:D(z_{n},r)\cap \cl{\mathbb{H}}\to (K,L_{n})$ is a sequence of $J_{n}$-holomorphic curves then
  \begin{equation*}
    \limsup_{n\to\infty}\int_{D(z_{n},r)\cap \cl{\mathbb{H}}} \abs{\bd_{s}u_{n}}^{2}\d s\wedge \d t=0\implies \limsup_{n\to\infty}\abs{\nabla^{\ell}\d u_{n}(z_{n})}=0,
  \end{equation*}
  for all $\ell\ge 0$.
\end{lemma}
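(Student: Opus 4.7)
The plan is to combine the mean-value property (Lemma \ref{lemma:mvp}) with an elliptic bootstrap performed in Levi-Civita coordinates. First, I would upgrade the vanishing $L^{2}$-energy hypothesis to uniform $C^{0}$-decay of $\d u_{n}$ by covering $D(z_{n},r/2)\cap \cl{\mathbb{H}}$ with (half-)disks of radius $r/4$ and applying Lemma \ref{lemma:mvp} (together with its standard interior analogue) at each point. Since the $L^{2}$-energy on every sub-disk is dominated by the total energy on $D(z_{n},r)\cap \cl{\mathbb{H}}$, which tends to zero, one obtains $\sup\abs{\d u_{n}}\to 0$ on $D(z_{n},r/2)\cap \cl{\mathbb{H}}$.

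Next, to bring the setup into a form where Lemma \ref{lemma:elliptic estimate} applies, I would introduce diffeomorphisms $\varphi_{n}\to \id$ of $W$ sending $L_{n}$ to $L$ (possible since $L_{n}\to L$ in $C^{\infty}$) and replace $u_{n}$ by $v_{n}:=\varphi_{n}\circ u_{n}$. The maps $v_{n}$ are holomorphic for complex structures $\tilde J_{n}\to J$, which can be further adjusted by a small $n$-dependent perturbation so that $\tilde J_{n}|_{L}=J|_{L}$ for all $n$; Lemma \ref{lemma:tubnub} then provides a Weinstein embedding into $T^{*}L$ in which the Levi-Civita formalism of \S\ref{sec:lccoordinates} is valid. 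Decompose $v_{n}=(Q_{n},P_{n})$, where $P_{n}$ vanishes on the real boundary; the perturbed Cauchy-Riemann system \eqref{eq:perturbJ} then holds.

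The bootstrap would proceed inductively. Differentiating \eqref{eq:perturbJ} and using Lemma \ref{lemma:interchange} to swap $\nabla_{s}$ and $\nabla_{t}$, one sees that $\Delta^{\mathrm{lc}} P_{n}$ is a polynomial in $Q_{n},P_{n},\d Q_{n},\nabla P_{n}$ with $C^{\infty}$-bounded coefficients, so Lemma \ref{lemma:elliptic estimate} upgrades $W^{k,2}$-control of $P_{n}$ to $W^{k+1,2}$-control. In parallel, in local coordinate charts on $L$ the horizontal component $Q_{n}-c_{n}$ (for a suitable basepoint $c_{n}$) satisfies an elliptic equation with free boundary conditions, to which standard elliptic regularity applies. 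Starting from the $C^{0}$-bound established in the first paragraph, one alternates these two estimates to obtain $\norm{\d v_{n}}_{W^{k,2}}\to 0$ on shrinking nested neighborhoods of $z_{n}$ for every $k$; Sobolev embedding then yields $\abs{\nabla^{\ell}\d v_{n}(z_{n})}\to 0$ for every $\ell\ge 0$. Pulling back by $\varphi_{n}^{-1}$, whose derivatives are uniformly bounded, transfers this conclusion to the original $u_{n}$.

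The main obstacle I anticipate is the simultaneous bootstrap of $Q_{n}$ and $P_{n}$: the system couples them, $Q_{n}$ does not obey Dirichlet boundary conditions, and Lemma \ref{lemma:elliptic estimate} as stated applies only to strip domains with double Dirichlet boundary. All of these are routine to circumvent --- one localizes in a chart on $L$ and treats $(Q_{n},P_{n})$ as a single $\C^{d}$-valued map satisfying a $\db$-equation with the totally real boundary condition $\R^{d}\subset \C^{d}$, and one adapts Lemma \ref{lemma:elliptic estimate} from strips to half-disks via cutoffs and conformal changes --- but making the bookkeeping fully explicit is the most tedious aspect of the argument.
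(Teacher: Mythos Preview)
Your proposal is correct and follows the same overall arc as the paper --- mean-value property for $C^{0}$ control of $\d u_{n}$, reduction to $L_{n}=L$ via convergent diffeomorphisms, then elliptic bootstrapping in local coordinates --- but the paper's execution of the bootstrap is more direct than yours. Rather than invoking the global Levi-Civita machinery and Lemma~\ref{lemma:elliptic estimate} (which, as you note, is tailored to strips with double Dirichlet conditions and must be adapted), the paper works in a single local chart $\varphi$ on $W$ sending $L$ to $\R^{d}\times\{0\}$ and $J|_{L}$ to $J_{0}$. Writing $w_{n}=\varphi\circ u_{n}=(X_{n},Y_{n})$, the holomorphic curve equation gives $\bd_{t}X_{n}(s,0)=0$ (Neumann) and $Y_{n}(s,0)=0$ (Dirichlet), so \emph{both} components are handled simultaneously by the standard elliptic estimates for $\Delta$ on half-disks. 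One then computes $\Delta w_{n}=\bd_{t}[J(w_{n})]\bd_{s}w_{n}-\bd_{s}[J(w_{n})]\bd_{t}w_{n}$ and bootstraps via the $W^{k,2}$ product estimates. This is precisely the ``treat $(Q_{n},P_{n})$ as a single $\C^{d}$-valued map'' idea you identify as the workaround in your obstacle paragraph; the paper simply starts there, avoiding the separate treatment of horizontal and vertical parts and the need to port Lemma~\ref{lemma:elliptic estimate} to half-disks. Your route works but carries extra baggage; the Neumann/Dirichlet splitting is the streamlining you were looking for.
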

The proof of Lemma \ref{lemma:bootstrappingone} is given in \S\ref{sec:proof-of-bootstrappingone}. Briefly, the argument uses the mean value property to conclude that $\abs{\d u_{n}(z_{n})}$ is bounded, and then uses elliptic estimates and bootstrapping to bound the higher derivatives.

We will also find the following corollaries of Lemma \ref{lemma:bootstrappingone} useful.
\begin{cor}
  \label{corollary:apriori-estimates}
  Assume the setup of Lemma \ref{lemma:bootstrappingone}. Pick constants $M>0$, $r>0$, and an integer $\ell\ge 0$. Then there is a constant $\epsilon_{\ell}=\epsilon_{\ell}(M,r)>0$ so that
  \begin{equation*}
    \limsup_{n\to\infty}\int_{D(z_{n},r)\cap \cl{\mathbb{H}}} \abs{\bd_{s}u_{n}}^{2}\d s\wedge \d t\le \epsilon_{\ell}\implies \limsup_{n\to\infty}\abs{\nabla^{\ell}\d u_{n}(z_{n})}\le M.
  \end{equation*}  
\end{cor}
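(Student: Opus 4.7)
The plan is to deduce Corollary \ref{corollary:apriori-estimates} from Lemma \ref{lemma:bootstrappingone} by a standard proof-by-contradiction with a diagonal sequence. Suppose, for the sake of contradiction, that no such $\epsilon_\ell$ exists for the given $M,r$. Then for each positive integer $m$, the implication must fail with $\epsilon = 1/m$, meaning that there is a sequence $u_n^{(m)}$ of $J_n$-holomorphic curves (with base-points $z_n^{(m)}$) satisfying the setup of Lemma \ref{lemma:bootstrappingone} for which
\begin{equation*}
  \limsup_{n\to\infty}\int_{D(z_n^{(m)},r)\cap \cl{\mathbb{H}}}\abs{\bd_{s}u_n^{(m)}}^{2}\d s\wedge \d t \le \tfrac{1}{m},
  \qquad
  \limsup_{n\to\infty}\abs{\nabla^\ell \d u_n^{(m)}(z_n^{(m)})} > M.
\end{equation*}

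Using the definitions of the two $\limsup$s, I would then choose indices $n_m \to \infty$ (strictly increasing in $m$) so that, simultaneously,
\begin{equation*}
  \int_{D(z_{n_m}^{(m)},r)\cap \cl{\mathbb{H}}}\abs{\bd_{s}u_{n_m}^{(m)}}^{2}\d s\wedge \d t \le \tfrac{2}{m}
  \quad\text{and}\quad
  \abs{\nabla^\ell \d u_{n_m}^{(m)}(z_{n_m}^{(m)})} > M.
\end{equation*}
Now form the diagonal sequence by setting $\tilde J_m := J_{n_m}$, $\tilde L_m := L_{n_m}$, $\tilde u_m := u_{n_m}^{(m)}$, and $\tilde z_m := z_{n_m}^{(m)}$. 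Since $n_m \to \infty$, the $C^\infty$ convergences $\tilde J_m \to J$ and $\tilde L_m \to L$ are inherited from the original sequences, so the diagonal data $(\tilde J_m, \tilde L_m, \tilde u_m, \tilde z_m)$ fits exactly into the hypotheses of Lemma \ref{lemma:bootstrappingone}. By construction its $\omega$-energy on $D(\tilde z_m, r)\cap \cl{\mathbb{H}}$ tends to zero, so the lemma yields $\limsup_{m\to\infty} \abs{\nabla^\ell \d \tilde u_m(\tilde z_m)} = 0$, which directly contradicts the lower bound $\abs{\nabla^\ell \d \tilde u_m(\tilde z_m)} > M$ enforced at every $m$. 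There is no serious obstacle to this argument; the only point requiring minor attention is verifying that the relabeled sequence still enjoys the $C^\infty$-convergence of complex structures and Lagrangians, which is immediate from $n_m \to \infty$.
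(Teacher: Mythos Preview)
Your proof is correct and follows exactly the same contradiction-plus-diagonal-sequence argument as the paper, only with more detail spelled out. The paper's own proof is a one-line sketch of precisely this idea: assume failure, extract a diagonal sequence with energy tending to zero but $\abs{\nabla^{\ell}\d u_{n}(z_{n})}>M$, and contradict Lemma~\ref{lemma:bootstrappingone}.
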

\begin{proof}
  Suppose not, then we can find a (diagonal) sequence of curves with energy tending to zero but with $\abs{\nabla^{\ell}\d u_{n}(z_{n})}>M$ for all $n$, contradictng Lemma \ref{lemma:bootstrappingone}.
\end{proof}
\begin{cor}
  \label{corollary:apriori-estimates-2}
  Assume the setup of Lemma \ref{lemma:bootstrappingone}. Then for all $\ell\ge 1$ we have
  \begin{equation*}
    \limsup_{n\to\infty}\sup_{z\in D(z_{n},r)}\abs{\bd_{s}u_{n}(z)}<\infty\implies \limsup_{n\to\infty}\abs{\nabla^{\ell}\d u_{n}(z_{n})}<\infty.
  \end{equation*}
\end{cor}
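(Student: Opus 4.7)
The plan is to reduce to Corollary \ref{corollary:apriori-estimates} by a conformal rescaling that converts the given $C^0$ bound on $\bd_s u_n$ into an arbitrarily small energy bound on a unit half-disk. Let $C := \limsup_n \sup_{z \in D(z_n,r) \cap \cl{\mathbb{H}}} \abs{\bd_s u_n(z)} < \infty$ be the given bound, fix an integer $\ell \ge 1$, and let $\epsilon := \epsilon_\ell(1,1) > 0$ be the constant supplied by Corollary \ref{corollary:apriori-estimates} with $M=1$ and radius $1$.

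For a large parameter $\lambda > 0$ to be determined, define the rescaled maps $v_n(w) := u_n(z_n + w/\lambda)$. Since conformal reparametrization preserves the $J_n$-holomorphic curve equation, each $v_n$ is $J_n$-holomorphic with boundary on $L_n$, and its first derivative is bounded by $C/\lambda$ on its domain. The domain of $v_n$ is $D(0, \lambda r) \cap \set{w : \mathrm{Im}(w) \ge -\lambda \mathrm{Im}(z_n)}$; translating by $i\lambda \mathrm{Im}(z_n) \ge 0$ yields $\hat v_n(\zeta) := v_n(\zeta - i\lambda \mathrm{Im}(z_n))$, defined on $D(i\lambda \mathrm{Im}(z_n), \lambda r) \cap \cl{\mathbb{H}}$ and still $J_n$-holomorphic with boundary on $L_n$.

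Now choose $\lambda$ so large that $\pi C^2 / \lambda^2 < \epsilon$ and $\lambda r \ge 2$. The $\omega$-energy of $\hat v_n$ over $D(i\lambda\mathrm{Im}(z_n), 1) \cap \cl{\mathbb{H}}$ is at most $(C/\lambda)^2 \cdot \pi < \epsilon$, so applying Corollary \ref{corollary:apriori-estimates} to the sequence $\hat v_n$ at the centers $i\lambda\mathrm{Im}(z_n)$ yields $\limsup_n \abs{\nabla^\ell \d \hat v_n(i\lambda\mathrm{Im}(z_n))} \le 1$. Since spatial derivatives scale by $1/\lambda$, one has $\abs{\nabla^\ell \d \hat v_n(i\lambda\mathrm{Im}(z_n))} = \lambda^{-(\ell+1)}\abs{\nabla^\ell \d u_n(z_n)}$, and therefore $\limsup_n \abs{\nabla^\ell \d u_n(z_n)} \le \lambda^{\ell+1} < \infty$, as required. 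There is no serious analytic obstacle beyond invoking Corollary \ref{corollary:apriori-estimates}; the only step warranting care is the translation by $i\lambda \mathrm{Im}(z_n)$, which puts the rescaled straight boundary back onto $\partial \cl{\mathbb{H}}$ so that the hypotheses of the corollary are literally satisfied, and this is benign because Corollary \ref{corollary:apriori-estimates} allows an arbitrary center sequence.
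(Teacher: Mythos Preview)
Your proof is correct and rests on the same idea as the paper's: rescale so that the $C^0$ bound on $\partial_s u_n$ forces the energy on a unit (half-)disk to be as small as one likes, then invoke the earlier estimate. The paper argues by contradiction, rescaling by a sequence $\delta_n \to 0$ chosen so that $\delta_n^{\ell+1}\abs{\nabla^\ell \d u_n(z_n)}$ stays bounded away from zero and then applying Lemma~\ref{lemma:bootstrappingone} directly to reach a contradiction; your version is direct, fixing a single dilation factor $\lambda$ and invoking Corollary~\ref{corollary:apriori-estimates}, which even yields the explicit bound $\lambda^{\ell+1}$. Your extra care with the vertical translation putting the boundary back on $\partial\cl{\mathbb{H}}$ is a detail the paper's own proof glosses over.
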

\begin{proof}
  Suppose not. Then we can find $u_{n},z_{n},r,C,\ell$ and $\delta_{n}\to 0$ so that $\abs{\bd_{s}u_{n}(z)}$ remains bounded by $C$ for all $z\in D(z_{n},r)$ but
  \begin{equation*}
    \limsup_{n\to\infty}\delta_{n}^{\ell+1}\abs{\nabla^{\ell}\d u_{n}(z_{n})}>0.
  \end{equation*}
  Indeed this holds whenever $\abs{\nabla^{\ell}\d u_{n}(z_{n})}$ is unbounded. Then we set
  \begin{equation*}
    w_{n}(z)=u_{n}(z_{n}+\delta_{n}z),
  \end{equation*}
  so that $w_{n}(z)$ is defined on $D(\delta_{n}^{-1}r)$. Clearly the first derivative of $w_{n}$ is bounded by $\delta_{n}C$, which tends to $0$. On the other hand $\nabla^{\ell}\d w_{n}(0)$ does not tend to $0$, by our assumption on $\delta_{n}^{\ell+1}\nabla^{\ell}\d u_{n}(z_{n})$.

  For $n$ large enough, $D(1)\subset D(\delta_{n}^{-1}r)$, and clearly the energy of $w_{n}$ on $D(1)$ is bounded by the energy of $u_{n}$ on $D(z_{n},\delta_{n}r)$ which tends to zero. Thus we can apply Lemma \ref{lemma:bootstrappingone} to conclude that $\abs{\nabla^{\ell}\d w_{n}(0)}\to 0$, contradicting our earlier deduction. This completes the proof.
\end{proof}

\section{Exponential estimates for low-energy holomorphic strips.}
\label{sec:exponential-estimates}

Throughout this section, we fix the following data:
\begin{enumerate}
\item a symplectic manifold $(W,\omega,J)$ with a tame almost complex structure $J$,
\item a compact Lagrangian $L\subset W$, so that $\omega(-,J-)$ is a Riemannian metric along $L$,
\item a compact neighbourhood $K$ of $L$, and
\item\label{data:4} a symplectic tubular neighbourhood $\iota:T^{*}L\supset N\to K$, with the property that the pullback $\d \iota^{-1}J\d\iota=:J$ agrees with $J_{0}$ along $L$ for some Riemannian metric $g$ on $L$. See \S\ref{sec:J0} for the definition of $J_0$. Recall that the induced metric $g$ on $L$ is determined by $g=\omega(-,J-)$.
\end{enumerate}
The results of \S\ref{sec:other-complex-structures} guarantee that there always exists a tubular neighbourhood satisfying (iv) given the data of (i), (ii), (iii). 

Recall that if $L'$ is any submanifold $C^{1}$ close enough to $L$ then (by definition) $L'$ can be expressed as the graph of some one-form $\mathfrak{a}\in \Gamma(T^{*}L)$. We introduce the following notation:
\begin{equation*}
  L_{\mathfrak{a}}:=\text{graph of }\mathfrak{a}\in \Gamma(T^{*}L).
\end{equation*}
Note that $L_{\mathfrak{a}}$ can be considered as a submanifold of $K$ using the tubular neighbourhood \ref{data:4} if $\mathfrak{a}$ is sufficiently small. 

The main result of this section is an exponential $C^1$-bound for holomorphic strips whose derivatives satisfy $\abs{\d u_n} \to 0$ and which take boundary values on Lagrangians $L_{\mathfrak{a}_n}$, $L_{\mathfrak{b}_n}$ which converge to $L$. The idea is similar to the one in Floer's paper, \S\ref{sec:floers-argument}. Our weakened assumptions on the complex structure lead to additional terms in the calculation, but using the low energy assumption and the bootstrapping Lemma \ref{lemma:bootstrappingone}, we show they do not make a significant difference in the limit. The exponential bound is essential in proving the holomorphic strips converging to broken Morse flow lines, as will be shown in the proof of Lemma \ref{lemma:finite-number-case}.

\subsection{$W^{1,2}$ exponential estimates}
This section concerns an exponential estimate for the $W^{1,2}$ size of holomorphic strips with boundary conditions lying on submanifolds $L_{\mathfrak{a}}$ and $L_{\mathfrak{b}}$ nearby $L$.

To set-up the notation, recall that for a sequence of maps $u_{n}$ valued in $T^{*}L$ we have the horizontal and vertical coordinate decomposition $(Q_{n},P_{n})$, as in \S\ref{sec:PandQ}. The key quantity is a modification of the $P$ coordinate. Consider the following boundary value problem, where $\mathfrak{a}_{n},\mathfrak{b}_{n}$ are $1$-forms and $\epsilon_{n}>0$ is a parameter,
\begin{equation}\label{eq:bvp}
  \left\{
    \begin{aligned}
      &u_{n}:[-r_{n},r_{n}]\times [0,1]\to K\text{ (a compact neighbourhood of $L$)},\\
      &u_{n}(s,0)\in L_{\epsilon_{n}\mathfrak{a}_{n}},\hspace{1cm}u_{n}(s,1)\in L_{\epsilon_{n}\mathfrak{b}_{n}}.\\
    \end{aligned}\right.
\end{equation}
Define:
\begin{equation}\label{eq:perturbedP}
  \tilde{P}_{n}(s,t)=P_{n}(s,t)-\epsilon_{n}t(\mathfrak{b}_{n}\circ Q_{n}(s,t))-\epsilon_{n}(1-t)(\mathfrak{a}_{n}\circ Q_{n}(s,t)).
\end{equation}
By construction, $\tilde{P}_{n}(s,0)=\tilde{P}_{n}(s,1)=0$. This fact will be important for integration by parts. Denote by $\gamma_{n}(s)$ the $L^{2}$ size of $\tilde{P}_{n}$,
\begin{equation}\label{eq:gamma_quantity}
  \gamma_n(s) := \dfrac{1}{2}\int_{t=0}^{1} |\tilde{P}_n(s,t)|^2dt.
\end{equation}
Our first technical result is an exponential estimate for $\tilde{P}_{n}$ and its first derivatives.

\begin{lemma}\label{lemma:statementA}
  Let $\epsilon_{n}\to 0$ and $\mathfrak{a}_{n},\mathfrak{b}_{n}$ be convergent sequences of one-forms on $L$. Let $J_{n}\to J$ be a $C^{\infty}$ convergent family of almost complex structures so that $J_{n}|_{L}=J|_{L}$ for all $n$. Suppose that $u_{n}$ is a sequence of $J_{n}$-holomorphic curves satisfying the boundary value problem \eqref{eq:bvp} and $\limsup_{n\to\infty} \norm{\d u_{n}}_{C^{0}}=0$ for some Riemannian metric on $K$. Set $R_{n}+1=r_{n}$. Then:
  \begin{enumerate}[label=(\alph*)]
  \item\label{statementA:a} There exist constants $\delta$ and $K_{n}=K_{n}(u_{n},J,\mathfrak{a}_{n},\mathfrak{b}_{n})$ so that $K_{n}\to 0$ and
    \begin{equation*}
      \ddot{\gamma}_n(s) - \delta^2 \gamma_n(s) \geq \frac{1}{3}\int_{0}^{1}\abs{\nabla_{s}\tilde{P}_{n}}^{2}+\abs{\nabla_{t}\tilde{P}_{n}}^{2}\d t-K_{n}\epsilon_{n}^{2},
    \end{equation*}
    for $s\in [-R_{n}-0.75,R_{n}+0.75]$ and $n$ sufficiently large. 
  \item\label{statementA:b} There exists constants $\theta_{n}=\theta_{n}(u_{n},J,\mathfrak{a}_{n},\mathfrak{b}_{n})$ so that $\theta_{n}\to 0$ and
    \begin{equation}\label{eq:w12-estimate} \int_{s'=s-0.5}^{s+0.5}\int_{t=0}^{1}\abs{\tilde{P}_{n}}^{2}+\abs{\nabla_{s}\tilde{P}_{n}}^{2}+\abs{\nabla_{t}\tilde{P}_{n}}^{2}\d t\d s'\le \theta_{n}(e^{-\delta(R_{n}+s)}+e^{-\delta(R_{n}-s)}+\epsilon_{n}^{2}),
    \end{equation}
    for $s\in [-R_{n},R_{n}]$ and $n$ sufficiently large.
  \end{enumerate}
\end{lemma}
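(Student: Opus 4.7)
The strategy mirrors Floer's argument from \S\ref{sec:floers-argument}, computing $\ddot{\gamma}_n$ and using the holomorphic curve equation to convert $\nabla_s^2 \tilde{P}_n$ into $-\nabla_t^2 \tilde{P}_n$ modulo small error terms, after which integration by parts in $t$ exploits the vanishing of $\tilde{P}_n$ at $t=0,1$. Concretely, I would begin by differentiating $\gamma_n$ directly to get $\dot{\gamma}_n(s)=\int_0^1 \langle \tilde{P}_n,\nabla_s\tilde{P}_n\rangle\,\d t$ and $\ddot{\gamma}_n(s)=\int_0^1 |\nabla_s\tilde{P}_n|^2 + \langle\tilde{P}_n,\nabla_s\nabla_s\tilde{P}_n\rangle\,\d t.$ Differentiating the definition \eqref{eq:perturbedP} of $\tilde{P}_n$ expresses $\nabla_s\tilde{P}_n$ in terms of $\nabla_s P_n$ plus $O(\epsilon_n)$ correction terms involving $\nabla(\mathfrak{a}_n\circ Q_n)$ and $\nabla(\mathfrak{b}_n\circ Q_n)$. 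Applying \eqref{eq:perturbJ} converts $\nabla_s P_n$ into $-g_*\partial_t Q_n + B_n(u_n)P_n\partial_t u_n$. Repeating this step and invoking the interchange Lemma \ref{lemma:interchange} gives $\nabla_s\nabla_s\tilde{P}_n = -\nabla_t\nabla_t\tilde{P}_n + E_n,$ where $E_n$ is a sum of (i) curvature terms of schematic form $R(\partial_s Q_n,\partial_t Q_n)\tilde{P}_n$, (ii) derivatives of the perturbation terms $A_n, B_n$ multiplied by $P_n$ and $\partial u_n$, and (iii) $\epsilon_n$-sized pieces from the correction built into $\tilde{P}_n$.

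Next I would integrate by parts in $t$ using $\tilde{P}_n(s,0)=\tilde{P}_n(s,1)=0$ to obtain $\int_0^1\langle\tilde{P}_n,-\nabla_t\nabla_t\tilde{P}_n\rangle\,\d t = \int_0^1|\nabla_t\tilde{P}_n|^2\,\d t,$ yielding
\[
\ddot{\gamma}_n \ge \int_0^1 |\nabla_s\tilde{P}_n|^2 + |\nabla_t\tilde{P}_n|^2\,\d t \; - \;\int_0^1 \langle\tilde{P}_n,E_n\rangle\,\d t.
\]
Every term in $E_n$ carries at least one factor of $|\d u_n|$ (curvature and $A_n,B_n$ contributions) or at least one factor of $\epsilon_n$. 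After Cauchy--Schwarz and AM--GM, the $\epsilon_n$ terms split into a harmless $K_n\epsilon_n^2$ piece and a cross term absorbable into a tiny fraction of $\int(|\nabla_s\tilde P_n|^2+|\nabla_t\tilde P_n|^2)\,\d t$; the curvature/perturbation terms produce bounds of the form $C\|\d u_n\|_{C^0}^2 \int |\tilde P_n|^2\,\d t$, which (since $\|\d u_n\|_{C^0}\to 0$) can be made smaller than any prescribed fraction of $\int |\nabla_t\tilde P_n|^2\,\d t$ via the Poincar\'e inequality (Lemma \ref{lemma:poincare}). Choosing $\delta>0$ so small that $\delta^2 c_{\mathrm{pc}}\le 1/3$, another application of Poincar\'e to the $\delta^2\gamma_n$ term gives \ref{statementA:a} with constant $1/3$ in front of the positive integral.

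For \ref{statementA:b}, I would first drop the positive $W^{1,2}$ integral from \ref{statementA:a} to obtain the weaker ODE inequality $\ddot\gamma_n - \delta^2\gamma_n \ge -K_n\epsilon_n^2$. Setting $h_n=\gamma_n - K_n\epsilon_n^2/\delta^2$ gives $\ddot h_n \ge \delta^2 h_n$, and the maximum principle (comparing $h_n$ to the explicit solution $A_n e^{\delta s}+B_n e^{-\delta s}$ matching boundary values at $s=\pm(R_n+0.75)$) yields the pointwise bound $\gamma_n(s)\le \theta_n'(e^{-\delta(R_n+s)} + e^{-\delta(R_n-s)} + \epsilon_n^2)$. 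The constant $\theta_n'$ is controlled by $\gamma_n$ at the boundary of the strip, and I would check $\theta_n'\to 0$ by noting that $|P_n(s,t)|\le|P_n(s,0)| + \|\d u_n\|_{C^0}\le C\epsilon_n + \|\d u_n\|_{C^0}\to 0$. Finally, to upgrade pointwise $L^2$ decay of $\tilde P_n$ to a $W^{1,2}$ estimate, I would integrate the full inequality \ref{statementA:a} over $[s-0.5,s+0.5]$; the left-hand side contributes $\dot\gamma_n(s+0.5)-\dot\gamma_n(s-0.5) - \delta^2\int_{s-0.5}^{s+0.5}\gamma_n$, and both pieces are controlled by the already-proven pointwise bound on $\gamma_n$ (together with a standard Cauchy--Schwarz bound $|\dot\gamma_n|\le \gamma_n^{1/2}(\int|\nabla_s\tilde P_n|^2\,\d t)^{1/2}$ applied on a slightly larger interval, after which a small absorption closes the estimate). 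Alternatively, the elliptic estimate (Lemma \ref{lemma:elliptic estimate}) gives the $W^{1,2}$ control of $\tilde P_n$ from the $L^2$ bound on $\tilde P_n$ and the PDE-based $L^2$ bound on $\Delta^{\mathrm{lc}}\tilde P_n$, arriving at \ref{statementA:b}.

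The main obstacle is the careful bookkeeping in part \ref{statementA:a}: one must track curvature terms and the error terms $A_n\cdot P\cdot \partial_t u$, $B_n\cdot P\cdot \partial_t u$ from the difference $J_n - J_0$ (which do not vanish identically, only along $L$), as well as all derivatives of the correction built into $\tilde P_n$, and verify that each either absorbs into $K_n\epsilon_n^2$, into a tiny fraction of the positive $W^{1,2}$ integral, or into $\delta^2\gamma_n$ after Poincar\'e. The smallness hypothesis $\|\d u_n\|_{C^0}\to 0$ is crucial here, since without it the curvature contributions $R(\partial Q,\partial Q)\tilde P$ would not be controllable by the positive principal part and the sharp coefficient $1/3$ (needed downstream in Lemma \ref{lemma:finite-number-case}) could not survive.
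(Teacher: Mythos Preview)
Your outline for part \ref{statementA:a} matches the paper's argument essentially line for line: compute $\ddot\gamma_n$, use \eqref{eq:perturbJ} and the interchange Lemma~\ref{lemma:interchange} to rewrite $\nabla_s\nabla_s\tilde P_n=-\nabla_t\nabla_t\tilde P_n+(\text{small})$, integrate by parts using $\tilde P_n(s,0)=\tilde P_n(s,1)=0$, and absorb errors via Poincar\'e and Cauchy--Schwarz. (One remark: no curvature term actually appears, because the interchange is performed on $g_*\partial Q$ rather than on $\nabla P$; but your inclusion of such a term is harmless since it would also be $o(1)\cdot\tilde P_n$.)

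For part \ref{statementA:b}, your pointwise bound on $\gamma_n$ via the ODE maximum principle is fine and parallels the paper's one-sided comparison. The gap is in the step upgrading to the $W^{1,2}$ estimate. Your proposed bound $|\dot\gamma_n|\le \gamma_n^{1/2}\bigl(\int|\nabla_s\tilde P_n|^2\bigr)^{1/2}$ involves $\alpha_n(s'):=\int_0^1(|\nabla_s\tilde P_n|^2+|\nabla_t\tilde P_n|^2)\,\d t$ at the \emph{single point} $s'=s\pm0.5$, which cannot be absorbed into $\int_{s-0.5}^{s+0.5}\alpha_n$. If instead you estimate $\alpha_n(s')^{1/2}$ by the crude uniform bound $o(1)$ coming from $\|\d u_n\|_{C^0}\to 0$, you obtain $|\dot\gamma_n(s')|\le o(1)\cdot\gamma_n(s')^{1/2}$, which after inserting the pointwise bound on $\gamma_n$ yields decay $e^{-\delta(R_n\pm s)/2}$ rather than $e^{-\delta(R_n\pm s)}$---a weaker statement than \eqref{eq:w12-estimate}. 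Your alternative via Lemma~\ref{lemma:elliptic estimate} does not help either, since that lemma requires $W^{1,2}$ input to produce $W^{2,2}$ output.

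The paper avoids this by a different trick: from $\ddot\gamma_n\ge -K_n\epsilon_n^2$ (a consequence of \ref{statementA:a} with $\alpha_n\ge 0$), $\dot\gamma_n$ is almost increasing, and integrating over $[s,s+0.25]$ gives $\dot\gamma_n(s)\le 4\gamma_n(s+0.25)+O(K_n\epsilon_n^2)$, with the analogous lower bound $\dot\gamma_n(s)\ge -4\gamma_n(s-0.25)-O(K_n\epsilon_n^2)$. This bounds $\dot\gamma_n(s\pm0.5)$ \emph{purely in terms of $\gamma_n$}, with no reference to $\alpha_n$, and then $\int_{s-0.5}^{s+0.5}\alpha_n\le 3(\dot\gamma_n(s+0.5)-\dot\gamma_n(s-0.5))+O(K_n\epsilon_n^2)$ closes with the sharp rate $\delta$. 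You could also rescue your approach by multiplying \ref{statementA:a} by a cutoff $\chi$ supported in $[s-0.75,s+0.75]$ and integrating by parts twice, turning the boundary terms into $\int\chi''\gamma_n$; but this is not what you wrote.
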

\begin{proof}
  We begin by proving the differential inequality in part \ref{statementA:a}.

  Let $\mathfrak{c}_n = \mathfrak{b}_n - \mathfrak{a}_n$. The covariant derivatives of $\tilde{P}_n$ and $P_n$ are related by the following expressions:
  \begin{equation}
    \label{eq:daren2}
    \begin{aligned}
      P_{n}&=\tilde{P}_{n}+\epsilon_{n} \mathfrak{a}_{n}\circ Q_{n}+t\epsilon_{n}\mathfrak{c}_{n}\circ Q_{n}\\
      \nabla_tP_n & = \nabla_t \tilde{P}_n + \epsilon_{n}(\nabla \mathfrak{a}_{n}\circ Q_{n})\cdot \bd_{t}Q_{n}+\epsilon_{n}\mathfrak{c}_n\circ Q_n + t\epsilon_{n}(\nabla\mathfrak{c}_n\circ Q_{n})\cdot\qt \\
      \nabla_sP_n & = \nabla_s \tilde{P}_n + \epsilon_{n}(\nabla \mathfrak{a}_{n}\circ Q_{n})\cdot \qs+ t\epsilon_{n}(\nabla\mathfrak{c}_n\circ Q_{n})\cdot\qs.
    \end{aligned}
  \end{equation}
  Recall that the $\cdot$ symbol denotes bilinear multiplication of tensors. We reprint the holomorphic curve equations derived in \S\ref{sec:other-complex-structures}:
  \begin{equation*}\tag{\ref{eq:perturbJ}}
    \begin{aligned}
      g_{*}\bd_{s}Q_{n}-\nabla_{t} P_{n}&=A_{n}(u)\cdot P_{n}\cdot \bd_{t}u_{n}\\
      \nabla_{s}P_{n}+g_{*}\bd_{t} Q_{n}&=B_{n}(u)\cdot P_{n}\cdot \bd_{t}u_{n}.
    \end{aligned}
  \end{equation*}
  Recall that $A_{n}$ and $B_{n}$ are $C^{\infty}$-convergent to limits $A,B$. Combining these with \eqref{eq:daren2} yields
  \begin{equation}
    \label{eq:daren3}\left.
      \begin{aligned}
        g_*\qs &= \nabla_t \tilde{P}_n + \epsilon_{n}(\nabla \mathfrak{a}_{n}\circ Q_{n})\cdot \bd_{t}Q_{n}+\epsilon_{n}\mathfrak{c}_n\circ Q_n \\&\hspace{2cm}+ t\epsilon_{n}(\nabla\mathfrak{c}_n\circ Q_{n})\cdot\qt +A_{n}(u_{n})\cdot P_{n}\cdot \bd_{t}u_{n}\\
        g_*\qt &= -\nabla_s \tilde{P}_n - \epsilon_{n}(\nabla \mathfrak{a}_{n}\circ Q_{n})\cdot \qs\\&\hspace{2cm}- t\epsilon_{n}(\nabla\mathfrak{c}_n\circ Q_{n})\cdot\qs + B_{n}(u_{n})\cdot P_{n}\cdot \bd_{t}u_{n}.
      \end{aligned}\right.
  \end{equation}

  Compute the first and second derivatives of $\gamma_n(s)$:
  \begin{equation*}
    \dot{\gamma}_n(s) = \int_{t=0}^{1} \langle\nabla_s \tilde{P}_n,\tilde{P}_n \rangle \d t,
  \end{equation*}
  \begin{equation*}\label{eq:daren4}
    \ddot{\gamma}_n(s) = \int_{t=0}^{1} |\nabla_s\tilde{P}_n|^2 + \langle\nabla_s\nabla_s \tilde{P}_n,\tilde{P}_n \rangle dt
  \end{equation*}
  From equation \eqref{eq:daren3}, we get
  \begin{equation}\label{eq:dylan1}
    \begin{aligned}
      \nabla_{s}\nabla_{s}\tilde{P}_{n} &=-\nabla_{s}(g_{*}\bd_{t}Q_{n})-\epsilon_{n}\nabla_{s}((\nabla \mathfrak{a}_{n}\circ Q_{n})\cdot \qs+t\epsilon_{n}(\nabla\mathfrak{c}_n\circ Q_{n})\cdot\qs) \\&\hspace{8cm}+ \nabla_{s}(B_{n}(u_{n})\cdot P_{n}\cdot \bd_{t}u_{n})\\
                                        &= -\nabla_{s}(g_{*}\bd_{t}Q_{n})+\epsilon_{n}R_{n}^{1}+\nabla_{s}(B_{n}(u_{n})\cdot P_{n}\cdot \bd_{t}u_{n})\\
                                        &=-\nabla_{t}(g_{*}\bd_{s}Q_{n})+\epsilon_{n}R_{n}^{1}+\tilde{P}_{n}\cdot R_{n}^{2}+\nabla_{s}\tilde{P}_{n}\cdot R_{n}^{3}+\epsilon_{n}R_{n}^{4},
    \end{aligned}
  \end{equation}
  where we use Lemma \ref{lemma:interchange} to switch the order of differentiation at the second line and where the remainder terms are given by:
  \begin{equation*}
    \begin{aligned}
      R_{n}^{1}&=-\nabla_{s}\left[(\nabla \mathfrak{a}_{n}\circ Q_{n})\bd_{s}Q_{n}-t(\nabla \mathfrak{c}_{n}\circ Q_{n})\cdot \bd_{s}Q_{n}\right]\\
      \tilde{P}_{n}\cdot R^{2}_{n}+\nabla_{s}\tilde{P}_{n}\cdot R^{3}_{n}+\epsilon_{n}R^{4}_{n}&=\nabla_{s}\left[B_{n}(u_{n})\cdot (\tilde{P}_{n}-\epsilon_{n}\mathfrak{a}_{n}\circ Q_{n}-t\epsilon_{n}\mathfrak{c}_{n}\circ Q_{n})\cdot \bd_{t}u_{n}\right].
    \end{aligned}
  \end{equation*}
  We have used \eqref{eq:daren2} to rewrite $P_{n}$ in terms of $\tilde{P}_{n}$. It is a straightforward consequence of the assumption that $\d u_{n}$ converges to $0$ and Lemma \ref{lemma:bootstrappingone} (ensuring decay on the higher derivatives of $u_{n}$) that one can take $R^{i}_{n}$ so that:
  \begin{equation*}
    \limsup_{n\to\infty}\sum_{i=1}^{4}\sum_{\ell=0}^{k}\abs{\nabla^{\ell}R^{i}_{n}(s,t)}=0,
  \end{equation*}
  uniformly for $(s,t)\in [-R_{n}-0.75,R_{n}+0.75]\times [0,1]$. This uses the fact that $\mathfrak{a}_{n}$, $\mathfrak{b}_{n}$, and $B_{n}$, are convergent to obtain uniform estimates on their derivatives. Simplify things by letting $R^{5}_{n}=R^{1}_{n}+R^{4}_{n}$. 

  Substitute the formula for $g_{*}\bd_{s}Q_{n}$ from \eqref{eq:daren2} into \eqref{eq:dylan1} to obtain:
  \begin{equation*}
    \begin{aligned}
      \nabla_{s}\nabla_{s}\tilde{P}_{n}     &=-\nabla_{t}(\nabla_{t}\tilde{P}_{n}+\epsilon_{n}(\nabla\mathfrak{a}_{n}\circ Q_{n})\cdot \bd_{t}Q_{n}+\epsilon_{n}\mathfrak{c}_{n}\circ Q_{n}+t\epsilon_{n}(\nabla \mathfrak{c}_{n}\circ Q_{n})\cdot \bd_{t}Q_{n})\\
                                            &\hspace{2cm}-\nabla_{t}(A_{n}(u_{n})\cdot P_{n}\cdot \bd_{t}u_{n})+\epsilon_{n}R^{5}_{n}+\tilde{P}_{n}\cdot R^{2}_{n}+\nabla_{s}\tilde{P}_{n}\cdot R_{n}^{3}.\\ &=-\nabla_{t}\nabla_{t}\tilde{P}_{n}+\epsilon_{n}R_{n}^{6}-\nabla_{t}(A_{n}(u_{n})\cdot P_{n}\cdot \bd_{t}u_{n})\\
                                            &\hspace{3cm}+\epsilon_{n}R_{n}^{5}+\tilde{P}_{n}\cdot R_{n}^{2}+\nabla_{s}\tilde{P}_{n}\cdot R_{n}^{3}\\     &=-\nabla_{t}\nabla_{t}\tilde{P}_{n}+\epsilon_{n}R_{n}^{6}+\tilde{P}_{n}\cdot R_{n}^{7}+\nabla_{t}\tilde{P}_{n}\cdot R_{n}^{8}+\epsilon_{n}R_{n}^{9}\\
                                            &\hspace{3cm}+\epsilon_{n}R_{n}^{5}+\tilde{P}_{n}\cdot R_{n}^{2}+\nabla_{s}\tilde{P}_{n}\cdot R_{n}^{3}.
    \end{aligned}
  \end{equation*}
  Here the remainder terms are given by: 
  \begin{equation*}
    \begin{aligned}
      R_{n}^{6}&=-\nabla_{t}\left[(\nabla\mathfrak{a}_{n}\circ Q_{n})\cdot \bd_{t}Q_{n}+\mathfrak{c}_{n}\circ Q_{n}+t(\nabla \mathfrak{c}_{n}\circ Q_{n})\bd_{t}Q_{n}\right],\\
      \tilde{P}_{n}\cdot R_{n}^{7}&+\nabla_{t}\tilde{P}_{n}\cdot R_{n}^{8}+\epsilon_{n}R_{n}^{9}=-\nabla_{t}(A_{n}(u_{n})\cdot P_{n}\cdot \bd_{t}u_{n}),
    \end{aligned}
  \end{equation*}
  using \eqref{eq:daren2} to rewrite $P_{n}$ in terms of $\tilde{P}_{n}$. The fact that all the derivatives of $u_{n}$ are converging to $0$ implies $R^{i}_{n}$ can be chosen so that:
  \begin{equation}\label{eq:remainder-goes-to-zero}
    \limsup_{n\to\infty}\sum_{i=1}^{9}\sum_{\ell=0}^{k}\abs{\nabla^{\ell}R^{i}_{n}(s,t)}=0
  \end{equation}
  uniformly for $(s,t)\in [-R_{n}-0.75,R_{n}+0.75]\times [0,1]$.

  Combining similar terms:
  \begin{equation*}
    R^{10}_{n}=R^{6}_{n}+R^{5}_{n}+R^{9}_{n}\hspace{1cm}R^{11}_{n}=R^{7}_{n}+R^{2}_{n},
  \end{equation*}
  whereby: 
  \begin{equation}\label{eq:dylan2} \nabla_{s}\nabla_{s}\tilde{P}_{n}=-\nabla_{t}\nabla_{t}\tilde{P}_{n}+\epsilon_{n}R^{10}_{n}+\tilde{P}_{n}\cdot R^{11}_{n}+\nabla_{s}\tilde{P}_{n}\cdot R^{3}_{n}+\nabla_{t}\tilde{P}_{n}\cdot R^{8}_{n}.
  \end{equation}
  It follows that:
  \begin{equation}\label{eq:dylan3}
    \begin{aligned}
      \ddot{\gamma}_{n}(s)&=\int_{0}^{1}\abs{\nabla_{s}\tilde{P}_{n}}^{2}\d t+\int_{0}^{1}\ip{-\nabla_{t}\nabla_{t}\tilde{P}_{n},\tilde{P}_{n}}\d t+\epsilon_{n}\int_{0}^{1}\ip{R_{n}^{10},\tilde{P}_{n}}\d t\\&\hspace{1cm}+\int_{0}^{1}\ip{\tilde{P}_{n}\cdot R^{11}_{n},\tilde{P}_{n}}\d t+\int_{0}^{1}\ip{\nabla_{s}\tilde{P}_{n}\cdot R^{3}_{n},\tilde{P}_{n}}\d t+\int_{0}^{1}\ip{\nabla_{t}\tilde{P}_{n}\cdot R^{8}_{n},\tilde{P}_{n}}\d t
    \end{aligned}
  \end{equation}
  Recalling the boundary condition $\tilde{P}_n(s,0) = \tilde{P}_n(s,1) =0$, integrate by parts to obtain:
  \begin{equation*}
    \int_{0}^{1} \langle - \nabla_t\nabla_t\tilde{P}_n,\tilde{P}_n\rangle \d t = \int_{0}^{1} \langle\nabla_t\tilde{P}_n,\nabla_t\tilde{P}_n\rangle \d t=\int_{0}^{1} |\nabla_t\tilde{P}_n|^2 \d t=\norm{\nabla_{t}\tilde{P}_{n}}^{2},
  \end{equation*}
  using the notation:
  \begin{equation*}
    \norm{f}^{2}=\int_{0}^{1}\abs{f}^{2}\,dt.
  \end{equation*}

  The remaining terms can be bounded as follows:
  \begin{equation}\label{eq:remainder-integral-estimate}
    \begin{aligned}
      \abs{\epsilon_{n}\int_{0}^{1}\ip{R_{n}^{10},\tilde{P}_{n}}\d t}&\le \epsilon_{n}\norm{R_{n}^{10}}\norm{\tilde{P}_{n}}\\
      \abs{\int_{0}^{1}\ip{\tilde{P}_{n}\cdot R^{11}_{n},\tilde{P}_{n}}\d t}&\le (\max \abs{R_{n}^{11}(s,t)})\norm{\tilde{P}_{n}}^{2}\\
      \abs{\int_{0}^{1}\ip{\nabla_{s}\tilde{P}_{n}\cdot R^{3}_{n},\tilde{P}_{n}}\d t}&\le (\max \abs{R_{n}^{3}(s,t)})\norm{\nabla_{s}\tilde{P}_{n}}\norm{\tilde{P}_{n}}\\
      \abs{\int_{0}^{1}\ip{\nabla_{s}\tilde{P}_{n}\cdot R^{8}_{n},\tilde{P}_{n}}\d t}&\le (\max \abs{R_{n}^{8}(s,t)})\norm{\nabla_{t}\tilde{P}_{n}}\norm{\tilde{P}_{n}}.
    \end{aligned}
  \end{equation}
  As in \eqref{eq:remainder-goes-to-zero}, for $s\in [-R_{n}-0.75,R_{n}+0.75]$, one can arrange that:
  \begin{equation*}
    \lim_{n\to\infty}\left[\norm{R^{10}_{n}}+\max\abs{R^{11}_{n}}+\max\abs{R^{3}_{n}}+\max\abs{R^{8}_{n}}\right]=0.
  \end{equation*}
  Invoke the trick that $2ab\le a^{2}+b^{2}$, and use equations \eqref{eq:dylan3} with the estimates \eqref{eq:remainder-integral-estimate} to obtain:
  \begin{equation*}
    \begin{aligned}
      \ddot{\gamma}_{n}(s)&\ge \norm{\nabla_{s}\tilde{P}_{n}}^{2}+\norm{\nabla_{t}\tilde{P}_{n}}^{2}-K_{n}(\epsilon_{n}^{2}+\norm{\tilde{P}_{n}}^{2}+\norm{\nabla_{s}\tilde{P}_{n}}^{2}+\norm{\nabla_{t}\tilde{P}_{n}}^{2}),
    \end{aligned}
  \end{equation*}
  for some sequence $K_{n}=K_{n}(u_{n},J,\mathfrak{a}_{n},\mathfrak{b}_{n})\to 0$. Note that any quantity involving $J_{n}$ and its derivatives (e.g., $A_{n},B_{n}$ and their derivatives) can be bounded in terms of $J$ and its derivatives.
  
  Another input is the version of Poincar\'{e} inequality as stated in Lemma \ref{lemma:poincare}, which implies:
  \begin{equation*}
    \norm{\tilde{P}_{n}}^{2}\le c_{\mathrm{pc}}\norm{\nabla_{t} \tilde{P}_{n}}^{2}.
  \end{equation*}
  Deduce the following estimate for $\ddot{\gamma_n}(s)$ for $n$ sufficiently large:
  \begin{equation*}
    \begin{aligned}
      \ddot{\gamma_n}
      &\geq \frac{2}{3}(\norm{\nabla_{s}\tilde{P}_{n}}^{2}+\norm{\nabla_{t}\tilde{P}_{n}}^{2})+(\frac{1}{3}-K_{n})(\norm{\nabla_{s}\tilde{P}_{n}}^{2})\\&\hspace{4cm}+(\frac{1}{3}-K_{n}-c_{\mathrm{pc}}K_{n})\norm{\nabla_{t}\tilde{P}_{n}}^{2})-K_{n}\epsilon_{n}^{2}\\      
      &\ge \frac{2}{3}(\norm{\nabla_{s}\tilde{P}_{n}}^{2}+\norm{\nabla_{t}\tilde{P}_{n}}^{2})-K_{n}\epsilon_{n}^{2}\\
      &\ge \frac{1}{3}(\norm{\nabla_{s}\tilde{P}_{n}}^{2}+\norm{\nabla_{t}\tilde{P}_{n}}^{2})+\frac{1}{3c_{\mathrm{pc}}}\norm{\tilde{P}_{n}}^{2}-K_{n}\epsilon_{n}^{2}.
    \end{aligned}
  \end{equation*}
  Here we assume that $n$ is sufficiently large to make $(1+c_{\mathrm{pc}})K_{n}<1/3$. Recalling the definition of $\gamma_{n}=\norm{\tilde{P}_{n}}^{2}$, conclude that
  \begin{equation*}
    \ddot{\gamma}_{n}-\delta^{2}\gamma_{n}\ge \frac{1}{3}(\norm{\nabla_{s}\tilde{P}_{n}}^{2}+\norm{\nabla_{t}\tilde{P}_{n}}^{2})-K_{n}\epsilon_{n}^{2},
  \end{equation*}
  where $\delta^{2}=1/(3c_{\mathrm{pc}})$. This completes the proof of \ref{statementA:a}. The fact that \ref{statementA:a} implies \ref{statementA:b} follows from Lemma \ref{lemma:statementB} below.
\end{proof}

\begin{lemma}\label{lemma:statementB}
  Let $\gamma,\alpha:[-R-0.75,R+0.75]\to \R^{+}$ be smooth functions and $\delta,K>0$ so that
  \begin{equation*}
    \ddot{\gamma}-\delta^{2}\gamma\ge \alpha-K\epsilon^{2}.
  \end{equation*}
  Then for all $s\in [-R,R]$ we have
  \begin{equation*}
    \gamma(s)+\int_{s-0.5}^{s+0.5}\alpha(s)\,\d s\le C_{1}e^{-\delta(R+s)}+C_{2}e^{-\delta(R-s)}+C_{3}K\epsilon^{2},
  \end{equation*}
  where $C_{1},C_{2},C_{3}$ can be taken to be
  \begin{equation}\label{eq:formula-for-C}    C_{1}=9\gamma(-R-0.75),\hspace{2mm}C_{2}=9\frac{\abs{\dot{\gamma}(R+0.75)+\delta\gamma(R+0.75)}}{2\delta},\text{ and }C_{3}=\frac{9}{\delta^{2}}+\frac{5}{4}.
  \end{equation}
\end{lemma}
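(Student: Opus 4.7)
The plan is to reduce the second-order differential inequality to a first-order one via the substitution $v(s):=\dot\gamma(s)+\delta\gamma(s)$, which factors $\partial_s^2-\delta^2=(\partial_s-\delta)(\partial_s+\delta)$. A direct computation shows $\dot v-\delta v=\ddot\gamma-\delta^2\gamma\ge\alpha-K\epsilon^2\ge -K\epsilon^2$, which rearranges to $(e^{-\delta s}v)'\ge -K\epsilon^2 e^{-\delta s}$. Integrating this from $s$ to $R+0.75$ and using $\alpha\ge 0$ yields the pointwise upper bound
\begin{equation*}
  v(s)\le|v(R+0.75)|\,e^{-\delta(R+0.75-s)}+\frac{K\epsilon^2}{\delta}.
\end{equation*}

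Next, since $(e^{\delta s}\gamma)'=e^{\delta s}v$, I would integrate from $-R-0.75$ to $s$ and substitute the bound on $v$ to obtain a pointwise exponential estimate of the form
\begin{equation*}
  \gamma(s)\le\gamma(-R-0.75)\,e^{-\delta(R+s)}+\frac{|\dot\gamma(R+0.75)+\delta\gamma(R+0.75)|}{2\delta}\,e^{-\delta(R-s)}+\frac{K\epsilon^2}{\delta^2},
\end{equation*}
after evaluating the elementary integrals of $e^{2\delta s'}$ appearing in the calculation. This already accounts for the leading contribution to the first part of the claimed bound.

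To control the remaining piece $\int_{s-0.5}^{s+0.5}\alpha(s')\,ds'$, I would integrate the original differential inequality over $[s-0.5,s+0.5]$ to get
\begin{equation*}
  \int_{s-0.5}^{s+0.5}\alpha\,ds'\le\dot\gamma(s+0.5)-\dot\gamma(s-0.5)-\delta^2\!\int_{s-0.5}^{s+0.5}\gamma\,ds'+K\epsilon^2.
\end{equation*}
The $-\delta^2\int\gamma$ term is non-positive since $\gamma\ge 0$ and can be dropped. The boundary term is then controlled by writing $\dot\gamma(s')=v(s')-\delta\gamma(s')$ and substituting the pointwise bounds on $v$ and $\gamma$ at $s'=s\pm 0.5$; the half-unit shifts introduce only bounded factors like $e^{\pm 0.5\delta}$ and $e^{\pm 0.25\delta}$ in front of the exponential coefficients.

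Adding the two estimates and absorbing the various exponential factors into the claimed constants yields a bound of the form $C_1 e^{-\delta(R+s)}+C_2 e^{-\delta(R-s)}+C_3 K\epsilon^2$. The main obstacle is numerical bookkeeping: verifying that the specific constants in \eqref{eq:formula-for-C}, in particular the factor $9$ in $C_1,C_2$ and the combination $9/\delta^2+5/4$ in $C_3$, genuinely dominate all the $e^{\pm 0.75\delta}$, $e^{\pm 0.5\delta}$, and $e^{\pm 0.25\delta}$ losses coming from shifting between the endpoints $\pm(R+0.75)$, the averaging interval $[s-0.5,s+0.5]$, and the evaluation point $s$, together with the sum of the $K\epsilon^2/\delta$ and $K\epsilon^2/\delta^2$ contributions accumulated at each integration step. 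This part is routine but tedious once the two pointwise estimates above are in hand.
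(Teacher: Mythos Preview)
Your derivation of the pointwise bound on $\gamma$ is essentially the paper's argument: the paper's auxiliary function $\beta(s)=\gamma(s)-K\epsilon^2/\delta^2-Ae^{-\delta(R'-s)}$ is just a repackaging of the same two integrations you perform (first of $(e^{-\delta s}v)'$, then of $(e^{\delta s}\gamma)'$), and the resulting bound $\gamma(s)\le B_1e^{-\delta(R'+s)}+B_2e^{-\delta(R'-s)}+\delta^{-2}K\epsilon^2$ with $B_1=\gamma(-R')$ and $B_2=|v(R')|/(2\delta)$ is identical.

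The gap is in your treatment of $\int_{s-0.5}^{s+0.5}\alpha$. After integrating the differential inequality you correctly need an upper bound on $\dot\gamma(s+0.5)-\dot\gamma(s-0.5)$. Writing $\dot\gamma=v-\delta\gamma$, your bounds give $\dot\gamma(s+0.5)\le v(s+0.5)$ and $\delta\gamma(s-0.5)\le(\text{exponential})$, but you also need an \emph{upper} bound on $-v(s-0.5)$, i.e.\ a \emph{lower} bound on $v(s-0.5)$. Your integration of $(e^{-\delta s}v)'\ge -K\epsilon^2 e^{-\delta s}$ only yields an upper bound on $v$; integrating in the other direction produces a factor $e^{\delta(s-s_0)}$ that grows rather than decays, so no useful lower bound is available this way. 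The paper sidesteps this by observing directly that $\ddot\gamma\ge -K\epsilon^2$ (drop $\delta^2\gamma$ and $\alpha$, both nonnegative), so $\dot\gamma$ is almost nondecreasing; a short mean-value argument then gives the two-sided estimate $-4\gamma(s-0.25)-\tfrac{K}{8}\epsilon^2\le\dot\gamma(s)\le 4\gamma(s+0.25)+\tfrac{K}{8}\epsilon^2$, which closes the loop using only the already-established bound on $\gamma$.

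A secondary issue is the constants. Your anticipated losses of size $e^{\pm0.5\delta}$ would make $C_1,C_2$ depend on $\delta$, whereas the claimed constants are exactly $9$. The paper avoids this because the bound on $\gamma$ is stated at level $R'=R+0.75$, and evaluating it at $s\pm0.75$ shifts the exponents by $\pm0.75$; combined with the passage from $R'$ to $R$ this produces factors $e^{-0.75\delta}+4e^{-1.5\delta}+4\le 9$, with no $\delta$-dependent growth. Your scheme, which evaluates at $s\pm0.5$ and picks up genuine $e^{+0.25\delta}$ factors from $v$, cannot recover the stated $9$ without a different organization of the shifts.
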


\begin{remark}
  To see how this lemma shows that \ref{statementA:a} implies \ref{statementA:b} in Lemma \ref{lemma:statementA}, can take
  \begin{equation*}
    \theta_{n}=\max\set{9\gamma_{n}(-R_{n}-0.75),9\frac{\abs{\dot{\gamma}_{n}(R_{n}+0.75) + \delta\gamma_{n}(R_{n}+0.75)}}{2\delta},(\frac{9}{\delta^{2}}+\frac{5}{4})K_{n}}.
  \end{equation*}
  Then the assumption that $\norm{\d u_{n}}_{C^{0}}\to 0$ implies that the expressions involving $\gamma$ and $\dot\gamma$ tend to zero. Since $K_{n}$ tends to zero from \ref{statementA:a}, we conclude that $\theta_{n}$ tends to zero, as desired.
\end{remark}

\begin{proof}[of Lemma \ref{lemma:statementB}]
  This is inspired by the proof of \cite[Lemma 3.1]{robbinsalamon}. To simplify the notation, introduce $R' = R + 0.75$. Define $\beta:[-R',R']\to \R^{+}$ to be:
  \begin{equation*}
    \beta(s)= \gamma (s)- \frac{K}{\delta^2}\epsilon^2 - Ae^{-\delta(R'-s)},
  \end{equation*}
  where $A = \max\set{(2\delta)^{-1}(\dot{\gamma}(R')+ \delta \gamma(R')),0}$. The reason for this choice of $A$ will be made apparent momentarily.

  Compute \[\ddot{\beta}(s) = \ddot{\gamma}(s)-\delta^2Ae^{-\delta(R'-s)},\] and hence \[ \ddot{\beta}(s) - \delta^2\beta(s) = \ddot{\gamma}(s) - \delta^2\gamma(s) + K\epsilon^2 \geq 0. \]
  
  This implies \[
    \dfrac{d}{ds}\big(e^{-\delta s}(\dot{\beta}+ \delta \beta)\big)\geq 0\]
  
  In particular, if $\dot{\beta}(R') + \delta \beta (R')\leq 0$, then $\dot{\beta}(s) + \delta \beta (s)\leq 0$ for all $s\in \left[-R',R'\right]$. 
  
  By our choice of $A$, we have
  $$\dot{\beta}(R') + \delta\beta(R')=\dot\gamma(R')+\delta \gamma(R')-\frac{K}{\delta}\epsilon^{2}-2\delta A = -\frac{K}{\delta}\epsilon^2 \leq 0,$$ so $\dot{\beta}(s) + \delta \beta (s)\leq 0$ for all $s\in \left[-R',R'\right]$.

  It follows that $e^{\delta s}\beta(s)$ is decreasing on $\left[-R',R'\right]$. Hence \[\beta(s) \leq e^{-\delta(s+R')}\beta(-R')\le e^{-\delta(s+R')}\gamma(-R').\] This implies that:
  \begin{equation}\label{equ:beta}
    \gamma(s)\leq B_1 e^{-\delta(R'+s)} + B_2 e^{-\delta(R'-s)} + B_3 K\epsilon^2,
  \end{equation} 
  where $B_1 = \gamma(-R')$, $B_2 = A \le (2\delta)^{-1}\abs{\dot{\gamma}(R')+ \delta \gamma(R')},$ and $B_3 = \delta^{-2}.$
  
  We estimate the integral $\int_{s-0.5}^{s+0.5}\alpha(s)ds$. As $\gamma \ge 0$, we have $\alpha \leq \ddot{\gamma} + K\epsilon^2$, so \[\int_{s-0.5}^{s+0.5}\alpha(s)ds \leq \dot{\gamma}(s+0.5)-\dot{\gamma}(s-0.5) + K\epsilon^2.\]
  To bound $\dot{\gamma}(s+0.5)$, notice the bound on $\gamma$ by the previous analysis, and since $\ddot{\gamma} \geq \delta^2\gamma + \alpha - K\epsilon^2\geq -K\epsilon^2$, $\dot{\gamma}$ is almost an increasing function, up to order $\epsilon^2$.
  More precisely, for $s' >s$, we have: \[\dot{\gamma}(s') - \dot{\gamma}(s) = \int_{s}^{s'} \ddot{\gamma}(x)dx \geq -K(s'-s)\epsilon^2.\]
  Integrating both sides against $s'$:
  \[ \int_{s'=s}^{s+0.25}\dot{\gamma}(s')ds' - \int_{s'= s}^{s+0.25}\dot{\gamma}(s)ds' \geq \int_{s'=s}^{s+0.25}-K(s'-s)\epsilon^2ds' = \dfrac{-K}{32}\epsilon^2,\]
  which yields:
  \[\gamma(s+0.25) - \gamma(s) - \frac{1}{4}\dot{\gamma}(s) \geq\dfrac{-K}{32}\epsilon^2.\]
  Rearranging:
  \begin{align*}
    \dot{\gamma}(s)
    &\leq4\gamma(s+0.25) - 4\gamma(s) + \dfrac{K}{8}\epsilon_{n}^2 \\
    & \leq  4\gamma(s+0.25)+\dfrac{K}{8}\epsilon^2. 	
  \end{align*}

  Switching the roles of $s$ and $s'$ and integrating $s'$ from $s-0.25$ to $s$, we obtain the inequality in the other direction:
  \[\dot{\gamma}(s) \geq - 4\gamma(s-0.25) - \dfrac{K}{8}\epsilon^2.\]
  
  Therefore:
  \begin{align*}
    \int_{s-0.5}^{s+0.5}\alpha(s)ds
    &\leq \dot{\gamma}(s+0.5)-\dot{\gamma}(s-0.5)+K\epsilon^2 \\
    &\leq 4\gamma(s+0.75)+4\gamma(s-0.75)+\dfrac{5K}{4}\epsilon^2.
  \end{align*}
  Appealing to ($\ref{equ:beta}$), conclude:
  \begin{equation*}
    \gamma(s)+\int_{s-0.5}^{s+0.5}\alpha(s)ds\leq 9B_{1}e^{-\delta(R+s)}+9B_{2}e^{-\delta(R-s)}+(9B_{3}+\frac{5}{4})K\epsilon^{2}.
  \end{equation*}
  The expressions for $B_{i}$ in \eqref{equ:beta} imply that we can set $C_{i}$ as in \eqref{eq:formula-for-C}. This completes the proof.
\end{proof}

\subsection{Bootstrapping $W^{1,2}$ estimates to $C^{1}$ estimates}
\label{sec:bootstrapping-to-C1}

The $W^{1,2}$ estimates on $\tilde{P}_{n}$ will be upgraded to obtain $C^{1}$ estimates. The following lemma is a key ingredient when proving the convergence of holomorphic strips with adiabatic boundary conditions to Morse flow flow lines.
\begin{lemma}\label{lemma:c1estimates}
  Assume the setup and hypotheses of Lemma \ref{lemma:statementA}. Recall that this involved a choice of $\epsilon_{n}\to 0$ and convergent sequences of $1$-forms $\mathfrak{a}_{n}$ and $\mathfrak{b}_{n}$, a sequence $J_{n}\to J$ so that $J_{n}|_{L}=J|_{L}$ and supposed a sequence of holomorphic maps $$u_{n}:[-R_{n}-1,R_{n}+1]\times [0,1]\to K$$ with boundary on $L_{\epsilon_{n}\mathfrak{a}_{n}}$ and $L_{\epsilon_{n}\mathfrak{b}_{n}}$ whose first derivatives converge to $0$.

  The conclusions of Lemma \ref{lemma:statementA} can be upgraded: there exist $\kappa_{n}=\kappa_{n}(u_{n},K,J,\mathfrak{a}_{n},\mathfrak{b}_{n})$ so that $\kappa_{n}\to 0$ and
  \begin{equation*}
    \abs{\tilde{P}_{n}(s,t)}+\abs{\nabla_{s}\tilde{P}_{n}(s,t)}+\abs{\nabla_{t}\tilde{P}_{n}(s,t)}\le \kappa_{n}(e^{-d(R_{n}+s)}+e^{-d(R_{n}-s)}+\epsilon_{n}),
  \end{equation*}
  for all $(s,t)\in [-R_{n},R_{n}]\times [0,1]$ and $n$ sufficiently large. Here $2d=\delta$ is the constant from Lemma \ref{lemma:statementA}.
\end{lemma}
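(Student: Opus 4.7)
The plan is to combine the $W^{1,2}$ estimate of Lemma~\ref{lemma:statementA}\ref{statementA:b} with the Levi-Civita elliptic estimate of Lemma~\ref{lemma:elliptic estimate} and a two-dimensional Sobolev embedding. The relation $2d=\delta$ reflects the fact that Lemma~\ref{lemma:statementA}\ref{statementA:b} controls the squared quantity $|\tilde{P}_n|^2$, so that taking square roots converts $e^{-\delta(\cdot)}$ into $e^{-d(\cdot)}$ and $\epsilon_n^2$ into $\epsilon_n$.

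The starting point is the pointwise formula for $\Delta^{\mathrm{lc}}\tilde{P}_n$ already extracted in equation \eqref{eq:dylan2}:
\begin{equation*}
\Delta^{\mathrm{lc}}\tilde{P}_n = \epsilon_n R_n^{10} + \tilde{P}_n\cdot R_n^{11} + \nabla_s\tilde{P}_n\cdot R_n^3 + \nabla_t\tilde{P}_n\cdot R_n^8,
\end{equation*}
where, by \eqref{eq:remainder-goes-to-zero}, each $R_n^i$ converges to zero in every $C^k$-norm uniformly on $[-R_n-0.75,R_n+0.75]\times[0,1]$. Consequently $\|\Delta^{\mathrm{lc}}\tilde{P}_n\|_{W^{k-1,2}(\Omega(r+\delta))}$ is bounded by $o(1)\|\tilde{P}_n\|_{W^{k,2}(\Omega(r+\delta))} + o(1)\epsilon_n$, where the small factor in front of $\epsilon_n$ comes from the uniform decay of $R_n^{10}$ and its derivatives on a bounded-area domain.

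With this in hand I would iterate Lemma~\ref{lemma:elliptic estimate}. The hypothesis $|\partial_sQ_n|+|\partial_tQ_n|\le c$ holds for $n$ large since $\|du_n\|_{C^0}\to 0$, and the required uniform bounds on higher derivatives of $Q_n$ follow from Corollary~\ref{corollary:apriori-estimates-2}; the boundary condition $\tilde{P}_n(s,0)=\tilde{P}_n(s,1)=0$ holds by construction. Starting from the $W^{1,2}$ bound of Lemma~\ref{lemma:statementA}\ref{statementA:b} on $\Omega(r+\delta_2)\cap\{|s|\le R_n\}$, applying Lemma~\ref{lemma:elliptic estimate} with $k=1$ produces a $W^{2,2}$ bound on a slightly smaller rectangle, and applying it again with $k=2$ produces a $W^{3,2}$ bound on a still smaller rectangle. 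At each step, the small-coefficient $\|\tilde{P}_n\|_{W^{k,2}}$ terms appearing through $\Delta^{\mathrm{lc}}\tilde{P}_n$ can be absorbed into the previous estimate, so the conclusion at the end of the bootstrap is
\begin{equation*}
\|\tilde{P}_n\|_{W^{3,2}([s-\tau,s+\tau]\times[0,1])} \le C\,\|\tilde{P}_n\|_{W^{1,2}([s-2\tau,s+2\tau]\times[0,1])} + o(\epsilon_n).
\end{equation*}

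Finally, in two dimensions $W^{3,2}\hookrightarrow C^1$, which converts the $W^{3,2}$ bound into the desired pointwise $C^1$ bound. Taking square roots of the exponential $W^{1,2}$ estimate from Lemma~\ref{lemma:statementA}\ref{statementA:b} then gives
\begin{equation*}
|\tilde{P}_n(s,t)|+|\nabla_s\tilde{P}_n(s,t)|+|\nabla_t\tilde{P}_n(s,t)|\le \kappa_n\bigl(e^{-d(R_n+s)}+e^{-d(R_n-s)}+\epsilon_n\bigr),
\end{equation*}
with $d=\delta/2$ and $\kappa_n = O(\sqrt{\theta_n})+o(1)\to 0$. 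The main technical nuisance will be the bookkeeping: tracking the shrinking rectangles through two applications of the elliptic estimate, and confirming that every occurrence of $\epsilon_n$ in the bootstrap carries a coefficient that itself tends to zero. The latter is precisely what the uniform $C^k$-decay of the remainders $R_n^i$ (secured by the bootstrapping Lemma~\ref{lemma:bootstrappingone}) is designed to deliver.
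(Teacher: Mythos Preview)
Your proposal is correct and follows essentially the same approach as the paper: derive the formula $\Delta^{\mathrm{lc}}\tilde P_n=\epsilon_nR_n^{10}+\tilde P_n\cdot R_n^{11}+\nabla_s\tilde P_n\cdot R_n^3+\nabla_t\tilde P_n\cdot R_n^8$ from \eqref{eq:dylan2}, feed the $W^{1,2}$ bound of Lemma~\ref{lemma:statementA}\ref{statementA:b} into Lemma~\ref{lemma:elliptic estimate} twice to reach $W^{3,2}$ on nested rectangles, then apply the Sobolev embedding $W^{3,2}\hookrightarrow C^1$ and take square roots to convert the decay rate from $\delta$ to $d=\delta/2$. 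The only cosmetic difference is that the paper invokes Corollary~\ref{corollary:apriori-estimates} rather than Corollary~\ref{corollary:apriori-estimates-2} to verify the higher-derivative hypotheses on $Q_n$ in Lemma~\ref{lemma:elliptic estimate}, and it fixes the shrinking rectangles concretely ($\Omega(0.5)\supset\Omega(0.25)\supset\Omega(0.125)$) rather than leaving them abstract.
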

\begin{proof}  
  This proof mainly uses the elliptic estimates for the Levi-Civita Laplacian proved in Lemma \ref{lemma:elliptic estimate}:
  \begin{equation}\label{eq:elliptic-estimate}
    \begin{aligned}
      \norm{\tilde{P}_{n}}_{W^{k+1,2}(\Omega(r))}\le C_{k}(\norm{\Delta^{\mathrm{lc}}\tilde{P}_{n}}_{W^{k-1,2}(\Omega(r+\delta))}+\norm{\tilde{P}_{n}}_{W^{k,2}(\Omega(r+\delta))}).
    \end{aligned}
  \end{equation}
  Together with the exponential estimates on the $W^{1,2}$-norm of $\tilde{P}_n$ developed in Lemma \ref{lemma:statementA}, use \eqref{eq:elliptic-estimates} to estimate the $W^{3,2}$-norm of $\tilde{P}_n$. Then applying the Sobolev embedding theorem (see Lemma \ref{lemma:prereq1}) gives the desired $C^1$ estimates on $\tilde{P}_n$.

  The most important calculations needed to apply \eqref{eq:elliptic-estimate} were performed in the proof of Lemma \ref{lemma:statementA}. In particular, equation \eqref{eq:dylan2} can be rearranged to obtain
  \begin{equation}\label{eq:daren-lapl}
    \Delta^{\mathrm{lc}}\tilde{P}_{n}=\epsilon_{n}R^{10}_{n}+\tilde{P}_{n}\cdot R^{11}_{n}+\nabla_{s}\tilde{P}_{n}\cdot R^{3}_{n}+\nabla_{t}\tilde{P}_{n}\cdot R^{8}_{n},
  \end{equation}
  where the remainder terms and their derivatives converge to zero as $n\to\infty$, uniformly for $(s,t)\in [-R_{n},R_{n}]\times [0,1]$.
  
  Now we give the details of the proof. Consider a sequence of $J$-holomorphic curves $u_n$ satisfying the conditions of Lemma \ref{lemma:statementA}. For $n$ large enough, we can assume $u_n$ lies entirely in the tubular neighbourhood $N$, and write $u_n = (Q_n,P_n)$ in the Levi-Civita coordinates. Define the perturbed maps $\tilde{P}_n$ as in equation (\ref{eq:perturbedP}). Note that the perturbation is along the fiber direction of $T^*L$, so $\tilde{P}_n$ is also a section of $Q^*_nT^*L$, and we will apply Lemma \ref{lemma:elliptic estimate} to $Q_n$ and $\tilde{P}_n$. 
  
  Choose any point $(s_0,t_0) \in [-R_n,R_n]\times [0,1]$. Define the rectangle
  \begin{equation*}
    \Omega(r)=[-s_0-r,s_0+r] \times [0,1].
  \end{equation*}
  around $(s_0,t_0)$.

  Take $r = 0.25$, $\delta = 0.25$, $k=1$ and $N=1$ in Lemma \ref{lemma:elliptic estimate}, defining constants $c_{1}=c(r,\delta)$ and $C_{1}=C(r,\delta,k,N)$. Then take $r=0.125$, $\delta=0.125$, $k=2$ and $N=2$, defining $c_{2}$ and $C_{2}$. Take $c=\mathrm{min}(c_{1},c_{2})$.

  By our assumption that $\limsup_{n\to\infty} \norm{\d u_{n}}_{C^{0}}=0$ and the a priori estimates from Corollary \ref{corollary:apriori-estimates}, we have 
  \begin{equation*}
    \abs{\bd_{s}Q_n}+\abs{\bd_{t}Q_n}\le c\text{ and }\sum_{\ell=0}^{2}\abs{\nabla^{\ell}\bd_{s}Q_n}+\sum_{\ell=0}^{2}\abs{\nabla^{\ell}\bd_{t}Q_n}\le 1
  \end{equation*} 
  on the strip $[-R_{n}-1,R_{n}+1]\times [0,1]$ for $n$ sufficiently large. The a priori estimates in Lemma \ref{lemma:elliptic estimate} yield:
  \begin{equation}\label{eq:elliptic-estimates}
    \begin{aligned}
      \norm{\tilde{P}_{n}}_{W^{2,2}(\Omega(0.25))}&\le C_{1}(\norm{\Delta^{\mathrm{lc}}\tilde{P}_{n}}_{W^{0,2}(\Omega(0.5))}+\norm{\tilde{P}_{n}}_{W^{1,2}(\Omega(0.5))})\\
      \norm{\tilde{P}_{n}}_{W^{3,2}(\Omega(0.125))}&\le C_{2}(\norm{\Delta^{\mathrm{lc}}\tilde{P}_{n}}_{W^{1,2}(\Omega(0.25))}+\norm{\tilde{P}_{n}}_{W^{2,2}(\Omega(0.25))}),
    \end{aligned}
  \end{equation}
  for $n$ large enough. The right hand side of \eqref{eq:elliptic-estimates} can be estimated by the previous $W^{1,2}$ bound of $\tilde{P}_n$ as follows.

  For the term $\norm{\Delta^{\mathrm{lc}}\tilde{P}_n}_{W^{0,2}(\Omega(0.5))}$, equation \eqref{eq:daren-lapl} implies
  $$\norm{\Delta^{\mathrm{lc}}\tilde{P}_n}_{W^{0,2}(\Omega(0.5))} \leq K'_n (\epsilon_{n} + \norm{\tilde{P}_n}_{W^{1,2}(\Omega(0.5))}), $$ where $$K'_n = \sup_{(s,t)\in \Omega{(0.5)}}\text{max}\{ \abs{R^{10}_n(s,t)}, \abs{R^{11}_n(s,t)},\abs{R^{3}_n(s,t)}, \abs{R^{8}_n(s,t)}\}.$$

  It follows from the proof of Lemma \ref{lemma:statementA} that $\lim_{n\to\infty}K'_n=0.$
  For the second term $\norm{\tilde{P}_n}_{W^{1,2}(\Omega(0.5))}$, apply part (b) of Lemma \ref{lemma:statementA} to conclude
  \begin{equation*}
    \begin{aligned}
      \norm{\tilde{P}_n}_{W^{1,2}(\Omega(0.5))}
      &= \big( \int_{s'=s-0.5}^{s+0.5}\int_{t=0}^{1}\abs{\tilde{P}_{n}}^{2}+\abs{\nabla_{s}\tilde{P}_{n}}^{2}+\abs{\nabla_{t}\tilde{P}_{n}}^{2}\d t\d s' \big)^{1/2}\\
      & \leq \big(\theta_{n}(e^{-\delta(R_{n}+s)}+e^{-\delta(R_{n}-s)}+\epsilon_{n}^{2})\big)^{1/2}\\
      &\leq \theta_{n}^{1/2}(e^{-\delta(R_{n}+s)/2}+e^{-\delta(R_{n}-s)/2}+\epsilon_{n}),
    \end{aligned}
  \end{equation*}
  where $\theta_{n}\to 0.$

  Using equation \eqref{eq:elliptic-estimates} conclude
  \begin{equation*}
    \begin{aligned}
      \norm{\tilde{P}_n}_{W^{2,2}(\Omega(0.25))}
      &\le C_{1}(\norm{\Delta^{\mathrm{lc}}\tilde{P}_n}_{W^{0,2}(\Omega(0.5))}+\norm{\tilde{P}_n}_{W^{1,2}(\Omega(0.5))}) \\ 
      &\leq C_1\big(2K'_n\epsilon_{n} +(2K'_n+1)\norm{\tilde{P}_n}_{W^{1,2}(\Omega(0.5))}\big)\\
      & \leq \kappa_{n}'(e^{-d(R_{n}+s)}+e^{-d(R_{n}-s)}+\epsilon_{n}),
    \end{aligned}
  \end{equation*}
  where $d = \delta/2$, $\kappa_{n}'=2C_1K'_n+(2K'_n+1)\theta_{n}^{1/2}.$ Note that $\kappa'_{n}\to 0$.

  Now we perform the bootstrapping argument. As mentioned above, the important calculation in the bootstrapping procedure is equation \eqref{eq:daren-lapl} which expresses $\Delta^{\mathrm{lc}}\tilde{P}_n$ in terms of $\epsilon_{n}$, $\tilde{P}_n$, $\nabla_{s}\tilde{P}_n$, and $\nabla_{t}\tilde{P}_n$ with coefficients $R^i_n(s,t)$.

  Hence, 
  $$\norm{\Delta^{\mathrm{lc}}\tilde{P}_n}_{W^{1,2}(\Omega(0.25))} \leq 2 K_n (\epsilon_{n} + \norm{\tilde{P}_n}_{W^{2,2}(\Omega(0.25))}), $$
  where $K_n$ is some combination of $ R^{10}_n(s,t), R^{11}_n(s,t),R^{3}_n(s,t) , R^{8}_n(s,t)$ and their first derivatives. By equation \eqref{eq:remainder-goes-to-zero}, $R^i_n(s,t)$ as well as their derivatives converge to $0$ uniformly in $s,t$ as $n\to\infty$, and so $K_n$ does as well. Applying \eqref{eq:elliptic-estimates},
  \begin{align*}
    \norm{\tilde{P}_n}_{W^{3,2}(\Omega(0.125))}
    &\le C_{2}(\norm{\Delta^{\mathrm{lc}}\tilde{P}_n}_{W^{1,2}(\Omega(0.25))}+\norm{\tilde{P}_n}_{W^{2,2}(\Omega(0.25))})\\
    & \leq 2C_2K_n\epsilon_{n} +2C_2K_n\norm{\tilde{P}_n}_{W^{2,2}(\Omega(0.25))} \\
    &\leq 2C_2K_n\epsilon_{n} + 2C_2K_n\kappa_{n}'(e^{-d(R_{n}+s)}+e^{-d(R_{n}-s)}+\epsilon_{n})\\
    &\leq \kappa''_{n}(e^{-d(R_{n}+s)}+e^{-d(R_{n}-s)}+\epsilon_{n}),
  \end{align*}
  where $\kappa''_{n} = 2C_2K_n(1+\kappa'_{n})$, which again converges to $0$ as $n \to \infty$. 

  Finally, by the Sobolev inequality, 
  $$\norm{\tilde{P}_n}_{C^{1}(\Omega(0.125))} \leq C_{\mathrm{Sob}} \norm{\tilde{P}_n}_{W^{3,2}(\Omega(0.125))}\le \kappa_{n}(e^{-d(R_{n}+s)}+e^{-d(R_{n}-s)}+\epsilon_{n})$$ for some constant $C_{\mathrm{Sob}}$, and $\kappa_{n} = C_{\mathrm{Sob}} \kappa''_{n}$. This is the desired $C_1$ bound on $\tilde{P}_n(s_0,t_0)$. Noting that $(s_0,t_0)$ was chosen arbitrarily in the rectangle $[-R_n,R_n]\times [0,1]$, the proof is complete.
\end{proof}

\section{Compact partial domains}
\label{sec:compact-partial}

In this section we develop the theory of \emph{compact partial domains} as the primary tool to analyze the degeneration of Riemann surfaces. The first three sections are concerned with the convergence of domains, while in \S\ref{sec:with_maps}, we consider domains $\Sigma_{n}$ equipped with smooth maps $u_{n}:\Sigma_{n}\to W$. In \S\ref{sec:conclusion_compactness}, we tie all the theory together and complete the proof of Theorem \ref{theorem:main_theorem}, modulo the technical arguments postponed to \S\ref{sec:low-energy-regions}; in particular, the analysis involving the convergence to broken flow lines is postponed to \S\ref{sec:low-energy-strips}, where we make use of the estimates proved in \S\ref{sec:exponential-estimates}. The reader who wishes to skip to the part requiring the exponential estimates should refer to Lemma \ref{lemma:finite-number-case}.

See \cite{gro85, hummel, BEHWZ, mcduffsalamon, chiu_chu_katz, MT, abbas-book} for related discussion on compactness in the space of Riemann surfaces and holomorphic curves.

\begin{defn}\label{defn:cpd}
  A \emph{compact partial domain} is a connected Riemann surface $\Sigma$ with corners together with a decomposition of boundary into two pieces $\bd\Sigma\cup \bd C$, so that:
  \begin{enumerate}
  \item $\bd\Sigma\cap \bd C$ forms the finite set of corners,
  \item each component of $\bd C$ can be covered by a neighbourhood $U$ conformally identified with $[0,r]\times S$, where $S=[0,1]$ or $S=\R/\Z$,
  \item $\bd C\cap U$ is mapped on to $\set{r}\times S$,
  \item if $S=[0,1]$, $\bd\Sigma\cap U$ is mapped onto $[0,r]\times \set{0,1}$,
  \item if $S=\R/\Z$, $\bd\Sigma\cap U=\emptyset$.
  \end{enumerate}
  For example, if $\Sigma_{\infty}$ is a punctured Riemann surface, and we remove open ends $C$ around each puncture, then $\Sigma:=\Sigma_{\infty}\setminus C$ is a compact partial domain. Clearly every compact partial domain arises in this fashion, via an obvious completion operation.
\end{defn}

\subsection{Notions of convergence for compact partial domains}
\label{sec:convergence_cpd}
In this section we define a notion of \emph{strong convergence} for a sequence compact partial domains $(\Sigma_{n},\mathfrak{e}_{n})$, where $\mathfrak{e}_{n}$ are ends; see \S\ref{sec:markings-and-ends}. First we explain a more naive notion of weak convergence.

\subsubsection{Weakly convergent sequences of domains}
\label{sec:weakly}
\begin{defn}
  Let $\Sigma_{\infty}$ be a punctured Riemann surface, with punctures $\Gamma$. A weakly convergent sequence of domains is the data of a sequence $(\Sigma_{n},\psi_{n})$ where $\Sigma_{n}$ is a compact partial domain and $\psi_{n}:\Sigma_{n}\to \Sigma_{\infty}$ is an embedding so that:
  \begin{enumerate}[label=(\alph*)]
  \item $\psi_{n}^{-1}(\bd\Sigma_{\infty})=\bd\Sigma_{n}$.
  \item the compact sets $\psi_{n}(\Sigma_{n})$ exhaust $\Sigma_{\infty}$,
  \item $\psi_{n,*}(j_{n})$ converges to $j_{\infty}$ on compact subsets of $\Sigma_{\infty}$. 
  \end{enumerate}
\end{defn}
The limit $\Sigma_{\infty}$ is not uniquely determined by $\Sigma_{n}$, however it will be unique if we impose the following condition:
\begin{defn}
  Let $(\Sigma_{n},\psi_{n})$ and $(\Sigma_{n},\psi_{n}')$ be two weakly convergent sequences (with the same $\Sigma_{n}$). We say that the sequences are \emph{relatively proper} if $$\psi_{n}(z_{n})\to\infty\iff \psi_{n}'(z_{n})\to \infty$$ for all sequences $z_{n}\in \Sigma_{n}$.
\end{defn}
\begin{defn}
  A punctured surface $\Sigma$ is \emph{semi-stable} if there are no non-constant holomorphic maps $S^{2}\to \Sigma$ or $(D,\bd D)\to (\Sigma,\bd\Sigma)$.
\end{defn}
\begin{lemma}\label{lemma:rel-prop}
  Suppose $\Sigma$ and $\Sigma'$ are the limits of two relatively proper sequences with the same underlying $\Sigma_{n}$, and $\Sigma,\Sigma'$ are semi-stable. Consider $f_{n}:=\psi_{n}^{\prime}\psi_{n}^{-1}$ as a sequence which is eventually defined on any compact set. Then, after passing to a subsequence, $f_{n}$ converges to a biholomorphism $f:\Sigma\to \Sigma'$.
\end{lemma}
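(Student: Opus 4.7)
The plan is to apply a normal families / Gromov compactness argument to extract a subsequential limit of $f_n$, and then to show that the limit is a biholomorphism by running the same extraction on the inverse sequence $g_n := f_n^{-1} = \psi_n \circ (\psi_n')^{-1}$.

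First, I would establish a precompactness statement for $f_n$. For every compact $K \subset \Sigma$, the maps $f_n$ are eventually defined on $K$ because $\psi_n(\Sigma_n)$ exhausts $\Sigma$, so $\psi_n^{-1}$ is defined on $K$ for $n$ large. Moreover there is a compact $K' \subset \Sigma'$ containing $f_n(K)$ for all sufficiently large $n$. Indeed, if not, one could find $z_n \in K$ with $f_n(z_n)$ escaping every compact subset of $\Sigma'$; then $w_n := \psi_n^{-1}(z_n) \in \Sigma_n$ would satisfy $\psi_n(w_n) = z_n$ bounded in $\Sigma$ while $\psi_n'(w_n) = f_n(z_n) \to \infty$ in $\Sigma'$, contradicting relative properness.

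Second, $f_n$ intertwines the complex structures $\psi_{n,*} j_n$ on its domain with $\psi_{n,*}' j_n$ on its image, and these converge in $C^\infty_{\mathrm{loc}}$ to $j_\infty$ and $j_\infty'$ respectively. Thus $f_n$ is a sequence of (asymptotically) pseudoholomorphic maps which are $C^0$-precompact by the previous step and which carry $\partial\Sigma$ into $\partial\Sigma'$ (since $\psi_n^{-1}(\partial\Sigma) = \partial\Sigma_n$ and similarly for $\psi_n'$). By a standard diagonal/Gromov compactness argument on an exhausting sequence of compacts, after passing to a subsequence, $f_n$ converges in $C^\infty_{\mathrm{loc}}$ to a $(j_\infty, j_\infty')$-holomorphic map $f: \Sigma \to \Sigma'$, modulo finitely many bubble points at which a non-constant holomorphic sphere or boundary disk would bubble off into $\Sigma'$. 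Here is where the semi-stability of $\Sigma'$ enters: no such non-constant map into $\Sigma'$ exists, so no bubbles can form and the convergence is genuinely $C^\infty_{\mathrm{loc}}$.

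Third, I apply the same procedure to $g_n$, using the reverse direction of the relative properness iff together with the exhaustion of $\Sigma'$ by $\psi_n'(\Sigma_n)$; semi-stability of $\Sigma$ rules out bubbling in this direction, and after a further subsequence $g_n \to g$ in $C^\infty_{\mathrm{loc}}$ for some holomorphic $g: \Sigma' \to \Sigma$. On any compact set eventually contained in both domains, the identities $f_n \circ g_n = \mathrm{id}$ and $g_n \circ f_n = \mathrm{id}$ pass to the limit, yielding $f \circ g = \mathrm{id}_{\Sigma'}$ and $g \circ f = \mathrm{id}_\Sigma$, so $f$ is a biholomorphism that maps $\partial\Sigma$ onto $\partial\Sigma'$. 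The main obstacle is the second step, ruling out bubbling when extracting the $C^\infty_{\mathrm{loc}}$ limit; the semi-stability hypothesis on $\Sigma$ and $\Sigma'$ is exactly what handles this, both for $f_n$ and for $g_n$.
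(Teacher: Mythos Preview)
Your proposal is correct and follows essentially the same approach as the paper: use relative properness to get $C^{0}$-precompactness of $f_{n}$ on compact sets, invoke semi-stability of $\Sigma'$ to rule out bubbling and hence obtain $C^{\infty}_{\mathrm{loc}}$ convergence, and then repeat with $g_{n}=f_{n}^{-1}$ using semi-stability of $\Sigma$. The only cosmetic difference is packaging: the paper phrases the second step as ``the derivative is bounded on $K$, otherwise a bubbling argument produces a non-constant sphere or disk in $\Sigma'$'' and then applies Arzel\`a--Ascoli directly, whereas you invoke Gromov compactness as a black box; the content is identical.
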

\begin{proof}
  The relatively proper assumption implies that this sequence is uniformly proper, in the sense that if $f_{n}(z_{n})\in \Sigma'$ diverges to $\infty$, then $z_{n}\in \Sigma$ must be also be diverging to $\infty$.

  In particular, if $K$ is a compact set in $\Sigma$, then $f_{n}(K)$ remains in some compact set $K'$ (independent of $n$). Moreover, for any choice of global metrics, the derivative of $f_{n}$ will be uniformly bounded on $K$, otherwise we would conclude by a bubbling argument the existence of a holomorphic sphere or disk in $\Sigma'$ (contradicting semi-stability). Here we use that $f_{n}$ is approximately holomorphic $K\to K'$, by assumption on $\psi_{n},\psi_{n}'$.

  After passing to a subsequence, $f_{n}$ converges in $C^{0}_{\mathrm{loc}}$ by Arzel\`a-Ascoli to a limit $f:\Sigma\to \Sigma'$. The usual elliptic regularity arguments imply that the convergence is actually in $C^{\infty}_{\mathrm{loc}}$, and hence $f$ is holomorphic. The same argument proves that $f_{n}^{-1}$ converges to a holomorphic map which must be $f^{-1}$. Thus $f$ is a biholomorphism, as desired.
\end{proof}

\subsubsection{Markings and ends}
\label{sec:markings-and-ends}
The data $\mathfrak{e}$ of a collection of holomorphic embeddings $[0,r_{k}]\times S\to \Sigma$ satisfying Definition \ref{defn:cpd} is called a \emph{collection of ends}. Here $r_{k}$ is allowed to vary amongst the components of $\bd C$.

A \emph{marked} compact partial domain is the additional data of a metric on $\bd C$. Ends $\mathfrak{e}$ which parametrize $\bd C$ with constant speed (in time $1$) are said to be compatible with the marking.
\begin{prop}\label{prop:com-mark}
  If $\mathfrak{e},\mathfrak{e}'$ are ends compatible with a marking, then
  \begin{equation*}
    \mathfrak{e}(r_{k}-s,t)=\mathfrak{e}'(r_{k}'-s,t+t_{0})
  \end{equation*}
  for all $s\le \mathrm{min}(r_{k},r_{k}')$, and for some $t_{0}$. In words, the ends agree up to a rotation.
\end{prop}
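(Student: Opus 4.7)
Fix a component of $\partial C$ and the two parametrizations $\mathfrak{e}:[0,r_k]\times S\to \Sigma$ and $\mathfrak{e}':[0,r_k']\times S\to \Sigma$ near it. Consider the transition map
\begin{equation*}
    \phi := \mathfrak{e}^{-1}\circ \mathfrak{e}',
\end{equation*}
which is a biholomorphism from a neighborhood of $\{r_k'\}\times S$ in $[0,r_k']\times S$ onto a neighborhood of $\{r_k\}\times S$ in $[0,r_k]\times S$, sending the first boundary arc onto the second. The plan is to (i) identify $\phi$ on the boundary arc using the marking-compatibility, and (ii) use Schwarz reflection together with the identity theorem to conclude that $\phi$ is a translation.

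First, restrict $\phi$ to $\{r_k'\}\times S$. The hypothesis that both ends are compatible with the fixed marking means that $t\mapsto \mathfrak{e}(r_k,t)$ and $t\mapsto \mathfrak{e}'(r_k',t)$ are both unit-speed parametrizations of the \emph{same} component of $\partial C$, with the boundary orientation induced from the orientations of $[0,r_k]\times S$ and $[0,r_k']\times S$. Two such parametrizations necessarily differ by a translation in the parameter: there is a constant $t_0$ (an element of $\R/\Z$ when $S=\R/\Z$, and forced to equal zero when $S=[0,1]$) such that $\mathfrak{e}'(r_k',t)=\mathfrak{e}(r_k,t+t_0)$. Consequently, on the boundary arc $\{r_k'\}\times S$,
\begin{equation*}
    \phi(r_k',t) = (r_k,\, t + t_0).
\end{equation*}

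Next I will extend $\phi$ holomorphically across this boundary arc. Because $\phi$ maps the real-analytic arc $\{r_k'\}\times S$ onto the real-analytic arc $\{r_k\}\times S$, Schwarz reflection provides a holomorphic extension of $\phi$ to an open neighborhood of $\{r_k'\}\times S$ in $\C$ (for the case $S=[0,1]$ one can also reflect across $[0,r_k']\times\{0,1\}$ if necessary, since $\phi$ also sends those boundary arcs to their counterparts; the corners cause no issue as one only needs holomorphicity away from a discrete set, then a standard removable singularity argument applies). Consider the holomorphic function
\begin{equation*}
    F(z) := \phi(z) - \bigl(z + (r_k - r_k') + i t_0\bigr),
\end{equation*}
defined on this extended neighborhood. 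By the boundary computation above, $F$ vanishes identically on the curve $\{r_k'\}\times S$. Since this curve has an accumulation point in the extended (open) domain, the identity theorem forces $F\equiv 0$ on the connected component containing the arc. Therefore $\phi(s,t) = (s+(r_k-r_k'),\, t+t_0)$ wherever it is defined.

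Finally, unwinding the definition of $\phi$ gives $\mathfrak{e}'(s,t) = \mathfrak{e}(s + (r_k - r_k'),\, t + t_0)$; substituting $s = r_k' - \sigma$ yields $\mathfrak{e}'(r_k'-\sigma, t+t_0) = \mathfrak{e}(r_k - \sigma, t)$, which after renaming $\sigma\to s$ is exactly the claimed identity. The argument should work uniformly for both $S=[0,1]$ and $S=\R/\Z$, and the only genuine step of substance is the Schwarz-reflection/identity-theorem combination; everything else is bookkeeping of parametrizations. The minor obstacle I expect is verifying that the Schwarz extension behaves well at the corners in the $S=[0,1]$ case, but this is handled either by extending only across the interior of $\{r_k'\}\times S$ (which is already enough to apply the identity theorem on the connected open set) or by appealing to a standard removable singularity argument.
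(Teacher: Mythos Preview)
Your proof is correct and takes essentially the same approach as the paper's: both arguments observe that the two ends agree (up to a rotation in $t$) along $\partial C$ and then invoke analytic continuation/the identity theorem. The paper's version is terser, simply noting that the two holomorphic inclusions into $\Sigma$ coincide on infinitely many points, whereas you spell out the Schwarz reflection step explicitly so that the identity theorem applies on an open set; aside from a harmless sign slip in $t_0$ in your final substitution, the content is the same.
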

\begin{proof}
  This follows by analytic continuation. The inclusions of $\mathfrak{e},\mathfrak{e}'$ into $\Sigma$, after precomposing with a rotation, if necessary, have infinitely many intersections (as holomorphic maps). Then by analytic continuation they agree on their entire domain.
\end{proof}

\subsubsection{Strong convergence of compact partial domains with ends}
\label{sec:upgrade}
In order to define a good notion of convergence for a sequence $(\Sigma_{n},\psi_{n})$, we require an additional property which forces the \emph{relatively proper} condition for any other convergent sequence $(\Sigma_{n},\psi_{n}')$. The additional property we use depends on the notion of ends from \S\ref{sec:markings-and-ends}.

\begin{defn}
  A sequence $(\Sigma_{n},\psi_{n},\mathfrak{e}_{n})$ \emph{strongly converges} to $\Sigma$ if $(\Sigma_{n},\psi_{n})$ is weakly convergent, and $\mathfrak{e}_{n}$ is a collection of ends with conformal modulus tending to $\infty$, so that $\psi_{n}(z_{n})$ converges to $\infty$ if and only if $z_{n}$ eventually enters $\mathfrak{e}_{n}$ and converges to $\infty$.
\end{defn}
\begin{defn}
  Two sequences of ends $\mathfrak{e}_{n},\mathfrak{e}_{n}'$ are \emph{compatible} provided $z_{n}$ converges to $\infty$ as measured by $\mathfrak{e}_{n}$ if and only if it does as measured by $\mathfrak{e}_{n}'$, for all sequences $z_{n}\in \Sigma_{n}$.
\end{defn}
\begin{prop}
  If $(\Sigma_{n},\psi_{n},\mathfrak{e}_{n})$ and $(\Sigma_{n},\psi_{n}',\mathfrak{e}_{n}')$ strongly converge, and $\mathfrak{e}_{n},\mathfrak{e}_{n}'$ are compatible, then $\psi_{n},\psi_{n}'$ are relatively proper (and hence $\psi_{n}'\psi_{n}^{-1}$ converges to a biholomorphism $\Sigma\to \Sigma'$).
\end{prop}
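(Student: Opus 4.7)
The plan is to verify relative properness by a direct chain of ``if and only if'' statements, and then cite Lemma \ref{lemma:rel-prop} for the conclusion about the biholomorphism. Given a sequence $z_{n}\in \Sigma_{n}$, I want to show that $\psi_{n}(z_{n})\to\infty$ in $\Sigma$ is equivalent to $\psi_{n}'(z_{n})\to\infty$ in $\Sigma'$. Both conditions should be equated with an intrinsic condition on $z_{n}$ not involving the embeddings $\psi_{n}$ at all, namely the condition that $z_{n}$ eventually enters the ends of $\Sigma_{n}$ and runs out to infinity along them.

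More precisely, first I would unpack the definition of strong convergence applied to the sequence $(\Sigma_{n},\psi_{n},\mathfrak{e}_{n})$: this gives the equivalence ``$\psi_{n}(z_{n})\to\infty$ in $\Sigma$ $\iff$ $z_{n}$ eventually enters $\mathfrak{e}_{n}$ and runs to infinity along $\mathfrak{e}_{n}$.'' The same definition applied to $(\Sigma_{n},\psi_{n}',\mathfrak{e}_{n}')$ gives the analogous equivalence with $\mathfrak{e}_{n}'$ in place of $\mathfrak{e}_{n}$. Next, I would invoke the compatibility hypothesis on the two end systems, which by definition says that the statement ``$z_{n}$ runs to infinity along $\mathfrak{e}_{n}$'' is equivalent to the statement ``$z_{n}$ runs to infinity along $\mathfrak{e}_{n}'$.'' Chaining these three equivalences together yields
\begin{equation*}
\psi_{n}(z_{n})\to\infty \iff \psi_{n}'(z_{n})\to\infty,
\end{equation*}
which is exactly the relatively proper property from \S\ref{sec:weakly}.

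Having established relative properness, the parenthetical conclusion follows immediately from Lemma \ref{lemma:rel-prop}: after passing to a subsequence, $f_{n}=\psi_{n}'\psi_{n}^{-1}$ converges to a biholomorphism $f:\Sigma\to \Sigma'$, provided $\Sigma$ and $\Sigma'$ are semi-stable (which should be standing since compact partial domains, completed at the punctures, arise here as limits of honest Riemann surfaces with boundary and only finitely many punctures).

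I do not anticipate any real obstacle here: the argument is essentially formal, with all the substantive content pushed into the two prior results. The only minor point to be careful about is the direction of the iff in the definition of strong convergence: one needs both that entering the ends forces $\psi_{n}(z_{n})\to\infty$ and that divergence in $\Sigma$ forces the points to eventually enter the ends. Both are built into Definition of strong convergence, so no real work is required beyond citing it twice and using compatibility once.
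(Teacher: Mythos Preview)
Your proof is correct and takes essentially the same approach as the paper: both unwind the definitions of strong convergence and compatibility to link $\psi_{n}(z_{n})\to\infty$ with $\psi_{n}'(z_{n})\to\infty$ via the intrinsic condition on $z_{n}$ going to infinity in the ends. The paper phrases it as a short contradiction argument rather than an iff-chain, but the content is identical; your remark about needing semi-stability for Lemma~\ref{lemma:rel-prop} is apt and is handled in the paper by noting that a limit with at least one puncture is automatically semi-stable.
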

\begin{proof}
  Suppose not. Then we can find $\psi_{n}'(z_{n})\to\infty$ so that $\psi_{n}(z_{n})$ converges to $p$. However, by the strong convergence assumption, $z_{n}$ eventually enters $\mathfrak{e}_{n}$ and converges to $\infty$, which then implies that $\psi_{n}(z_{n})$ converges to $\infty$, contradicting the earlier convergence to $p$.
\end{proof}

\subsubsection{Strong convergence and markings}
\label{sec:and_markings}
If $\mathfrak{e}_{n},\mathfrak{e}_{n}'$ are two ends, then the induced change of parametrization of a component of $\bd C_{n}$ is the diffeomorphism $\varphi_{n}(t)$ so that $\mathfrak{e}_{n}'(r_{n}',t)=\mathfrak{e}_{n}(r_{n},\varphi_{n}(t))$. Here $r_{n},r_{n}'$ are the moduli of $\mathfrak{e}_{n},\mathfrak{e}_{n}'$ at the boundary component under consideration. 
\begin{prop}
  If $\mathfrak{e}_{n}$ and $\mathfrak{e}_{n}'$ are compatible ends, then $\varphi_{n}$ converges to a rotation after passing to a subsequence. 
\end{prop}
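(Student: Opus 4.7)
The plan is to reinterpret the convergence of $\varphi_n$ as the convergence of a transition map between two parametrizations of the same end, and then to apply a classification of automorphisms. Translate coordinates so that the cut boundary $\bd C_n$ sits at $s=0$: set $\bar{\mathfrak{e}}_n(s,t) := \mathfrak{e}_n(s+r_n,t)$ on $[-r_n,0]\times S$, and likewise define $\bar{\mathfrak{e}}_n'$. The transition $h_n := \bar{\mathfrak{e}}_n^{-1}\circ\bar{\mathfrak{e}}_n'$ satisfies $h_n(0,t) = (0,\varphi_n(t))$ wherever defined, so it suffices to produce a subsequence along which $h_n$ converges on compact subsets of $(-\infty,0]\times S$ to a biholomorphism of $(-\infty,0]\times S$ preserving the boundary $\{0\}\times S$ setwise. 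The classification of such automorphisms then yields the rotation: for $S=\R/\Z$, Schwarz reflection across the top boundary combined with the classical description of injective self-maps of $\C^*$ gives $h_\infty(s,t) = (s,t+t_0)$; for $S=[0,1]$, an additional reflection across the horizontal edges $\{t=0\}$ and $\{t=1\}$ reduces to the circle case and forces $h_\infty=\mathrm{id}$.

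First, I would verify that for every fixed $R>0$, the composition $h_n$ is defined on all of $[-R,0]\times S$ for $n$ sufficiently large. If not, along a subsequence one finds $(s_n,t_n)\in[-R,0]\times S$ with $\bar{\mathfrak{e}}_n'(s_n,t_n)\notin\bar{\mathfrak{e}}_n([-r_n,0]\times S)$. But $\bar{\mathfrak{e}}_n'(s_n,t_n)=\mathfrak{e}_n'(r_n'+s_n,t_n)$ has $\mathfrak{e}_n'$-coordinate in $[r_n'-R,r_n']$, which tends to infinity; by the compatibility hypothesis this sequence must eventually enter the image of $\bar{\mathfrak{e}}_n$, contradicting our assumption. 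The symmetric argument shows $h_n^{-1}$ is also defined on $[-R,0]\times S$ for $n$ large.

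Next, I would extract a subsequential limit via a diagonal argument over $R_k\to\infty$. The maps $h_n:[-R,0]\times S\to(-\infty,0]\times S$ take values in a hyperbolic surface, and the boundary diffeomorphisms $\varphi_n$ of $S$ are orientation-preserving, hence precompact in $C^0$ by a Helly-type selection theorem; combined with Montel's theorem this makes $\{h_n\}$ a normal family. After passing to a subsequence, $h_n$ converges uniformly on compact subsets to a holomorphic map $h_\infty:(-\infty,0]\times S\to(-\infty,0]\times S$ with $h_\infty(0,t)=(0,\varphi_\infty(t))$ for some monotone $\varphi_\infty$. Applying the same extraction to $h_n^{-1}$ and passing to the limit of $h_n^{-1}\circ h_n=\mathrm{id}$ identifies $h_\infty$ as a biholomorphism of $(-\infty,0]\times S$ onto itself. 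Reading off the boundary values gives $\varphi_n\to\varphi_\infty$, and the classification step above identifies $\varphi_\infty$ as a rotation.

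The main obstacle is verifying that the family $\{h_n\}$ is indeed normal with the limit a boundary-preserving biholomorphism rather than a degenerate map whose image drifts off to the puncture. A priori the target $[-r_n,0]\times S$ is expanding as $r_n\to\infty$, so nothing prevents the image of $\{-R\}\times S$ under $h_n$ from escaping toward the puncture. The key compactness input is the top boundary condition together with the counterpart bound on $h_n^{-1}$: compatibility of the two ends provides both $h_n$ and $h_n^{-1}$ on arbitrarily large compact subsets, pinning the image and ruling out degeneration. Once compactness is in hand, the classification of self-biholomorphisms preserving the boundary is standard, and produces the rotation stated in the proposition.
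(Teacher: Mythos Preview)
Your overall architecture matches the paper's: form the transition map between the two parametrizations, show by compatibility that it is eventually defined on any compact set, extract a limit, and classify the limiting automorphism. The substantive difference is the compactness step. The paper doubles the ends across $\bd C_n$ at the outset, obtaining a map $f_n:[-r_n,r_n]\times S \to [-r_n',r_n']\times S$ with $\{0\}\times S$ in the interior, and then bounds the derivative by a bubbling argument: a blow-up would produce a non-constant finite-Hofer-energy plane in $\R\times\R/\Z$, which does not exist. You instead invoke Montel's theorem on the one-sided target $(-\infty,0)\times S$, which is hyperbolic (a punctured disk when $S=\R/\Z$). That is a legitimate and arguably cleaner substitute for the Hofer-energy step on the interior.

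There is, however, a gap at the boundary. Helly's selection theorem does not give $C^0$-precompactness of monotone self-maps of $S$: pointwise limits can have jumps (take $\varphi_n(t)=t^n$ on $[0,1]$), so the phrase ``precompact in $C^0$ by a Helly-type selection theorem'' is false as written. Montel on the open half-cylinder yields convergence of $h_n$ only away from $\{s=0\}\times S$, and you have not shown that the boundary values $\varphi_n$ converge, nor that their limit matches the trace of your interior limit $h_\infty$. The natural repair is to perform the Schwarz reflection across $\{s=0\}$ at the start rather than only for the classification --- this is exactly the paper's doubling --- so that $\{0\}\times S$ becomes interior and convergence there is automatic. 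The price is that the doubled target is $\R\times S$, which is not hyperbolic, so Montel no longer applies directly; this is why the paper turns to Hofer energy. If you want to salvage the Montel route, apply it on each open half separately, then use the maximum principle on the harmonic $s$-coordinate of the doubled map (bounded on $\{s=\pm\epsilon\}\times S$ by the interior convergence, and vanishing on $\{s=0\}\times S$) to see that the doubled maps stay in a fixed compact subset of $\R\times S$ near $\{s=0\}$; a bounded-family normality argument then extends the convergence across $\{0\}\times S$ and delivers $\varphi_n\to\varphi_\infty$.
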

\begin{proof}
  Double $\mathfrak{e}_{n}$ and $\mathfrak{e}_{n}'$ and consider the change of trivialization as a partially defined map:
  \begin{equation*}
    f_{n}:[-r_{n},r_{n}]\times S\to [-r'_{n},r'_{n}]\times S,
  \end{equation*}
  where $\bd C_{n}$ is identified with $0$, and $f_{n}$ maps $\set{0}\times S$ onto $\set{0}\times S$ via $\varphi_{n}$.

  The compatibility condition guarantees that for finite $s$, $f_{n}$ is eventually defined on $[-s,s]\times S$ (and holomorphic). This is because otherwise we would have a sequence of points which went to infinity for $\mathfrak{e}_{n}$ but not for $\mathfrak{e}'_{n}$.

  Thus there is $s_{n}\to\infty$ so that $f_{n}$ is defined on $[-s_{n},s_{n}]\times S$. The derivative of $f_{n}$ must be bounded on compact subsets, using the translation invariant metric on the source and target. This is because there is no non-constant map $\C\to \R\times \R/\Z$ with finite Hofer energy. Recall that the Hofer energy is a translation invariant energy assigned to holomorphic maps valued in $\R\times \R/\Z$, which assigns finite energy to any embedding but infinite energy to any non-constant map $\C\to \R/\Z$.

  In particular, $f_{n}$ is proper, in the sense that $f_{n}(z_{n})\to\infty$ implies $z_{n}\to\infty$. It follows from the same arguments in Lemma \ref{lemma:rel-prop} that $f_{n}$ converges to a proper biholomorphism $\R\times \R/\Z\to \R\times \R/\Z$ (after passing to a subsequence).

  The only proper biholomorphisms are rotations and translations. Clearly $f_{\infty}$ must preserve $\set{0}\times \R/\Z$ by its construction, and hence $\varphi_{\infty}=f_{\infty}|_{\set{0}\times\R/\Z}$ is a rotation, as desired.
\end{proof}

\subsubsection{Strong convergence implies convergence of the ends}
\label{sec:convergence_of_ends}
A strongly convergent sequence has the auxiliary data of a sequence of ends. As we now explain, we can pass to a subsequence so that this sequence of ends converges.
\begin{prop}
  Suppose that $(\Sigma_{n},\psi_{n},\mathfrak{e}_{n})$ converges strongly to $\Sigma$. Then, after passing to a subsequence,
  \begin{equation*}
    v_{n}:=\psi_{n}|\mathfrak{e}_{n}([1,r_{n}]\times S)
  \end{equation*}
  converges on compact subsets of $[1,\infty)\times S$ to a proper holomorphic embedding.
\end{prop}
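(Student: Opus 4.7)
The plan is to produce uniform $C^k_{\mathrm{loc}}$-estimates on the maps $v_n = \psi_n \circ \mathfrak{e}_n$, extract a subsequential $C^\infty_{\mathrm{loc}}$-limit via Arzel\`a--Ascoli, and then verify that the limit is a proper holomorphic embedding. Two preliminary observations frame the argument. First, the $v_n$ are approximately $j_\infty$-holomorphic on compact subsets, because $\mathfrak{e}_{n}$ is $j_n$-holomorphic by definition of an end and weak convergence gives $\psi_{n,*}j_n \to j_\infty$ in $C^\infty_{\mathrm{loc}}$. Second, for any compact $K \subset [1,\infty) \times S$, the union $\bigcup_n v_n(K)$ is precompact in $\Sigma_\infty$: if a subsequence $v_{n_k}(z_k)$ diverged with $z_k \in K$, then the $s$-coordinates $s(z_k)$ would be bounded by $\max_K s$, so $\mathfrak{e}_{n_k}(z_k)$ would not escape to infinity along $\mathfrak{e}_{n_k}$, contradicting the strong-convergence hypothesis.

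The heart of the argument is uniform derivative bounds, obtained by a bubbling analysis together with Lemma~\ref{lemma:mvp} and Corollary~\ref{corollary:apriori-estimates-2}. Since $v_n$ is an injective, almost $j_\infty$-holomorphic map between Riemann surfaces, the local energy $\int_K |dv_n|^2$ is controlled, up to uniform constants, by the area of any fixed compact $K' \subset \Sigma_\infty$ that contains every $v_n(K)$. If $|dv_n(z_n)| \to \infty$ for some $z_n$ in a compact subset, the rescaled maps $w_n(\zeta) = v_n(z_n + \zeta/|dv_n(z_n)|)$ have $|dw_n(0)|=1$ and image in $K'$, and a subsequence converges to a non-constant $j_\infty$-holomorphic limit $w_\infty$ from $\mathbb{C}$ (or from the upper half-plane in the boundary case, handled by Schwarz reflection across the totally real submanifold $\partial \Sigma_\infty$) into $\Sigma_\infty$, with image in $K'$. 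Lifting to the universal cover of $\Sigma_\infty$---one of $\mathbb{CP}^1$, $\mathbb{C}$, or $\mathbb{H}$---then forces the lift to be a bounded entire map, hence constant by Liouville or Picard, contradicting non-constancy of $w_\infty$. This rules out concentration; Lemma~\ref{lemma:mvp} and Corollary~\ref{corollary:apriori-estimates-2} then give uniform $C^k$-bounds on compact subsets, and a diagonal Arzel\`a--Ascoli argument extracts a subsequential $C^\infty_{\mathrm{loc}}$-limit $v_\infty:[1,\infty)\times S \to \Sigma_\infty$ that is $j_\infty$-holomorphic. I expect this bubbling step to be the main obstacle.

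Finally I verify the properties of $v_\infty$. Properness is the contrapositive of strong convergence: if $s(z_m)\to\infty$ while $v_\infty(z_m)\to p \in \Sigma_\infty$, then for each $m$ choose $n_m \to \infty$ large enough that $v_{n_m}(z_m)$ is within $1/m$ of $p$; then $\psi_{n_m}(\mathfrak{e}_{n_m}(z_m)) = v_{n_m}(z_m) \to p$, yet $\mathfrak{e}_{n_m}(z_m)$ has $s$-coordinate tending to infinity, violating strong convergence. Properness in particular forces $v_\infty$ to be non-constant. For injectivity, suppose $v_\infty(z_1)=v_\infty(z_2)$ with $z_1 \neq z_2$ and choose disjoint closed disks $D_i \ni z_i$; by the identity theorem, each $v_\infty|_{D_i}$ is non-constant, so by the open mapping theorem each $v_\infty(D_i)$ contains a common open neighborhood of the image point. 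A degree argument using $C^0$-convergence shows $v_n(D_1) \cap v_n(D_2)$ contains a nonempty open set for $n$ large, producing $\beta_i \in D_i$ with $v_n(\beta_1) = v_n(\beta_2)$ and contradicting injectivity of $v_n$. A non-constant holomorphic map between Riemann surfaces is locally $z \mapsto z^k$ at each critical point, so injectivity forces $v_\infty$ to be an immersion, completing the verification that $v_\infty$ is a proper holomorphic embedding.
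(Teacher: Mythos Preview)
Your proof is correct and follows the same architecture as the paper's: images on compacta stay bounded by strong convergence, bubbles are ruled out (you invoke Liouville/Picard on the universal cover, the paper invokes semi-stability of the punctured limit $\Sigma$---these are equivalent observations, since a punctured surface admits no non-constant holomorphic sphere or disk), elliptic regularity and Arzel\`a--Ascoli give a $C^\infty_{\mathrm{loc}}$ limit, properness comes from the contrapositive of strong convergence, and injectivity is inherited from that of each $v_n$. Your unified Rouch\'e/degree argument for injectivity is slightly slicker than the paper's, which splits into the cases $\d v_\infty(z_0)\neq 0$ and $\d v_\infty(z_0)=0$ and handles the latter via a separate winding-number count; otherwise the two arguments are interchangeable.
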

\begin{proof}
  The strong convergence assumption implies $v_{n}|_{[0,r]}$ remains bounded independently of $n$, for any fixed $r$. By assumption, $\Sigma$ has at least one puncture, and hence is semi-stable. Bubbling analysis then implies that $v_{n}$ has bounded derivative. Elliptic regularity theory for approximately holomorphic maps (with $C^{1}$ bounds) implies $v_{n}$ converges in $C^{\infty}_{\mathrm{loc}}$ to a limit $v_{\infty}$ (after passing to a subsequence). We shrink from $[0,\infty)$ to $[1,\infty)$ in order to apply the elliptic estimates. See \ref{sec:ell_reg_appx} for more details on elliptic regularity we use.

  We claim that $v_{\infty}$ is non-constant. Suppose not, and let $v_{\infty}=p$. Then we could find $z_{n}\to\infty$ so that $v_{n}(z_{n})$ converged to $p$. But this contradicts the definition of strong convergence.

  Indeed, more generally, if $v_{\infty}$ has $p$ as an asymptotic limit point, then we can find $z_{n}\to\infty$ so $v_{n}(z_{n})$ converges to $p$. The argument is as follows: let $\rho_{k}\to\infty$ and, for $N\in \mathbb{N}$, let $k(N)$ be the largest integer so that for $n\ge N$ we have $$\mathrm{dist}(v_{\infty}(z),v_{n}(z))<1/k$$ for all $z\in [1,\rho_{k}]\times S$.
  
  If we define $r_{n}:=\rho_{k(n)}$, then $v_{n}$ converges uniformly to $v_{\infty}$ on $[1,r_{n}]\times S$.

  In particular, if $p$ is an asymptotic limit point of $v_{\infty}$, then we can find $z_{n}\to\infty$ so $v_{n}(z_{n})$ converges to $p$, contradicting the definition of strong convergence.

  Thus $v_{\infty}$ has no asymptotic limit points, which is equivalent to $v_{\infty}$ being proper.

  Finally, we prove that $v_{\infty}$ is an embedding. If not, then we can find $z^{0}\ne z^{1}$ so that $v_{n}(z^{0})$ converges to $v_{n}(z^{1})$.

  If $\abs{\d v_{\infty}(z^{0})}>\delta$, then $\abs{\d v_{n}(z^{0})}> \delta$ for $n$ sufficiently large, and hence $v_{n}(z^{1})$ cannot enter a ball of fixed radius around $v_{n}(z^{0})$ (using the fact that $v_{n}$ is known to be injective and $z_{0}\ne z_{1}$). Here we use an arbitrary metric on $\Sigma$.

  Thus $\d v_{\infty}(z_{0})=0$. However, by local representation results for holomorphic maps, $v_{\infty}$ must appear as $v_{\infty}(z_{0})+(z-z_{0})^{k}$ in local coordinates around $z_{0},v_{\infty}(z_{0})$. Let $D$ be a small enough disk around $z_{0}$, so that $v_{\infty}(\bd D)$ is disjoint from $v_{\infty}(z_{0})$. Then for $n$ sufficiently large, $v_{n}(\bd D)$ is disjoint from $v_{\infty}(z_{0})$. Let $\Delta$ be a disk around $v_{\infty}(z_{0})$ which contains all of $v_{\infty}(\bd D)$.

  Then $v_{n}|_{\bd D}$ converges in $\Delta\setminus \set{v_{\infty}(z_{0})}$ to $v_{\infty}|_{\bd D}$. However, the winding number is continuous under such convergence, and hence $v_{n}|_{\bd D}$ has winding number $k>1$. It follows easily that $v_{n}|_{D}$ is non-injective, which is a contradiction. Thus $v_{\infty}$ is an embedding, as desired.
\end{proof}

\subsubsection{Strong convergence implies convergence of coordinate charts}
\label{sec:coordinate_disks}
The next result ensures the existence of convergent families of coordinate charts. We state the results for boundary holomorphic charts (defined on $\Omega(1):=D(1)\cap \bar{\mathbb{H}}$), and leave the interior case to the reader. Let $s_{n}\in \bd \Sigma_{n}$ be a sequence of points which remains bounded, as measured by ends $\mathfrak{e}_{n}$, and suppose that $\Sigma_{n},\mathfrak{e}_{n}$ converge strongly to $\Sigma$ for some $\psi_{n}$. Pick metrics $g_{n}$ so that $\psi_{n,*}g_{n}$ converges on compact sets to a complete metric $g$.

\begin{lemma}
  Let $h_{n}:\Omega(1)\to (\Sigma_{n},\bd \Sigma_{n})$ be a sequence of holomorphic maps so that $h_{n}(0)=s_{n}$, and $\abs{\d h_{n}}\in [\delta,\delta^{-1}]$ for some $\delta$, as measured by $g_{n}$. Then, after passing to a subsequence, $\psi_{n}\circ h_{n}$ converges in $C^{\infty}_{\mathrm{loc}}$ to a holomorphic map $h_{\infty}:\Omega(1)\to (\Sigma,\bd \Sigma)$ with $\abs{\d h_{\infty}}\in [\delta,\delta^{-1}]$ (here $C^{\infty}_{\mathrm{loc}}$ means on compact subsets disjoint from $\bd D(1)$).
\end{lemma}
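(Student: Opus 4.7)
The plan is to argue that $\psi_{n}\circ h_{n}$ is a sequence of holomorphic maps for the converging almost complex structures $\psi_{n,*}j_{n}\to j_{\infty}$, with images trapped in a fixed compact subset of $\Sigma$ and uniform first-derivative bounds; then apply Arzel\`a-Ascoli followed by the higher-derivative a priori estimate of Corollary \ref{corollary:apriori-estimates-2} to upgrade to $C^{\infty}_{\mathrm{loc}}$ convergence.

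First, I would use the hypothesis that $s_{n}\in \bd\Sigma_{n}$ is bounded as measured by the ends $\mathfrak{e}_{n}$: by the strong convergence definition, this forces $\psi_{n}(s_{n})=\psi_{n}(h_{n}(0))$ to stay in a fixed compact subset of $\Sigma$, so after passing to a subsequence I may assume $\psi_{n}(s_{n})\to p_{\infty}\in \Sigma$. Combined with the bound $\abs{\d h_{n}}_{g_{n}}\le \delta^{-1}$, the convergence $\psi_{n,*}g_{n}\to g$ on compact subsets of $\Sigma$, and the bounded diameter of $\Omega(1)$, this confines $\psi_{n}\circ h_{n}(\Omega(1))$ to a fixed compact set $K\subset \Sigma$ and gives a uniform first-derivative bound for $\psi_{n}\circ h_{n}$ measured in $g$. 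Arzel\`a-Ascoli then produces a subsequence converging in $C^{0}_{\mathrm{loc}}(\Omega(1))$ to a continuous limit $h_{\infty}:\Omega(1)\to \Sigma$ satisfying $h_{\infty}(\bd\Omega(1)\cap \mathbb{R})\subset \bd\Sigma$.

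Next I would upgrade to $C^{\infty}_{\mathrm{loc}}(\Omega(1)\setminus \bd D(1))$ convergence. Each $\psi_{n}\circ h_{n}$ is $\psi_{n,*}j_{n}$-holomorphic, the complex structures $\psi_{n,*}j_{n}$ converge to $j_{\infty}$ on compact subsets of $\Sigma$, and the boundary $\bd\Sigma$ is automatically totally real in the Riemann surface $\Sigma$ and contains the boundary images $\psi_{n}(h_{n}(\bd\Omega(1)\cap \mathbb{R}))$. This exactly matches the hypotheses of Corollary \ref{corollary:apriori-estimates-2} with $(W,J_{n},L_{n})=(\Sigma,\psi_{n,*}j_{n},\bd\Sigma)$, after extending $g$ to an ambient Riemannian metric on $\Sigma$. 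The uniform $C^{0}$ bound on $\abs{\d(\psi_{n}\circ h_{n})}_{g}$ then yields uniform bounds on $\abs{\nabla^{\ell}\d(\psi_{n}\circ h_{n})}$ on any compact subset of $\Omega(1)\setminus \bd D(1)$, the exclusion reflecting that the corollary requires a half-disk neighbourhood of each interior point, which fails along the arc part of $\bd\Omega(1)$. A second Arzel\`a-Ascoli and diagonal argument produces $C^{\infty}_{\mathrm{loc}}$-convergence, and the limit $h_{\infty}$ is automatically $j_{\infty}$-holomorphic.

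Finally, the two-sided derivative bound $\abs{\d h_{\infty}}_{g}\in [\delta,\delta^{-1}]$ passes to the limit pointwise by continuity of the derivative under the $C^{1}_{\mathrm{loc}}$-convergence established above together with the locally uniform convergence $\psi_{n,*}g_{n}\to g$, since pushforward by $\psi_{n}$ is an isometry from $g_{n}$ to $\psi_{n,*}g_{n}$. The main obstacle I anticipate is not the bubbling-free upper-derivative direction, but verifying that Corollary \ref{corollary:apriori-estimates-2} genuinely applies with the sequence of complex structures $\psi_{n,*}j_{n}$ rather than a fixed one; this is, however, the scenario the corollary was formulated for, and the conclusion follows.
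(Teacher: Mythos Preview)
Your proposal is correct and takes essentially the same approach as the paper: confine the images to a fixed compact set using the derivative bound, then apply elliptic regularity to bootstrap the $C^{1}$ bound to $C^{\infty}_{\mathrm{loc}}$ convergence. The paper cites the elliptic regularity result for approximately holomorphic maps in \S\ref{sec:ell_reg_appx} rather than Corollary~\ref{corollary:apriori-estimates-2}, and for the boundedness step it explicitly invokes the \emph{completeness} of the limiting metric $g$ (a path of bounded $g$-length starting in a compact set cannot escape to infinity), a point your phrasing leaves implicit since the convergence $\psi_{n,*}g_{n}\to g$ is only on compact subsets.
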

\begin{proof}
  It suffices to prove the existence of subsequences so that $\psi_{n}\circ h_{n}$ converges in $C^{\infty}$ on $\Omega(r)$, for every $r<1$.

  Observe that $h_{n}(\Omega(1))$ remains bounded. This is because the $g_{n}$ length of a path joining $h_{n}(0)$ to $h_{n}(x)$ can be bounded by $\delta^{-1}$. Thus, if $h_{n}(x_{n})$ converges to $\infty$, there are paths in $\Sigma$ whose left endpoint converges but right endpoint diverges to $\infty$ \emph{with bounded $g$-length}. This contradicts the definition of completeness.

  Since $h_{n}(\Omega(1))$ is bounded, we can apply the elliptic regularity result for approximately holomorphic maps given in the next section \S\ref{sec:ell_reg_appx}. This completes the proof.
\end{proof}

\subsubsection{Elliptic regularity for approximately holomorphic maps}
\label{sec:ell_reg_appx}
Here we state the elliptic regularity statement used in the previous sections.
\begin{lemma}
  Suppose that $u_{n}:\Omega(1)\to (W,L,J)$ is \emph{bounded} and satisfies:
  \begin{equation*}
    \bd_{s}u_{n}+J \bd_{t}u_{n}=a_{n}\cdot \d u_{n}+b_{n}
  \end{equation*}
  where $a_{n},b_{n}$ are smooth (tensor valued) functions on the disk which converge to zero (along with all their covariant derivatives) as $n\to\infty$.
  \begin{enumerate}[label=(\alph*)]
  \item If $u_{n}$ has bounded derivative on $\Omega(2/3)$, then all derivatives of $u_{n}$ are bounded on $\Omega(1/2)$. In particular, a subsequence of $u_{n}$ converges in $C^{\infty}$ on $\Omega(1/2)$.
  \item If $u_{n}$ has an unbounded derivative on $\Omega(2/3)$, then there exists a non-constant bounded holomorphic map $\C\to (W,J)$ or $\mathbb{H}\to (W,L,J)$ with bounded derivative.
  \end{enumerate}
  The same result holds with $\Omega$ replaced by $D$.
\end{lemma}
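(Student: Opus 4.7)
Proof plan for the elliptic regularity lemma.

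The plan is to first handle (a) by standard elliptic bootstrapping in local coordinates, then derive (b) by a rescaling/bubbling argument that feeds back into (a). For (a), I would cover $\Omega(2/3)$ by finitely many coordinate charts of two kinds: interior charts mapping into $\mathbb{C}^n$, and boundary charts flattening $L$ to $\mathbb{R}^n\subset\mathbb{C}^n$ using the fact that $L$ is totally real (so $TL\oplus JTL=TW|_L$, allowing us to choose local coordinates in which $J$ equals the standard $J_0$ along $L$). In such a chart, the PDE becomes
\[
  \bd_s u_n+J_0\bd_t u_n = (J_0-J(u_n))\bd_t u_n + a_n\cdot \d u_n+b_n,
\]
and the boundary condition $u_n(\bd\Omega)\subset L$ becomes the real boundary condition $\mathrm{Im}\,u_n=0$. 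Since $u_n$ is bounded and has bounded derivative on $\Omega(2/3)$, the right-hand side is bounded in $L^p$ for every $p$ with the coefficient $(J_0-J(u_n))$ lying in $C^0$. I would then apply the Calderón–Zygmund/totally-real boundary elliptic estimate for $\bar\partial$ (see for example \cite[App.~B]{mcduffsalamon}) on a slightly shrunken sub-chart to get $W^{1,p}$ bounds on $u_n$; then iterating (each bootstrap step improves the regularity of the right-hand side by one because the coefficients depend only on $u_n$, which is now more regular), one gets uniform $W^{k,p}$ bounds on $\Omega(1/2)$ for every $k,p$. Sobolev embedding then gives uniform $C^k$ bounds, and Arzelà–Ascoli yields a $C^\infty$ convergent subsequence on $\Omega(1/2)$.

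For (b), suppose $M_n:=\sup_{\Omega(2/3)}|\d u_n|\to\infty$. Apply Hofer's lemma to produce points $z_n\in\Omega(2/3)$ and radii $r_n>0$ with $R_n:=|\d u_n(z_n)|\to\infty$, $r_n R_n\to\infty$, and $|\d u_n|\le 2R_n$ on $D(z_n,r_n)\cap\Omega(2/3)$. Consider the rescaling
\[
  v_n(w):=u_n\!\left(z_n+\tfrac{w}{R_n}\right),
\]
defined on a domain that expands to fill either $\mathbb{C}$ or a half-plane, depending on whether the rescaled distance $R_n\cdot\mathrm{dist}(z_n,\bd\Omega(2/3))$ tends to $\infty$ or stays bounded (in the latter case, translate so the boundary appears as $\R$ in the limit). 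The map $v_n$ satisfies the rescaled PDE
\[
  \bd_s v_n+J(v_n)\bd_t v_n=\tilde a_n\cdot \d v_n+\tilde b_n,
\]
where $\tilde a_n(w)=a_n(z_n+w/R_n)$ and $\tilde b_n(w)=R_n^{-1}b_n(z_n+w/R_n)$, both of which still go to zero in $C^\infty_{\mathrm{loc}}$. Moreover $|\d v_n|\le 2$ everywhere on its domain, $|\d v_n(0)|=1$, and $v_n$ remains in the bounded set $u_n(\Omega(1))\subset W$. Apply part (a) on every compact subset of the limiting $\mathbb{C}$ or $\mathbb{H}$ to get a $C^\infty_{\mathrm{loc}}$ subsequential limit $v_\infty$, which solves $\bd_s v_\infty+J\bd_t v_\infty=0$, is bounded, has $|\d v_\infty|\le 2$, and satisfies $|\d v_\infty(0)|=1$, hence is non-constant; in the boundary case, $v_n(\bd\Omega)\subset L$ passes to $v_\infty(\R)\subset L$.

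The main obstacle is the boundary version of part (a): the inhomogeneous term $a_n\cdot\d u_n+b_n$ and the non-constant $J(u_n)$ must be handled via a reduction to a flat totally real boundary, and the elliptic regularity theorem for $\bar\partial$ with totally real boundary conditions must be applied on shrinking subdomains to keep the boundary contribution under control. The rescaling step in (b) is conceptually standard but requires care in the boundary case to verify that after translating, the boundary component appears as $\R\subset\partial\mathbb{H}$ in the limit (otherwise it escapes to infinity and we are in the interior case). The disk version is obtained identically, replacing $\Omega(\rho)$ by $D(\rho)$ everywhere and deleting the boundary condition throughout.
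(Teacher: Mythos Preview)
Your proposal is correct and follows essentially the same approach as the paper: derive (b) from (a) via Hofer's lemma and rescaling, and prove (a) by flattening $L$ in totally-real coordinates so that $J=J_0$ along $L$, then bootstrapping the inhomogeneous Cauchy--Riemann equation with real boundary conditions. The only cosmetic difference is that the paper carries out the bootstrapping via elliptic estimates for the Laplacian (applying $\partial_s - J_0\partial_t$ to the equation and using Dirichlet/Neumann conditions for the imaginary/real parts), whereas you invoke the Calder\'on--Zygmund estimate for $\bar\partial$ directly; both are standard and yield the same conclusion.
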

\begin{proof}
  We explain how (a) implies (b). If the derivative was unbounded, then we could apply Hofer's lemma to conclude a sequence of rescaled approximately holomorphic maps defined on $\Omega(r_{n})$ or $D(r_{n})$ with $r_{n}\to\infty$, \emph{with bounded first derivative}, and uniformly non-zero derivative at the origin. Then we can apply (a) to conclude all the higher derivatives of this rescaling are bounded on compact subsets. Hence Arzel\`a-Ascoli implies the rescalings converge to a holomorphic plane or half-plane, with non-zero derivative at the origin.

  The proof of (a) follows from standard bootstrapping, see, e.g., Lemma \ref{lemma:baby-ellreg}.
\end{proof}
\begin{remark}
  Here $(W,J,L)$ is an arbitrary almost complex manifold with totally real submanifold $L$. We use any metric on $W$, all of which give equivalent results since  $u_{n}$ remains uniformly bounded as a function of $n$. The submanifold $L$ is required to be properly embedded, i.e., have closed image. 
\end{remark}

\subsection{Stability of domains}
\label{sec:stability}
We say that a domain $\Sigma$ is \emph{stable} provided its double $D(\Sigma)$ has a negative Euler characteristic. 

\subsubsection{Ends for unstable domains}
\label{sec:canonical_ends_unstable}
An unstable compact partial domain domain is either $\cl{D}(r),\Omega(r)$, $[a,b]\times [0,1]$ or $[a,b]\times \R/\Z$, i.e., the compact approximations of $\C,\mathbb{H}$, $\R\times [0,1]$ or $\R\times \R/\Z$.

The ends of the form $[a,b]\times S$ have canonical markings, while the domains $\cl{D}(1)$, $\Omega(1)=\cl{D}(1)\cap \mathbb{H}$ have canonical markings up to the action of their automorphism group (which is a certain subgroup of the group of M\"obius transformations). We note that $\Omega(1)$ has $\bd C$ equal to $\bd D(1)\cap \Omega(1)$.

Henceforth, we require that unstable domains are given ends which are compatible with these canonical markings.

\subsubsection{Hyperbolic metrics for stable surfaces}
\label{sec:hyperbolic_metrics}
Let $\Sigma_{n}$ be a stable compact partial domain, with boundaries $\bd \Sigma_{n},\bd C_{n}$.
\begin{prop}
  There is a unique hyperbolic metric in the conformal class prescribed by $j_{n}$ so that $\bd\Sigma_{n}$ and $\bd C_{n}$ are geodesic.
\end{prop}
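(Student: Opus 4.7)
The plan is to reduce the claim to the classical uniformization theorem for closed Riemann surfaces of negative Euler characteristic, applied to the Schottky double $D(\Sigma_n)$.

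First, I would construct $D(\Sigma_n)$ explicitly by gluing two oppositely oriented copies $\Sigma_n^\pm$ along $\bd\Sigma_n \cup \bd C_n$ via the identity map. The right-angle corner structure guaranteed by Definition \ref{defn:cpd} — near a component of $\bd C$ the local model is $[0,r]\times[0,1]$ with $\bd C = \{r\}\times[0,1]$ meeting $\bd\Sigma = [0,r]\times\{0,1\}$ at angle $\pi/2$ — means that the four sheets meeting at a former corner glue up with total angle $2\pi$, producing a smooth interior point. Schwarz reflection across each smooth boundary arc promotes the conformal structure $j_n$ to a genuine holomorphic structure on $D(\Sigma_n)$, and the construction naturally equips $D(\Sigma_n)$ with an anti-holomorphic involution $\tau_n$ whose fixed locus is exactly $\bd\Sigma_n \cup \bd C_n$.

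Next, stability by definition means $\chi(D(\Sigma_n)) < 0$, so the classical uniformization theorem supplies a unique hyperbolic metric $g_n$ in the conformal class of $D(\Sigma_n)$. Since $\tau_n$ is anti-holomorphic it preserves the conformal class, so $\tau_n^{*}g_n$ is another hyperbolic metric in the same conformal class; uniqueness forces $\tau_n^{*}g_n = g_n$. Thus $\tau_n$ acts isometrically, its fixed locus $\bd\Sigma_n \cup \bd C_n$ is automatically totally geodesic, and $g_n$ descends to a hyperbolic metric on $\Sigma_n$ of the desired type.

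For uniqueness on $\Sigma_n$, suppose $g_n'$ is a second hyperbolic metric in the conformal class of $j_n$ with $\bd\Sigma_n$ and $\bd C_n$ totally geodesic. Doubling via reflection across the (totally geodesic) boundary produces a smooth hyperbolic metric on $D(\Sigma_n)$ in the same conformal class, which by the uniqueness clause of the uniformization theorem must coincide with $g_n$, forcing $g_n' = g_n|_{\Sigma_n}$. The only technical subtlety is smoothness of $D(\Sigma_n)$ at the former corners, and this is exactly what the right-angle condition in Definition \ref{defn:cpd} delivers.
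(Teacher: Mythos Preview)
Your overall strategy---reduce to the closed case via doubling and invoke uniformization---is exactly what the paper does, but there is a genuine inconsistency in your doubling construction. You describe gluing \emph{two} copies $\Sigma_n^\pm$ along $\bd\Sigma_n \cup \bd C_n$, yet then claim that \emph{four} sheets meet at each former corner. These statements are incompatible: a single Schottky double has only two sheets, and at a right-angle corner the interior angles contribute $\pi/2 + \pi/2 = \pi$, producing a cone point of angle $\pi$ rather than a smooth interior point. Concretely, in the local model $Q=\{x\ge 0,\ y\ge 0\}$, gluing $Q$ to its conjugate along both the positive real and positive imaginary axes yields a cone, not a plane; Schwarz reflection across the two arcs does not assemble into a single anti-holomorphic involution of a two-sheeted double.

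The paper resolves this by an \emph{iterated} double: first double along $\bd\Sigma_n$, which turns each corner into a smooth boundary point (total angle $\pi$ on the new boundary) and joins each $\bd C_n$-arc with its mirror into a circle; then double again across these circles to obtain the closed surface $D(D(\Sigma_n))$. This is what genuinely produces four sheets at each original corner with total angle $2\pi$, and it carries two commuting involutions whose fixed loci are $\bd\Sigma_n$ and (the double of) $\bd C_n$ respectively. Your ``four sheets'' remark suggests you already have this picture in mind, so the fix is to describe the construction correctly as a two-step double. Once that is done, your existence and uniqueness arguments go through; for uniqueness on the closed double you cite the uniformization theorem directly, whereas the paper spells out the maximum-principle argument for the conformal factor---either is acceptable.
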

\begin{proof}
  The double of $D(\Sigma_{n})$ along $\bd\Sigma_{n}$ then has boundary circles obtained by doubling $\bd C_{n}$. We can double across these circles a second time and obtain a compact Riemann surface $D(D(\Sigma_{n}))$. If $\Sigma_{n}$ is stable, the uniformization theorem implies that $\mathbb{H}$ is the universal cover of $D(D(\Sigma_{n}))$, and this induces a hyperbolic metric in the conformal class prescribed by the complex structure. This metric renders the doubling loci $\bd \Sigma_{n},\bd C_{n}$ geodesic, as they are preserved under the doubling involution, which must preserve the hyperbolic metric. This proves existence.

  For uniqueness, it suffices to observe that any such metric on $\Sigma_{n}$ can be doubled to obtain a hyperbolic metric on $D(D(\Sigma_{n}))$, by a standard gluing operation from hyperbolic geometry. In particular, the uniqueness reduces to the case when $\Sigma_{n}$ is closed.

  If $\mu_{1}=e^{f}\mu_{0}$ and $\mu_{0}$ is hyperbolic, then one can show that $\mu_{1}$ is hyperbolic if and only if:
  \begin{equation*}
    \Delta f=\frac{2(e^{f}-1)}{y^{2}},
  \end{equation*}
  where $x+iy$ is a coordinate chart where $\mu_{0}$ appears as $y^{-2}(\d x^{2}+\d y^{2})$. In particular, $f$ cannot have a local positive maximum or negative minimum, and hence $f=0$. This proves uniqueness. This above equation for $f$ follows from a tedious computation of the curvature:
  \begin{equation*}
    \kappa=\frac{g(R(X,Y)Y,X)}{g(X,X)g(Y,Y)}\text{ (for any orthogonal frame $X,Y$)},
  \end{equation*}
  and setting the result equal $-1$. 
\end{proof}
\subsubsection{Hyperbolic ends for stable domains}
\label{sec:canonical_ends_stable}
Let $\Sigma_{n}$ be a stable compact partial domain equipped with its canonical hyperbolic structure.
\begin{defn}
  A sequence of ends $\mathfrak{e}_{n}$ is \emph{strongly compatible} with the hyperbolic structure provided that:
  \begin{equation*}
    \mathfrak{i}(z_{n})\to 0\iff z_{n}\text{ eventually enters $\mathfrak{e}_{n}$ and converges to $\infty$},
  \end{equation*}
  where $\mathfrak{i}$ is the injectivity radius. This determines an (at most) unique compatibility class of ends.
\end{defn}
\begin{defn}
  A sequence of ends $\mathfrak{e}_{n}$ is \emph{weakly compatible} with the hyperbolic structure provided that the outermost end of $\mathfrak{e}_{n}$ has injectivity radius converging to zero, while the innermost end of $\mathfrak{e}_{n}$ has injectivity radius bounded below.
\end{defn}
The following theorem restates the well-studied phenomenon of degenerations of hyperbolic surfaces:
\begin{theorem}
  There exist ends $\mathfrak{e}_{n}$ weakly compatible with the hyperbolic structure sequence $\Sigma_{n}$ if and only if the hyperbolic length of $\bd C_{n}$ converges to zero. The parametrization of $\bd C_{n}$ induced by $\mathfrak{e}_{n}$ converges to the constant speed one. The resulting sequence $(\Sigma_{n},\mathfrak{e}_{n})$ converges strongly if and only if the injectivity radius is bounded below by a positive number on $\Sigma_{n}\setminus \mathfrak{e}_{n}$.
\end{theorem}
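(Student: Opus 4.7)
My plan rests on two classical tools from hyperbolic geometry: the collar lemma, which asserts that a simple closed hyperbolic geodesic of length $\ell$ has a standard collar neighborhood biholomorphic to $[-M_\ell,M_\ell]\times S$ with $M_\ell\to\infty$ as $\ell\to 0$, along which the geodesic sits at $\{0\}\times S$ parametrized at constant hyperbolic speed; and the Margulis thick-thin decomposition, which says that each component of the thin part of the doubled surface $D(D(\Sigma_n))$ constructed in \S\ref{sec:hyperbolic_metrics} is precisely such a collar around a short geodesic. For the geodesic boundary components $\bd C_n$ this gives half-collars conformally of the form $[0,M_{\ell_n}]\times S$.

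To prove the first equivalence, suppose first that weakly compatible ends $\mathfrak{e}_n$ exist. Then the injectivity radius on the outermost cross-section of $\mathfrak{e}_n$ tends to $0$; since this cross-section is freely homotopic to the geodesic boundary $\bd C_n$, the Margulis lemma (applied to the double $D(D(\Sigma_n))$, where a doubled boundary component becomes a closed geodesic) forces $\ell(\bd C_n)\to 0$. Conversely, if $\ell(\bd C_n)\to 0$, I would take $\mathfrak{e}_n$ to be the half-collars supplied by the collar lemma; the innermost cross-section then has injectivity radius bounded below by the Margulis constant, so $\mathfrak{e}_n$ is weakly compatible. Inside the collar the boundary geodesic is parametrized at constant hyperbolic speed, and by Proposition \ref{prop:com-mark} any two sequences of compatible ends differ by a rotation, so after that rotation the induced parametrizations of $\bd C_n$ converge to the constant-speed one.

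For the last assertion, if strong convergence holds then $\psi_n(\Sigma_n\setminus\mathfrak{e}_n)$ is bounded (sequences failing to enter the ends cannot tend to infinity), hence eventually lies in a fixed compact subset of the finite-area hyperbolic limit $\Sigma$; the injectivity radius of a complete hyperbolic surface is bounded below on compact sets, and this bound transfers through $\psi_n$ to $\Sigma_n\setminus\mathfrak{e}_n$ for $n$ large. Conversely, if the injectivity radius is bounded below on $\Sigma_n\setminus\mathfrak{e}_n$, a Mumford-type compactness argument on the thick part of $D(D(\Sigma_n))$ extracts a subsequential hyperbolic limit $\Sigma$; one then has to glue the local thick-part charts to the explicit collar parametrizations from $\mathfrak{e}_n$ to produce global embeddings $\psi_n$ realizing strong convergence.

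The main technical obstacle is the last gluing step. Mumford compactness only supplies local biholomorphic identifications over the thick part, and on its own does not produce a coherent sequence of embeddings $\psi_n$ which properly account for the ends. To pass to globally defined $\psi_n$ with the correct properness behavior one must combine thick-part compactness with the explicit conformal model of the thin collars, use the rotation freedom in Proposition \ref{prop:com-mark} to align markings, and appeal to the uniqueness of the hyperbolic metric in the conformal class (\S\ref{sec:hyperbolic_metrics}) to ensure the local identifications assemble into a bona fide biholomorphism with the limit.
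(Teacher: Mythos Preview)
Your outline matches the paper's in broad strokes---both defer the hard construction to the classical collar/thick-thin/Mumford machinery, and the paper is explicit about this deferral. Two sub-arguments differ, however. For the parametrization of $\bd C_n$, the paper does not use the hyperbolic collar's arc-length parametrization: it doubles $\mathfrak{e}_n$ across $\bd C_n$ to a proper holomorphic map into the annulus around the doubled geodesic, observes the limit is a proper biholomorphism $\R\times\R/\Z\to\R\times\R/\Z$ sending $\{0\}\times\R/\Z$ to itself with winding number one, hence a rotation. Your invocation of Proposition~\ref{prop:com-mark} is misplaced (that result concerns a fixed marking on a single surface); you would need the result of \S\ref{sec:and_markings} together with the claim that any two weakly compatible end-sequences are compatible, which you have not argued. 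For the implication ``strong convergence $\Rightarrow$ injectivity radius bounded below on $\Sigma_n\setminus\mathfrak{e}_n$,'' your metric-transfer argument presupposes that $\psi_{n,*}$ carries the hyperbolic metrics on $\Sigma_n$ to something converging to the complete cusped metric on $\Sigma$, which is true but not immediate from conformal convergence alone. The paper instead argues the contrapositive: a persistent thin neck or collapsing boundary in $\Sigma_n\setminus\mathfrak{e}_n$, restricted through $\psi_n$, would limit to a nonconstant holomorphic disk or strip in $\Sigma$ (via mapping degree in the collapsing-boundary case, via the Jordan curve theorem and non-contractibility of the central geodesic in the neck case), contradicting semi-stability of the limit.
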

\begin{proof}
  The argument is rather technical, implicitly relying on results involving pairs of pants decompositions and hyperbolic hexagons. We defer the reader to \cite{abbas-book} or \cite[\S14.4.1]{donaldson}. 
  
  For the claim about the parametrization of $\bd C_{n}$, we observe that $\mathfrak{e}_{n}$ can be doubled and will map larger and larger regions centered around $\bd C_{n}$ properly onto the hyperbolic annulus centered on $\bd C_{n}$ determined by the marking. This map will converge to a proper holomorphic map $\R\times \R/\Z\to \R\times \R/\Z$ which maps $0\times \R/\Z$ onto $0\times \R/\Z$ with winding number $1$. It follows that the map is a rotation.
  
  The results in \cite{abbas-book} explain how to construct the weakly compatible ends, by taking the \emph{isometric} ends defined by equations $\mathfrak{i}<c$ for a sufficiently small universal constant $c$ (whose precise value is irrelevant in our argument).

  In general, the set $\set{\mathfrak{i}<c}$ contains necks $\mathfrak{n}$, ends $\mathfrak{e}$, and \emph{collapsing boundaries} $\mathfrak{b}$. The distinction between $\mathfrak{b}$ and $\mathfrak{e}$ is that the outermost boundary of $\mathfrak{b}_{n}$ lies in $\bd \Sigma_{n}$ rather than $\bd C_{n}$. See \ref{sec:digression_boundary_components}.

  The ends $\mathfrak{e}$ formed by this process are always weakly compatible. It is not hard to see that there is a unique compatibility class of ends which is weakly compatible with the hyperbolic metric.

  If $\set{\mathfrak{i}_{n}<c}$ always contains necks or collapsing boundaries, for all $c$, then $(\Sigma_{n},\mathfrak{e}_{n})$ cannot possible converge. We argue by contradiction. First, suppose that we could find a collapsing boundary~$\mathfrak{b}_{n}$ for $c_{n}\to 0$. Considering $\mathfrak{b}_{n}\simeq [-r_{n},0]\times \R/\Z$, the modulus $r_{n}$ must tend to infinity if $c_{n}\to 0$. Then the sequence of maps $\psi_{n}|_{\mathfrak{b}_{n}}$ can be thought of as bounded sequence of maps into $(\Sigma,\bd\Sigma)$ converging to a punctured disk $(-\infty,0]\times \R/\Z$, which has a removable singularity at $-\infty$. Since the collapsing boundary component must be a circle component, $\psi_{n}$ maps this component diffeomorphically onto a boundary component of $\bd\Sigma$. In particular, the mapping degree of $\psi_{n}|_{\bd \mathfrak{b}_{n}}$ is always $1$, and hence cannot converge to a constant map. However, the existence of a non-constant holomorphic disk contradicts semi-stability. 

  The case of necks is similar, although the argument ruling out the existence of a constant limit is different. Let $\gamma_{n}$ be the central geodesic arc or loop at the center of a neck $\mathfrak{n}$. If the limiting neck $\R\times S\to \Sigma$ were constant, then one can use the Jordan curve theorem to conclude that $\psi_{n}(\gamma_{n})$ must bound a disk (or half-disk) in the limiting surface. Then $\gamma_{n}$ must divide $\Sigma_{n}$ into two pieces, and one of these pieces must be embedded into the disk, say $\Sigma^{+}_{n}$. Since the inside of the disk remains bounded in the limiting surface, $\Sigma^{+}_{n}$ must contain no $\bd C_{n}$ components. It follows that $(\Sigma^{+}_{n},\bd\Sigma^{+}_{n})$ is mapped diffeomorphically onto the disk or half-disk bounded by $\psi_{n}(\gamma_{n})$. However, this implies that the geodesic at the center of the neck $\gamma_{n}$ is contractible in $\Sigma_{n}$, contradicting the (well-known) hyperbolic geometry results that the geodesics arcs and loops at the centers of necks are non-contractible.
\end{proof}
Thus we obtain a definition of convergence for sequences $\Sigma_{n}$ of stable domains:
\begin{defn}
  A sequence of stable compact partial domains $\Sigma_{n}$ \emph{converges in the hyperbolic sense} to $\Sigma$ if there exists $\psi_{n},\mathfrak{e}_{n}$ so that $(\Sigma,\psi_{n},\mathfrak{e}_{n})$ strongly converges and $\mathfrak{e}_{n}$ is weakly compatible with the canonical hyperbolic structure. 
\end{defn}
The limit $\Sigma$ does not depend on the choice of $\psi_{n},\mathfrak{e}_{n}$, up to biholomorphism; indeed, for any choice of $\psi_{n},\psi_{n}'$ the map $\psi_{n}'\psi_{n}^{-1}$ converges to a biholomorphism $\Sigma\to \Sigma'$. 

\subsection{Cutting along necks and compactness}
\label{sec:cutting_and_pasting}
Let $(\Sigma_{n},\mathfrak{e}_{n})$ be a sequence of domains, and suppose that $\mathfrak{n}_{n}\subset \Sigma_{n}$ is an embedded strip $[a_{n},b_{n}]\times S$ (with $[a_{n},b_{n}]\times \bd S\subset \bd \Sigma_{n}$), and so $\mathfrak{n}_{n}$ is disjoint from $\mathfrak{e}_{n}$.

Suppose that $a_{n}<0<b_{n}$, and consider the operation of cutting along $\mathfrak{n}_{n}$, i.e., removing the set $\set{0}\times \bd S$, and then compactifying the result open surface to obtain two new boundary components $\bd C_{n}^{-}$ and $\bd C_{n}^{+}$, with ends $[a_{n},0]$ and $[0,b_{n}]$ adjoined to the collection $\mathfrak{e}_{n}$.

This process shall be referred to as \emph{cutting along a neck}.

\subsubsection{Compactness for hyperbolic domains}
\label{sec:compactness_for_domains}

The following theorem is well-known from hyperbolic geometry.
\begin{theorem}
  Let $\Sigma_{n}$ be a sequence of compact hyperbolic partial domains so that the hyperbolic length of $\bd C_{n}$ converges to zero and the Euler characteristic of $\Sigma_{n}$ is bounded below. Then, after passing to a subsequence, there exist disjoint necks $\mathfrak{n}_{n}^{1},\dots,\mathfrak{n}_{n}^{k}$, so that if we symmetrically cut $\Sigma_{n}$ along those necks, the resulting surface with ends $\mathfrak{e}_{n}$ converges strongly to a limit $\Sigma'$. Here $\mathfrak{e}_{n}$ consists of the original (isometric) ends around the geodesics $\bd C_{n}$ and the ends arising from the disjoint necks.

  The punctures of $\Sigma'$ are divided into \emph{nodal pairs}, which arise from the necks $\mathfrak{n}$, and \emph{original punctures}, arising from the original ends around $\bd C_{n}$. As suggested by the name, there is a duality involution on the set of nodal pairs.
\end{theorem}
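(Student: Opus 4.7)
The plan is to apply a Margulis-type thick-thin decomposition to $\Sigma_n$, identify a uniformly bounded collection of ``thin'' components beyond the existing ends around $\bd C_n$, cut symmetrically along the central geodesics of those components to produce the promised necks, and then invoke the previous theorem on the cut sequence.

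First I would invoke the classical thick-thin decomposition for hyperbolic surfaces with geodesic boundary: there is a universal constant $c>0$ such that every connected component of the thin part $\set{z : \mathfrak{i}_n(z) < c}$ is isometric to one of a few standard model pieces---a collar of a short boundary geodesic (giving a cusp-like end around a component of $\bd C_n$), a tubular neighborhood of a short embedded closed interior geodesic, or a tubular neighborhood of a short embedded geodesic arc with both endpoints on $\bd\Sigma_n$. Each such thin component has hyperbolic area bounded below by a universal constant. By Gauss--Bonnet for surfaces with geodesic boundary, $\mathrm{Area}(\Sigma_n) = -2\pi \chi(\Sigma_n)$, so the assumed lower bound on $\chi(\Sigma_n)$ yields a uniform upper bound on the total area and hence on the number of thin components. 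Pass to a subsequence so that the number and topological arrangement of thin components is independent of $n$.

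The thin components that touch $\bd C_n$ are exactly the existing ends $\mathfrak{e}_n$, which exist by hypothesis since $\bd C_n$ is short. Label the remaining thin components $\mathfrak{n}_n^1, \dots, \mathfrak{n}_n^k$; each is isometric to a standard hyperbolic cylinder or strip $[a_n^i, b_n^i] \times S$ whose modulus $b_n^i - a_n^i$ diverges and whose central geodesic has length tending to zero. Symmetrically cut $\Sigma_n$ along the central geodesic of each $\mathfrak{n}_n^i$ in the sense of \S\ref{sec:cutting_and_pasting}, adding the two resulting half-necks to the ends collection to form $\mathfrak{e}_n'$; call the cut surface $\Sigma_n'$. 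By construction, every point of $\Sigma_n' \setminus \mathfrak{e}_n'$ lies in the thick part $\set{\mathfrak{i}_n \geq c}$ of the original surface, and cutting along geodesics cannot decrease injectivity radius (it only removes geodesic loops), so $\mathfrak{i}_{\Sigma_n'} \geq c$ on the complement of the new ends. The lengths of $\bd C_n' = \bd C_n \sqcup \{\text{central geodesics of cut necks}\}$ still tend to zero.

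Therefore the previous theorem applies to $(\Sigma_n', \mathfrak{e}_n')$: the injectivity-radius criterion gives strong convergence to a limit $\Sigma'$ (after passing to a further subsequence if needed). The duality involution on nodal pairs is recorded by the cutting procedure itself---each neck $\mathfrak{n}_n^i$ contributes exactly two new boundary components to $\Sigma_n'$, and these form a nodal pair, while the ends inherited from $\bd C_n$ are the original punctures. The main obstacle is setting up the thick-thin decomposition and the relevant collar lemma rigorously for hyperbolic surfaces with geodesic boundary, and verifying that the cut surface inherits a uniform injectivity-radius lower bound on the complement of its ends; these are classical facts in hyperbolic geometry (see the references cited for the previous theorem) but require careful bookkeeping when boundary is present.
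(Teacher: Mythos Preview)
Your approach via the thick--thin decomposition is the standard one, and the paper itself does not give a detailed proof here: it simply declares the theorem ``well-known from hyperbolic geometry'' and points to references. So on the level of strategy you are doing exactly what is expected.

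There is, however, one genuine gap in your argument. Your enumeration of thin-part components lists: collars of short $\bd C_n$ geodesics, tubes around short interior closed geodesics, and tubes around short geodesic arcs with endpoints on $\bd\Sigma_n$. You have omitted the case of a collar around a short closed geodesic lying in $\bd\Sigma_n$ itself---what the paper calls a \emph{collapsing boundary} $\mathfrak{b}_n$ in \S\ref{sec:digression_boundary_components}. Such a component has its short geodesic \emph{on} the boundary, so there is no interior central geodesic to cut along; your cutting procedure leaves the collar intact, and then your claim that ``every point of $\Sigma_n'\setminus\mathfrak{e}_n'$ lies in the thick part'' is false. The paper flags exactly this issue (see Figure~\ref{fig:making_cuts_collapsing_boundary} and the surrounding discussion): one must make an additional cut through the collar to isolate the collapsing boundary, producing an unstable once-punctured disk in the limit. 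This is not hard to fix, but it does require an explicit step.

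A smaller point: not every thin component that is neither an end nor a collapsing boundary has its central geodesic shrinking to zero. After passing to a subsequence you should separate those whose core length tends to zero (cut these) from those whose core length stays bounded below (these have uniformly bounded modulus and cause no trouble). Your sentence ``each \ldots\ whose modulus diverges and whose central geodesic has length tending to zero'' asserts this without the case distinction.
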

\begin{figure}[H]
  \centering
  \begin{tikzpicture}[scale=.8]
    \begin{scope}[scale=0.7]
      \node at (.8,.4) {(a)};
      \node at (5.4,.9) {(b)};
      \node at (9.0,3)[above] {(c)};
      \node at (10.5,1) {$\Sigma_{n}$};
      \draw (-0.3,0) to[out=90,in=-90] (-.5,2) to[out=90,in=180] (1,3) to[out=0,in=180] (5,.5) to [out=0,in=-90] (8,2) arc (180:0:1) to[out=270,in=90] (8.5,0);
      \draw (0.3,0) to[out=90,in=-90] (0.5,2) to[out=90,in=90] (2.9,0);
      \draw (0,2) circle (0.5 and 0.1);
      \draw (9,2) circle (0.7 and 0.7);
      \draw (4.4,0) circle (0.1 and .6);
      \draw (6.4,0) circle (0.1 and .6);
      \draw (8.15,2) circle (0.15 and 0.05);
      \draw (10-.15,2) circle (0.15 and 0.05);
      \draw[dashed] (-1,0)--(10.5,0);
      \begin{scope}[yscale=-1]
        \node at (10.5,1) {$\sigma(\Sigma_{n})$};
        \draw (-0.3,0) to[out=90,in=-90] (-.5,2) to[out=90,in=180] (1,3) to[out=0,in=180] (5,.5) to [out=0,in=-90] (8,2) arc (180:0:1) to[out=270,in=90] (8.5,0);
        \draw (0.3,0) to[out=90,in=-90] (0.5,2) to[out=90,in=90] (2.9,0);
        \draw (0,2) circle (0.5 and 0.1);
        \draw (9,2) circle (0.7 and 0.7);
        \draw (8.15,2) circle (0.15 and 0.05);
        \draw (10-.15,2) circle (0.15 and 0.05);
        \node at (9.0,3)[below] {$\sigma(\mathrm{c})$};
      \end{scope}
    \end{scope}   
    \begin{scope}[shift={(10.5,0)},every node/.style={draw,circle,shading=ball,inner sep=1.5pt}]

      \draw [dashed] (1,0)--(2,0);
      \draw (1,0) arc (0:180:1) (0,0) circle (1 and 0.2) (3,0) circle (1 and 0.2) (4,0) arc (0:180:1);
      \begin{scope}[shift={(150:2.5)},rotate=-120]
        \draw (0.5,0) arc (0:180:.5);
        \draw (0,0) circle (0.5 and 0.1);
        \draw[dashed] (0,.5)--(0,1.5);
        \node at (0,.5) {};
        \node at (0,1.5) {};
      \end{scope}

      \begin{scope}[shift={(3,0)}]
        \draw[dashed] (30:1) to[out=30,in={60},looseness=3] (60:1);
        \node at (-1,0) {};
        \node at (30:1) {};
        \node at (60:1) {};
      \end{scope}
      \node at (1,0) {};
    \end{scope}
  \end{tikzpicture}
  \caption{$\Z/2$-equivariant long cylinders in the doubled domain $D(\Sigma_n)=\Sigma_{n}\cup \sigma(\Sigma_{n})$ (left) converging to nodes, shown as abstract pairings between marked points on the limit domain (right). As explained below, collapsing boundary components, as in (a), need to be isolated via an additional cut.}
  \label{fig:long-cylinders}
\end{figure}
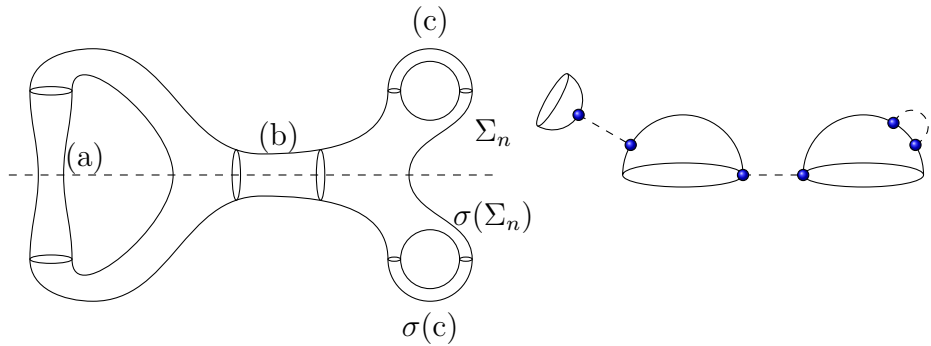
\begin{remark}
  As stated, the limit surface $\Sigma'$ is \emph{not unique}, as one could add in arbitrarily many copies of $\R\times \R/\Z$. 
\end{remark}

\subsubsection{Digression on collapsing boundary components}
\label{sec:digression_boundary_components}
If a sequence of hyperbolic surfaces $\Sigma_{n}$ with $\abs{\bd C_{n}}\to 0$ has a collapsing $\bd\Sigma_{n}$ boundary component $\mathfrak{b}_{n}$, then it will fail to converge. We must therefore make cuts of the form shown in Figure \ref{fig:making_cuts_collapsing_boundary}.
\begin{figure}[H]
  \centering
  \begin{tikzpicture}[scale=.7]
    \draw (0,0) circle (0.3 and 1) (10,0) circle (0.3 and 1) (5,0) circle (0.3 and 1) (0,1)--+(10,0) (0,-1)--+(10,0);
    \draw[draw=none] (0.3,0)--+(5,0)node[right]{$\bd C_{n}$}--+(10,0)node[right]{$\bd \Sigma_{n}$};
    \draw[dashed] (7.5,0) circle (0.3 and 1);
    \begin{scope}[shift={(14,0)}]
      \draw (0,0) circle (0.3 and 1) (0,1) arc (90:-90:1.2 and 1);
      \draw (2.4,0) circle (0.3 and 1) (2.4,1) arc (90:270:1.2 and 1);
      \node at (1.2,0)[draw,circle, inner sep=1pt,fill]{};
      \node at (2.7,0)[right] {$\bd \Sigma$};
    \end{scope}
  \end{tikzpicture}
  \caption{Making cuts to isolate a collapsing boundary. This introduces new $\bd C_{n}$ components. On the right we show a representation of the nodal limit.}
  \label{fig:making_cuts_collapsing_boundary}
\end{figure}
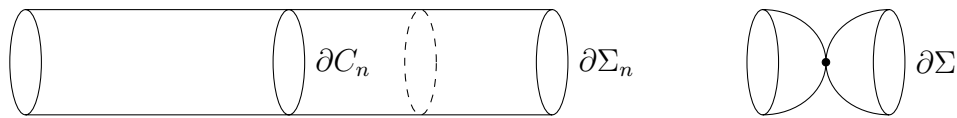
The requirement that $\psi_{n}^{-1}(\bd \Sigma)=\bd \Sigma_{n}$ forces the limit to develop these singly-punctured disks at each collapsing boundary component. This is in contrast to the usual compactification of the space of hyperbolic surfaces, which forbids any unstable domains from appearing in the limit.

\subsubsection{Marked points}
\label{sec:marked_points}
Marked points are the data of a finite subset $\Theta_{n}\subset \Sigma_{n}$. We do not consider these as punctures. A sequence $(\Sigma_{n},\Theta_{n},\mathfrak{e}_{n})$ converges \emph{with marked points} to $(\Sigma,\Theta)$ if there exist $\psi_{n}$ so that $\psi_{n}(\Theta_{n})=\Theta$, the restriction $\psi_{n}:\Theta_{n}\to \Theta$ is a bijection, and $(\Sigma_{n},\psi_{n},\mathfrak{e}_{n})$ strongly converges to $\Sigma$. The main application is considering holomorphic maps with allowed singularities at marked points. In the following statement, we use metrics $g_{n}$ so that $\psi_{n,*}g_{n}$ converges to a complete metric on $\Sigma$.

\begin{lemma}\label{lemma:marked_pt}
  Let $\Sigma_{n},\mathfrak{e}_{n}$ be a sequence of compact partial domains which converges strongly to $\Sigma$, and suppose that $\Theta_{n}$ is a sequence of marked points which satisfies the following assumptions:
  \begin{enumerate}
  \item $\Theta_{n}$ has bounded cardinality,
  \item the marked points remain bounded (as measured by $\mathfrak{e}_{n}$),
  \item there is a minimum distance between interior marked points and $\bd\Sigma_{n}$,
  \item there is a minimum distance between distinct marked points,
  \end{enumerate}
  Then we can pass to a subsequence and construct a set $\Theta$ so that $\Sigma_{n},\Theta_{n},\mathfrak{e}_{n}$ converges to $\Sigma,\Theta$ with marked points.
\end{lemma}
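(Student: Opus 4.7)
My plan has three phases: extract limiting positions for the marked points, verify that the limit set has the required structure, and then modify $\psi_n$ so that it sends $\Theta_n$ onto the limit set exactly.

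First, I would pass to a subsequence so that $\abs{\Theta_n}=N$ is constant and enumerate the marked points as $\Theta_n=\set{x_n^1,\dots,x_n^N}$. Hypothesis (ii) together with the definition of strong convergence forces $\psi_n(x_n^i)$ to remain in a compact subset of $\Sigma$: any sequence in $\Sigma_n$ whose image escapes would have to eventually enter $\mathfrak{e}_n$ and diverge there, contradicting the boundedness hypothesis. By a diagonal extraction I obtain a further subsequence with $\psi_n(x_n^i)\to x^i\in\Sigma$ for each $i$. Using metrics $g_n$ with $\psi_{n,*}g_n\to g$ on compact subsets (with $g$ complete), hypotheses (iii) and (iv) give uniform positive lower bounds for the $g$-distances between distinct $\psi_n(x_n^i)$ and between interior $\psi_n(x_n^i)$ and $\bd\Sigma$. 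Passing to the limit shows the $x^i$ are pairwise distinct and that interior marked points limit into $\Sigma\setminus\bd\Sigma$; the relation $\psi_n^{-1}(\bd\Sigma)=\bd\Sigma_n$ forces boundary marked points to limit onto $\bd\Sigma$. Set $\Theta:=\set{x^1,\dots,x^N}$.

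The second, and essentially only nontrivial step, is to upgrade $\psi_n$ to a new embedding $\psi_n'$ satisfying $\psi_n'(\Theta_n)=\Theta$ exactly, while preserving strong convergence. I would pick pairwise disjoint coordinate neighborhoods $U_i\ni x^i$ (half-disk coordinates when $x^i\in\bd\Sigma$, full disks otherwise). For $n$ large enough, $\psi_n(x_n^i)\in U_i$ and $\psi_n(x_n^i)\to x^i$. Using bump functions supported in each $U_i$, build a diffeomorphism $\phi_n:\Sigma\to\Sigma$ supported in $\bigcup_i U_i$, preserving $\bd\Sigma$, sending $\psi_n(x_n^i)$ to $x^i$, and with $\phi_n\to \id_\Sigma$ in $C^\infty_{\mathrm{loc}}$ (the $C^\infty_{\mathrm{loc}}$-convergence is automatic from $\psi_n(x_n^i)\to x^i$). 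Then $\psi_n':=\phi_n\circ\psi_n$ will do the job: the exhaustion property is preserved because each $\phi_n$ is a diffeomorphism equal to the identity off a fixed compact set; the almost-holomorphicity $\psi'_{n,*}j_n\to j_\infty$ on compact subsets follows from the analogous statement for $\psi_n$ composed with $\phi_n\to\id$; and the characterization of divergence through $\mathfrak{e}_n$ is unchanged because $\phi_n$ is supported away from the ends.

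The only mildly technical obstacle I anticipate is ensuring that $\phi_n$ respects $\bd\Sigma$ when some $x^i$ lies on the boundary. This is routine once one works in boundary-adapted half-disk coordinates and chooses bump functions that are constant in the normal direction near $\bd\Sigma$, so that the resulting isotopy preserves the boundary. Everything else is sequential compactness and a standard isotopy-extension construction.
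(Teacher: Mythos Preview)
Your proposal is correct and follows essentially the same approach as the paper's proof: extract convergent subsequences of the images $\psi_n(x_n^i)$, verify the limit points are distinct and respect the boundary/interior dichotomy using hypotheses (iii) and (iv), and then perturb $\psi_n$ by a diffeomorphism converging to the identity so that marked points land exactly on $\Theta$. Your treatment is in fact more explicit than the paper's, which simply says ``by perturbing $\psi_n$ by a $C^1$ small map which converges to zero, we may suppose that $\psi_n(\Theta_n)=\Theta$'' and notes that hypothesis (iii) is what makes this compatible with $\psi_n^{-1}(\bd\Sigma)=\bd\Sigma_n$; you spell out the bump-function construction and the preservation of strong convergence.
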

\begin{proof}
  By passing to a subsequence, one ensures that the cardinalities of $\Theta_{n}\cap \bd \Sigma_{n}$ and $\Theta_{n}$ are constant, and moreover that their images $\psi_{n}(\Theta_{n})\cap \bd \Sigma$ and $\psi_{n}(\Theta_{n})$ converge to sets $\Theta\cap \bd \Sigma$ and $\Theta$.

  The ``nearest element'' map $\Theta_{n}\to \Theta$ is injective, and hence is a bijection. By perturbing $\psi_{n}$ by a $C^{1}$ small map which converges to zero, we may suppose that $\psi_{n}(\Theta_{n})=\Theta$, without affecting the strong convergence assumption. This uses the fact that interior marked points remain far from the boundary (otherwise perturbing $\psi_{n}$ could ruin the condition that $\psi_{n}^{-1}(\bd\Sigma)=\bd\Sigma_{n}$). This completes the proof.
\end{proof}

\subsubsection{Compactness for domains with marked points}
\label{sec:application_marked_points}

In this section we prove the following compactness theorem for domains with marked points $\Theta_{n}$:
\begin{lemma}\label{lemma:marked_pt_1}
  Let $\Sigma_{n},\mathfrak{e}_{n}$ be a sequence of compact partial domains which converges strongly to $\Sigma$, and suppose that $\Theta_{n}\subset \Sigma_{n}$ is a collection of marked points with bounded cardinality, and which remains bounded as measured by $\mathfrak{e}_{n}$.

  After passing to a subsequence, there exist a disjoint collection of necks $$\mathfrak{n}^{i}_{n}\simeq [-\rho_{n},\rho_{n}]\times S,$$ $i=1,\dots,k$, with $\rho_{n}\to\infty$, so that the new sequence $\Sigma_{n}'$ obtained by cutting along the centers of the necks satisfies:
  \begin{enumerate}
  \item the marked points $\Theta_{n}$ are disjoint from each neck $\mathfrak{n}$, and hence $\Theta_{n}$ induces a collection of marked points in $\Sigma_{n}'$,
  \item the induced sequence $(\Sigma_{n}',\mathfrak{e}_{n}',\Theta_{n})$ converges with marked points to $(\Sigma',\Theta)$, for some $\Sigma'$, and
  \item the punctures of $\Sigma'$ consist of the original punctures of $\Sigma$ together with a collection of nodal pairs.
  \end{enumerate}
\end{lemma}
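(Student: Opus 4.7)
The plan is an iterated bubbling argument that separates colliding marked points, reducing the statement to Lemma \ref{lemma:marked_pt} on each resulting piece. First, pass to a subsequence so that $\abs{\Theta_n} = N$ is constant. Since $\Theta_n$ remains bounded as measured by $\mathfrak{e}_n$, the images $\psi_n(\Theta_n)$ lie in a fixed compact subset of $\Sigma$, and a further subsequence ensures each marked point converges to some $p_i \in \Sigma$. Partition the marked points into \emph{clusters} by common limit, and call a cluster \emph{good} if it is a singleton whose limit is an interior point at positive distance from $\bd\Sigma$ (for interior marked points) or lies on $\bd\Sigma$ (for boundary marked points); otherwise it is \emph{bad}. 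If every cluster is good, then Lemma \ref{lemma:marked_pt} applies directly with no cuts.

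Otherwise, for each bad cluster with limit $p$, pick a small conformal chart around $p$---a disk $D$ if $p$ is interior, a half-disk $\Omega$ if $p \in \bd\Sigma$---with the charts around distinct limits pairwise disjoint. Pulling back via $\psi_n$ gives charts $U_n \subset \Sigma_n$ containing the cluster's marked points for $n$ large. In these coordinates, let $\lambda_n > 0$ be the minimum over pairwise distances among the cluster's marked points and, in the half-disk case, the distances from interior marked points of the cluster to $\bd\Sigma$; badness forces $\lambda_n \to 0$. Rescaling by $\lambda_n^{-1}$, the rescaled marked points are bounded and not all coincident, so $U_n$ decomposes as a bubble $\tilde U_n$ carrying the (possibly still sub-clustered) marked points, attached to the rest of $\Sigma_n$ by a long neck $\mathfrak{n}_n \simeq [-\rho_n, \rho_n] \times S$ with $\rho_n \to \infty$, where $S = \R/\Z$ in the disk case and $S = [0,1]$ in the half-disk case. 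Cut along $\{0\} \times S$, which by construction avoids the marked points and introduces new ends of growing modulus on both sides.

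After cutting at every bad cluster, the outer piece converges strongly to $\Sigma$ with additional punctures at the bad limits; each bubble piece contains either well-separated marked points (good clusters) or sub-clusters of strictly smaller size than their parent. Recursively apply the same bubbling construction to each bubble. The recursion terminates after at most $N$ iterations, since each step strictly reduces the maximum cluster size. At the end every piece has only good clusters, so Lemma \ref{lemma:marked_pt} applies piece by piece; assembling these along the nodal pairings produced by the cuts yields the required $(\Sigma', \Theta)$, whose punctures consist of the original punctures of $\Sigma$ (inherited through $\mathfrak{e}_n$) together with the nodal pairs from the cut necks.

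The main obstacle will be verifying that the chosen $\lambda_n$ produces genuine progress: after rescaling, either the cluster resolves into two or more distinct sub-clusters, or at least its maximum sub-cluster size strictly decreases. The minimum-pairwise-distance choice achieves this in the interior case, but the half-disk case requires balancing interior collisions against boundary approach. A secondary point is the conformal identification of the annular (respectively half-annular) region between bubble and outer piece as a neck of growing modulus, which is standard from the conformal geometry of disks and half-disks via logarithmic coordinates.
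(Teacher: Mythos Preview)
Your overall architecture---iterated bubbling to separate bad clusters, reducing to Lemma \ref{lemma:marked_pt}---matches the paper's. The concrete gap is in the sentence ``Rescaling by $\lambda_n^{-1}$, the rescaled marked points are bounded.'' With $\lambda_n$ taken as the \emph{minimum} of the pairwise distances (and, in the half-disk case, distances to $\bd\Sigma$), this is false in general: a cluster can live on several scales simultaneously. For instance, three interior points at positions $0,\lambda_n,\sqrt{\lambda_n}$ all converge to $p$, but after dividing by $\lambda_n$ the third point escapes to $\infty$. Such points land in the annular region you want to call the neck $\mathfrak{n}_n$, so the cut at $\{0\}\times S$ does \emph{not} avoid them, and the bubble piece does not satisfy the boundedness hypothesis needed to recurse. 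Your closing paragraph flags a worry about ``progress,'' but the real failure is earlier: a single-scale cut cannot separate a multi-scale cluster into bubble and complement with a marked-point-free neck in between.

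The paper's fix is exactly this missing ingredient. Rather than rescaling at one distinguished scale, it parametrizes the annulus around $p$ as a half-strip $[0,r_n]\times S$ and uses a ``$k$-special radius'' argument: pick the maximal $k$ for which some fixed $[0,r]$ contains $k$ marked points infinitely often, then find growing gaps $[r,r+2\rho_n]$ and $[r_n-r-2\rho_n,r_n-r]$ containing no marked points, and cut there. This produces \emph{three} pieces---an outer piece converging to $\Sigma\setminus\{p\}$, an inner bubble, and a long middle neck that may still carry marked points at intermediate scales---and then iterates the same $k$-special argument on the middle neck. The counter controlling termination is not cluster size but the number of marked points trapped on the inner side of each cut. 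If you want to repair your argument in its own idiom, replace $\lambda_n$ by the cluster \emph{diameter} (plus boundary distance), which does make the rescaled points bounded; you will then need a separate mechanism to guarantee the neck between scales is marked-point-free, which is precisely what the $k$-special device supplies.
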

\begin{proof}
  As in Lemma \ref{lemma:marked_pt}, it is sufficient to ensure that the marked points remain a minimum distance apart, and interior marked points remain a minimum distance from the boundary. To perform this analysis, introduce metrics $g_{n}$ which converge to a complete metric $g$ on $\Sigma$, via the map $\psi_{n}$.
  
  First consider interior marked points which are converging to the boundary. Suppose that $\psi_{n}(z_{n})$, where $z_{n}$ are interior marked points, converges to $s_{\infty}$ on the boundary of the limit $\Sigma$.

  As in \S\ref{sec:coordinate_disks}, pick coordinate disks $h_{n}:\Omega(1)\to (\Sigma_{n},\bd\Sigma_{n})$ which converge to a coordinate disk $h_{\infty}$ centered on $s_{\infty}$, so that $z_{n}=h_{n}(it_{n})$, $t_{n}\to 0$.

  Introduce the neck $\mathfrak{m}_{n}=h_{n}(\Omega(1)\setminus \Omega(t_{n}))$; and note that $\psi_{n}$ converges uniformly on $\mathfrak{m}_{n}$ to a punctured disk (namely, the punctured image of $h_{\infty}$). Moreover, $(h_{n}(\Omega(1)),z_{n})$, with the map $\psi_{n}^{1}=\frac{1}{t_{n}}h_{n}^{-1}$ converges with marked points to $(\mathbb{H},i)$. 

  We explain how to make the cuts. Consider $\mathfrak{m}_{n}$ as identified with $[0,r_{n}]\times [0,1]$ where $r_{n}\to\infty$, via the map $(s,t)\mapsto t_{n}e^{\pi (s+it)}$.

  Call a radius $k$-\emph{special} if the subneck $[0,r]\times [0,1]$ contains $k$ marked points infinitely often. Henceforth let us abbreviate the notation and denote rectangle $[a,b]\times [0,1]$ by their base $[a,b]$.

  Let $k$ be the maximal integer for which there exists a $k$-special subneck, and let $r$ be such a radius. By passing to a subsequence, we may suppose that $[0,r]$ \emph{always} contains $k$ marked points.

  Clearly, there exist $\rho_{n}\to\infty$ so that $[0,r+2\rho_{n}]$ always contains $k$ marked points, and \emph{none of which} are in $[r,r+2\rho_{n}]$.

  Similarly, we can find $r',\rho_{n}'$ so that $[r_{n}-r'-2\rho_{n}',r_{n}]$ never has any marked points in $[r_{n}-r'-2\rho_{n}',r_{n}-r']$. We may suppose that $2(\rho_{n}+\rho_{n}')<r_{n}$. By letting $r=\mathrm{max}(r,r')$ and shrinking $\rho_{n},\rho_{n}'$ if necessary, we may suppose that $r=r'$ and $\rho_{n}'=\rho_{n}$.
  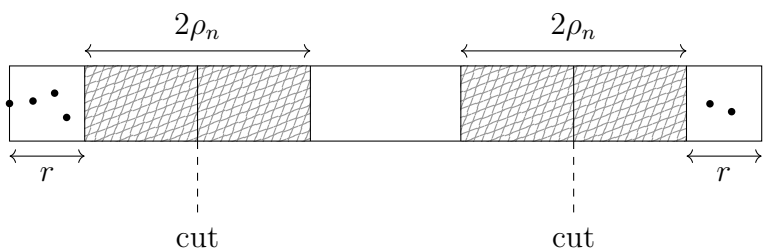
\begin{figure}[H]
    \centering
    \begin{tikzpicture}
      \fill[pattern={Lines[angle=20]}, pattern color=black!40!white] (1,0)rectangle(4,1);
      \fill[pattern={Lines[angle=70]}, pattern color=black!40!white] (1,0)rectangle(4,1);
      \fill[pattern={Lines[angle=20]}, pattern color=black!40!white] (9,0)rectangle+(-3,1);
      \fill[pattern={Lines[angle=70]}, pattern color=black!40!white] (9,0)rectangle+(-3,1);
      \draw (0,0) rectangle (10,1);    
      \draw (1,0) --+ (0,1) (9,0) --+ (0,1) (2.5,0) --+ (0,1) (4,0) --+ (0,1) (6,0)--+(0,1) (7.5,0) --+ (0,1);
      \draw[shift={(0,-0.2)},<->] (0,0)--node(B)[below]{$r$}+(1,0);
      \draw[shift={(0,0.2)},<->] (1,1)--node[above]{$2\rho_{n}$}+(3,0);
      \draw[shift={(0,0.2)},<->] (6,1)--node[above]{$2\rho_{n}$}+(3,0);
      \draw[shift={(0,-0.2)},<->] (9,0)--node(A)[below]{$r$}+(1,0);
      \draw[dashed] (2.5,0)--+(0,-1) node[below] {cut};
      \draw[dashed] (7.5,0)--+(0,-1) node[below] {cut};
      \path (A)--+(0,1) coordinate (Y);
      \path (B)--+(0,.9) coordinate (X);
      \path[every node/.style={inner sep=1pt,fill,circle}] (X)--+(60:0.2)node{}--+(160:0.2)node{}--+(330:0.3)node{};
      \path[every node/.style={inner sep=1pt,fill,circle}] (0,0.5)node{}--(Y)--+(-60:0.2)node{}--+(-160:0.2)node{};
    \end{tikzpicture}
    \caption{Making cuts. The shaded regions never have any marked points, by construction.}
    \label{fig:cutting}
  \end{figure}

  We cut the surface as shown above, i.e., 
  \begin{equation*}
    \mathfrak{m}_{n}=[0,r+\rho_{n}]\cup [r+\rho_{n},r_{n}-r-\rho_{n}]\cup [r_{n}-r-\rho_{n},r_{n}],
  \end{equation*}
  and let the corresponding necks be equal to:
  \begin{equation*}
    \begin{aligned}
      \mathfrak{n}_{n}^{1}&=[r,r+\rho_{n}]\cup [r+\rho_{n},r+2\rho_{n}]\simeq [-\rho_{n},\rho_{n}]\\\mathfrak{n}_{n}^{2}&=[r_{n}-r-2\rho_{n},r_{n}-r-\rho_{n}]\cup [r_{n}-r-\rho_{n},r_{n}-r]\simeq [-\rho_{n},\rho_{n}].
    \end{aligned}
  \end{equation*}
  It is clear that when we symmetrically cut $\Sigma_{n}$ along these two necks, we obtain three sequences of compact partial domains:
  \begin{enumerate}
  \item The region $\Sigma_{n}^{0}=\Sigma_{n}\setminus h_{n}(\Omega(e^{-\pi(r+\rho_{n})}))$, with the right half of the neck $\mathfrak{n}^{2}$ added to the existing ends $\mathfrak{e}_{n}$. It is clear that this converges to $\Sigma\setminus \set{s_{\infty}}$ using the same maps $\psi_{n}$.
  \item The region $\Sigma_{n}^{1}=h_{n}(\Omega(t_{n}e^{\pi(r+\rho_{n})}))$ which converges to $\mathbb{H}$ using the map $\psi_{n}^{1}$, with the left half of the neck $\mathfrak{n}^{1}$ as the sole end.
  \item The middle region $\Sigma_{n}^{2}\simeq [r+\rho_{n},r_{n}-r-\rho_{n}]$. By our earlier assumption that $2(\rho_{n}+\rho_{n})<r_{n}$, this middle region has conformal modulus tending to infinity. We give this the ends $[r+\rho_{n},r+2\rho_{n}]$, and $[r_{n}-r-2\rho_{n},r_{n}-r-\rho_{n}]$ (i.e., the right of $\mathfrak{n}^{1}$ and the left of $\mathfrak{n}^{2}$). This middle region with these ends will not converge strongly, in general.
  \end{enumerate}  
  Observe that the ends have been chosen so that no marked points enter $\mathfrak{n}^{1}_{n}$ or $\mathfrak{n}^{2}_{n}$. In particular, the hypotheses of this theorem still apply to $\Sigma_{n}^{0}$ and $\Sigma_{n}^{1}$ (i.e., the marked points remain bounded as measured by the ends). Moreover, in the new sequence, the marked points are slightly better behaved: if there were $N$ interior marked points which converged to points on the boundary previously, now there are at most $N-1$, since we have fixed the behaviour for $z_{n}$.
  
  Let us explain how to fix the middle region $\Sigma_{n}^{2}$ so as to make it converge. Observe that, for some $R_{n}$,
  \begin{equation*}
    \Sigma_{n}^{2}=[-R_{n}-\rho_{n},R_{n}+\rho_{n}],
  \end{equation*}
  and the ends are $[R_{n},R_{n}+\rho_{n}]$ and $[-R_{n}-\rho_{n},-R_{n}]$, where $\rho_{n}$ tends to $\infty$. Such a sequence converges if and only if $R_{n}$ is bounded (in which case we can use $\psi_{n}=\mathrm{id}$ to make it strongly converge to $\R\times [0,1]$). Thus, assume $R_{n}$ is unbounded. Such a sequence is called a \emph{long neck}.

  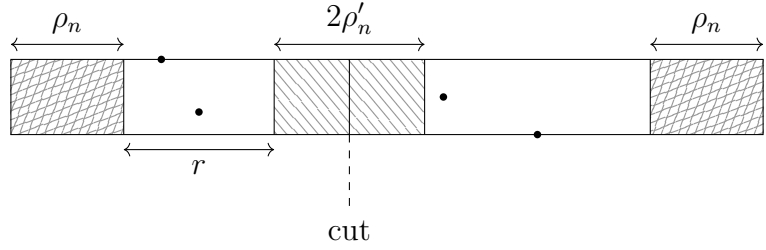
\begin{figure}[H]
    \centering
    \begin{tikzpicture}
      \fill[pattern={Lines[angle=20]}, pattern color=black!40!white] (0,0)rectangle(1.5,1);
      \fill[pattern={Lines[angle=70]}, pattern color=black!40!white] (0,0)rectangle(1.5,1);
      \fill[pattern={Lines[angle=130]}, pattern color=black!40!white] (3.5,0)rectangle(5.5,1);
      \fill[pattern={Lines[angle=20]}, pattern color=black!40!white] (10,0)rectangle+(-1.5,1);
      \fill[pattern={Lines[angle=70]}, pattern color=black!40!white] (10,0)rectangle+(-1.5,1);
      \draw (0,0) rectangle (10,1);
      \draw (1.5,0) --+ (0,1) (8.5,0) --+ (0,1) (3.5,0)--+(0,1) (5.5,0)--+(0,1) (4.5,0)--+(0,1);
      \draw[shift={(0,0.2)},<->] (0,1)--node[above]{$\rho_{n}$}+(1.5,0);
      \draw[shift={(0,0.2)},<->] (10,1)--node[above]{$\rho_{n}$}+(-1.5,0);
      \draw[shift={(0,0.2)},<->] (3.5,1)--node[above]{$2\rho_{n}'$}+(2,0);
      \draw[shift={(0,-0.2)},<->] (1.5,0) --node[below]{$r$}+ (2,0);
      \draw[dashed] (4.5,0)--+(0,-1) node[below] {cut};
      \path[every node/.style={inner sep=1pt,fill,circle}] (7,0)node{}--(5.75,0.5) node {};
      \path[every node/.style={inner sep=1pt,fill,circle}] (2,1)node{}--(2.5,0.3) node {};
    \end{tikzpicture}
    \caption{Making another cut to a long cylinder. The region on the left of the cut converges strongly (as $r$ is bounded).}
    \label{fig:cutting-2}
  \end{figure} 
  By our assumption, all the marked points are in $[-R_{n},R_{n}]$. As we argued above, let $r$ be a $k$-special radius for maximal $k$, and by passing to a subsequence assume that there are no marked points in $[-R_{n}+r,-R_{n}+r+2\rho_{n}']$ where $2\rho_{n}'$ increases to infinity slowly compared to $R_{n}$. We should choose $\rho_{n}'$ somewhat optimally in order for the process to terminate, and so we require that there \emph{is} a marked point in:
  \begin{equation*}
    [-R_{n}+r,-R_{n}+r+2\rho_{n}'+1],
  \end{equation*}
  or, if there no marked points, pick $\rho_{n}'$ so that $-R_{n}+r+2\rho_{n}+1=R_{n}-\rho_{n}$ (i.e., go right up to the right end). Then we make the cut at $-R_{n}+r+\rho_{n}'$ so that 
  \begin{equation*}
    \Sigma_{n}^{3}=[-R_{n}-\rho_{n},-R_{n}+r+\rho_{n}']\text{ and }\Sigma_{n}^{4}=[-R_{n}+r+\rho_{n}',R_{n}+\rho_{n}],
  \end{equation*}
  with the ends as shown in Figure \ref{fig:cutting-2}. 

  By construction, $\Sigma_{n}^{3}$ converges strongly. By iterating the argument, we can make finitely many cuts so that each piece converges strongly to $\R\times [0,1]$. We remark that our optimal choice of $\rho_{n}'$ implies that the next time we make a cut, the maximal $k$ will be at least $1$; this ensures the iterative cutting process will terminate in finitely many steps.

  Summarizing, we can decompose our original sequence $\Sigma_{n}$ into convergent pieces (by cutting along necks), so that the hypotheses of the theorem still apply to $\Sigma_{n}$, and so that a sequence of interior points $z_{n}$ which originally converged to the boundary now no longer converges to the boundary.
  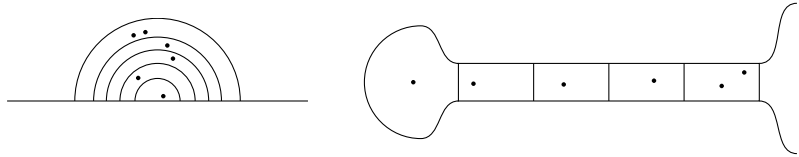
\begin{figure}[H]
    \centering
    \begin{tikzpicture}
      \draw (-2,0) -- (2,0);
      \begin{scope}
        \clip (-2,0) rectangle (2,1.1);
        \draw (0,0) circle (1.1) circle (0.85) circle (0.5) circle (0.3) circle (0.68);
        \path[every node/.style={fill,circle,inner sep=0.6pt}] (40:0.1) node{} (130:0.4) node{} (70:0.6) node{} (80:0.75) node{} -- (110:0.93) node{} -- (100:0.93) node{};
      \end{scope}
      \begin{scope}[shift={(4,0)}]
        \draw (0,0.5) to[out=180,in=0] (-0.5,1) arc (90:270:0.75) (0,0) to[out=180,in=0] (-0.5,-0.5) (0,0) rectangle (4,0.5);
        \draw (4,0.5) to[out=0,in=180] (4.5,1.3) (4,0) to[out=0,in=180] (4.5,-0.7);
        \foreach \k in {1,2,3} {
          \draw (\k,0)--+(0,0.5);
        }
        \node[fill,circle,inner sep=0.6pt] at (3.5,0.2) {};
        \node[fill,circle,inner sep=0.6pt] at (-0.6,0.25) {};
        \foreach \k in {1,2,3,4} {
          \node[fill,circle,inner sep=0.6pt] at (\k+0.2*\k-1,0.3-0.1*\k+0.03*\k*\k) {};
        }
      \end{scope}
    \end{tikzpicture}
    \caption{Fixing an interior puncture converging to the boundary}
    \label{fig:fixing_int_bd}
  \end{figure}
  Returning to the original sequence $\Sigma_{n}$, let $k$ be the maximal number so that there are injective maps $z_{n}:\set{1,\dots,k}\to \Theta_{n}$ so that each element in $z_{n}(k)$ has a limit point on the boundary. If $k>0$, then we can apply the above process to obtain a new sequence $\Sigma_{n}'$ (cut into many pieces, but considered as a single compact partial domain with ends), with new number $k'<k$. In this fashion, it suffices to prove the theorem when $k=0$.

  The problem of two marked points converging together is handled in a similar fashion: let $m$ be the maximal number so that:
  \begin{enumerate}
  \item there exist injective maps $p_{n}:\set{1,\dots,m}\to \Theta_{n}\times \Theta_{n}$ which are disjoint from the diagonal, and
  \item $\mathrm{dist}(p_{n}(j))\to 0$ for each $j$.
  \end{enumerate}
  By performing a similar iterative argument, it suffices to prove the theorem when $m=0$. However, in this case, we can directly apply Lemma \ref{lemma:marked_pt}, and this completes the proof.
\end{proof}

\subsection{Compact partial domains equipped with punctured maps}
\label{sec:with_maps}
Let $(\Sigma_{n},\mathfrak{e}_{n},\Theta_{n},u_{n})$ be a sequence of compact partial domains with ends $\mathfrak{e}_{n}$, marked points $\Theta_{n}$, and maps $u_{n}:\Sigma_{n}\setminus \Theta_{n}\to W$, where $W$ is a metric space.
\begin{defn}
  We say that $(\Sigma_{n},\mathfrak{e}_{n},\Theta_{n},u_{n})$ \emph{converges uniformly} to $(\Sigma,u)$ provided that $(\Sigma_{n},\mathfrak{e}_{n},\Theta_{n},\psi_{n})$ converges with marked points to $(\Sigma,\Theta)$ and $$\lim_{n\to\infty}\sup_{z\in \Sigma_{n}\setminus \Theta_{n}}\mathrm{dist}(u\circ \psi_{n}(z),u_{n}(z))=0.$$
  This is the notion of convergence used in Definition \ref{defn:convergence_defn}.
\end{defn}

\subsubsection{Tame sequences of maps}
\label{sec:holomorphic_necks}
\label{sec:tame_sequences}
Let $W$ be a smooth manifold, and let $(\Sigma_{n},\mathfrak{e}_{n},\Theta_{n})$ converge with marked points to $(\Sigma,\Theta)$. A sequence $u_{n}:\Sigma_{n}\setminus \Theta_{n}\to W$ is called \emph{tame} if any sequence of \emph{end restrictions} $u_{n}|\mathfrak{e}_{n}$ has a subsequence which converges uniformly (in $C^{1}$) to a map defined on $[0,\infty)\times S$. Since $\psi_{n}|\mathfrak{e}_{n}$ converges to infinite ends of $\Sigma$, it follows that $u_{n}\circ \psi_{n}^{-1}$ converges uniformly on a neighbourhood of each puncture in $\Sigma$ (bear in mind that these neighborhoods are punctured). 

A sequence of maps $[a_{n},b_{n}]\times S\to W$ is said to be \emph{very tame} if it converges to a constant map. Note that a tame sequence has a preferred direction for $s$ (and cannot be reversed in general), while very tame sequences are preserved under the reparametrization $(s,t)\mapsto (-s,1-t)$.

Often it will occur that a tame end can be concatenated with a very tame sequence, in the sense that the domains can be glued (as shown) and the maps extend smoothly over the interface.
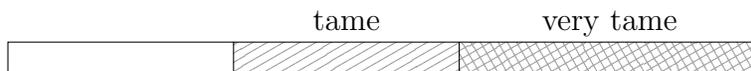
\begin{figure}[H]
  \centering
  \begin{tikzpicture}[yscale=.4]
    \fill[pattern={Lines[angle=30]},pattern color=black!40!white] (3,0) rectangle (10,1);
    \fill[pattern={Lines[angle=-60]},pattern color=black!40!white] (6,0) rectangle (10,1);
    \path (3,2.4)--node[below]{tame}+(3,0);
    \path (6,2.4)--node[below]{very tame}+(4,0);
    \draw (0,0) rectangle (10,1) (3,0)--+(0,1) (6,0)--+(0,1);
  \end{tikzpicture}
  \caption{Adding a very tame sequence to existing tame ends.}
  \label{fig:tame-very-tame}
\end{figure}
The resulting glued map will also have tame ends.  

\subsubsection{Cutting holomorphic necks}
\label{sec:cutting_and_pasting}
We describe a cutting lemma which results in \emph{tame} ends, at the expense of an additional long neck in the middle, as shown in Figure \ref{fig:cutting} 
\begin{defn}
  A sequence of \emph{holomorphic necks} is the data of: $$u_{n}:[a_{n},b_{n}]\times S\to (W,L_{n},J_{n})$$ where $L_{n}=L^{0}_{n}\cup L^{1}_{n},J_{n}$ are convergent sequences of totally real submanifolds and almost complex structures, $u_{n}$ are $J_{n}$-holomorphic, and $b_{n}-a_{n}\to\infty$.
\end{defn}
The next lemma will be used to cut holomorphic necks so as to ensure the tame condition holds.
\begin{lemma}\label{lemma:neck-cutting-lemma}\label{lemma:little-trick}
  Let $u_{n}$ be a bounded sequence of holomorphic necks with uniformly bounded energy. Then there exist $r$, $\rho_{n}\to\infty$, so that the restrictions:
  \begin{equation*}
    u_{n}|_{[a_{n}+r,a_{n}+r+2\rho_{n}]}\text{ and }u_{n}|_{[b_{n}-r-2\rho_{n},b_{n}-r]}
  \end{equation*}
  converge \emph{uniformly} to holomorphic limits defined on $[0,\infty)$ and $(-\infty,0]$ (after the appropriate retranslation, i.e., $a_{n}+r$ and $b_{n}-r$ should be identified with $0$). The restrictions:
  \begin{equation*} 
    u_{n}|_{[a_{n}+r+\rho_{n},a_{n}+r+2\rho_{n}]}\text{ and }u_{n}|_{[b_{n}-r-2\rho_{n},b_{n}-r-\rho_{n}]}    
  \end{equation*}
  are very tame, i.e., converge uniformly to points, (these are the heavily shaded regions in Figure \ref{fig:cutting-holo}).
\end{lemma}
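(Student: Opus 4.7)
The strategy is a standard concentration-compactness argument: localize the possible bubbling in the long neck using the mean value property, choose the cut locations $r$ and $\rho_{n}$ to avoid every bubbling region, extract holomorphic limits by Arzel\`a--Ascoli, and conclude the very-tame statement via removable singularity and a diagonal argument.

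First I would translate to $v_{n}(s,t):=u_{n}(a_{n}+s,t)$ on $[0,b_{n}-a_{n}]\times S$. By a standard packing argument using the mean value property (Lemma~\ref{lemma:mvp}) and the uniform energy bound $E$, the set of ``concentration $s$-values''
\[
B_{n}:=\set{s:\int_{[s-1/2,s+1/2]\times S}\abs{\bd_{s}v_{n}}^{2}\,\d s\wedge \d t\ge \epsilon_{0}}
\]
lies in at most $\lfloor E/\epsilon_{0}\rfloor$ intervals of bounded length, depending only on $E/\epsilon_{0}$. After passing to a subsequence this collection of intervals stabilizes to a fixed number $K$ of components $[\alpha_{i}^{n},\beta_{i}^{n}]$, each classified as \emph{left} (if $\beta_{i}^{n}$ stays bounded), \emph{right} (if $b_{n}-a_{n}-\alpha_{i}^{n}$ stays bounded), or \emph{middle} (if both $\alpha_{i}^{n}\to\infty$ and $b_{n}-a_{n}-\beta_{i}^{n}\to\infty$).

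Next I would pick $r$ larger than every uniform left/right bound, so that on $[r,b_{n}-a_{n}-r]\times S$ only middle components can appear. Because each middle component drifts to infinity from both ends, I can choose $\rho_{n}\to\infty$ slowly enough that both $[r,r+2\rho_{n}+1]$ and $[b_{n}-a_{n}-r-2\rho_{n}-1,b_{n}-a_{n}-r]$ are disjoint from every component of $B_{n}$. On these good strips the unit-ball energy is everywhere below $\epsilon_{0}$, so Lemma~\ref{lemma:mvp} gives a uniform $C^{0}$-bound on $\abs{\d v_{n}}$, and Corollary~\ref{corollary:apriori-estimates-2} bootstraps this to uniform $C^{k}$-bounds for every $k$. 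Arzel\`a--Ascoli plus a diagonal argument then yields subsequences so that the retranslated maps $w_{n}^{-}(s,t):=u_{n}(a_{n}+r+s,t)$ and $w_{n}^{+}(s,t):=u_{n}(b_{n}-r+s,t)$ converge in $C^{\infty}_{\mathrm{loc}}$ on $[0,\infty)\times S$ and $(-\infty,0]\times S$ to $J$-holomorphic maps $v^{\pm}$ with boundary on $L^{0}\cup L^{1}$ and $\omega$-energy at most $E$; this is the ``tame'' conclusion.

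Finally, the finite-energy removable-singularity theorem for bounded holomorphic half-strips and half-cylinders with totally real boundary (the isolated-intersection hypothesis on the fixed Lagrangians in the paper's setup ensures that asymptotic chords between distinct $L^{0},L^{1}$ collapse to constants) gives $v^{\pm}(s,\cdot)\to p^{\pm}$ uniformly as $|s|\to\infty$. A second diagonalization, further slowing $\rho_{n}$, then forces $\sup_{s\in[\rho_{n},2\rho_{n}]}\mathrm{dist}(w_{n}^{-}(s,t),p^{-})\to 0$ together with the analogous bound for $w_{n}^{+}$; this is the very-tame conclusion. The main obstacle is the simultaneous, slow diagonal choice of $\rho_{n}$ that tends to infinity while avoiding every middle bubble on both ends and realizing the very-tame uniform convergence; the rest is standard mean-value and bootstrapping machinery.
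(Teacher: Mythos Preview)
Your proposal is correct and follows the same overall structure as the paper's proof: find a finite $r$, obtain derivative bounds on $[r,\infty)\times S$ via the mean-value property, extract a $C^{\infty}_{\mathrm{loc}}$ limit $v_{\infty}$ by Arzel\`a--Ascoli and bootstrapping, invoke removable singularities to get $v_{\infty}(s,\cdot)\to x_{+}$, and then run a diagonal argument to choose $\rho_{n}$ realizing uniform (hence very-tame) convergence.

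The one genuine difference is in how $r$ is selected. You classify the concentration intervals of $B_{n}$ as left/right/middle and choose $r$ beyond the left ones, then burden $\rho_{n}$ with the additional task of dodging the middle bubbles. The paper instead uses a ``$k$-special radius'' device: call $r$ $k$-special if $E_{n}([0,r-1]\times S)\ge k\hbar$ infinitely often, and take $r$ to be $k$-special for the maximal such $k$. After passing to a subsequence so that $E_{n}([0,r-1])\ge k\hbar$ always, maximality forces $E_{n}([r-1,R])<\hbar$ eventually for every fixed $R$, so there is \emph{no} bubbling past $r$ in the left frame at all. This makes the choice of $\rho_{n}$ a pure diagonalization against the $C^{0}_{\mathrm{loc}}$ convergence, with no middle-bubble constraint to track. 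Your approach works fine but carries slightly more bookkeeping; the paper's $k$-special trick is worth internalizing since it recurs in several later cutting arguments.
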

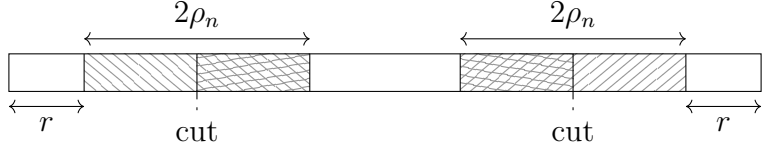
\begin{figure}[H]
  \centering
  \begin{tikzpicture}[yscale=.5]
    \fill[pattern={Lines[angle=10]}, pattern color=black!40!white] (4,0)rectangle+(-1.5,1);
    \fill[pattern={Lines[angle=-40]}, pattern color=black!40!white] (4,0)rectangle+(-3,1);
    \fill[pattern={Lines[angle=-10]}, pattern color=black!40!white] (6,0)rectangle+(1.5,1);
    \fill[pattern={Lines[angle=40]}, pattern color=black!40!white] (6,0)rectangle+(3,1);
    \draw (0,0) rectangle (10,1);    
    \draw (1,0) --+ (0,1) (9,0) --+ (0,1) (2.5,0) --+ (0,1) (4,0) --+ (0,1) (6,0)--+(0,1) (7.5,0) --+ (0,1);
    \draw[shift={(0,-0.4)},<->] (0,0)--node[below]{$r$}+(1,0);
    \draw[shift={(0,0.4)},<->] (1,1)--node[above]{$2\rho_{n}$}+(3,0);
    \draw[shift={(0,0.4)},<->] (6,1)--node[above]{$2\rho_{n}$}+(3,0);
    \draw[shift={(0,-0.4)},<->] (9,0)--node[below]{$r$}+(1,0);
    \draw[dashed] (2.5,0)--+(0,-.5) node[below] {cut};
    \draw[dashed] (7.5,0)--+(0,-.5) node[below] {cut};
  \end{tikzpicture}
  \caption{Using the lemma to make cuts. The shaded regions converge uniformly to holomorphic limits. The heavily shaded regions are very tame. The neck in the middle may be \emph{long}, i.e., might not converge strongly, although it will have very tame ends.}
  \label{fig:cutting-holo}
\end{figure}
\begin{proof}
  We explain the argument at the left end. Without loss of generality, suppose that $a_{n}=0$. The argument is the similar to the proof of Lemma \ref{lemma:marked_pt_1}.  

  Suppose that $u_{n}$ remains in a bounded set $K$. Let $\hbar$ be a small quantity of energy with the property that:
  \begin{equation*}
    E_{n}([s-1,s+1]\times S)\le \hbar\implies \sup_{t}\abs{\d u_{n}(s,t)}\le C,
  \end{equation*}
  where $C$ is independent of $u_{n}$ and $s$, and $E_{n}$ is the energy of $u_{n}$. The existence of constants $C$ and $\hbar$ follows easily from the mean-value property. 

  We say that a radius $r$ is $k$\emph{-special} if $E_{n}([0,r-1]\times S)\ge k\hbar$ holds infinitely often. Clearly there is a maximal $k$ which admits a $k$-special radius. Let us fix this maximal $k$ and a $k$-special radius $r$. Let $v_{n}(s,t)=u_{n}(s+r,t)$, and consider $v_{n}$ as defined on subsets of $[0,\infty)\times S$.
  \begin{figure}[H]
    \centering
    \begin{tikzpicture}
      \draw (0,0) rectangle (10,1);    
      \draw (2,0) --+ (0,1);
      \draw (3,0) --+ (0,1);
      \draw[shift={(0,-0.2)},<->] (0,0)--node[below]{$r-1$}+(2,0);
      \draw[shift={(0,-0.2)},<->] (3,0)--node[below]{domain of $v_{n}$}+(7,0);
    \end{tikzpicture}
  \end{figure}
  Maximality of $k$ and the choice of $\hbar$ implies $v_{n}$ has bounded derivative on compact subsets. We apply Arzel\`a-Ascoli to $v_{n}$ and conclude it converges on compact sets to a holomorphic limit $v_{\infty}:[0,\infty)\times S\to W$.

  The next step is a little trick to upgrade convergence on compact sets to uniform convergence. Let $\rho_{k}$, $k=1,2,\dots$ be an arbitrary sequence which converges to $+\infty$. For each $N\in \mathbb{N}$, consider the maximal integer $k=k(N)$ satisfying:
  \begin{equation*}
    n\ge N\implies \sup \set{\mathrm{dist}_{W}(v_{n}(s,t),v_{\infty}(s,t)):(s,t)\in [0,2\rho_{k}]\times S}\le k^{-1}.
  \end{equation*}
  By the aforementioned convergence on compact sets, $k(N)\to \infty$ as $N\to\infty$. We then define $\rho_{n}:=\rho_{k(n)}$. Then $v_{n}$ converges uniformly to $v_{\infty}$ on this region. Since $v_{\infty}$ is bounded and has finite energy, $v_{\infty}(s,t)$ converges uniformly in $t$ to a removable singularity $x_{+}$ as $s\to\infty$. It follows that the region $s\in [r+\rho_{n},r+2\rho_{n}]$ is very tame, and converges uniformly to this point $x_{+}$.
  \begin{figure}[H]
    \centering
    \begin{tikzpicture}[yscale=.5]
      \fill[pattern={Lines[angle=10]}, pattern color=black!40!white] (7,0)rectangle +(-4,1);
      \fill[pattern={Lines[angle=-40]}, pattern color=black!40!white] (7,0)rectangle +(-2,1);    
      \draw (0,0) rectangle (10,1) (3,0) --+ (0,1) (5,0)--+(0,1) (7,0)--+(0,1);
      \draw[shift={(0,-0.4)},<->] (0,0)--node[below]{$r$}+(3,0);
      \draw[shift={(0,0.4)},<->] (3,1)--node[above]{$2\rho_{n}$}+(4,0);
      \draw[dashed] (5,0)--+(0,-.5)node[below]{cut};
    \end{tikzpicture}
    \caption{The total shaded region is tame and converges to $v_{\infty}$, the heavily shaded region is very tame, and converges to $v_{\infty}(+\infty)$.}
  \end{figure}
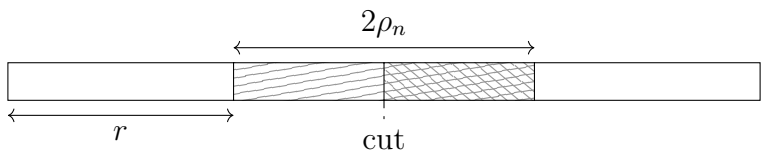
  This completes the proof at the left end. The right end is handled in the same way. 
\end{proof}

\subsubsection{Cutting lemma for holomorphic ends}
\label{sec:for_ends}
A similar cutting construction can be performed near the marked points, where we recall $u_{n}$ is allowed to have a singularity. Let us suppose that the underyling domain of $u_{n}$ converges with marked points. By the results of \S\ref{sec:coordinate_disks}, we can find convergent sequences of coordinate disks around the marked points, which we reparametrize via an appropriate exponential map to obtain a map defined on $[0,\infty)\times S$. 
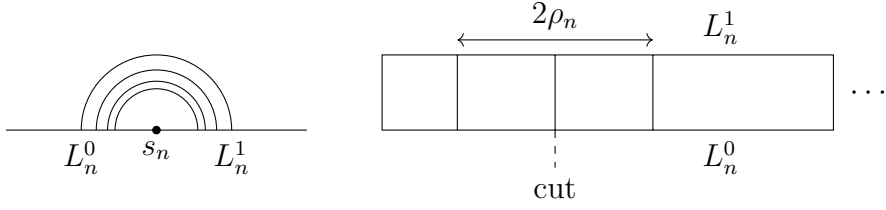
\begin{figure}[H]
  \centering
  \begin{tikzpicture}
    \draw (1,0) arc (0:180:1);
    \draw (0.8,0) arc (0:180:0.8);
    \draw (0.65,0) arc (0:180:0.65);
    \draw (0.55,0) arc (0:180:0.55);
    \draw (-2,0)--node(A)[draw,fill,inner sep=1pt,circle]{}node[pos=0.25,below]{$L^{0}_{n}$}node[pos=0.75,below]{$L^{1}_{n}$}(2,0);
    \node at (A) [below] {$s_{n}$};
    \begin{scope}[shift={(3,0)}]
      \draw (0,0) rectangle (6,1) (1,0)--+(0,1) (2.3,0)--+(0,1) (3.6,0)--+(0,1);
      \node at (6.5,0.5) {$\cdots$};
      \draw[<->] (1,1.2)--node[above]{$2\rho_{n}$}+(2.6,0);
      \draw[dashed] (2.3,0)--+(0,-0.5)node[below]{cut};
      \node at (4.5,0)[below] {$L^{0}_{n}$};
      \node at (4.5,1)[above] {$L^{1}_{n}$};
    \end{scope}
  \end{tikzpicture}
  \caption{We can think of a punctured neighbourhood of a marked point as a half-infinite end. Cutting the end corresponds to removing a subdisk.}
  \label{fig:concentric-necks}
\end{figure}
We have the following cutting lemma:
\begin{lemma}\label{lemma:end-cutting-lemma}
  Let $u_{n}:[0,\infty)\times S\to W$ be a bounded sequence of $J_{n}$-holomorphic maps with finite energy, with boundary on $L^{0}_{n}\cup L^{1}_{n}$, and suppose that $J_{n},L^{i}_{n}$ are convergent. Then there exist $r>0$ and $\rho_{n}\to\infty$ so that
  $u_{n}$ converges uniformly to a holomorphic limit on the region $[r,r+2\rho_{n}]\times S$.
\end{lemma}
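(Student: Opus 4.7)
The plan is to mimic the argument for Lemma \ref{lemma:neck-cutting-lemma}, using the finite energy hypothesis to isolate a half-infinite tail on which the gradient is uniformly bounded, and then upgrading compact convergence to convergence on a slowly growing region by the diagonalization trick used there.

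First, I fix $\hbar>0$ and $C>0$ from the mean-value Lemma \ref{lemma:mvp}, chosen so that whenever $v$ is a $J_n$-holomorphic map (with boundary on $L^0_n\cup L^1_n$) defined on an appropriate neighbourhood and $E_n([s-1,s+1]\times S)\le \hbar$, then $\sup_t|\d v(s,t)|\le C$. (This uses the fact that $J_n,L^i_n$ are $C^\infty$-convergent, so Lemma \ref{lemma:mvp} applies uniformly in $n$ for large $n$.) Next, following the maximal-$k$-special argument from Lemma \ref{lemma:neck-cutting-lemma}, I call a radius $\rho$ \emph{$k$-special} if $E_n([0,\rho-1]\times S)\ge k\hbar$ for infinitely many $n$. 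The uniform bound on $\int u_n^*\omega$ bounds $k$, so there exists a maximal $k$ and a corresponding $k$-special radius $r$. After passing to a subsequence we may assume $E_n([0,r-1]\times S)\ge k\hbar$ for all $n$. By maximality, for any fixed $M>0$, the radius $r+M$ is not $(k+1)$-special, so for $n\ge N(M)$ we have $E_n([0,r+M-1]\times S)<(k+1)\hbar$, and hence
\begin{equation*}
  E_n([r-1,r+M-1]\times S)<\hbar.
\end{equation*}
In particular $E_n([s-1,s+1]\times S)<\hbar$ for every $s\in[r,r+M-2]$ and $n\ge N(M)$, so $|\d u_n|\le C$ on this region.

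Armed with this uniform gradient bound, I apply Arzel\`a-Ascoli and the elliptic regularity result for approximately holomorphic maps (\S\ref{sec:ell_reg_appx}, cf.\ Corollary \ref{corollary:apriori-estimates-2} and Lemma \ref{lemma:bootstrappingone}) combined with a diagonal argument over $M\to\infty$ to extract a subsequence along which $u_n|_{[r,\infty)\times S}$ converges in $C^\infty_{\mathrm{loc}}$ to a $J$-holomorphic limit $v_\infty\colon[r,\infty)\times S\to W$ with boundary on $L^0\cup L^1$.

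Finally, I upgrade $C^\infty_{\mathrm{loc}}$-convergence to uniform convergence on a growing region by the ``little trick'' used in Lemma \ref{lemma:neck-cutting-lemma}: pick an arbitrary sequence $\rho_k\to\infty$, and for each $N\in\mathbb{N}$ let $k(N)$ be the largest integer such that
\begin{equation*}
  n\ge N\implies \sup\set{\mathrm{dist}_W(u_n(s,t),v_\infty(s,t)):(s,t)\in[r,r+2\rho_k]\times S}\le k^{-1}.
\end{equation*}
The established $C^\infty_{\mathrm{loc}}$-convergence forces $k(N)\to\infty$. Setting $\rho_n:=\rho_{k(n)}$ yields $\rho_n\to\infty$ with $u_n\to v_\infty$ uniformly on $[r,r+2\rho_n]\times S$, as desired. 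The main subtlety is the energy-partition step that extracts the uniform window bound from merely finite total energy; however this is handled in exactly the same way as in Lemma \ref{lemma:neck-cutting-lemma}, and the only difference from the neck case is that we keep the left endpoint fixed at $r$ rather than cutting on both sides.
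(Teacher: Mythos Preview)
Your proposal is correct and is precisely the argument the paper has in mind: the paper's proof of Lemma \ref{lemma:end-cutting-lemma} simply reads ``The argument is the exact same as the proof of Lemma \ref{lemma:neck-cutting-lemma},'' and you have faithfully reproduced that argument (maximal $k$-special radius, mean-value gradient bound, Arzel\`a--Ascoli plus elliptic regularity for $C^\infty_{\mathrm{loc}}$ convergence, then the diagonalization ``little trick'' to obtain uniform convergence on $[r,r+2\rho_n]\times S$), with the only difference being that you keep the left endpoint fixed at $r$ rather than cutting symmetrically.
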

\begin{proof}
  The argument is the exact same as the proof of Lemma \ref{lemma:neck-cutting-lemma}.
\end{proof}

\subsection{Boundary conditions for holomorphic necks}
\label{sec:boundary_conditions}
The preceding cutting results do not depend on the \emph{Lagrangian} aspects of our problem, except for the a priori estimates coming from the mean-value property (which use the totally real hypothesis). In the subsequent results, a sequence of holomorphic necks $u_{n}:[a_{n},b_{n}]\times [0,1]\to W$ should satisfy one of the following boundary conditions. Suppose that $u_{n}([a_{n},b_{n}]\times \set{0})\subset L^{0}_{n}$ and $u_{n}([a_{n},b_{n}]\times \set{1})\subset L^{1}_{n}$, and
\begin{enumerate}
\item $L^{0}_{n}=L^{1}_{n}$ (same Lagrangian boundary conditions),
\item $L^{0}_{n}\to L^{0}$ and $L^{1}_{n}\to L^{1}$, and $L^{0}\cap L^{1}$ is an isolated set, or
\item $L^{0}_{n},L^{1}_{n}$ converge to the same Lagrangian $L$ \emph{adiabatically}, as explained in \S\ref{sec:intro}.
\end{enumerate}

\subsection{High and low energy decomposition of long necks and ends}
\label{sec:low-energy-de}
Let $$u_{n}:\Sigma_{n}=[-R_{n}-\rho_{n},R_{n}+\rho_{n}]\times S\to W$$ be a sequence of holomorphic curves as above (i.e., bounded, finite energy, etc), and suppose that the ends of length $\rho_{n}$ converge uniformly to points $x_{-}$ and $x_{+}$.

Fix a quantity of energy $\hbar>0$. A sequence $u_{n}$ is said to be \emph{high energy} if it has energy at least $\hbar$, infinitely often. On the other hand, if the energy of $u_{n}$ is eventually less than $\hbar$, and the ends are very tame, we say that the sequence is \emph{low energy}.

The quantity $\hbar$ will need to be chosen small enough that the following \emph{energy quantization} result applies.
\subsubsection{Quantization of energy for necks}
\label{sec:quantization-neck}
Let $u_{n}$ be a sequence of necks, as above, which converges uniformly to $x_{\pm}$ on its ends.\footnote{We also suppose that the energy of $u_{n}$ on the necks also converges to zero, which certainly holds if, e.g., the second derivatives of $u_{n}$ is bounded on the $\bd C_{n}$ boundary of the neck. In all cases we consider, this will hold true, since $u_{n}$ extends across the $\bd C_{n}$ boundary in a controlled fashion. This additional assumption will be implicit in our arguments.}
\begin{theorem}\label{theorem:quantization}
  There exists a constant $\hbar>0$, depending only on (i) the compact set containing $u_{n}$, and (ii) the limits of the data $L^{0}_{n},L^{1}_{n},J_{n}$, so that:
  \begin{equation*}
    \limsup_{n}E_{n}<\hbar\implies \lim_{n}\sup_{s,t}\abs{\d u_{n}(s,t)}=0\implies \lim_{n}E_{n}= 0,
  \end{equation*}
  where $E_{n}$ is the $\omega$-energy of $u_{n}$.
\end{theorem}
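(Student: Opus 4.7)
The plan is to prove each implication separately. For (A): $\limsup E_n<\hbar$ implies $\sup|du_n|\to 0$, I would use a rescaling/bubbling argument by contradiction. For (B): $\sup|du_n|\to 0$ implies $E_n\to 0$, I would combine the very-tame condition on the ends with the exponential estimates from \S\ref{sec:exponential-estimates}.

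To prove (A), suppose not and extract $z_n=(s_n,t_n)$ with $|du_n(z_n)|\ge c>0$. Choosing $\hbar<\epsilon_0$ (the mean-value threshold of Lemma \ref{lemma:mvp}) gives a uniform bound on $|du_n|$ on sufficiently small disks, and Corollary \ref{corollary:apriori-estimates-2} upgrades this to uniform bounds on all higher derivatives. The very-tame assumption on the ends combined with the mean-value property forces $|du_n|\to 0$ on the length-$\rho_n$ ends, so $z_n$ must lie in the middle $[-R_n,R_n]\times S$. Translating, $v_n(s,t):=u_n(s+s_n,t)$ is defined on domains exhausting $\mathbb{R}\times S$; after passing to a subsequence, Arzel\`a--Ascoli gives $v_n\to v_\infty$ in $C^\infty_{\mathrm{loc}}$, where $v_\infty$ is $J$-holomorphic, bounded (image in the fixed compact set), has energy $\le\hbar$, takes boundary values on the limit Lagrangians, and is nonconstant because $|dv_\infty(0,t_\infty)|\ge c$. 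Removal of singularities at $s=\pm\infty$ then compactifies $v_\infty$: in the same-Lagrangian setting it becomes a nonconstant $J$-holomorphic disk with boundary on $L$ (or sphere if $S=\mathbb{R}/\mathbb{Z}$) of area $\ge\hbar_0$ by the standard monotonicity/isoperimetric bound, while in the isolated-intersection case it becomes a nonconstant Floer strip between points of $L^0\cap L^1$, again with a universal positive action gap $\hbar_0$. Choosing $\hbar<\min\{\epsilon_0,\hbar_0\}$ yields the desired contradiction.

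To prove (B), when $R_n$ remains bounded the estimate $E_n\le 2(R_n+\rho_n)\cdot \sup|du_n|^2$ together with vanishing end energies yields $E_n\to 0$ immediately. When $R_n\to\infty$ I would lift $u_n$ into a Weinstein neighborhood of the limiting Lagrangian $L$, write $u_n=(Q_n,P_n)$ in Levi-Civita coordinates, and apply Lemma \ref{lemma:c1estimates} to the perturbed vertical component $\tilde P_n$. The uniform assumption $\sup|du_n|\to 0$ is exactly the hypothesis of that lemma, and the uniform decay of $u_n$ to $x_\pm$ on the length-$\rho_n$ ends provides the boundary smallness for $\gamma_n$ and $\dot\gamma_n$ needed at $s=\pm(R_n+0.75)$. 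The resulting pointwise bound
\begin{equation*}
|\tilde P_n|+|\nabla\tilde P_n|\le \kappa_n\bigl(e^{-d(R_n+s)}+e^{-d(R_n-s)}+\epsilon_n\bigr),
\end{equation*}
combined with \eqref{eq:perturbJ} (which converts $\nabla P_n$ bounds into $\bd Q_n$ bounds), allows one to integrate $|du_n|^2=|\bd_s Q_n|^2+|\nabla_s P_n|^2$ over $[-R_n,R_n]\times S$. The exponential terms integrate to a constant multiple of $\kappa_n^2$, and with appropriate adiabatic scaling of $\epsilon_n$ the full integral tends to zero. In the non-adiabatic boundary cases one takes $\mathfrak{a}_n=\mathfrak{b}_n\equiv 0$ and $\epsilon_n=0$, reducing the bound to pure exponential decay.

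The main obstacle is the uniform handling of the three boundary-condition regimes. Lemma \ref{lemma:c1estimates} is formulated only for adiabatic data, so in the fixed-Lagrangian cases one must either re-run the $\gamma_n$-convexity proof with $\mathfrak{a}_n=\mathfrak{b}_n\equiv 0$ or cite Floer's original exponential estimate. A second delicate point is in (A) in the isolated-intersection case when both asymptotics of $v_\infty$ land at the same intersection point: the compactified domain is a disk with two boundary marked points identified, so the minimum-energy claim is not the standard disk monotonicity and requires either a separate Darboux-chart monotonicity argument or an application of the present paper's own compactness machinery in a simpler subcase.
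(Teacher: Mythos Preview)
Your argument for (A) is essentially the paper's: the second implication in each of Propositions \ref{prop:low-e-cyl}, \ref{prop:low-e-strips}, \ref{prop:low-e-strips-3} is exactly the rescaling/bubbling argument you describe, and the paper resolves your ``delicate point'' about the isolated-intersection case by a short Stokes computation on a small Darboux ball around the common asymptotic point, showing such a strip would have zero energy.

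Your approach to (B), however, diverges from the paper and has a genuine gap. The paper never uses the exponential estimates of \S\ref{sec:exponential-estimates} for this implication; instead it uses Stokes' theorem directly with primitives of $\omega$ tailored to the boundary conditions. In the adiabatic case (Proposition \ref{prop:low-e-strips}) one takes $\lambda_n=-\lambda_{\mathrm{can}}+\epsilon_n\pr^*\mathfrak{a}_n$, so the horizontal boundary contributes at most $\epsilon_n\,\mathrm{osc}(f_n)\to 0$ and the vertical edges go to zero because $\abs{\d u_n}\to 0$; in the same-Lagrangian and cylinder cases (Propositions \ref{prop:low-e-cyl}, \ref{prop:low-e-strips-2}) one either applies Stokes immediately or derives the ODE $E_n(r)\le CE_n'(r)$; in the isolated-intersection case (Proposition \ref{prop:low-e-strips-3}) one first shows $u_n$ converges uniformly to a single point $p\in L^0\cap L^1$ and then applies Stokes on a ball around $p$.

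Your route via Lemma \ref{lemma:c1estimates} does not close in the adiabatic case. From \eqref{eq:daren3} one has $g_*\bd_sQ_n=\nabla_t\tilde P_n+\epsilon_n\mathfrak{c}_n\circ Q_n+\text{(smaller)}$, and the term $\epsilon_n\mathfrak{c}_n\circ Q_n$ carries no $\kappa_n$ factor; its square integrated over $[-R_n,R_n]$ contributes on the order of $R_n\epsilon_n^2$, which need not vanish (indeed $\epsilon_nR_n$ is typically the length of the limiting flow line). Your phrase ``with appropriate adiabatic scaling of $\epsilon_n$'' cannot help, since $\epsilon_n$ is prescribed data. Moreover, the isolated-intersection case is not covered by Lemma \ref{lemma:c1estimates} at all: the two boundary Lagrangians do not both converge to a single $L$, so there is no common Weinstein neighbourhood in which to set up $\tilde P_n$. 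The Stokes argument bypasses both issues.
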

\begin{proof}
  The proof is different in the case when $S=[0,1]$ and $S=\R/\Z$. The case when $S=[0,1]$ further splits into three cases depending on the three allowable boundary conditions. The result is a combination of Propositions \ref{prop:low-e-cyl}, \ref{prop:low-e-strips}, \ref{prop:low-e-strips-2}, and \ref{prop:low-e-strips-3}.
\end{proof}

\subsubsection{An energy decomposition lemma}
\begin{lemma}\label{lemma:energy-D}
  For sufficiently small $\hbar$, there exists a sequence of tame necks $\mathfrak{n}_{n}^{i}\subset [-R_{n},-\rho_{n},R_{n}+\rho_{n}]$, so that when we symmetrically cut $\Sigma_{n}$ along $\mathfrak{n}^{i}_{n}$, the resulting surface splits into an alternating concatenation of \emph{low energy} and \emph{high energy} pieces. Moreover, the high energy pieces of the domain converge strongly. The rightmost and leftmost regions will be low energy and have very tame ends.
  \begin{figure}[H]
    \centering
    \begin{tikzpicture}[scale=.4]
      \fill[pattern={Lines[angle=10]}, pattern color=black!40!white] (3,0)rectangle+(4,1);
      \fill[pattern={Lines[angle=-40]}, pattern color=black!40!white] (3,0)rectangle+(2,1);    
      \fill[pattern={Lines[angle=10]}, pattern color=black!40!white] (-5,0)rectangle+(1.5,1);
      \draw[pattern={Lines[angle=-40]}, pattern color=black!40!white] (-5,0)rectangle+(1.5,1);    
      \fill[pattern={Lines[angle=10]}, pattern color=black!40!white] (25,0)rectangle+(-1.5,1);
      \draw[pattern={Lines[angle=-40]}, pattern color=black!40!white] (25,0)rectangle+(-1.5,1);
      \fill[pattern={Lines[angle=10]}, pattern color=black!40!white] (17,0)rectangle+(-2,1);
      \fill[pattern={Lines[angle=-40]}, pattern color=black!40!white] (17,0)rectangle+(-4,1);    
      \draw (-5,0) rectangle (25,1) (3,0) --+ (0,1) (5,0)--+(0,1) (7,0)--+(0,1) (13,0) --+ (0,1) (15,0)--+(0,1) (17,0)--+(0,1);
      \path (-5,0)--+(30,0)node[pos=0.16,below]{low} node[pos=0.5,below]{high} node[pos=0.84,below]{low};
      \draw (5,0)--+(0,-.5)node[below]{cut};
      \draw (15,0)--+(0,-.5)node[below]{cut};
    \end{tikzpicture}
    \caption{Cutting the domain into high and low energy regions. The heavily shaded ends are very tame. The underlying domain of the high energy regions converge strongly with the ends induced by the cutting.}
  \end{figure}
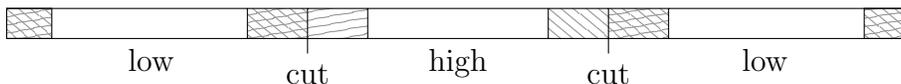
\end{lemma}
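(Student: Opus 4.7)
The approach is to iteratively extract energy concentrations by translation, then separate each concentration from its neighbors by cuts made at very tame positions. Termination of the extraction is guaranteed by the uniform total energy bound.

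Fix $\hbar>0$ small enough for Theorem \ref{theorem:quantization} to apply, and further small enough for the mean-value property (Lemma \ref{lemma:mvp}) to hold with $\hbar$ in place of $\epsilon_0$. I would identify concentrations iteratively: having found $j$ concentrations (sequences $\tau_n^1,\ldots,\tau_n^j$ of translations in $[-R_n,R_n]$, pairwise separated by distances tending to $\infty$, such that each translated sequence $u_n(\cdot+\tau_n^i,\cdot)$ converges in $C^\infty_{\mathrm{loc}}$ to a non-constant holomorphic limit), I would look for another translation sequence, separated from all previous by distances going to $\infty$, along which another non-constant $C^\infty_{\mathrm{loc}}$ limit can be extracted. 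If such a sequence exists, append it; otherwise, halt. Each concentration carries at least $\hbar$ of energy by Theorem \ref{theorem:quantization}, so the process terminates with at most $E/\hbar$ concentrations, where $E$ is the uniform energy bound.

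After reordering, assume $\tau_n^1<\cdots<\tau_n^k$ with $\tau_n^{i+1}-\tau_n^i\to\infty$. For each $i$, I would apply Lemma \ref{lemma:neck-cutting-lemma} to a growing subneck of width $M_n\to\infty$ centered at $\tau_n^i$, producing positions $\alpha_n^i<\tau_n^i<\beta_n^i$ with $\beta_n^i-\alpha_n^i$ bounded, at which $u_n$ is very tame on windows of length $\rho_n'\to\infty$. The tame necks $\mathfrak{n}_n^{2i-1},\mathfrak{n}_n^{2i}$ are placed at $\alpha_n^i$ and $\beta_n^i$. This yields the alternating decomposition: bounded ``high-energy'' pieces $[\alpha_n^i,\beta_n^i]$, which converge strongly by Arzel\`a-Ascoli combined with the higher-derivative estimates of Corollary \ref{corollary:apriori-estimates-2}; and long ``low-energy'' intermediate pieces $[\beta_n^i,\alpha_n^{i+1}]$ with very tame ends on both sides, including the outermost regions, which join the original very tame ends of length $\rho_n$ to the first and last cut positions.

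That the intermediate and outermost pieces really are low-energy must be verified separately: if some such piece had energy at least $\hbar$ infinitely often, one could translate to center its energy and extract another $C^\infty_{\mathrm{loc}}$ limit of $u_n$ which is non-constant (again by Theorem \ref{theorem:quantization}), contradicting the maximality of the extraction. Then Theorem \ref{theorem:quantization} yields $\sup|\d u_n|\to 0$ on each low-energy piece, so those pieces genuinely qualify as low energy with very tame ends. The main technical obstacle will be making the iterative extraction rigorous for boundary-valued maps: the extraction relies on Lemma \ref{lemma:bootstrappingone} to produce $C^\infty_{\mathrm{loc}}$ convergent subsequences after each translation, which requires tracking that the translated sequences remain bounded in $W$ and retain boundary on a convergent sequence of Lagrangians. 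Once this is set up, the combinatorial task of placing the cuts and verifying the alternating structure follows routinely from Lemma \ref{lemma:neck-cutting-lemma}.
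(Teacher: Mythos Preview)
Your overall strategy---locate energy concentrations, surround each by very tame collars, iterate until the remaining energy drops below $\hbar$---is the same as the paper's. Two of the tools you invoke, however, do not deliver what you claim. First, you define a concentration as a translation sequence $\tau_n^i$ along which $u_n(\cdot+\tau_n^i,\cdot)$ converges in $C^\infty_{\mathrm{loc}}$ to a non-constant limit, and cite Lemma~\ref{lemma:bootstrappingone} to produce this convergence. But that lemma only bounds higher derivatives when the energy on a disk tends to \emph{zero}; near a genuine concentration the gradient may blow up (bubbling), and then no $C^\infty_{\mathrm{loc}}$ limit exists without rescaling. The paper sidesteps this entirely: it only locates $s_n$ with $|du_n(s_n,t_n)|$ bounded below, never asserting map convergence on the high-energy piece. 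The lemma only demands strong convergence of the \emph{domain} of the high-energy piece, which follows once its core has bounded modulus---Arzel\`a--Ascoli and Corollary~\ref{corollary:apriori-estimates-2} are irrelevant here.

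Second, Lemma~\ref{lemma:neck-cutting-lemma} applied to a window $[\tau_n^i-M_n,\tau_n^i+M_n]$ produces tame collars at distance $r$ from the \emph{endpoints} $\tau_n^i\pm M_n$, not at bounded distance from $\tau_n^i$; the cuts it yields leave $\tau_n^i$ in a middle piece of width roughly $2M_n-2r-2\rho_n'$, which need not be bounded. The paper instead runs a \emph{centered} $k$-special radius argument: fix the maximal $k$ and a radius $r$ (independent of $n$) such that $E_n([s_n-r,s_n+r])\ge k\hbar$ infinitely often; maximality of $k$ then forces the energy on unit sub-intervals just outside $[s_n-r,s_n+r]$ below $\hbar$, so the mean-value property gives gradient bounds there and one obtains very tame collars $[s_n\pm r,s_n\pm(r+\rho_n')]$ of growing length. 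This is the same mechanism as in the proof of Lemma~\ref{lemma:neck-cutting-lemma}, but anchored at $s_n$ rather than at the neck's endpoints. The paper then recurses on the two remaining side pieces rather than extracting all concentrations in advance; with these two fixes your plan becomes the paper's proof.
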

\begin{proof}
  We pick $\hbar$ small enough that the quantization result in Theorem \ref{theorem:quantization} applies. Then, if $\abs{\d u_{n}}$ converges to zero uniformly, the energy $E_{n}$ must converge to zero, and hence the whole domain is low energy, i.e., no cuts are necessary.

  Thus there exists a sequence $z_{n}$ so that $\abs{\d u_{n}(z_{n})}$ remains bounded below, after passing to a subsequence. Since $u_{n}$ converges to constants on the ends, $z_{n}=s_{n}+it_{n}$ remains far from the ends, i.e., we can find $r_{n}\to \infty$ so that $[s_{n}-r_{n},s_{n}+r_{n}]\times S$ is disjoint from the ends $[-R_{n}-\rho_{n},-R_{n}]\times S$ and $[R_{n},R_{n}+\rho_{n}]\times S$.

  Make two cuts in the region $[s_{n}-r_{n},s_{n}+r_{n}]\times S$, as shown below.
  \begin{figure}[H]
    \centering
    \begin{tikzpicture}[scale=.4]
      \fill[pattern={Lines[angle=10]}, pattern color=black!40!white] (3,0)rectangle+(2,1);
      \fill[pattern={Lines[angle=-40]}, pattern color=black!40!white] (3,0)rectangle+(4,1);    
      \fill[pattern={Lines[angle=10]}, pattern color=black!40!white] (-5,0)rectangle+(1.5,1);
      \draw[pattern={Lines[angle=-40]}, pattern color=black!40!white] (-5,0)rectangle+(1.5,1);    
      \fill[pattern={Lines[angle=10]}, pattern color=black!40!white] (25,0)rectangle+(-1.5,1);
      \draw[pattern={Lines[angle=-40]}, pattern color=black!40!white] (25,0)rectangle+(-1.5,1);
      \fill[pattern={Lines[angle=10]}, pattern color=black!40!white] (15,0)rectangle+(-4,1);
      \fill[pattern={Lines[angle=-40]}, pattern color=black!40!white] (15,0)rectangle+(-2,1);    
      \draw (-5,0) rectangle (25,1) (1,0)--+(0,1)node[above]{$s_{n}-r_{n}$} (17,0)--+(0,1.0)node[above]{$s_{n}+r_{n}$} (3,0) --+ (0,1) (5,0)--+(0,1) (7,0)--+(0,1) (11,0) --+ (0,1) (15,0)--+(0,1) (13,0)--+(0,1);
      \draw (5,0)--+(0,-.5)node[below]{cut};
      \draw (13,0)--+(0,-.5)node[below]{cut};
      \draw[shift={(0,-.4)},<->] (7,0)--node[below]{$2r$}(11,0);
      \draw[shift={(0,.4)},<->] (3,1)--node[above]{$2\rho_{n}'$}+(4,0);
      \draw[shift={(0,.4)},<->] (11,1)--node[above]{$2\rho_{n}'$}+(4,0);
    \end{tikzpicture}
  \end{figure}
  Similarly to the proof of Lemma \ref{lemma:neck-cutting-lemma}, we say $r$ is $k$-special if the energy of $[s_{n}-r,s_{n}+r]\times S$ is greater than $k\hbar$ infinitely often. Fix $r>0$ to be a $k$-special radius for the maximal $k$. As above, there is $\rho_{n}'$ so that $v_{n}$ converges uniformly on $[s_{n}\pm r,s_{n}\pm (r+\rho_{n}')]$ (all after passing to a subsequence).

  Since $r$ is finite, independently of $n$, the middle region converges strongly. Moreover, the left and right regions satisfy the hypotheses of this lemma, and hence the argument can be iterated.

  It remains only to prove that the middle region is high energy, i.e., $k>0$. This is straightforward; if not, then the quantization theorem would imply the derivative on the middle region converged to zero, which it does not. 

  Iterate the argument until there is not enough energy for the iteration to continue. By construction, the resulting decomposition alternates between low energy and high energy regions.
\end{proof}

\subsubsection{Energy decomposition for ends}
\label{sec:e-d-for-ends}
There is an analogous result for ends. Consider a sequence $u_{n}:[0,\infty)\times S\to W$, initially with a single very tame end identified with $[0,\rho_{n}]\times S$ which converges uniformly to a point $x_{-}$. The domain can be decomposed into an alternating sequence of low-energy and high-energy regions, ending with a low energy end. The domains of the high energy regions converge strongly (i.e., the part uncovered by the ends remains of bounded modulus).
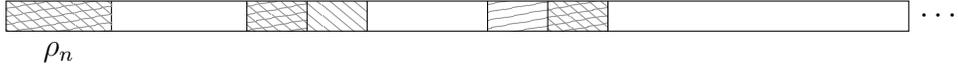
\begin{figure}[H]
  \centering
  \begin{tikzpicture}[scale=.4]
    \fill[pattern={Lines[angle=10]}, pattern color=black!40!white] (15,0)rectangle+(-4,1);
    \fill[pattern={Lines[angle=-40]}, pattern color=black!40!white] (15,0)rectangle+(-2,1);    
    \fill[pattern={Lines[angle=10]}, pattern color=black!40!white] (3,0)rectangle+(2,1);
    \fill[pattern={Lines[angle=-40]}, pattern color=black!40!white] (3,0)rectangle+(4,1);

    \foreach \x in {-1.5,3,5,7,11,13,15} {
      \draw (\x,0)--+(0,1);
    }
    \fill[pattern={Lines[angle=10]}, pattern color=black!40!white] (-5,0)rectangle+(3.5,1);
    \fill[pattern={Lines[angle=-40]}, pattern color=black!40!white] (-5,0)rectangle+(3.5,1);
    \draw (-5,0) rectangle (25,1);
    \draw (-5,0)+(1.75,0)node[below]{$\rho_{n}$};
    \node at (25,0.5)[right]{$\cdots$};
  \end{tikzpicture}
  \caption{A sequence of ends decomposed into a concatenation of a low-high-low sequence. As above, the construction is an iterative process.}
\end{figure}

\subsubsection{Convergence of low energy regions}
\label{sec:low-energy-convergence}
Let $\hbar$ be small enough that the quantization results of \S\ref{sec:quantization-neck} hold. Suppose that $u_{n}$ is a bounded low energy sequence of necks or ends, with ends of length $\rho_{n}$, i.e., $u_{n}$ has limiting energy less than $\hbar$, and $u_{n}$ converges uniformly to points $x_{-}$ and $x_{+}$ at its ends (there is only $x_{-}$ in when $u_{n}$ is defined on $[0,\infty)\times S$).

\begin{theorem}
  We have three cases for the convergence of low energy necks and ends, dependending on the boundary conditions $L^{0}_{n},L^{1}_{n}$.
  \begin{enumerate}
  \item If $S=\R/\Z$, then $x_{-}=x_{+}=x$ and $u_{n}$ converges uniformly to $x$ on the entire domain. 
  \item If $S=[0,1]$ and $L^{0}_{n}=L^{1}_{n}$ for all $n$, then $x_{-}=x_{+}=x$ and $u_{n}$ converges uniformly to $x$ on the entire domain.
  \item If $S=[0,1]$ and $L^{0}_{n}$ converges to $L^{0}$ and $L^{1}_{n}$ converges to $L^{1}$, and $L^{0}\cap L^{1}$ is a finite set, then $x_{-}=x_{+}=x$ is an intersection point in $L^{0}\cap L^{1}$ and $u_{n}$ converges uniformly to $x$ on the entire domain.
  \item If $S=[0,1]$ and $L^{0}_{n}=L_{\epsilon_{n}\mathfrak{a}_{n}}$ and $L^{1}_{n}=L_{\epsilon_{n}\mathfrak{b}_{n}}$, i.e., the boundary conditions are of adiabatic type, then $u_{n}$ converges to a \emph{broken flow line} for the limiting Morse function $f_{\infty}$ satisfying $\lim_{n}\mathfrak{b}_{n}-\mathfrak{a}_{n}=\lim_{n}\d f_{n}=\d f_{\infty}$.
  \end{enumerate}
\end{theorem}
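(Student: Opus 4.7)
The plan is to treat the four cases separately, since each reflects a different boundary configuration. The common input in all cases is that $\limsup_n E_n < \hbar$ together with the quantization result (Theorem \ref{theorem:quantization}) already delivers $\sup_{s,t}|\d u_n(s,t)|\to 0$ and $\lim_n E_n = 0$. Cases (i)--(iii) then reduce to showing that a holomorphic map with uniformly vanishing derivative, bounded image, and $C^0$-limiting ends can only converge to a single constant, and I expect these to follow from standard bubbling/isoperimetric arguments; the genuinely new content is case (iv), which requires the $C^1$ exponential estimate of Lemma \ref{lemma:c1estimates}.

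For case (i), fix a local Darboux chart around $x_-$; for $n$ large the whole image lies in this chart (by the uniform smallness of $\d u_n$ and very tame ends). The isoperimetric inequality applied to each loop $t\mapsto u_n(s,t)$ gives $\int_{\{s\}\times \R/\Z}u_n^*\lambda \le C \mathrm{length}(u_n(s,\cdot))^2$, which combined with Stokes gives pointwise control of the primitive energy; letting $\sup|\d u_n|\to 0$ forces $x_- = x_+$ and uniform convergence to that common point. Case (ii) follows by the Schwarz reflection principle: doubling across the common Lagrangian boundary $L^0_n = L^1_n$ (using that $J|_L$ extends to a reflection-symmetric almost complex structure since $\omega(-,J-)|_L$ is a metric) converts the strip to a cylinder, and case (i) applies. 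For case (iii), uniform convergence on the tame ends forces $x_\pm$ to be limits of points lying on both $L^0_n$ and $L^1_n$ simultaneously, hence $x_\pm \in L^0\cap L^1$; the same kind of connectivity argument as in case (i), applied in a Darboux chart around each isolated intersection point, then shows $x_- = x_+$ and $u_n$ collapses to it.

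For case (iv), I would choose a Levi-Civita tubular neighbourhood of $L = L^{\pi(i)}$ as in \S\ref{sec:tubnub_thm}, so that for large $n$ the map $u_n = (Q_n,P_n)$ is expressed in the $(Q,P)$ coordinates of \S\ref{sec:PandQ}. Form the perturbed vertical component $\tilde P_n$ as in \eqref{eq:perturbedP}. Lemma \ref{lemma:c1estimates} gives, on the interior $[-R_n,R_n]\times[0,1]$ of the long middle region,
\begin{equation*}
  |\tilde P_n| + |\nabla_s\tilde P_n| + |\nabla_t \tilde P_n| \le \kappa_n\bigl(e^{-d(R_n+s)} + e^{-d(R_n-s)} + \epsilon_n\bigr),
\end{equation*}
so away from the endpoints $\tilde P_n = O(\epsilon_n)$ and thus $P_n \approx \epsilon_n\bigl(\mathfrak a_n + t(\mathfrak b_n-\mathfrak a_n)\bigr)\circ Q_n$. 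Inserting this into the perturbed holomorphic curve equation \eqref{eq:perturbJ} and absorbing the $\tilde P_n$-terms yields the approximate evolution equations $g_*\partial_s Q_n = \epsilon_n\,(\mathfrak b_n-\mathfrak a_n)\circ Q_n + O(\epsilon_n\kappa_n)$ and $\partial_t Q_n = O(\epsilon_n)$. The rescaling $\hat Q_n(\sigma,t) := Q_n(\sigma/\epsilon_n,t)$ then produces a sequence defined on $[-\epsilon_n(R_n+\rho_n),\epsilon_n(R_n+\rho_n)]\times[0,1]$ with uniformly bounded first derivative, $t$-derivative tending to zero, and approximately satisfying $\partial_\sigma \hat Q_n = \nabla f_\infty(\hat Q_n)$. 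After passing to a subsequence, Arzel\`a-Ascoli together with the standard Morse-theoretic breaking argument (whenever $\hat Q_n$ spends asymptotically infinite rescaled time near a critical point, record a breaking) produces a limiting broken flow line $\hat Q_\infty$ for $f_\infty$.

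The main obstacle will be matching the limiting broken flow line with the endpoint data at $s = \pm(R_n+\rho_n)$. The exponential bound on $\tilde P_n$ degenerates exactly where the tame end regions begin, and the rescaling by $\epsilon_n$ contracts the decay region to a point in the rescaled domain while stretching the flow-line region to possibly infinite length, so the two regimes must be glued carefully. The key input here is that the outer tame ends provide $C^0$-convergence of $u_n$ to prescribed points, which must be matched to the extreme endpoints of the broken flow line; this matching is done by combining the decay estimate on $\tilde P_n$ (which pins the map to the zero section up to scale $\epsilon_n$) with the uniform convergence on the very tame ends, and then invoking stability of broken gradient flow lines under perturbation of the endpoint conditions. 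When an end is exterior in the graph $\Gamma$, one further uses that any accumulation point of $\hat Q_\infty$ at $\pm\infty$ in the rescaled variable $\sigma$ must be a critical point of $f_\infty$, which is a standard consequence of the gradient flow equation and finite total length between criticalities.
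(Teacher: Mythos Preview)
Your overall structure matches the paper's: the theorem is proved by deferring to four separate propositions, and the quantization Theorem \ref{theorem:quantization} is indeed the common first step forcing $\sup|\d u_n|\to 0$. Cases (i) and (iii) are sketched along the right lines. However, your treatment of case (ii) via Schwarz reflection is not the paper's approach and is problematic as stated: the doubled almost complex structure across $L_n$ is in general only continuous, not smooth, since the hypothesis $J_n|_L = J|_L$ is $\omega$-compatible does not yield a reflection symmetry of $J_n$ in a neighbourhood. The paper (Proposition \ref{prop:low-e-strips-2}) instead argues directly: writing $L_n$ as the graph of a closed $1$-form $\beta_n\to 0$ in a Weinstein neighbourhood, the primitive $\lambda_n = -\lambda_{\mathrm{can}} + \pr^*\beta_n$ vanishes on $TW|_{L_n}$, so Stokes gives $E_n(r)=\int\gamma_{r,n}^*\lambda_n - \int\gamma_{-r,n}^*\lambda_n$, and since $\lambda_n$ vanishes on the zero section one obtains $E_n(r)\le C E_n'(r)$ exactly as in the cylinder case, with no doubling.

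For case (iv), your use of Lemma \ref{lemma:c1estimates} and rescaling is the paper's, but the endpoint matching you flag as the ``main obstacle'' is left unresolved, and your proposed fix via ``stability of broken gradient flow lines under perturbation of endpoint conditions'' does not apply here: the issue is not a perturbation of a fixed ODE but the transition between two asymptotic regimes of the exponential estimate. The paper (Lemma \ref{lemma:finite-number-case}) resolves this concretely by setting $\rho_n := (\epsilon_n/\delta)\log(1/\epsilon_n)\to 0$ and splitting the rescaled domain $[-\ell_n,\ell_n]$ into the interior $[-\ell_n+\rho_n,\ell_n-\rho_n]$, where the exponential terms $e^{-\delta(r_n\pm s)}/\epsilon_n$ are dominated by $1$ and the estimate becomes $|\partial_s q_n - g_*^{-1}\mathfrak{c}_n\circ q_n|+|\partial_t q_n|\le 6\kappa_n$ (whence Gronwall controls the distance to an actual flow line), and two end regions of rescaled width $\rho_n$. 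On those end regions one goes back to the unrescaled map and integrates the exponential bound on $|\d Q_n|$ over the strip $[-r_n,-r_n+\delta^{-1}\log(1/\epsilon_n)]\times[0,1]$; the integral of $e^{-\delta(r_n+s)}$ is bounded by $1/\delta$, and the linear term $\epsilon_n\cdot\delta^{-1}\log(1/\epsilon_n)\to 0$, so the diameter of the image of each end region vanishes. The breaking when $\epsilon_n r_n\to\infty$ (Theorem \ref{theorem:main_morse}) is then handled by an iterative argument using the Morse energy $\epsilon_n^{-1}E(v_n;(a_n,b_n))\to f_\infty(y_+)-f_\infty(y_-)$, which is bounded below by a positive constant on any subinterval converging to a nonconstant flow line.
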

\begin{proof}
  The proofs are given in Proposition \ref{prop:low-e-cyl}, \ref{prop:low-e-strips}, \ref{prop:low-e-strips-2}, \ref{prop:low-e-strips-3}.  
\end{proof}
In particular, we observe that any low-energy neck is \emph{a priori} very tame, and hence can be glued\footnote{See Figure \ref{fig:tame-very-tame} for the gluing of tame and very tame ends.} onto any adjacent tame end \emph{except in the adiabatic boundary conditions case}. Thus these adiabatic low-energy necks should be interpreted as stable objects in the limit of a sequence, i.e., they cannot simply be absorbed into the ends of another component.

\subsubsection{Digression on convergence to flow lines}
\label{sec:digression-flow-line}
We introduce a few of the concepts used in \S\ref{sec:low-energy-strips} concerning the convergence to broken flow lines.

In the adiabatic case, recall that $L^{0}_{n}=L_{\epsilon_{n}\mathfrak{a}_{n}}$ and $L^{1}_{n}=L_{\epsilon_{n}\mathfrak{b}_{n}}$, where $\mathfrak{a}_{n},\mathfrak{b}_{n}$ converge, $\mathfrak{b}_{n}-\mathfrak{a}_{n}=\d f_{n}$ converges to $\d f_{\infty}$ for a Morse function $f_{\infty}$, and $\epsilon_{n}>0$ converges to $0$.

Since Morse functions are non-zero somewhere, $\epsilon_{n}$ is uniquely determined from $L^{0}_{n},L^{1}_{n}$ by the requirement that $\mathfrak{a}_{n},\mathfrak{b}_{n}$ converge, up to the equivalence relation which requires that $\log(\epsilon_{n}'/\epsilon_{n})$ remains bounded. In particular, $c^{-1}\epsilon_{n}'<\epsilon_{n}<c\epsilon_{n}'$ for some $c>0$ holds for any two sequences.
\begin{defn}
  A sequence $u_{n}:[a_{n},b_{n}]\times S\to W$ converges \emph{uniformly to a flow line} $v_{\infty}$, with rescaling parameter $\epsilon_{n}\to 0$, if there is a sequence $s_{n}$ so that:
  \begin{equation*}
    \lim_{n\to \infty}\sup_{s,t}\mathrm{dist}_{W}(u_{n}(s,t),v_{\infty}(\epsilon_{n}(s-s_{n})))=0.
  \end{equation*}
  We distinguish a few cases:
  \begin{enumerate}
  \item $\epsilon_{n}(b_{n}-a_{n})$ converges to a finite number $\ell$. In this case $v_{\infty}$ is defined on an interval of length $\ell$.
  \item $\epsilon_{n}(b_{n}-a_{n})$ diverges to $\infty$, but one of $\epsilon_{n}(a_{n}-s_{n})$ or $\epsilon_{n}(b_{n}-s_{n})$ converges to a finite number. Then $v_{\infty}$ is defined on a half-infinite interval. We can replace $s_{n}$ by either $a_{n}$ or $b_{n}$ in this case, so that the half-infinite interval starts at $0$.
  \item The final case is when $v_{\infty}$ is defined on $\R$.
  \end{enumerate}
\end{defn}
\begin{theorem}\label{theorem:main_morse}
  Let $u_{n}:[-R_{n}-\rho_{n},R_{n}+\rho_{n}]\times [0,1]$ be a low energy strip with adiabatic boundary conditions, as above. If $\epsilon_{n}(R_{n}+\rho_{n})$ is bounded above, then, after passing to a subsequence, $u_{n}$ converges uniformly to a finite length flow line, with rescaling parameter $\epsilon_{n}$.

  Otherwise, there exist finitely many disjoint necks $\mathfrak{n}^{i}_{n}$ contained in $[-R_{n},R_{n}]$ so that if we symmetrically cut along these necks, we obtain an alternating sequence of sub-strips
  \begin{equation*}
    [-R_{n}-\rho_{n},R_{n}+\rho_{n}]\times [0,1]=\Sigma^{-}_{n}\cup \sigma^{0}_{n}\cup \Sigma^{1}_{n}\cup \dots\cup \Sigma^{k}_{n}\cup \sigma^{k}_{n}\cup \Sigma^{+}_{n},
  \end{equation*}
  where the restriction of $u_{n}$ to:
  \begin{enumerate}
  \item $\sigma^{k}_{n}$ converges uniformly to a constant map valued at a critical point $y^{i}$ for $f_{\infty}$. In particular, $\sigma^{k}_{n}$ is very tame.
  \item $\Sigma^{i}_{n}=[a^{i}_{n},b^{i}_{n}]\times [0,1]$ converges to an infinite flow line, with rescaling parameter $\epsilon_{n}$, joining $y^{i-1}$ and $y^{i}$, when we use $\psi^{i}_{n}(s,t)=\epsilon_{n}(s-\frac{1}{2}(a^{i}_{n}+b^{i}_{n}))$,
  \item $\Sigma^{-}_{n}=[-R_{n}-\rho_{n},b^{-}_{n}]\times [0,1]$ converges to a (positively) half-infinite flow line joining a point $x^{-}$ to $y^{0}$, while $\Sigma^{+}_{n}=[a_{n}^{+},R_{n}+\rho_{n}]\times [0,1]$ converges to a (negatively) half-infinite flow line joining $y^{k}$ to a point $x^{+}$, both with rescaling parameter $\epsilon_{n}$.
  \end{enumerate}
  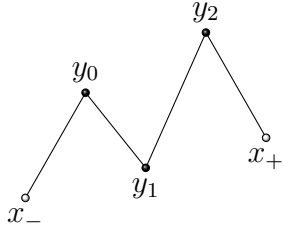
\begin{figure}[H]
    \centering
    \begin{tikzpicture}
      \begin{scope}
        \clip (-.5,-.4) rectangle (3.6,2.7);
        \path (0,0)-- coordinate[pos=0](x-) coordinate[pos=0.4](y2)coordinate [pos=0.8](y4)(4,1);
        \path (0,1)--coordinate[pos=0.2](y1)coordinate[pos=0.6](y3)coordinate[pos=1](x+)(4,3);
        \draw (x-)node[below]{$x_{-}$}--(y1)node[above]{$y_{0}$}--(y2)node[below]{$y_{1}$}--(y3)node[above]{$y_{2}$}--(y4)node[below]{$x_{+}$};
        \path[every node/.style={draw,circle,inner sep=1pt,shading=ball,ball color=black}] (x-)node[ball color=white]{}--(y1)node{}--(y2)node{}--(y3)node{}--(y4)node[ball color=white]{};
      \end{scope}
    \end{tikzpicture}
    \caption{A broken gradient flow line. The points $x_{\pm}$ need not be critical points.}
    \label{fig:broken-flow}
  \end{figure}
\end{theorem}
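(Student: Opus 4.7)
My plan is to transfer the $J$-holomorphic curve equation for $u_n$ into an approximate gradient flow equation for the horizontal component $Q_n=\pr(u_n)$, then separate the ``bounded rescaled length'' case (pure Arzelà--Ascoli) from the ``unbounded rescaled length'' case (breaking into flow-line segments joined at critical points of $f_\infty$).

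\textbf{Step 1: derive an approximate gradient flow.} In Levi-Civita coordinates write $u_n=(Q_n,P_n)$ and set $\tilde{P}_n=P_n-\epsilon_n(1-t)\mathfrak{a}_n\!\circ\! Q_n-\epsilon_n t\,\mathfrak{b}_n\!\circ\! Q_n$. Since $u_n$ is low energy, Lemma \ref{lemma:c1estimates} gives
\begin{equation*}
\abs{\tilde P_n}+\abs{\nabla \tilde P_n}\le \kappa_n\bigl(e^{-d(R_n+s)}+e^{-d(R_n-s)}+\epsilon_n\bigr)
\end{equation*}
on $[-R_n,R_n]\times[0,1]$, with $\kappa_n\to 0$. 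On any region whose distance to the lateral ends is $\gg \log(\epsilon_n^{-1})$ the exponential terms are dominated by $\epsilon_n$. Substituting the definition of $\tilde P_n$ into the perturbed Cauchy--Riemann system \eqref{eq:perturbJ} and using $\mathfrak{b}_n-\mathfrak{a}_n=\d f_n$ gives
\begin{equation*}
g_*\,\partial_s Q_n=\epsilon_n\,\d f_n\!\circ\! Q_n+r_n^{(1)},\qquad g_*\,\partial_t Q_n=r_n^{(2)},
\end{equation*}
where $\abs{r_n^{(i)}}=o(\epsilon_n)$ in the interior. Thus $Q_n$ is nearly $t$-independent, and in the rescaled variable $s':=\epsilon_n s$ the map $\hat Q_n(s',t):=Q_n(s'/\epsilon_n,t)$ approximately satisfies $\partial_{s'}\hat Q_n=\nabla f_n(\hat Q_n)$ with an $o(1)$ error.

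\textbf{Step 2: bounded rescaled length.} Suppose $\epsilon_n(R_n+\rho_n)\le M$. Then $\hat Q_n$ is defined on an interval of uniformly bounded length with uniformly bounded $C^1$-norm (via Step 1), so Arzelà--Ascoli and elliptic regularity extract a subsequential uniform limit $\hat Q_\infty(s')$ satisfying the gradient flow ODE for $f_\infty$. The very-tame ends of length $\rho_n$ collapse to the endpoints of $\hat Q_\infty$ under rescaling, giving a finite-length flow line.

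\textbf{Step 3: breaking.} In the unbounded case, the conserved quantity $\int\abs{\nabla f_\infty}^2\d s'=\mathrm{osc}(f_\infty)$ bounds the rescaled measure of the set where $Q_n$ stays outside any given open neighborhood $U_\eta$ of $\mathrm{Crit}(f_\infty)$ (on which $\abs{\nabla f_\infty}\ge\eta$) by $\mathrm{osc}(f_\infty)/\eta^2$, uniformly in $n$. A topological argument then shows that, after a subsequence, $[-R_n-\rho_n,R_n+\rho_n]$ splits into at most $k+2$ maximal intervals $\Sigma_n^-,\Sigma_n^1,\ldots,\Sigma_n^k,\Sigma_n^+$ on which $Q_n(\,\cdot\,,1/2)$ exits $U_\eta$, each of bounded rescaled length, interleaved with neck intervals $\sigma_n^i$ on which $Q_n$ is trapped near a single critical point $y^i\in \mathrm{Crit}(f_\infty)$. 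Applying Step 2 to each $\Sigma_n^i$ (after recentering) produces the asserted flow lines; the very-tameness of the $\sigma_n^i$ and the matching of endpoints across $\Sigma_n^i\cup\sigma_n^i\cup\Sigma_n^{i+1}$ is what remains.

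\textbf{Main obstacle.} The delicate part is establishing that on each neck $\sigma_n^i$ the map $Q_n$ converges \emph{uniformly} to the constant map at $y^i$, rather than merely remaining in $U_\eta$; equivalently, that the flow-line endpoints of $\Sigma_n^i$ and $\Sigma_n^{i+1}$ (which are \emph{a priori} only known to lie in $U_\eta$) coincide and equal $y^i$. This uses exponential stability at the hyperbolic fixed point $y^i$: the linearization of $\nabla f_\infty$ at a Morse critical point has spectrum bounded away from the imaginary axis, so trajectories of the approximate gradient flow that remain in a small enough neighborhood of $y^i$ must approach $y^i$ at a rate controlled by the Hessian, outpacing the $O(\epsilon_n)$ error. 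Combining this with a second iteration of the cutting argument from \S\ref{sec:cutting_and_pasting} to shrink the neighborhood $U_\eta$ as $n\to\infty$ upgrades the trapped-in-$U_\eta$ statement to uniform convergence to $y^i$, completing the identification of the limit with a broken Morse trajectory as depicted in Figure \ref{fig:broken-flow}.
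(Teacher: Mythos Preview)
Your Steps 1 and 2 are essentially the content of Lemma \ref{lemma:finite-number-case}, and the paper proceeds the same way there. The divergence is in Step 3 and in your handling of the ``main obstacle.''

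The paper does \emph{not} use a measure bound on $\int|\nabla f|^2$ derived from the ODE, nor hyperbolic stability. Instead it introduces the \emph{Morse energy} $\mathrm{ME}(v_n;I)=\epsilon_n^{-1}\int_I u_n^*\omega$ and computes it by Stokes' theorem with the primitive $\lambda_n=-\lambda_{\mathrm{can}}+\epsilon_n\pr^*\mathfrak{a}_n$: one gets $\mathrm{ME}(v_n;I)=f_n(u_n(\text{right}))-f_n(u_n(\text{left}))+o(1)$, where the $o(1)$ comes only from the two short vertical boundary integrals and is independent of the length of $I$. This is the key point your argument is missing. The breaking then runs as follows: if the diameter of $v_n$ on a sub-interval does not tend to zero, pick $s_n$ with $v_n(s_n,\cdot)$ converging to a non-critical point, apply Lemma \ref{lemma:finite-number-case} on growing windows around $s_n$ to extract a nonconstant infinite flow line, which carries Morse energy at least $\hbar=\min\{f(y_+)-f(y_-)\}>0$ over nonconstant connecting orbits. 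Iterate; after finitely many steps the leftover intervals have Morse energy $\to 0$, hence diameter $\to 0$, and since their endpoints already converge to critical points (as ends of the extracted flow lines) the whole neck converges uniformly to that critical point.

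Your route has a genuine gap at exactly the place you flag. Both the global bound $\int|\nabla f_\infty(\hat Q_n)|^2\,ds'\le\mathrm{osc}(f_\infty)$ and the neck convergence require controlling error accumulated over rescaled time $T_n\to\infty$, and the approximate ODE $\partial_{s'}\hat Q_n=\nabla f_n(\hat Q_n)+o(1)$ alone does not provide this: integrating $\partial_{s'}(f_n\circ\hat Q_n)=|\nabla f_n|^2+\langle\nabla f_n,o(1)\rangle$ over an interval of length $T_n$ leaves an error of order $o(1)\cdot T_n$, which need not be small. The hyperbolic-stability heuristic fails for the same reason: in Morse coordinates near a saddle, $\partial_{s'}x^+=\lambda x^++e_n$ with $|e_n|\le\delta_n$, so $x^+(s')$ contains a term of size $\delta_n e^{\lambda s'}/\lambda$, which is uncontrolled once $s'\gtrsim\lambda^{-1}\log(1/\delta_n)$; you have no relation between $\delta_n$ and the neck length. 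Shrinking $\eta$ with $n$ does not help, since the drift bound depends on $\delta_n$ and $T_n$, not on $\eta$. The Stokes identity is precisely what replaces this accumulating ODE error by a clean telescoping difference of $f$-values.
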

The proof is given in \S\ref{sec:proof_main_morse}.

\subsection{Formation of bubble trees}
\label{sec:tame-ends-2}
Let $(u_{n},\Sigma_{n},\Theta_{n},\mathfrak{e}_{n})$ converge with marked points to $(\Sigma,\Theta)$, and have tame ends. Suppose that $u_{n}$ satisfies the bounded, finite energy, and boundary condition assumptions from \S\ref{sec:boundary_conditions}.

It is still possible that $u_{n}$ fails to converge uniformly (along any subsequence), due to a \emph{bubbling phenomenon}. The derivative of $u_{n}$ may blow up, and this will lead to the formation of a bubble, i.e., a small region on $u_{n}$ being expanded to have a large diameter in $W$. This phenomenon will obviously prevent uniform convergence to a map defined on $\Sigma$.

Even in cases where one expects to have derivative bounds, e.g., in exact symplectic manifolds, one has bubbling at the marked points. This is because the map $u_{n}$ is not required to be defined at the marked points.
\begin{figure}[H]
  \centering
  \begin{tikzpicture}
    \draw (-2,0)--(2,0);
    \draw (0,-1) circle (1);
    \draw (0,-2.5) circle (.5);
    \draw (0,-2.5) + (-20:1) circle (.5);
    \path[every node/.style={draw,circle,fill,inner sep=1pt}] (0,0)node{}--(0,-2)--(0,-3)node{};
  \end{tikzpicture}
  \caption{Bubbling at a boundary marked point.}
  \label{fig:bubbling-switch}
\end{figure}
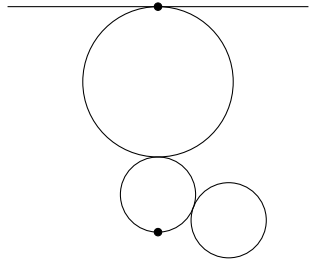

\subsubsection{Bubbling}
\label{sec:addition-gamma}

Continuing with the set-up of the previous section, let $\psi_{n}:(\Sigma_{n},\Theta_{n})\to (\Sigma,\Theta)$ realize the convergence of the underlying domain with marked points.

\begin{lemma}
  After adding a finite set $\Gamma$ to $\Theta$, we can ensure that $u_{n}\circ \psi_{n}^{-1}$ has bounded derivative on the complement of any neighbourhood of $\Gamma\cup \Theta$ (allowing subsequences). 
\end{lemma}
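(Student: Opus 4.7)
The plan is to define $\Gamma\subset \Sigma$ to be the set of ``bubble points,'' prove it is finite using energy quantization, and then verify that away from any neighbourhood of $\Gamma\cup\Theta$ the derivatives cannot escape to infinity. Fix Riemannian metrics $g_n$ on $\Sigma_n$ so that $\psi_{n,*}g_n$ converges on compact sets to a complete metric $g$ on $\Sigma$, and a fixed Riemannian metric on $W$. Declare $p\in \Sigma$ to lie in $\Gamma$ if and only if there exists a subsequence and points $z_n\in \Sigma_n$ with $\psi_n(z_n)\to p$ and $|\d u_n(z_n)|_{g_n}\to\infty$. Since the ends $\mathfrak{e}_n$ are tame, $u_n$ restricted to the ends converges in $C^1$, so any such blow-up sequence $z_n$ is eventually contained in the complement of the ends; in particular $\Gamma$ is contained in a compact portion of $\Sigma$.

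Next I would show $|\Gamma|<\infty$. For each $p\in \Gamma$, apply Hofer's lemma at the blow-up points $z_n$ to produce new points $\tilde z_n$ with $\psi_n(\tilde z_n)\to p$, blow-up rates $c_n:=|\d u_n(\tilde z_n)|\to\infty$, and scales $\delta_n\to 0$ with $c_n\delta_n\to\infty$ so that $|\d u_n|\le 2c_n$ on the $g_n$-ball of radius $\delta_n$ about $\tilde z_n$. Pulling back by an approximately conformal chart and rescaling by $c_n$ produces a sequence $v_n$ defined on larger and larger disks (or half-disks if $\tilde z_n$ approaches the boundary) with uniformly bounded first derivative and unit derivative at the origin. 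The elliptic regularity result of \S\ref{sec:ell_reg_appx} and Arzel\`a--Ascoli give a $C^\infty_{\mathrm{loc}}$-convergent subsequence whose limit $v_\infty$ is a non-constant, bounded, $J$-holomorphic map $\mathbb{C}\to W$ or $\mathbb{H}\to(W,L)$, which extends across infinity by removable singularities to a non-constant $J$-holomorphic sphere or disk. The quantization result (Theorem \ref{theorem:quantization}) forces $E(v_\infty)\ge \hbar$, and by Fatou this energy is absorbed from $u_n$ in arbitrarily small neighbourhoods of $p$. Since distinct points of $\Gamma$ absorb disjoint energy, $|\Gamma|\le \sup_n E(u_n)/\hbar<\infty$.

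To verify the derivative bound, let $U$ be any open neighbourhood of $\Gamma\cup \Theta$; write $\Sigma\setminus U=K\cup E$ where $E$ is contained in the union of the ends of $\Sigma$ and $K$ is a compact set in $\Sigma$ disjoint from $\Gamma\cup\Theta$. On $E$ the tame-ends assumption directly gives a uniform $C^1$ bound on $u_n\circ\psi_n^{-1}$. On $K$, suppose for contradiction that a subsequence satisfies $|\d(u_n\circ\psi_n^{-1})(p_n)|\to\infty$ with $p_n\in K$; since $K$ is compact, $p_n$ subconverges to some $p\in K$, and by construction $p\in \Gamma$, contradicting $K\cap\Gamma=\emptyset$.

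The main obstacle is ensuring that boundary bubbles are correctly accounted for and that the extraction of a non-constant bubble limit is valid even when $\tilde z_n$ approaches $\bd\Sigma_n$; this is handled by using the boundary Hofer lemma and the boundary version of the elliptic regularity in \S\ref{sec:ell_reg_appx}, together with the hypothesis that the $L^i_n$ are $J_n$-totally real and converge to $L^i$, so that the rescaled limit is a disk with boundary on the limiting Lagrangian. A minor bookkeeping point is that the derivative on $\Sigma_n$ is measured in $g_n$ while the convergence is of $\psi_{n,*}g_n\to g$, but on compact subsets of $\Sigma$ away from the ends these metrics are uniformly comparable for large $n$, so the notions of ``bounded derivative'' agree in the limit.
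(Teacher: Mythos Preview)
Your overall strategy is the same as the paper's (Hofer's lemma, rescaling, extraction of a non-constant bubble, energy quantization), and your verification paragraph at the end is correct. However, there is a genuine gap in your finiteness argument for $\Gamma$.

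You define $\Gamma$ as the set of \emph{all} points $p$ such that \emph{some} subsequence blows up at $p$. With this definition, distinct points of $\Gamma$ may correspond to disjoint subsequences of $n$, so the sentence ``distinct points of $\Gamma$ absorb disjoint energy'' is not justified: the $\hbar$ absorbed near $p_1$ lives along one subsequence, the $\hbar$ near $p_2$ along another, and these cannot be summed against a single $\sup_n E(u_n)$. For a concrete scenario, imagine $E(u_n)\equiv\hbar$ with all the energy concentrating near $p_1$ for odd $n$ and near $p_2$ for even $n$; then your $\Gamma=\{p_1,p_2\}$ but your bound would give $|\Gamma|\le 1$. The paper avoids this by building $\Gamma$ \emph{iteratively}: start with $\Gamma=\emptyset$; if the derivative is unbounded on some compact set disjoint from $\Gamma\cup\Theta$, pass to a subsequence realizing the blow-up, add the limit point to $\Gamma$, and repeat. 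Each added point then yields a bubble along the \emph{current} subsequence, so the energies genuinely add and the process terminates after at most $\sup_n E(u_n)/\hbar$ steps. Your argument becomes correct once you reorganize it this way.

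A minor point: the quantization you need for bubbles (spheres and disks) comes from the mean-value inequality (Lemma~\ref{lemma:mvp}), not from Theorem~\ref{theorem:quantization}, which concerns low-energy necks.
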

\begin{proof}
  This follows from Hofer's bubbling argument in \S\ref{sec:hofer-bubbling}, and the fact that the ends are tame (and hence converge in $C^{1}$, by assumption).
\end{proof}
Note that this induces marked points $\Gamma_{n}:=\psi_{n}^{-1}(\Gamma)$ so that $(\Sigma_{n},\mathfrak{e}_{n},\Theta_{n}\cup \Gamma_{n},\psi_{n})$ converges to $(\Sigma,\Theta\cup \Gamma)$. Fix convergent sequence of holomorphic disks or half-disks centered on a point $\zeta_{n}\in \Theta_{n}\cup \Gamma_{n}$. By a small modification of the map $\psi_{n}$, we may assume that:
\begin{enumerate}
\item $\zeta_{n}=z_{n}\in \Gamma_{n}^{\mathrm{int}}$ locally maximizes the derivative $\abs{\d u_{n}}$, and
\item $\zeta_{n}=s_{n}\in \bd \Gamma_{n}$ can be chosen so that $z_{n}=s_{n}+it_{n}$ locally maximizes the derivative for $t_{n}\to 0$.
\end{enumerate}

\begin{defn}
  For every puncture $\zeta$ in $\Theta\cup \Gamma$, let $$E(\zeta)=\inf_{U}\limsup_{n\to\infty}E_{n}(\psi_{n}^{-1}(U)),$$ where $U$ ranges over open neighbourhoods of $\zeta$. Let $\hbar>0$ be a small quantity of energy so that the quantization results of \S\ref{sec:quantization-neck} hold. The \emph{bubbling energy level} of the sequence is the maximal $k$ for which: $$\sum_{\zeta\in \Gamma^{\mathrm{int}}} E(\zeta)+2\sum_{\zeta\in \bd \Gamma}E(\zeta)+3\sum_{\zeta\in \Theta}E(\zeta)\ge k\hbar.$$ The reason why we include the points with weights is so that the iterative argument strictly decreases the bubbling energy level at each step (and hence must terminate).
\end{defn}
\begin{lemma}
  Let $(\Sigma_{n},\mathfrak{e}_{n},\Theta_{n}\cup \Gamma_{n},\psi_{n},u_{n})$ be as above. Then, after passing to a subsequence and cutting along finitely many concentric necks around the points in $\Theta_{n}\cup \Gamma_{n}$, the resulting sequence has bubbling energy level strictly less than initial sequence.

  In particular, after making finitely many cuts, (resulting in a new sequence with new ends), the bubbling energy level is zero.
\end{lemma}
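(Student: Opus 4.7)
The plan is to pick a puncture $\zeta \in \Theta \cup \Gamma$ with $E(\zeta) > 0$ (if none exists then the bubbling energy level is already zero), use Hofer's rescaling to extract a non-constant holomorphic bubble $v_\infty$ whose interior $\omega$-energy is at least $\hbar$, apply Lemma \ref{lemma:end-cutting-lemma} to make a concentric cut isolating $v_\infty$, and verify that this single cut strictly decreases the weighted sum in the definition of bubbling energy level by at least $\hbar$. Since that weighted sum is finite and the bubbling energy level is its integer floor, finitely many iterations suffice to drive it to zero.

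To build the bubble, use the convergent coordinate charts from \S\ref{sec:coordinate_disks} and the exponential reparametrization from \S\ref{sec:for_ends} for $\Theta$-punctures to present a neighbourhood of $\zeta_n$ as a sequence $v_n$ of $J_n$-holomorphic maps defined on increasing exhaustions of the half-infinite end $[0,\infty) \times S$, with $S = \R/\Z$ for interior punctures and $S = [0,1]$ for boundary punctures (with the appropriate converging Lagrangian conditions). If $|\d v_n|$ remained uniformly bounded on compact sets, Lemma \ref{lemma:end-cutting-lemma} together with Theorem \ref{theorem:quantization} would force $E(\zeta) = 0$, contradicting our choice; hence $|\d v_n|$ blows up, and the standard Hofer rescaling trick produces a non-constant bounded holomorphic bubble $v_\infty$ (a holomorphic sphere, half-plane, or disk image according to the type of $\zeta$). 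Applying Theorem \ref{theorem:quantization} to small annular ends isolating each puncture of $v_\infty$ shows that the interior energy of $v_\infty$---its total $\omega$-energy minus the sum of the asymptotic energies at its own punctures---is at least $\hbar$.

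To isolate the bubble, apply Lemma \ref{lemma:end-cutting-lemma} to $v_n$ at a scale intermediate between the bubble's rescaling parameter and the fixed coordinate scale near $\zeta_n$. This yields $r > 0$ and $\rho_n \to \infty$ such that $u_n$ restricted to the corresponding annular region converges uniformly to a tame holomorphic limit, very tame on its inner portion. Cutting at $s = r + \rho_n$ introduces a nodal pair whose two punctures inherit the interior/boundary type of $\zeta$ and both lie in $\Gamma$ (as newly formed bubbling nodes), hence each carries weight $1$ (interior) or $2$ (boundary), irrespective of whether $\zeta$ originally lay in $\Theta$ with weight $3$ or in $\Gamma$. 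Further bubbling within $v_\infty$ produces additional punctures on the bubble side, also of weight at most $2$, to be addressed by subsequent iterations.

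The weighted-sum accounting closes the argument. Letting $w_\zeta \in \{1, 2, 3\}$ denote the weight at $\zeta$, and noting that every newly introduced puncture has weight at most $\min(w_\zeta, 2)$ (a sphere bubble contributes only weight-$1$ punctures, while a half-plane/disk bubble contributes punctures of weight at most $2$), the old contribution $w_\zeta E(\zeta)$ is replaced by a new contribution bounded above by $\min(w_\zeta, 2)\,(E(\zeta) - \hbar)$. A case check on $w_\zeta \in \{1, 2, 3\}$ shows that the decrease is at least $\hbar$ in every case (and at least $2\hbar$ when $w_\zeta \geq 2$). The principal technical obstacle is ensuring that the $\hbar$ of ``captured'' energy is truly concentrated in the interior of $v_\infty$ rather than at its own further punctures; this is the content of Theorem \ref{theorem:quantization} applied to the bubble after using small annular ends to isolate each bubble-puncture.
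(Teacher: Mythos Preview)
Your argument has a genuine gap in the treatment of punctures $\zeta\in\Theta$. You claim that if $\abs{\d v_{n}}$ remained bounded on compact subsets of the end $[0,\infty)\times S$, then Lemma~\ref{lemma:end-cutting-lemma} together with Theorem~\ref{theorem:quantization} would force $E(\zeta)=0$. This is not justified, and in fact fails: energy can escape to $s=\infty$ in the end coordinates without the derivative blowing up on any compact set. Concretely, a sequence $v_{n}$ that agrees with a fixed limit on $[0,n]\times S$ but carries a lump of energy of size $\hbar$ supported on $[n,n+2]\times S$ with $\abs{\d v_{n}}\le C$ there has $\abs{\d v_{n}}$ bounded on every compact set, yet $E(\zeta)=\hbar>0$. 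Lemma~\ref{lemma:end-cutting-lemma} only produces a tame region $[r,r+2\rho_{n}]\times S$ near the \emph{inner} boundary of the end; it says nothing about $[r+2\rho_{n},\infty)\times S$, which is precisely where the energy hides. Theorem~\ref{theorem:quantization} applies to finite necks with very tame ends on both sides, not to half-infinite ends, so it cannot close this gap either. The paper handles $\Theta$-punctures by a genuinely different mechanism: the energy decomposition for ends in \S\ref{sec:e-d-for-ends}, which iteratively cuts the half-infinite end into an alternating low/high sequence and guarantees that the \emph{final} region containing $\theta_{n}$ is low energy, so that the weight-$3$ contribution of $\theta$ drops to zero after cutting.

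There is a related accounting error downstream of this. Even when a bubble does form near a $\Theta$-puncture, your single cut at an intermediate scale does not remove the marked point $\theta_{n}$: it survives in the inner (bubble-side) piece, still carrying weight $3$. So the replacement ``$3E(\zeta)$ becomes at most $2(E(\zeta)-\hbar)$'' is unjustified; you have not explained why the surviving $\Theta$-contribution vanishes. The paper's decomposition resolves this by ensuring $\theta_{n}$ lands in a low-energy region, but your single cut does not. For $\zeta\in\Gamma$ your scheme is closer to correct, though note that the paper makes \emph{two} cuts around the bubble (producing an outer piece, a long middle neck, and a uniformly convergent bubble piece) and then applies the high/low decomposition of \S\ref{sec:low-energy-de} to the middle neck; your single cut leaves the bubble-side piece in a state that does not obviously satisfy the inductive hypotheses (convergent domain with tame ends and identified $\Gamma'$-set), so the iteration is not cleanly set up.
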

\begin{proof}
  The argument is similar (on a formal level) to the convergence results for marked points in \S\ref{sec:application_marked_points}. We suppress mentioning when we pass to subsequences.
  
  Suppose the bubbling energy level is non-zero. Then there must be at least one puncture $\zeta$ in $\Theta\cup \Gamma$ whose bubbling energy level is non-zero. We consider three cases:
  \begin{enumerate}
  \item $\zeta=\psi_{n}(z_{n})$ is an interior puncture in $\Gamma$, and $z_{n}$ maximizes the derivative on $\psi_{n}^{-1}(D)$, where $D$ is a sufficiently small disk around $\zeta$ (of fixed radius),
  \item $\zeta=\psi_{n}(s_{n})$ is a boundary puncture in $\Gamma$, and $z_{n}=s_{n}+it_{n}$ maximizes the derivative, for $t_{n}\to 0$,
  \item $\zeta\in \Theta$. 
  \end{enumerate}
  Implicitly we refer to convergent sequences of coordinate disks/half-disks, especially when referring to the real and imaginary parts $s_{n},t_{n}$ in (ii).

  First we explain the iteration in case (i). Let us suppose that the contribution to the bubbling energy level due to $\zeta$ is $k>0$.

  Fix some notation: let $c_{n}$ be a convergent sequence of coordinate disks centered on the points in $z_{n}$. Think of $u_{n}$ as a family of functions on $D(1)$. By assumption, we can pick $c_{n}$ so that $u_{n}$ has maximal derivative at $0$, say of size $R_{n}$. Let $v_{n}=u_{n}(R_{n}^{-1}z),$ so that $v_{n}$ is defined on a sequence of disks which exhaust $\C$. Then $v_{n}$ converges uniformly on compact subsets to a \emph{non-constant} holomorphic plane $v_{\infty}$, which has a removable singularity at $\infty$. See \S\ref{sec:hofer-bubbling} for more details.
  
  By the same cutting arguments given above, we can pick $\rho_{n}\to \infty$ so that $v_{n}$ converges \emph{uniformly} on $D(e^{4\pi\rho_{n}})$. In other words, using the holomorphic parameterization $(s+it)\mapsto e^{2\pi (s+it)}$, then the neck $s\in [0,2\rho_{n}]$ is tame and $s\in [\rho_{n},2\rho_{n}]$ is very tame. Without loss of generality, shrink $\rho_{n}$ so that $R_{n}^{-1}e^{4\pi\rho_{n}}$ converges to zero.

  After shrinking $\rho_{n}$ further, suppose that $$N_{n}:=D(e^{-\rho_{n}})\setminus D(R_{n}^{-1}e^{2\pi \rho_{n}})$$
  has very tame ends $D(e^{-\rho_{n}})\setminus D(e^{-2\rho_{n}})$ and $D(R_{n}^{-1}e^{4\pi \rho_{n}})\setminus D(R_{n}^{-1}e^{2\pi \rho_{n}})$. The set-up is similar to that of Figure \ref{fig:fixing_int_bd}.
  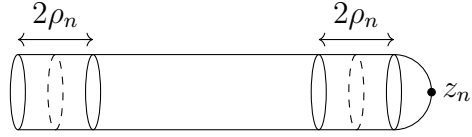
\begin{figure}[H]
    \centering
    \begin{tikzpicture}[xscale=-1]
      \draw (5,0)--+(-5,0) coordinate (X) arc (90:270:.5) node(A)[pos=0.5,draw,circle,inner sep=1pt,fill]{}coordinate (Y)--+(5,0);
      \path (X)--coordinate(Z)(Y);
      \node at (A) [right]{$z_{n}$};
      \draw (Z) circle (0.1 and 0.5) (Z)+(5,0) circle (0.1 and 0.5) (Z)+(1,0) circle (0.1 and 0.5) (Z)+(4,0) circle (0.1 and 0.5);
      \draw[dashed] (Z)+(4.5,0) circle (0.1 and 0.5) (Z)+(.5,0) circle (0.1 and 0.5);
      \draw[<->] (X)+(0,0.2)--node[above]{$2\rho_{n}$}+(1,0.2);
      \draw[<->] (X)+(4,0.2)--node[above]{$2\rho_{n}$}+(5,0.2);
    \end{tikzpicture}
    \caption{Making cuts to isolate where the derivative blows up. We make the cuts at the dashed circles.}
    \label{fig:making-cuts-2}
  \end{figure}
  Referring to Figure \ref{fig:making-cuts-2}, let $\Sigma^{0}_{n},\Sigma^{1}_{n},\Sigma^{2}_{n}$ (left to right) denote the sequences of compact partial domains, with new ends of length $\rho_{n}$, resulting from the cutting process. It is clear that $\Sigma^{0}_{n}$ has tame ends and converges to $\Sigma\setminus \zeta$ (with the other marked points unaffected). By construction, $\Sigma^{2}_{n}$ and $u_{n}$ converges \emph{uniformly} to a non-constant holomorphic plane, with energy at least $\hbar$, when we use the $\psi_{n}$ which rescales its domain by $R_{n}$. The uniform convergence implies the $\Sigma^{2}_{n}$ sequence no longer contributes to the bubbling energy level.

  By definition, the limiting energy of $D(e^{-\rho_{n}})$ converges to some number in the interval $[k\hbar,k\hbar+\hbar)$. It follows that the limiting energy of the long neck $\Sigma_{n}^{1}$ converges to a number \emph{strictly less than} $k\hbar$.

  We apply the high-low energy decomposition from \S\ref{sec:low-energy-de} to $\Sigma_{n}^{1}$, and thereby conclude that the domain can be decomposed into an alternating sequence of low and high energy regions. None of the high energy regions have marked points. However, applying Hofer's bubbling argument again, we conclude a new set $\Gamma_{n}'$ contained in the high energy regions, so that the derivative of $u_{n}$ is bounded\footnote{We require using metrics which converge to complete metrics on the limiting domains of the high energy regions (i.e., the translation invariant metric).} on the complement of neighbourhood of $\Gamma_{n}'$. In particular, it makes sense to talk about bubbling energy level after cutting.

  Clearly the limiting total energy of all the high energy regions is less than $k\hbar$, and in particular, the contribution to the bubbling energy level is strictly less than $k$. It follows that we have decreased the bubbling energy level by at least $1$.

  The argument in the other cases is similar, but more notationally involved. Essentially, one does the same rescaling argument. In case (ii), one can perform the rescaling and cutting argument near $\zeta\in \bd\Gamma$, and conclude that the resulting sequence of half-disks either converges uniformly, or has unbounded derivative near an \emph{interior point}. Because we have doubly weighted the energy contributed by points in $\bd\Gamma$, we conclude that the bubbling energy level will decrease by at least $1$. The set-up is similar to Figure \ref{fig:fixing_int_bd}. We leave the details to the reader.

  In case (iii), we apply the result of \S\ref{sec:e-d-for-ends}. As in the other two cases, consider a punctured neighbourhood of $\zeta_{n}=\theta_{n}$ as an end $[0,\infty)\times S$. We can decompose this end into an alternating sequence:
  \begin{equation*}
    \Sigma^{0}_{n}\cup \sigma^{0}_{n}\cup \dots \cup \Sigma^{N}_{n}\cup \sigma^{N}_{n},
  \end{equation*}
  where $\Sigma^{0}_{n}=[0,\rho_{n})\times S$ is a new end of $\Sigma_{n}$, $\sigma^{i}_{n}$ is a low-energy region, while $\Sigma^{j}_{n}$ is high energy (whose underlying domains converge and $u_{n}$ has tame ends). In particular, the final region $\sigma^{N}_{n}$ is low energy. Since this final region contains the marked point $\theta_{n}$, we conclude that the bubbling energy level decreases (since we have \emph{triply} weighted the bubbling energy of points in $\Theta_{n}$). In other words, all of the new bubbling energy formed this process is supported by the new points $\Gamma'$ added to the high energy regions. Since the $\Gamma'$ points have weighting either $1$ or $2$, we conclude the bubbling energy level decreases. This completes the proof.  
\end{proof}

\subsubsection{Sequences with zero bubbling energy}
\label{sec:sequences-with-zero-bubbling-energy}
It remains to analyze sequences with zero bubbling energy. At punctures where the incident Lagrangian boundary conditions are adiabatic, it is possible that broken flow lines appear in the limit. Each puncture $\theta\in \bd\Theta$ has a sequence of pairs of incident Lagrangians $L^{0}_{n}$ and $L^{1}_{n}$. After passing to a subsequence, suppose that the pair $L^{0}_{n},L^{1}_{n}$ remains in one of the three allowable boundary conditions, see \ref{sec:boundary_conditions}. If the pair is adiabatic, then $\theta$ is called an adiabatic marked point.

\begin{lemma}
  Let $(u_{n},\Sigma_{n},\mathfrak{e}_{n},\Theta_{n})$ be a sequence of compact partial domains with tame ends $\mathfrak{e}_{n}$, maps $u_{n}$, and marked points $\Theta_{n}$, and suppose that the derivative of $u_{n}$ is uniformly bounded (i.e., there is no bubbling energy). After making cuts near each adiabatic boundary marked point, the resulting sequence converges uniformly to a holomorphic map with broken flow lines attached at the adiabatic marked points. 
\end{lemma}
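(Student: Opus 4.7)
The plan breaks into three parts: first, extract a holomorphic limit on the complement of the marked points via Arzel\`a--Ascoli; second, analyze a punctured neighborhood of each marked point as a low-energy end and apply the classification of \S\ref{sec:low-energy-convergence}; and third, match the limits at the interface.

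For the first step, since $\abs{\d u_n}$ is uniformly bounded by hypothesis and $(\Sigma_n,\Theta_n,\mathfrak{e}_n,\psi_n)$ converges with marked points to $(\Sigma,\Theta)$, the maps $u_n\circ \psi_n^{-1}\vert_K$ are uniformly bounded and uniformly Lipschitz on every compact $K\subset \Sigma\setminus\Theta$. Arzel\`a--Ascoli together with the elliptic regularity for approximately holomorphic maps from \S\ref{sec:ell_reg_appx} produces, after passing to a subsequence, a $J$-holomorphic limit $u:\Sigma\setminus\Theta\to W$ satisfying the appropriate Lagrangian boundary conditions. The tame-ends hypothesis extends this convergence uniformly to the punctures coming from $\mathfrak{e}_n$, and bounded energy combined with the standard removable singularity theorem yields a continuous extension of $u$ across those punctures.

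For the second step, I would fix $\theta\in\Theta$ and parametrize a punctured neighborhood of $\theta$ as a half-infinite end $[0,\infty)\times S$ using convergent coordinate charts as in \S\ref{sec:for_ends}. Since $\abs{\d u_n}$ is uniformly bounded and $\theta$ is isolated, the $\omega$-energy on $[R,\infty)\times S$ tends to $0$ as $R\to\infty$ uniformly in $n$; hence, for the quantization threshold $\hbar$ of Theorem \ref{theorem:quantization}, for $R_0$ large enough the restriction is a low-energy end whose outer boundary is \emph{very tame} (lying inside the region of uniform $C^0$ convergence to the continuous limit $u$). The classification of \S\ref{sec:low-energy-convergence} then applies: in the three non-adiabatic cases the low-energy end converges uniformly to a single point, which must equal $u(\theta)$; in the adiabatic case I would invoke Theorem \ref{theorem:main_morse} to obtain finitely many cuts $\mathfrak{n}_n^i$ whose complementary sub-strips converge uniformly to finite and half-infinite segments of a (possibly broken) Morse flow line for $f^{ij}$.

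For the third step, making the cuts only at the adiabatic marked points, the cut-up domain decomposes into a vertex piece converging uniformly to $u$ together with adiabatic edge pieces carrying the flow line components. The interface matching $x^{-}=u(\theta)$ at each adiabatic marked point follows by selecting any sequence $z_n$ lying in the outermost cut: Step 1 forces $u_n(z_n)\to u(\theta)$, while Step 2 forces $u_n(z_n)\to x^-$. At non-adiabatic marked points the very tame low-energy end is absorbed into the adjacent tame vertex end (cf.\ Figure \ref{fig:tame-very-tame}), so no cut is introduced there. The resulting decomposition is precisely the data required by Definition \ref{defn:convergence_defn}, and uniform convergence follows piece by piece. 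The main obstacle is Step 2 in the adiabatic case: verifying that Theorem \ref{theorem:main_morse} applies requires the $C^1$ exponential estimates on $\tilde{P}_n$ of Lemma \ref{lemma:c1estimates} (which rest on the $W^{1,2}$ estimate of Lemma \ref{lemma:statementA}), since these supply the derivative control needed both to identify the intermediate critical points where the cuts $\mathfrak{n}_n^i$ must be placed and to pass to the Morse flow line limit on each complementary sub-strip.
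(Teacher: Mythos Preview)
Your proposal is correct and follows essentially the same approach as the paper: parametrize a punctured neighbourhood of each marked point as a half-infinite end, make one cut (the paper does this via the end-cutting Lemma \ref{lemma:end-cutting-lemma} rather than your direct energy argument, but the effect is the same) so that the outer piece is low-energy with a very tame inner end, then apply the classification of \S\ref{sec:low-energy-convergence} and, in the adiabatic case, Theorem \ref{theorem:main_morse}. The one point you omit that the paper spells out is why the limiting broken flow line at an adiabatic marked point \emph{terminates at a critical point} of $f_\infty$, as Definition \ref{defn:generalized_curve}(b) requires: for each fixed $n$, the finite-energy strip $u_n$ has a removable singularity at $\infty$ equal to an intersection point of $L_n^0\cap L_n^1$, i.e.\ a critical point of $f_n$, and any sequence of such critical points subconverges to a critical point of $f_\infty$.
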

\begin{proof}
  For every puncture $\theta$, pick convergent sequences of disks or half-disks around $\theta_{n}$, and identify these neighbourhoods with infinite strips $[0,\infty)\times S$, where $S=[0,1]$ or $S=\R/\Z$.

  Using the cutting lemmas, decompose this end into two pieces $$\Sigma_{0}\cup \Sigma_{1}=[0,r+\rho_{n}]\times S\cup [r+\rho_{n},\infty)\times S,$$ where $\Sigma_{0}$ has $[r,r+\rho_{n}]\times S$ as a tame end and $\Sigma_{1}$ has $[r+\rho_{n},r+2\rho_{n}]\times S$ as a very tame end. Since the bubbling energy level is known to be zero, $\Sigma_{1}$ is low-energy. There are now two cases to consider.

  If $S=\R/\Z$ or the incident boundary conditions are non-adiabatic, then $\Sigma_{1}$ is a priori very tame, and hence can be concatenated onto the tame end of $\Sigma_{0}$, while keeping the ends tame. Thus, in the non-adiabatic cases, making the cuts at $\set{r+\rho_{n}}\times S$ was a superfluous operation.

  When the incident boundary conditions \emph{are} adiabatic, then the $\Sigma_{1}$ region converges to a broken Morse flow line which starts at the removable singularity $x=\lim_{n\to\infty}(u_{n}(r+\rho_{n},t))$, as explained in \S\ref{sec:digression-flow-line}. For each $n$ fixed, the limit $\lim_{s\to\infty}(u_{n}(s,t))$ exists and is a critical point of the Morse function $f_{n}$, because the intersection points between $L_{0}^{n}\cap L^{1}_{n}$ are in bijection with critical points of $f_{n}$. Since $f_{n}$ converges to the function $f_{\infty}$, any sequence of critical points of $f_{n}$ has a subsequence which converges to a critical point of $f_{\infty}$. After taking appropriate subsequences, the $\Sigma_{1}$ regions converge to broken flow lines which terminate at these limiting critical points.

  Removing the $\Sigma_{1}$ regions for each adiabatic puncture, we are left with a sequence of holomorphic curves which converges uniformly to a limiting holomorphic map (taking subsequences as required). The new ends introduced at the adiabatic punctures have tame ends which limit to removable singularities. As explained above, each $\Sigma_{1}$ region converges to a broken flow line which starts at the corresponding removable singularity and terminates at a criticial point. This completes the proof.
\end{proof}

\begin{figure}[H]
  \centering
  \begin{tikzpicture}
    \coordinate (A) at (2,0);
    \coordinate (B) at (120:2);
    \coordinate (C) at (260:2);
    \draw (A) to[out=60,in=30] (B) (B)to[out=210,in=150] (C) (C)to[out=-30,in=240] (A);
    \path[every node/.style={draw,circle,inner sep=1pt,fill}] (A)node{}--(B)node{}--(C)node{};
    \draw[decoration={random steps,segment length=1.5mm},decorate,every node/.style={draw,circle,inner sep=1pt,fill}] (A)--+(-20:1)node{} (B)--+(150:1)node{} (C)--+(-110:1)node{};
  \end{tikzpicture}
  \caption{A sequence of maps with zero bubbling energy forms half-infinite broken flow lines at its marked points. The starting points of the broken flow lines need not by critical points of the limiting Morse functions, but the terminal points of the flow lines are guaranteed to be critical points.}
  \label{fig:zero-bubbling-energy}
\end{figure}
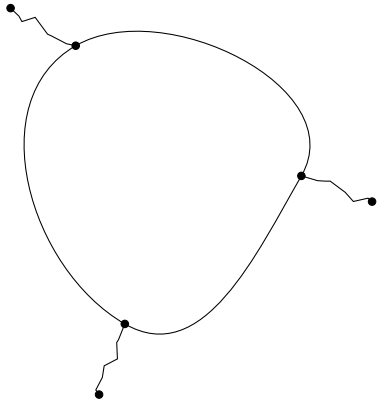

\subsection{Conclusion of the compactness statement}
\label{sec:conclusion_compactness}
We conclude by tying together and summarizing the results of this section. This will complete the proof of Theorem \ref{theorem:main_theorem}.

Suppose the data of $(\Sigma_{n},u_{n},\Theta_{n})$ where $\Sigma_{n}$ is a compact Riemann surface with boundary, and $u_{n}$ is defined on $\Sigma_{n}\setminus \Theta_{n}$. Assume that $u_{n}$ has uniformly bounded energy, $\abs{\Theta_{n}}$ and $\mathrm{X}(\Sigma_{n})$ are bounded above, and $u_{n}$ takes values in a compact set $K$. Consider a degenerating sequence of Lagrangians $L^{i}_{n}\to L^{\pi(i)}$, an convergent sequence of $\omega$-tame complex structures $J_{n}\to J$ (so that $J_{n}|L^{j}=J|L^{j}$ for each limit Lagrangian $L^{j}$). 

Throughout the argument, we suppress mentioning the passage to subsequences.

Recall from \S\ref{sec:intro} that the desired limit object is a generalized holomorphic curve, i.e., a graph $\Gamma$ whose vertices are labelled by holomorphic curves, and whose edges are labelled by either points (nodes) or gradient flow lines. See Definition \ref{defn:generalized_curve} for the details.

\subsubsection{Convergence of the underyling domain}
\label{sec:conclusion-1}
Consider two cases, either (i) the sequence of domains $\Sigma_{n}$ can be analyzed using the unique hyperbolic metrics rending the boundary $\bd\Sigma_{n}$ geodesic, or (ii) the sequence of domains $\Sigma_{n}$ satisfies $\mathrm{X}(\Sigma_{n})\ge 0$. In case (i), apply the results of \S\ref{sec:compactness_for_domains}, while the cases in (ii) are handled in ad-hoc fashion (either $\Sigma_{n}$ is a sequence of annuli, tori, disks, or spheres). In either case, we conclude the existence of a disjoint sequence of necks $\mathfrak{n}_{1}\cup \dots\cup \mathfrak{n}_{k}$ so that if we cut along the center of these necks, the resulting sequence of compact partial domains with ends converges strongly to a limiting domain.

\begin{remark}
  We have yet to discuss the case when $\Sigma_{n}$ is a sequence of tori or annuli. In the former case, we consider $\Sigma_{n}$ as a point in the topological space of lattices modulo the action of $\mathrm{SL}(2,\Z)$, while in the case of annuli we consider $\Sigma_{n}$ as a point in $(0,\infty)$ via the ``modulus'' function. In either case, it suffices to make a single cut (or no cuts at all). The details are left to the reader.
\end{remark}

\subsubsection{Convergence of the marked points}
\label{sec:conclusion_marked}
Consider a sequence of domains $\Sigma_{n}$ with the necks $\mathfrak{n}_{i}$ and marked points. First arrange that the marked points $\Theta_{n}$ remain disjoint from the necks, as follows. Given a particular neck $\mathfrak{n}_{i}^{n}$, replace $\mathfrak{n}^{i}_{n}$ by a collection of sub-necks $\mathfrak{m}^{1}_{n}\cup \dots\cup \mathfrak{m}^{\ell(i)}_{n}$ so that the complement $\mathfrak{n}^{i}_{n}\setminus(\mathfrak{m}_{n}^{1}\cup \dots\cup \mathfrak{m}^{\ell(i)}_{n})$ has uniformly bounded modulus and contains all the marked points in $\mathfrak{n}^{i}_{n}$. Each neck $\mathfrak{m}^{j}_{n}$ has length tending to infinity as $n\to\infty$. This can be arranged with only finitely many necks, following an argument similar to the one in \S\ref{sec:application_marked_points}. Passing to a subsequence if necessary, the number of necks remains constant, and the number of marked points in each complementary region remains fixed as well.

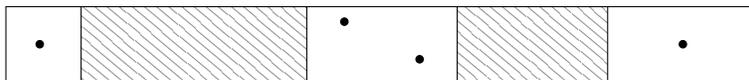
\begin{figure}[H]
  \centering
  \begin{tikzpicture}
    \fill[pattern={Lines[angle=-40]},pattern color=black!40!white] (1,0) rectangle (4,1);
    \fill[pattern={Lines[angle=-40]},pattern color=black!40!white] (6,0) rectangle (8,1);
    \draw (0,0) rectangle (10,1);
    \foreach \x in {1,4,6,8} {
      \draw (\x,0)--+(0,1);
    }
    \path[every node/.style={draw,circle,inner sep=1pt,fill}] (0,0.5)--+(0.45,0)node{}--+(4.5,0.3)node{}--+(5.5,-0.2)node{}--+(9,0)node{};
  \end{tikzpicture}
  \caption{Replacing the neck $\mathfrak{n}^{i}_{n}$ by sub-necks better adapted to the marked points. The new necks are shown as shaded regions.}
  \label{fig:replacing-1}
\end{figure}

Using the results of \S\ref{sec:application_marked_points}, add additional necks to the collection of all the $\mathfrak{m}^{j}_{n}$, so that the compact partial domain resulting from cutting converges strongly \emph{with marked points} to a limiting domain $(\Sigma,\Theta)$. Henceforth, relabel the collection of all necks by $\mathfrak{n}^{1}_{n},\dots,\mathfrak{n}^{k}_{n}$.

\subsubsection{Digression on the formation of the limit graph}
\label{sec:formation_of_graph_1}

Let $(\Sigma_{n}',\Theta_{n},\mathfrak{e}_{n})$ be the sequence resulting by cutting the necks $\mathfrak{n}_{n}^{i}$ along their centers. This domain converges strongly with marked points to a limit domain $(\Sigma,\Theta)$. Implicitly, we have fixed a subsequence by this stage in the argument. 

Form a preliminary graph $\Gamma_{\mathrm{dom}}$ describing the limit of the domain, as follows. Set:
\begin{enumerate}
\item $V(\Gamma_{\mathrm{dom}})=\pi_{0}(\Sigma)$, i.e., the connected components of the limit, 
\item $E_{\mathrm{int}}(\Gamma_{\mathrm{dom}})$ the set of punctures of $\Sigma$ modulo the \emph{nodal involution}, and
\item $E_{\mathrm{ext}}(\Gamma_{\mathrm{dom}})=\Theta$.
\end{enumerate}
Since the original sequence of domains $\Sigma_{n}$ is compact, there is a bijection between $\mathfrak{n}\sqcup \mathfrak{n}$ and the ends $\mathfrak{e}_{n}$ of $\Sigma_{n}'$. Since the ends $\mathfrak{e}_{n}$ converge to the punctures of $\Sigma$, we conclude a natural involution on the punctures of $\Sigma$; i.e., two punctures are identified if they correspond to two ends which are glued to make one of the necks in $\mathfrak{n}_{n}$.

An edge $e$ in $\Gamma_{\mathrm{dom}}$ is connected to a vertex $v$ if (i) $e$ corresponds to a non-compact end of $v$, or (ii) $e$ is a marked point on $v$. Note that there is a bijection between $\mathfrak{n}_{n}$ and the set of interior edges of $\Gamma_{\mathrm{dom}}$.

\subsubsection{Convergence of the maps}
\label{sec:convergence_of_maps}

Consider the maps $u_{n}$. Apply the results of \S\ref{sec:low-energy-de} to decompose each neck $\mathfrak{n}^{i}_{n}$ into a sequence of low and high energy regions. The low-energy regions either converge to broken flow lines joining two removable singularities of the limiting map, or converge uniformly to a point.

After cutting, there remains a collection of compact partial domains with tame ends. As explained in \ref{sec:tame-ends-2}, by cutting along additional necks, the derivative of $u_{n}$ is uniformly bounded. Together with the tameness of the ends, it is straightforward to apply Arzel\`a-Ascoli to conclude that a subsequence converges uniformly to a holomorphic limit defined on the limiting domain. As explained previously, there may be half-infinite broken flow lines attached at the adiabatic marked points.

\subsubsection{Constructing the limiting generalized holomorphic curve}
\label{sec:graph_construction}
Let us be a bit more explicit about the argument in \S\ref{sec:convergence_of_maps}, with special care to the construction of the limiting graph. First let us focus on an edge in $\Gamma_{\mathrm{dom}}$, which corresponded to a sequence of necks. To handle the degeneration of the maps, cut the neck into an alternating sequence of low and high energy regions. By construction, we guarantee at least one low energy region.

Denote by $\Gamma'$ the graph resulting from Figure \ref{fig:string_edge_vertices}. Each vertex of $\Gamma'$ represents a convergent sequence of domains with tame ends. The maps may not have a convergent subsequence, due to the bubbling phenomenon. Cut along concentric necks around points where the derivative is blowing up in order to ensure convergence. The effect of this bubbling argument is to change $\Gamma'$ by adding on finitely many trees to each vertex; each added tree corresponds to a ``bubble tree'' arising from an unbounded derivative. 

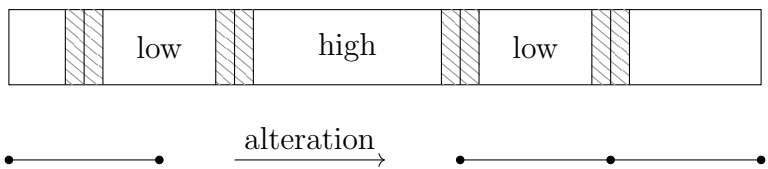
\begin{figure}[H]
  \centering
  \begin{tikzpicture}
    \draw (0,0) rectangle (10,1);
    \foreach \x in {1,3,6,8} {
      \fill[pattern={Lines[angle=-40]},pattern color=black!40!white] (\x-0.25,0) rectangle +(.5,1);
      \draw (\x,0)--+(0,1) (\x-0.25,0)--+(0,1) (\x+0.25,0)--+(0,1);
    }
    \path (0,0.5)--+(2,0)node{low}--+(4.5,0)node{high}--+(7,0)node{low};

    \begin{scope}[shift={(0,-1)}]
      \draw[every node/.style={draw,circle,inner sep=1pt,fill}] (0,0)node{}--(2,0)node{};
    \end{scope}
    \draw[->] (3,-1)--node[above]{alteration}+(2,0);
    \begin{scope}[shift={(6,-1)}]
      \draw[every node/.style={draw,circle,inner sep=1pt,fill}] (0,0)node{}--+(2,0)node{}--+(4,0)node{};
    \end{scope}
  \end{tikzpicture}
  \caption{An edge in $\Gamma_\mathrm{dom}$ becomes a sequence of edges and vertices. The new edges are in bijection with the low energy regions, and the added vertices are in bijection with the high energy regions. A similar alteration process is performed at the exterior edges of $\Gamma_{\mathrm{dom}}$; i.e., we consider a neighborhood of each marked point as a half infinite end, which we then decompose into alternating low and high energy regions.}
  \label{fig:string_edge_vertices}
\end{figure}

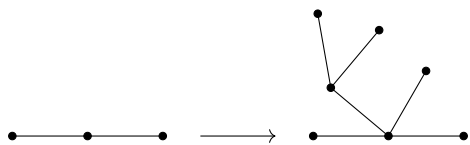
\begin{figure}[H]
  \centering
  \begin{tikzpicture}
    \draw[every node/.style={draw,circle,inner sep=1pt,fill}] (0,0)node{}--(1,0)node{}--(2,0)node{};
    \draw[->] (2.5,0)--(3.5,0);
    \begin{scope}[shift={(4,0)}]
      \draw[every node/.style={draw,circle,inner sep=1pt,fill}] (0,0)node{}--(1,0)node{}--(2,0)node{} (1,0)--+(60:1)node{} (1,0)--+(140:1)coordinate(X)node{} (X)--+(50:1)node{} (X)--+(100:1)node{};
    \end{scope}
  \end{tikzpicture}
  \caption{We alter $\Gamma'$ by adding finitely many trees to each vertex. The added trees correspond to bubble trees.}
  \label{fig:bubble_vertices}
\end{figure}
Denote by $\Gamma$ the graph formed from $\Gamma'$ by incorporating the bubble trees. Then $\Gamma$ can be labelled so as to become a generalized holomorphic curve (Definition \ref{defn:generalized_curve}); simply recall that vertices and edges of $\Gamma$ correspond to regions on the limiting domain $\Sigma_{n}$, and the limiting behaviour of the restriction of $u_{n}$ to each piece determines the label. It is tautological to check that $u_{n}$ indeed converges to the generalized holomorphic curve described by $\Gamma$, according to Definition \ref{defn:convergence_defn}. The verification of the various compatibility is a routine affair, and is left to the reader. This completes the proof of Theorem \ref{theorem:main_theorem}, modulo the technical details needed to prove the results in \S\ref{sec:low-energy-convergence}, which we treat in the next section.

\section{Low energy regions}
\label{sec:low-energy-regions}
In this section we analyze \emph{low energy} strips (as defined in \S\ref{sec:low-energy-de}). The analysis is different in each case, i.e., cylindrical necks in \S\ref{sec:low-energy-cylinders}, strip necks with \emph{adiabatic} boundary conditions in \S\ref{sec:low-energy-strips}, and the other two boundary conditions in \S\ref{sec:other_low_energy_strips}. The case of adiabatic boundary conditions is the most analytically involved, and requires the elliptic estimates from \S\ref{sec:exponential-estimates}. 

\subsection{Low energy cylinders}
\label{sec:low-energy-cylinders}
In this section we prove that long cylinders with low energy converge uniformly to points. This is often used in arguments proving that ``bubbles connect.'' Throughout $K$ denotes a compact subset of the target $(W,\omega)$.

\begin{prop}\label{prop:low-e-cyl}
  There exists a constant $\hbar=\hbar(K,\omega,J)>0$ with the following property: if $J_{n}\to J$ is a $C^{\infty}$ convergent family of almost complex structures, $J$ is tame, and $u_{n}:[-R_{n},R_{n}]\times \R/\Z\to K$ is a sequence of $J_{n}$-holomorphic cylinders with bounded energy and
  \begin{equation}\label{eq:end-plate-assumption}
    \lim_{n\to \infty}\abs{\d u_{n}(s-R_{n},t)}+\abs{\d u_{n}(R_{n}-s,t)}=0\text{ uniformly for $(s,t)$ in compact sets},
  \end{equation}
  then:
  \begin{equation}\label{eq:energy-assumption}
    \liminf_{n\to\infty}E(u_{n})>0\implies \liminf_{n\to\infty}\max_{s,t}\abs{\d u_{n}(s,t)}>0\implies\liminf_{n\to\infty}E(u_{n})>\hbar.
  \end{equation}
  On the other hand:
  \begin{equation}\label{eq:uniformly-to-pt}
    \lim_{n\to\infty}\max_{s,t}\abs{\d u_{n}(s,t)}=0\implies\lim_{n\to\infty}\text{diam}(u_{n}([-R_{n},R_{n}]\times \R/\Z))=0.
  \end{equation}
\end{prop}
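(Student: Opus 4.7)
The plan is to establish the three assertions of the proposition in turn. The implication in \eqref{eq:energy-assumption} that ``large derivative forces large energy'' is an immediate consequence of the mean-value inequality (Lemma \ref{lemma:mvp}): if $|du_n(z_n)|>\delta$ at a sequence of points $z_n$, the end-plate hypothesis \eqref{eq:end-plate-assumption} forces $z_n$ into the interior of the cylinder for $n$ large, so a full disk $D(z_n,1/4)$ is contained in $[-R_n,R_n]\times\R/\Z$, and the contrapositive of \eqref{eq:mvp-inequality} gives a lower bound $\hbar$ on $\int_{D(z_n,1/4)}|\bd_s u_n|^2$ depending only on $\delta$, $K$, and $J$.

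For the remaining two implications I assume $\max|du_n|\to 0$ and aim to show both $E(u_n)\to 0$ and $\mathrm{diam}(u_n)\to 0$. The key analytical tool is an exponential-decay estimate for the slice-wise energy density
\[
F_n(s) := \int_0^1 |\bd_s u_n(s,t)|^2\, dt,
\]
derived in the same spirit as Lemma \ref{lemma:statementA}. Working in geodesic normal coordinates on $W$ and using the higher-derivative bounds supplied by Lemma \ref{lemma:bootstrappingone}, the $J_n$-holomorphic equation is a $C^\infty$-small perturbation of the linear equation $\bd_s u + J_0\bd_t u = 0$ for constant $J_0$. Differentiating $F_n$ twice, integrating by parts in the boundaryless direction $t\in\R/\Z$, and applying the Poincar\'e--Wirtinger inequality on $\R/\Z$ to $\bd_s u_n$ (whose $t$-mean $\bar u_n'(s)$ is small by near-harmonicity) yields
\[
F_n''(s) \ge c F_n(s) - K_n
\]
for some $c>0$ and error $K_n = O(\max|du_n|^3)\to 0$. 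Running a maximum-principle argument analogous to Lemma \ref{lemma:statementB}, with the adiabatic parameter $\epsilon$ set to zero and with boundary data $F_n(\pm R_n)\to 0$ supplied by \eqref{eq:end-plate-assumption}, one obtains
\[
F_n(s) \le C\bigl(e^{-\sqrt{c}(R_n+s)} + e^{-\sqrt{c}(R_n-s)}\bigr) + K_n/c.
\]
Integrating over $[-R_n,R_n]$ yields $E(u_n) \le 2C/\sqrt{c} + 2R_n K_n/c$. The first term vanishes since $C\le \max\{F_n(\pm R_n)\}\to 0$; for the second, the a priori bound $R_n\max|du_n|^2 \lesssim E(u_n)$ shows $R_n\max|du_n|^2$ is bounded, so $R_n K_n = O(R_n\max|du_n|^3) = O(\max|du_n|)\to 0$. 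Hence $E(u_n)\to 0$, which is the contrapositive of the first implication in \eqref{eq:energy-assumption}. The diameter claim \eqref{eq:uniformly-to-pt} then follows from the monotonicity inequality for $J$-holomorphic curves: any point $x$ in the image at distance $r$ from the shrinking end-values $x_\pm$ must be surrounded by at least $cr^2$ units of energy, so $\mathrm{diam}(u_n) \le C'\sqrt{E(u_n)}\to 0$.

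The principal obstacle is that the computation of $F_n''$ is local: a priori the image of $u_n$ may not fit in a single geodesic chart. I resolve this by a bootstrap along the cylinder axis. Starting from the left end, where $u_n$ already lies in a small chart around $x_-^n$ by the end-plate hypothesis, one applies the decay estimate on as long a sub-cylinder as $u_n$ remains in a fixed chart and uses the resulting bound $\int \sqrt{F_n}\,ds$ on the incremental image diameter to guarantee that $u_n$ cannot exit the chart before decay has set in. Finitely many such chart-patches, taken from a fixed finite cover of $K$ by Darboux charts, suffice to cover the entire cylinder and carry the estimate through.
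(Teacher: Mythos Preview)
Your argument for the second implication of \eqref{eq:energy-assumption} does not prove the proposition as stated: the bound you extract from the mean-value inequality depends on $\delta = \liminf_n \max|du_n|$, whereas the proposition asserts a universal $\hbar = \hbar(K,\omega,J)$. Taking the contrapositive of \eqref{eq:mvp-inequality} only yields $E(u_n) \ge \min(\epsilon_0, \delta^2 r^2/c)$, which tends to zero as $\delta \to 0$ and so cannot serve as a sequence-independent threshold (and the applications in \S\ref{sec:low-energy-de} genuinely need $\hbar$ universal). The paper obtains a universal $\hbar$ by a bubbling argument: if $|du_n(s_n,t_n)|$ stays bounded below, the recentered maps $u_n(s_n+\cdot,\cdot)$ either converge on compact sets to a non-constant finite-energy holomorphic cylinder---which extends by removable singularities to a $J$-holomorphic sphere---or Hofer's rescaling produces a non-constant holomorphic plane, again capping to a sphere. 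In either case $\liminf E(u_n)$ dominates the energy of a non-constant $J$-holomorphic sphere in $K$, a quantity depending only on $(K,\omega,J)$.

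Your exponential-decay approach to the first implication also has a gap at the step where you dispose of the integrated error term: the claimed bound $R_n \max|du_n|^2 \lesssim E(u_n)$ does not follow from anything at hand (the mean-value property gives $\max|du_n|^2 \lesssim E(u_n)$, with no $R_n$ factor), so $R_n K_n \to 0$ is not justified. The paper sidesteps this entirely by working not with the slice energy $F_n(s)$ but with the cumulative energy $E_n(r) = \int_{-r}^r F_n$. Once $u_n$ is trapped in a ball where $\omega = d\lambda$, Stokes' theorem gives $E_n(r) = \int \gamma_{r,n}^*\lambda - \int \gamma_{-r,n}^*\lambda$; modifying $\lambda$ by an exact form $df$ so that it vanishes at one point on each boundary loop yields the isoperimetric-type estimate $\bigl|\int \gamma_{\pm r,n}^*\lambda\bigr| \le C \int_0^1 |\bd_s u_n(\pm r,t)|^2\,dt$, hence the first-order inequality $E_n(r) \le C E_n'(r)$ with \emph{no error term}. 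This integrates directly to exponential decay and, combined with the mean-value bound on $|du_n|$, gives both the diameter shrinking (by a direct arclength estimate rather than monotonicity) and $E(u_n) \to 0$.
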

\begin{proof}
  Our argument is similar to the ones \cite[Chapter 4]{mcduffsalamon}. The hypothesis \eqref{eq:end-plate-assumption} says that $\d u_{n}(z_{n})$ converges to zero provided $z_{n}$ remains a finite distance from the boundary circles $\set{\pm R_{n}}\times \R/\Z$. 
  
  Start with the second implication of \eqref{eq:energy-assumption}. Then there exists a sequence $(s_{n},t_{n})$ so that $\lim_{n\to\infty}\abs{s_{n}\pm R_{n}}=\infty$ and $\liminf_{n\to\infty}\abs{\d u_{n}(s_{n},t_{n})}>0$. We claim that the maps $(s,t)\mapsto u_{n}(s_{n}+s,t)$ have energy bounded below by the energy of a sphere bubble (e.g.,\ a non-constant holomorphic sphere). Indeed, if $(s,t)\mapsto u_{n}(s_{n}+s,t)$ has derivative bounded on compact sets, then $u_{n}$ converges in $C^{\infty}_{\mathrm{loc}}$ to a holomorphic cylinder, which has removable singularities and so extends to a holomorphic sphere. This limit is necessarily non-constant since $\abs{\d u_{n}(s_{n},t_{n})}$ is bounded below. On the other hand, if the derivative is unbounded, then the bubbling arguments in the proof Lemma \ref{lemma:basic_bubble} constructs a non-constant holomorphic plane, which again extends to a $J$-holomorphic sphere. In either case, we have proved the desired energy bound.

  Consider the first implication of \eqref{eq:energy-assumption}. We argue the contrapositive. Then $u_{n}$ has a subsequence so that $\max_{s,t}\abs{\d u_{n}(s,t)}$ converges to zero. It suffices to show that the energy of $u_{n}$ also converges to zero, after potentially passing to another subsequence. In the course of our argument we will also prove \eqref{eq:uniformly-to-pt}.

  The Arzel\`a-Ascoli theorem produces a subsequence for which the central loop $u_{n}(0,t)$ converges uniformly to a point $p$. Then there exists $\epsilon>0$ and a primitive $\lambda$ of $\omega$ (i.e., $\omega=\d\lambda$) defined on the closed $\epsilon$ ball $\cl{B}(p,\epsilon)$ around $p$. We claim that eventually the entire image of $u_{n}$ lies inside of $B(p,\epsilon)$. The idea depends on a exponential estimate (see \cite[Chapter 4]{mcduffsalamon} for similar results). Here is the argument: by contradiction, for $n$ sufficiently large, we can find radii $r_{n}\le R_{n}$ so that
  \begin{equation*}
    u_{n}([-r_{n},r_{n}]\times \R/\Z)\subset \cl{B}(p,\epsilon)\text{ but }u_{n}([-r_{n},r_{n}]\times \R/\Z)\not\subset B(p,\epsilon).
  \end{equation*}
  Clearly, as the derivative of $u_{n}$ converges to zero uniformly and $u_{n}(0,t)$ converges uniformly to $p$, these radii $r_{n}$ must be diverging to infinity. Introduce the notation $\gamma_{n,r}(t):=u_{n}(r,t)$. 

  Consider the quantity $E_{n}(r)$, defined for $r\le r_{n}$: 
  \begin{equation*}
    E_{n}(r)=\int_{-r}^{r}\int_{0}^{1}u_{n}^{*}\omega=\int_{0}^{1}\gamma_{n,r}^{*}\lambda-\int_{0}^{1}\gamma_{n,-r}^{*}\lambda.
  \end{equation*}
  The idea is to show $E_{n}(r)$ satisfies a certain first-order differential inequality. We compute:
  \begin{equation*}
    E_{n}'(r)=\int_{0}^{1}\abs{\bd_{s}u_{n}(r,t)}_{\omega,n}^{2}\,\d t+\int_{0}^{1}\abs{\bd_{s}u_{n}(-r,t)}_{\omega,n}^{2}\,\d t,
  \end{equation*}
  where $\abs{v}_{\omega,n}^{2}=\omega(v,J_{n}v)$ (we do not assume that $\omega(v,J_{n}w)$ is symmetric). The similarity of $E_{n}(r)$ and $E_{n}'(r)$ allows us to set up a differential inequality. We require one trick to get a good estimate; observe that
  \begin{equation*}
    \int_{0}^{1}\gamma_{n,r}^{*}\lambda=\int_{0}^{1}\gamma_{n,r}^{*}(\lambda+\d f).
  \end{equation*}
  By picking $f$ judiciously, we can assume that $\lambda+\d f$ vanishes at $\gamma_{n,r}(0)$. Then we can conclude that, for any metric $g$ on the target $W$,
  \begin{equation*}
    \begin{aligned}
      \int_{0}^{1}\gamma_{n,r}^{*}\lambda&=\int_{0}^{1}\gamma_{n,r}^{*}(\lambda+\d f)\le C\int_{0}^{1}\abs{\d u_{n}(r,t)}_{g}\mathrm{dist}_{g}(u_{n}(r,t),u_{n}(r,0))\d t\\ &\le C\left[\int_{0}^{1}\abs{\d u_{n}(r,t)}_{g}\d t\right]^{2}\le C\int_{0}^{1}\abs{\d u_{n}(r,t)}_{g}^{2}\d t\le C\int_{0}^{1}\abs{\bd_{s}u_{n}(r,t)}^{2},
    \end{aligned}
  \end{equation*}
  using the definition of distance in the second inequality, the Cauchy-Schwarz inequality in the third, and the holomorphic curve equation in the third. Here $C$ is a constant corresponding to the $C^{1}$ size of $\lambda+\d f$ (we may need to enlarge $C$ in the final inequality). It is straightforward to construct $f$ so that the $C^{1}$ size of $\lambda+\d f$ is bounded by three times the $C^{1}$ size of $\lambda$, and hence $C$ can be taken to be independent of the precise choice of~$f$.

  Using this estimate, and the fact that $\abs{v}_{g}$ is uniformly commensurate with $\abs{v}_{\omega,n}$ (as $n\to\infty$), conclude that:
  \begin{equation*}
    E_{n}(r)\le CE_{n}'(r)\implies E_{n}'(r)-\delta E_{n}(r)\ge 0\implies e^{\delta (r_{n}-r)}E_{n}(r)\le E_{n}(r_{n}),
  \end{equation*}
  where $\delta=1/C$. 
  
  Consider the decomposition of $[-r_{n},r_{n}]\times \R/\Z=\Sigma_{1,n}\cup \Sigma_{2,n}$ where:
  \begin{equation*}
    \Sigma_{1,n}=([-r_{n},-r_{n}+1]\cup [r_{n}-1,r_{n}])\times \R/\Z\hspace{.3cm}\text{and}\hspace{.3cm}\Sigma_{2,n}=[-r_{n}+1,r_{n}-1]\times \R/\Z.
  \end{equation*}
  The mean-value property can be applied for $(s,t)\in \Sigma_{2,n}$ to conclude that the derivative satisfies an estimate of the form:
  \begin{equation*}
    \abs{\d u_{n}(s,t)}\le C'e^{\delta(r_{n}-s)}E_{n}(r_{n}).
  \end{equation*}
  In particular, using the exponential estimate, conclude the diameter of $u_{n}(\Sigma_{2,n})$ is bounded by a fixed constant, depending on $C'$ and $\delta$, times $E_{n}(r_{n})$. Exactness implies $E_{n}(r_{n})$ tends to zero since $\abs{\d u_{n}}\to 0$ on the boundary circles. On the other hand, since the two components of $\Sigma_{1,n}$ have bounded diameter, and $\abs{\d u_{n}}\to 0$, the diameter of $u_{n}(\Sigma_{1,n})$ also tends to zero. As a consequence, the diameter of $u_{n}([-r_{n},r_{n}]\times \R/\Z)$ converges to zero, contradicting $u_{n}(0,0)\to p$ and $u_{n}([-r_{n},r_{n}]\times \R/\Z)\cap \bd B(p,\epsilon)\ne \emptyset$.

  Thus the assumption that the curve eventually leaves $B(p,\epsilon)$ leads to a contradiction, and hence the entire image of $u_{n}$ must eventually lie in $B(p,\epsilon)$. Noting that $\epsilon>0$ could be taken arbitrarily small, we have completed the proof of \eqref{eq:uniformly-to-pt}.\footnote{The reader may complain that we have only shown that the diameter tends to zero along a subsequence. However, this issue is easily remedied by first taking a subsequence which realizes the limit supremum of the diameter.} 

  It remains only to prove that the energy of $u_{n}$ converges uniformly to zero. The same argument above yields:
  \begin{equation*}
    E(u_{n})\le C\int_{0}^{1}\abs{\d u_{n}(R_{n},t)}^{2}+\abs{\d u_{n}(-R_{n},t)}^{2}\d t.
  \end{equation*}
  By assumption, the $C^{1}$ size of $u_{n}$ converges to zero at both endpoints. Thus the right hand side in the above inequality converges to zero, and so the energy also converges to zero, as desired. This completes the proof of the contrapositive of the first implication in \eqref{eq:energy-assumption}, and the proof of the proposition.
\end{proof}

\subsection{Low energy strips}
\label{sec:low-energy-strips}
In this section we begin with an analogue of Proposition \ref{prop:low-e-cyl} for the case when $u_{n}$ is defined on a strip. In the case of adiabatic boundary conditions, the hypothesis that $\lim_{n\to\infty}\mathrm{max}\abs{\d u_{n}(s,t)}=0$ \emph{does not} imply that $u_{n}$ converges to a point. As explained in \S\ref{sec:digression-flow-line}, $u_{n}$ will converge to a \emph{broken flow line}. The main goal of this section is to prove this convergence result. The key input will be the exponential estimates from \S\ref{sec:exponential-estimates}. The other boundary conditions from \S\ref{sec:boundary_conditions} are studied these cases in \S\ref{sec:other_low_energy_strips}.


\begin{prop}\label{prop:low-e-strips}
  Let $J$ be $\omega$-tame, $L$ be $J$-totally real, and $K\subset W$ a compact set. Then there exists a constant $\hbar=\hbar(K,\omega,J,L)>0$ with the following property. If $J_{n}\to J$ and $u_{n}:[-R_{n},R_{n}]\times [0,1]\to K$ is a sequence of $J_{n}$-holomorphic strips with bounded energy satisfying the adiabatic boundary conditions:
  \begin{equation}\label{eq:boundary-conditions-pr}
    \begin{aligned}
      u_{n}(s,0)\in L_{\epsilon_{n}\mathfrak{a}_{n}}\text{ and }u_{n}(s,1)\in L_{\epsilon_{n}\mathfrak{b}_{n}}\\
    \end{aligned}
  \end{equation}
  where $\mathfrak{b}_{n}-\mathfrak{a}_{n}=\d f_{n}$, $\mathfrak{a}_{n}$ is closed, $\epsilon_{n}\mathfrak{a}_{n}\to 0$ and $\epsilon_{n}\mathfrak{b}_{n}\to 0$, and 
  \begin{equation}\label{eq:end-plate-assumption-2}
    \lim_{n\to \infty}\abs{\d u_{n}(s-R_{n},t)}+\abs{\d u_{n}(R_{n}-s,-t)}=0\text{ in $C^{0}_{\mathrm{loc}}([0,\infty)\times [0,1])$},\footnote{Even though each $u_n$ is not defined on the whole half infinite strip, any compact set will be contained in the domain of $u_{n}$ for $n$ large enough.}
  \end{equation}
  then the conclusion is
  \begin{equation}\label{eq:energy-assumption-2}
    \liminf_{n\to\infty}E(u_{n})>0\implies \liminf_{n\to\infty}\max_{s,t}\abs{\d u_{n}(s,t)}>0\implies\liminf_{n\to\infty}E(u_{n})>\hbar.
  \end{equation}
\end{prop}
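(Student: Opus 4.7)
The proof splits into two independent implications. For the contrapositive of the first (if $\max|du_n|\to 0$, then $E(u_n)\to 0$), I would use Stokes' theorem inside the symplectic tubular neighbourhood $\iota:T^{*}L\supset N\to K$ fixed in the opening data, where $\omega=-d\lambda_{\mathrm{can}}$. The combination $\max|du_n|\to 0$ plus the adiabatic boundary conditions forces $u_n$ to eventually take values in $N$ (both boundary traces lie within $O(\epsilon_n)$ of $L$, and the $C^0$ bound on $du_n$ propagates this across the strip). Working in Levi--Civita coordinates and applying Stokes,
\begin{equation*}
E(u_n) = -\oint u_n^{*}\lambda_{\mathrm{can}} = \epsilon_n\int_{\gamma_1}\mathfrak{b}_n - \epsilon_n\int_{\gamma_0}\mathfrak{a}_n + I_n,
\end{equation*}
where $\gamma_i(s)=Q_n(s,i)$ and $I_n$ collects the two side integrals at $s=\pm R_n$. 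By the end-plate assumption \eqref{eq:end-plate-assumption-2}, $|I_n|\leq 2\|\lambda_{\mathrm{can}}\|_{C^0}\sup_t|du_n(\pm R_n,t)|\to 0$.

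Writing $\mathfrak{b}_n=\mathfrak{a}_n+df_n$, the bulk of $E(u_n)$ becomes $\epsilon_n[\int_{\gamma_1}\mathfrak{a}_n-\int_{\gamma_0}\mathfrak{a}_n]+\epsilon_n[f_n(Q_n(R_n,1))-f_n(Q_n(-R_n,1))]$. The $f_n$-difference is $O(\epsilon_n\|f_n\|_{C^0})\to 0$. For the $\mathfrak{a}_n$-difference, the crucial input is that $\mathfrak{a}_n$ is \emph{closed}: applying Stokes once more to $Q_n^{*}\mathfrak{a}_n$ on the strip $[-R_n,R_n]\times[0,1]$ yields
\begin{equation*}
\int_{\gamma_1}\mathfrak{a}_n - \int_{\gamma_0}\mathfrak{a}_n = \int_{\sigma_R}\mathfrak{a}_n - \int_{\sigma_L}\mathfrak{a}_n,
\end{equation*}
with $\sigma_R(t)=Q_n(R_n,t)$, $\sigma_L(t)=Q_n(-R_n,t)$. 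The right-hand side is again bounded by $\|\mathfrak{a}_n\|_{C^0}\sup_t|du_n(\pm R_n,t)|\to 0$. Multiplying through by the bounded sequence $\epsilon_n$ gives $E(u_n)\to 0$.

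For the second implication, along a subsequence fix $(s_n,t_n)$ with $|du_n(s_n,t_n)|\geq\delta>0$; by the end-plate hypothesis, $R_n\mp s_n\to\infty$. Consider the translates $v_n(s,t)=u_n(s_n+s,t)$. If $\|dv_n\|_{C^0}$ remains bounded, Lemma \ref{lemma:bootstrappingone} combined with Arzel\`a--Ascoli extracts a $C^\infty_{\mathrm{loc}}$ subsequential limit $v_\infty:\mathbb{R}\times[0,1]\to W$, a non-constant finite-energy $J$-holomorphic strip with both boundary components on $L$ (since $L_{\epsilon_n\mathfrak{a}_n}$ and $L_{\epsilon_n\mathfrak{b}_n}$ both converge to $L$). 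If $\|dv_n\|_{C^0}$ is unbounded, a standard Hofer rescaling yields a non-constant $J$-holomorphic plane or half-plane instead. In every case removability of singularities extends the limit to either a $J$-holomorphic sphere $\mathbb{CP}^1\to W$ or a $J$-holomorphic disk $(D,\partial D)\to(W,L)$, and the infimum $\omega$-energy of such non-constant objects is a positive constant $\hbar=\hbar(K,\omega,J,L)$; hence $\liminf_n E(u_n)\geq\hbar$.

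The main obstacle is the first implication. A crude length bound $\mathrm{length}(\gamma_i)\leq 2R_n\|du_n\|_{C^0}$ becomes useless whenever $R_n\|du_n\|_{C^0}\to\infty$, which is precisely the regime of long Morse flow lines (the scenario the theory is designed to handle). Without additional structure, one would be forced to estimate a period of $\mathfrak{a}_n$ over a long loop of a priori unknown homology class. The clean resolution is that the adiabatic hypothesis demands $\mathfrak{a}_n$ be closed, so a second application of Stokes absorbs the entire $\mathfrak{a}_n$-contribution into the end-plate region, bypassing any need for length or homological control.
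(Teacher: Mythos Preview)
Your proof is correct and follows essentially the same strategy as the paper's. The only difference is organizational: where you apply Stokes' theorem with $-\lambda_{\mathrm{can}}$ and then invoke closedness of $\mathfrak{a}_n$ via a second Stokes application to $Q_n^{*}\mathfrak{a}_n$, the paper builds closedness into the choice of primitive from the start, setting $\lambda_n = -\lambda_{\mathrm{can}} + \epsilon_n\,\pr^{*}\mathfrak{a}_n$ so that $L_{\epsilon_n\mathfrak{a}_n}^{*}\lambda_n = 0$ and $L_{\epsilon_n\mathfrak{b}_n}^{*}\lambda_n = -\epsilon_n\,\d f_n$; a single Stokes application then leaves only the $f_n$-difference and the two side integrals, exactly the terms you isolate after your second Stokes step.
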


\begin{proof}
  The proof of the second implication in \eqref{eq:energy-assumption-2} is the same as the proof of Proposition \ref{prop:low-e-cyl}.
  
  The first implication requires slightly different techniques from the ones used in Proposition \ref{prop:low-e-cyl}. As in Proposition \ref{prop:low-e-cyl}, argue the contrapositive: assuming the derivative goes to zero uniformly, show the energy must also go to zero. Then, for $n$ large enough $u_{n}$ maps $[-R_{n},R_{n}]\times [0,1]$ into the tubular neighbourhood $N\subset T^{*}L$.
  
  Define a sequence of primitives of $\omega$ on $T^{*}L$ by the formula
  \begin{equation}\label{eq:my-primitive}
    \lambda_{n}=-\lambda_{\mathrm{can}}+\epsilon_{n}\pr^{*}\mathfrak{a}_{n}.
  \end{equation}
  This is a primitive of $\omega=-\d\lambda_{\mathrm{can}}$ since $\mathfrak{a}_{n}$ is closed. Moreover, it is easy to see that:
  \begin{equation*}
    L_{\epsilon_{n}\mathfrak{a}_{n}}^{*}\lambda_{n}=0\text{ and }L_{\epsilon_{n}\mathfrak{b}_{n}}^{*}\lambda_{n}=-\epsilon_{n}\d f_{n}.
  \end{equation*}
  The energy of a holomorphic strip satisfying the boundary conditions \eqref{eq:boundary-conditions-pr} which remains entirely in the tubular neighbourhood $N$ can be computed by Stokes' theorem, see Figure \ref{fig:stokes}.

  \begin{figure}[H]
    \centering
    \begin{tikzpicture}
      \draw (0,0) rectangle (7,2);
      \node at (-0.2,1) [left] {$\displaystyle\int \lambda_{n}$};
      \node at (7.2,1) [right] {$\displaystyle\int \lambda_{n}$};
      \node at (3.5,2.2) [above] {$-\displaystyle\epsilon_{n}\int \d f_{n}$};
      \node at (3.5,-0.2) [below] {$0$};
      \begin{scope}[every path/.style={decoration={markings,mark=between positions 6mm and 1 step 6mm with {\arrow{>[scale=1.4]};}},decorate}]
        \draw (0,0)--(7,0);
        \draw (7,0)--(7,2);
        \draw (7,2)--(0,2);
        \draw (0,2)--(0,0);
      \end{scope}
    \end{tikzpicture}
    \caption{Computing the energy of a holomorphic strip with $\epsilon_n\mathfrak{a}_n$ and $\epsilon_{n}\mathfrak{b}_{n}$ boundary conditions.}
    \label{fig:stokes}
  \end{figure}
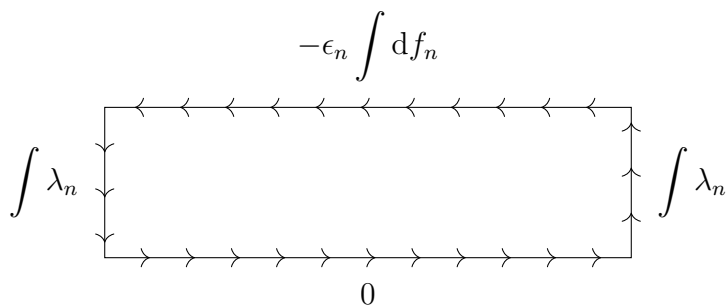
  
  By the assumption on $\d u_{n}$ in the statement of the lemma, we conclude that the two vertical integrals converge to zero. The integral along the bottom edge is obviously $0$, while the integral along the top edge is bounded by $\epsilon_{n}(\mathrm{max}(f_{n})-\mathrm{min}(f_{n}))$. The difference between the maximum and the minimum of a function is bounded by its derivative, and since $\epsilon_{n}(\mathfrak{b}_{n}-\mathfrak{a}_{n})=\epsilon_{n}\d f_{n}$ converges to $0$, the integral along the top edge is $o(1)$, and hence also converges to zero. Thus the energy of $u_{n}$ converges to zero, completing the proof of the contrapositive. This completes the proof of the lemma.
\end{proof}

The goal now is to show that a low energy strip with adiabatic boundary conditions converges to a broken flow line. The argument is divided into two parts, depending on whether $\ell_{n}=\epsilon_{n}r_n$ converges to a finite number or not. This limit is the length of the flow line. In the case when $\epsilon_{n}r_{n}$ converges to a finite number, no exactness assumptions on $\mathfrak{b}_{n}-\mathfrak{a}_{n}$ are required.

\begin{lemma}\label{lemma:finite-number-case}
  Let $J_{n}\to J$ be a convergent sequence of complex structures for which $J_{n}|_{L}=J|_{L}$, and $J|_{L}$ is $\omega$-compatible.
  
  Let $u_{n}:[-r_{n},r_{n}]\times [0,1]\to K$ be a sequence of $J_{n}$-holomorphic strips with
  \begin{equation*}
    \lim_{n\to\infty}\max_{s,t}\abs{\d u_{n}(s,t)}=0.
  \end{equation*}
  Suppose that $u_{n}(s,0)\in L_{\epsilon_{n}\mathfrak{a}_{n}}$ and $u_n(s,1)\in L_{\epsilon_{n}\mathfrak{b}_{n}}$. Assume that $\mathfrak{a}_{n},\mathfrak{b}_{n}$ converge (we do not assume any exactness), $\epsilon_{n}\to 0$, and $\ell_{n}=\epsilon_{n}r_{n}$ converges to a \emph{finite} limit $\ell_{\infty}$ (which could be zero).

  Then, after passing to a subsequence, the rescaled maps $v_{n}:[-\ell_{n},\ell_{n}]\times [0,\epsilon_{n}]\to K$ defined by:
  \begin{equation*}
    v_{n}(s,t)=u_{n}(\epsilon_{n}^{-1}s,\epsilon_{n}^{-1}t)    
  \end{equation*}
  converge uniformly to a flow line $v_{\infty}:[-\ell_{\infty},\ell_{\infty}]\to L$ for the vector field $g$-dual to the one-form $\mathfrak{c}_{\infty}=\lim_{n\to\infty}(\mathfrak{b}_{n}-\mathfrak{a}_{n})$. The convergence is given by:
  \begin{equation*}
    \lim_{n\to\infty}\sup_{s,t\in [-\ell_{n},\ell_{n}]\times [0,\epsilon_{n}]}\mathrm{dist}(v_{n}(s,t),v_{\infty}(s))=0,
  \end{equation*}
  where we use the fact that the domain of $v_{\infty}$ can be extended to all of $\R$ (it makes sense to evaluate $v_{\infty}$ at $s\not\in [-\ell_{\infty},\ell_{\infty}]$ in the above convergence statement).
\end{lemma}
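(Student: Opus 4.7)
The plan is to work in the Levi-Civita coordinates $(Q_n,P_n)$ for the fixed Weinstein neighborhood $\iota:T^{*}L\supset N\to W$ from data~\ref{data:4}. Since $\max|du_n|\to 0$ and $L_{\epsilon_n\mathfrak a_n},L_{\epsilon_n\mathfrak b_n}\to L$, the maps $u_n$ land in $N$ for $n$ large. Introduce the perturbed fiber coordinate $\tilde P_n$ defined by \eqref{eq:perturbedP}, which vanishes on $\bd\Sigma_n$, and apply Lemma~\ref{lemma:c1estimates} to obtain
\[|\tilde P_n(s,t)|+|\nabla_s\tilde P_n(s,t)|+|\nabla_t\tilde P_n(s,t)|\le \kappa_n(e^{-d(R_n+s)}+e^{-d(R_n-s)}+\epsilon_n),\]
valid for $(s,t)\in[-R_n,R_n]\times[0,1]$ with $\kappa_n\to 0$. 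Combined with $P_n=\tilde P_n+\epsilon_n\mathfrak a_n\circ Q_n+t\epsilon_n\mathfrak c_n\circ Q_n$ (where $\mathfrak c_n=\mathfrak b_n-\mathfrak a_n$), this shows $|P_n|=o(1)$ uniformly, so the vertical component of the rescaled map $v_n$ collapses onto the zero section.

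The horizontal component is the heart of the matter. Use equation \eqref{eq:daren3}, which after dividing by $\epsilon_n$ and rescaling by $\bar Q_n(s,t)=Q_n(\epsilon_n^{-1}s,\epsilon_n^{-1}t)$ becomes
\[g_{*}\partial_s\bar Q_n(s,t)=\epsilon_n^{-1}(\nabla_t\tilde P_n)(\epsilon_n^{-1}s,\epsilon_n^{-1}t)+\mathfrak c_n\circ\bar Q_n(s,t)+o(1),\]
where the $o(1)$ remainders are products of $\epsilon_n^{-1}$ with $\epsilon_n\cdot|du_n|$-type terms, hence uniformly small. Analogously, $g_{*}\partial_t\bar Q_n$ is $O(1)$ plus a term involving $\epsilon_n^{-1}\nabla_s\tilde P_n$. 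For any cutoff $M=M_n\to\infty$, on the interior rescaled region $|s|\le \ell_n-M_n\epsilon_n$, the exponential factor $e^{-d(R_n-|s_{\mathrm{orig}}|)}\le e^{-d(M_n-1)}$ kills the $\epsilon_n^{-1}\nabla\tilde P_n$ contributions provided $\kappa_n\epsilon_n^{-1}e^{-dM_n}\to 0$. Arzelà–Ascoli on this interior region produces, along a subsequence, a uniform limit $\bar Q_n\to v_\infty$ on compact subsets of $(-\ell_\infty,\ell_\infty)$, and passing to the limit in the equation gives $g_{*}\dot v_\infty=\mathfrak c_\infty\circ v_\infty$. The $t$-independence of the limit follows from $\bar Q_n(s,t)-\bar Q_n(s,0)=O(\epsilon_n\cdot\sup|\partial_t\bar Q_n|)\to 0$ because the rescaled $t$-interval has width $\epsilon_n$.

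To promote the convergence to the full rescaled domain $[-\ell_n,\ell_n]\times[0,\epsilon_n]$ (with $v_\infty$ extended continuously to $\R$), use the hypothesis $\max|du_n|\to 0$ directly on the boundary strips of rescaled width $M_n\epsilon_n$: the oscillation of $v_n$ there is bounded by
\[M_n\epsilon_n\cdot\sup|\partial_s\bar Q_n|+\epsilon_n\cdot\sup|\partial_t\bar Q_n|\le M_n\cdot\sup|\partial_su_n|+\sup|\partial_tu_n|,\]
which tends to zero as long as $M_n$ grows sufficiently slowly. Since the inner edge values $v_n(\pm(\ell_n-M_n\epsilon_n),t)$ converge to $v_\infty(\pm\ell_\infty)$ by the interior argument, a triangle inequality delivers the desired uniform statement.

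The main obstacle is balancing the three conditions on $M_n$: (i) $\kappa_n\epsilon_n^{-1}e^{-dM_n}\to 0$, (ii) $M_n\epsilon_n\to 0$, and (iii) $M_n\cdot\sup|\partial_su_n|\to 0$. Since $\kappa_n,\epsilon_n,\sup|du_n|\to 0$, one verifies that a choice such as $M_n$ satisfying $e^{-dM_n}=\sqrt{\epsilon_n/\kappa_n}$ (adjusting if $\kappa_n\le\epsilon_n$) meets all three requirements after a short unpacking; the finer dependence of $\kappa_n$ on $\epsilon_n$ and $\sup|du_n|$ traced through the proof of Lemma~\ref{lemma:c1estimates} provides the necessary slack. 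The secondary bookkeeping—that every remainder term in \eqref{eq:daren3} involves products of $\epsilon_n$ with derivatives of $u_n$ and with the bounded $\nabla\mathfrak a_n,\nabla\mathfrak c_n$, so is genuinely $o(1)$ after rescaling—is routine given the convergence of $\mathfrak a_n,\mathfrak b_n$ and the $C^0$-smallness of $du_n$.
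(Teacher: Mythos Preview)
Your overall strategy matches the paper's: apply the $C^{1}$ exponential estimate of Lemma~\ref{lemma:c1estimates}, rescale, and show the rescaled horizontal part $\bar Q_{n}$ satisfies an approximate flow-line ODE, while $|P_{n}|\to 0$ uniformly collapses the vertical part. Your interior step (derive $g_{*}\partial_{s}\bar Q_{n}=\mathfrak{c}_{n}\circ\bar Q_{n}+o(1)$ away from a boundary layer and pass to the limit) is essentially the paper's, though the paper phrases the convergence via a Gronwall comparison with the exact flow line rather than Arzel\`a--Ascoli.

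The genuine gap is in your boundary-strip argument. You bound the oscillation there by $M_{n}\cdot\sup|\partial_{s}u_{n}|$ and are then forced to balance (i) $\kappa_{n}\epsilon_{n}^{-1}e^{-dM_{n}}\to 0$ against (iii) $M_{n}\sup|du_{n}|\to 0$. These conditions can conflict: (i) forces roughly $M_{n}\gtrsim d^{-1}\log(\kappa_{n}\epsilon_{n}^{-1})$, but nothing in the hypotheses ties $\sup|du_{n}|$ to $\epsilon_{n}$ tightly enough to guarantee $\log(\epsilon_{n}^{-1})\cdot\sup|du_{n}|\to 0$. Your appeal to ``the finer dependence of $\kappa_{n}$'' is not substantiated and would require extracting quantitative bounds from the bootstrapping in Lemma~\ref{lemma:c1estimates}, which that proof does not supply. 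The paper sidesteps the whole balancing problem. It takes the explicit cutoff $\rho_{n}=(\epsilon_{n}/\delta)\log(1/\epsilon_{n})$ (i.e.\ $M_{n}=\delta^{-1}\log(1/\epsilon_{n})$), for which the interior bound becomes simply $\le 6\kappa_{n}$. On the boundary strip it does \emph{not} use $\sup|du_{n}|$; instead it integrates the pointwise exponential estimate on $|dQ_{n}|$ from \eqref{eq:my-estimates} directly: the integral of $e^{-\delta(r_{n}+s)}$ over the strip is bounded by $1/\delta$ regardless of its length, and the remaining contribution $(\kappa_{n}+C)\epsilon_{n}\cdot\delta^{-1}\log(1/\epsilon_{n})\to 0$. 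Replacing your crude oscillation bound by this integration of the exponential decay is the missing idea.
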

\begin{proof}
  The idea to use the exponential decay estimates from Lemma \ref{lemma:c1estimates} to control the derivatives of the rescaled map $v_n$, and apply the Arzel\`a-Ascoli theorem. Analysis of the holomorphic curve equation for $u_{n}$ will imply limit of $v_n$ satisfies the flow line equation for the vector field dual to  $\mathfrak{c}_{\infty}$. 

  The assumptions on $u_{n},J_{n},\mathfrak{a}_{n},\mathfrak{b}_{n}$ enable us to apply Lemmas \ref{lemma:statementA} and \ref{lemma:c1estimates}. Decomposing $u_{n}$ into the $Q_{n}$, $P_{n}$ component functions we conclude equation \eqref{eq:daren3} from the proof of Lemma \ref{lemma:statementA}, reprinted here:
  \begin{equation*}\tag{\ref{eq:daren3}}
    \begin{aligned}
      g_*\qs &= \nabla_t \tilde{P}_n + \epsilon_{n}(\nabla \mathfrak{a}_{n}\circ Q_{n})\cdot \bd_{t}Q_{n}+\epsilon_{n}\mathfrak{c}_n\circ Q_n \\&\hspace{2cm}+ t\epsilon_{n}(\nabla\mathfrak{c}_n\circ Q_{n})\cdot\qt +A_{n}(u_{n})\cdot P_{n}\cdot \bd_{t}u_{n}\\
      g_*\qt &= -\nabla_s \tilde{P}_n - \epsilon_{n}(\nabla \mathfrak{a}_{n}\circ Q_{n})\cdot \qs\\&\hspace{2cm}- t\epsilon_{n}(\nabla\mathfrak{c}_n\circ Q_{n})\cdot\qs + B_{n}(u_{n})\cdot P_{n}\cdot \bd_{t}u_{n}.
    \end{aligned}
  \end{equation*}
  Recall that $\mathfrak{c}_{n}:=\mathfrak{b}_{n}-\mathfrak{a}_{n}$, and $\tilde{P}_{n}:=P_{n}-\epsilon_{n}t\,\mathfrak{c}_{n}\circ Q_{n}-\epsilon_{n}\mathfrak{a}_{n}\circ Q_{n}$,  and $A_{n}(u_n),B_{n}(u_n)$ are bounded error terms related to the difference between $J_{n}$ and $J$.
  
  Lemma \ref{lemma:c1estimates} and the fact that $\abs{\d u_{n}}\to 0$ imply that for $s\in [-r_{n}+1,r_{n}-1]$,
  \begin{equation}\label{eq:my-estimates}
    \begin{aligned}
      \abs{P_{n}-\epsilon_{n}t\mathfrak{c}_{n}\circ Q_{n}-\epsilon_{n}\mathfrak{a}_{n}\circ Q_{n}}&\le \kappa_{n}(e^{-\delta(r_{n}+s)}+e^{-\delta(r_{n}-s)}+\epsilon_{n})\\
      \abs{g_{*}\bd_{s}Q_{n}-\epsilon_{n}\mathfrak{c}_{n}\circ Q_{n}}&\le \kappa_{n}(e^{-\delta(r_{n}+s)}+e^{-\delta(r_{n}-s)}+\epsilon_{n})\\
      \abs{\bd_{t}Q_{n}}&\le \kappa_{n}(e^{-\delta(r_{n}+s)}+e^{-\delta(r_{n}-s)}+\epsilon_{n}),
    \end{aligned}
  \end{equation}
  for some sequence $\kappa_{n}\to 0$.
  
  Let $v_{n}(s,t)=u_{n}(\epsilon_{n}^{-1}s,\epsilon_{n}^{-1}t)$. Then:
  \begin{equation*}
    q_{n}(s,t)=Q_{n}(\epsilon_{n}^{-1}s,\epsilon_{n}^{-1}t)\hspace{1cm} p_{n}(s,t)=P_{n}(\epsilon_{n}^{-1}s,\epsilon_{n}^{-1}t)
  \end{equation*}
  are the horizontal and vertical components of $v_{n}$. As a consequence of the first line of \eqref{eq:my-estimates}, $\abs{p_{n}(s,t)}\to 0$, and hence:
  \begin{equation*}
    \lim_{n\to\infty}\sup_{s,t\in [-\ell_{n},\ell_{n}]\times [0,\epsilon_{n}]}\mathrm{dist}(q_{n}(s,t),v_{n}(s,t))=0.
  \end{equation*}
  Thus it suffices to prove that $q_{n}(s,t)$ converges uniformly to a flow line dual to $\mathfrak{c}_{\infty}$.

  Substituting $\d q_{n}(s,t)=\epsilon_{n}^{-1}\d Q_{n}(\epsilon_{n}^{-1}s,\epsilon_{n}^{-1}t)$ and $r_{n}=\ell_{n}/\epsilon_{n}$, conclude
  \begin{equation*}
    \abs{\bd_{s}q_{n}-g_{*}^{-1}\mathfrak{c}_{n}\circ q_{n}}+\abs{\bd_{t}q_{n}}\le 2\kappa_{n}\frac{(e^{-\delta/\epsilon_{n}(\ell_{n}+s)}+e^{-\delta/\epsilon_{n}(\ell_{n}-s)}+\epsilon_{n})}{\epsilon_{n}}.
  \end{equation*}
  In particular, using the exponential decay, conclude that for $s\in [-\ell_{n}+\rho_{n},\ell_{n}-\rho_{n}]$ where $\rho_{n}=(\epsilon_{n}/\delta)\log(1/\epsilon_{n})$ the estimate:
  \begin{equation}\label{eq:gradient-6-kappa}
    \abs{\bd_{s}q_{n}-g_{*}^{-1}\mathfrak{c}_{n}\circ q_{n}}+\abs{\bd_{t}q_{n}}\le 6\kappa_{n}.
  \end{equation}
  Note that $\rho_{n}\to 0$.

  Arzel\`a-Ascoli produces a subsequence so that $q_{n}(-\ell_{n}+\rho_{n},t)$ converges to a point $x_{-}$ uniformly in $t$. Let $v_{\infty}:[-\ell_{\infty},\ell_{\infty}]\to L$ denote the flow line for the vector field $g_{*}^{-1}\mathfrak{c}_{\infty}$ satisfying $v_{\infty}(-\ell_{\infty})=x_{-}$. Our goal is to prove that
  \begin{equation*}
    \lim_{n\to\infty}\sup_{s,t\in [-\ell_{n},\ell_{n}]\times [0,\epsilon_{n}]}\mathrm{dist}(q_{n}(s,t),v_{\infty}(s))=0,
  \end{equation*}
  This will complete the proof of the lemma.

  Let $\varphi_{n}:[-\ell_{n}+\rho_{n},\ell_{n}-\rho_{n}]\to L$ be the flow line for $g_{*}^{-1}\mathfrak{c}_{n}$ starting at $q_{n}(-\ell_{n}+\rho_{n},0)$. Standard estimates from the theory of ODEs and \eqref{eq:gradient-6-kappa} imply that\footnote{Morally, both the initial conditions and the differential equations differ by some error converging to zero uniformly, and the domains are bounded, so the solutions remain close. More explicitly, the difference $\Delta_{n,t}(s)$ between $q_{n}(\ell_{n}-\rho_{n}+s,t)$ and $\varphi_{n}(\ell_{n}-\rho_{n}+s)$ satisfies a differential inequality of the form $\Delta_{n,t}'-c_{1}\Delta_{n,t}\le c_{2}\kappa_{n}$ where $c_{1}$ depends on $\mathfrak{c}_{\infty}$ and $c_{2}\approx 6$. There is some metric distortion which happens when one works in coordinates. This differential inequality can be integrated to obtain $\Delta_{n,t}(s)\le e^{cs}(\kappa_{n}s+\Delta_{n,t}(0))$. One observes that $\Delta_{n,t}(0)$ can also be bounded in terms of $\kappa_{n}$ by integrating $\bd_{t}q_{n}$. The details are left to the reader.}
  \begin{equation}\label{eq:gronwall-estimate}
    \max_{s,t\in [-\ell_{n}+\rho_{n},\ell_{n}-\rho_{n}]\times [0,\epsilon_{n}]}\mathrm{dist}(q_{n}(s,t),\varphi_{n}(s))\le C\kappa_{n},
  \end{equation}
  where $C$ depends on $\ell_{\infty}$ and $\mathfrak{c}_{\infty}$.

  Since the starting point of $\varphi_{n}(s)$ converges to the starting point of $v_{\infty}$, and the domain of $\varphi_{n}$, namely $[-\ell_{n}+\rho_{n},\ell_{n}-\rho_{n}]$, converges to the domain of $v_{\infty}$, namely $[-\ell_{\infty},\ell_{\infty}]$, \eqref{eq:gronwall-estimate} yields:
  \begin{equation*}
    \lim_{n\to \infty}\max_{s,t\in [-\ell_{n}+\rho_{n},\ell_{n}-\rho_{n}]\times [0,\epsilon_{n}]}\mathrm{dist}(q_{n}(s,t),v_{\infty}(s))=0.
  \end{equation*}
  It remains only to analyze the behavior near the ends, i.e., the region $\left[-\ell_n,-\ell_n+\rho_{n}\right]$ and $\left[\ell_n-\rho_{n},\ell_n\right].$ It is sufficient to prove that:
  \begin{equation*}
    \lim_{n\to\infty}\mathrm{diam}(q_{n}([-\ell_{n},-\ell_{n}+\rho_{n}]\times [0,\epsilon_{n}]))+\mathrm{diam}(q_{n}([\ell_{n}-\rho_{n},\ell_{n}]\times [0,\epsilon_{n}]))=0.
  \end{equation*}
  For this, rescale back to the original map. Consider one of the two regions, as the situation is exactly the same in the other one. The region $[-\ell_{n},-\ell_{n}+\rho_{n}]\times [0,\epsilon_{n}]$ is expanded to the rectangle: $$\Omega_{n}=[-r_{n},-r_{n}+\delta^{-1}\log(1/\epsilon_{n})]\times [0,1].$$ It follows from \eqref{eq:my-estimates} that:
  \begin{equation*}
    \abs{\d Q_{n}(s,t)}\le \kappa_{n}(e^{-\delta(r_{n}+s)}+e^{-\delta(r_{n}-s)}+\epsilon_{n})+C\epsilon_{n},
  \end{equation*}
  where $C$ is a constant depending on $\mathfrak{c}_{\infty}$. To estimate the diameter it suffices to bound the integral $\abs{\d Q_{n}}$ over horizontal and vertical lines in $\Omega_{n}$. It is easy to see that:
  \begin{enumerate}
  \item integrals over vertical lines are bounded by $\kappa_{n}(2+\epsilon_{n})+C\epsilon_{n}$,
  \item integrals over horizontal lines are bounded by
    \begin{equation*}
      \int_{-r_{n}}^{-r_{n}+\delta^{-1}\log(1/\epsilon_{n})}\abs{\d Q_{n}(s,t)}\d s\le \kappa_{n}(\frac{2}{\delta})+(\kappa_{n}+C)\epsilon_{n}\delta^{-1}\log(\frac{1}{\epsilon_{n}}).
    \end{equation*}
    One key is that the integral of the exponential function $e^{-\delta(r_{n}+s)}$ over $[-r_{n},\infty)$ is bounded by $1/\delta$.
  \end{enumerate}
  Since $\epsilon_{n}\log(1/\epsilon_{n})\to 0$ and $\kappa_{n}\to 0$, we conclude that these integrals tend to zero, and hence the diameter also tends to zero. This completes the proof.
\end{proof}

\subsubsection{Convergence to Morse flow lines}
\label{sec:proof_main_morse}
We prove Theorem \ref{theorem:main_morse}. Recall that $u_{n}$ has low energy and satisfies the adiabatic boundary conditions $u_{n}(s,0)\in L_{\epsilon_{n}\mathfrak{a}_{n}}$ and $u_{n}(s,1)\in L_{\epsilon_{n}\mathfrak{b}_{n}}$ where $\epsilon_{n}\to 0$ and $\mathfrak{b}_{n}-\mathfrak{a}_{n}=\d f_{n}$ converges to $\d f_{\infty}$ for $f_{\infty}$ \emph{Morse}.

In Theorem \ref{theorem:main_morse}, the domain of $u_{n}$ was presented as $[-R_{n}-\rho_{n},R_{n}+\rho_{n}]\times [0,1]$, and the ends of length $\rho_{n}$ are very tame, i.e., converge uniformly to points $x_{\pm}$. The necks which we cut along are supposed to lie in $[-R_{n},R_{n}]$. To avoid too much notation, we ignore the ends and simply consider the domain of $u_{n}$ as $[-r_{n},r_{n}]\times [0,1]$, leaving the details to the reader.


\begin{proof}[Theorem \ref{theorem:main_morse}]  
  The idea is to repeatedly apply Lemma \ref{lemma:finite-number-case}, iteratively constructing the limiting flow line. Let $[-r_{n},r_{n}]\times [0,1]$ be the domain of $u_{n}$. Suppose that $\ell_{n}:=\epsilon_{n}r_{n}$ converges to $+\infty$. The rescaled map $v_{n}(s,t)=u_{n}(\epsilon_{n}^{-1}s,\epsilon_{n}^{-1}t)$ is defined on $[-\ell_{n},\ell_{n}]\times[0,\epsilon_{n}]$.
  
  The first task is to construct the ``end'' strips $\Sigma_{n}^{\pm}$. Fix $\rho>0$ and consider the translated restriction:
  \begin{equation*}
    (s,t)\in [0,\rho]\times [0,\epsilon_{n}]\mapsto w_{n}^{-}(s,t)=v_{n}(-\ell_{n}+s,t).
  \end{equation*}
  Lemma \ref{lemma:finite-number-case} produces a subsequence so that $w_{n}^{-}(s,t)$ converges uniformly to a flow line $w_{\infty}(s)\in L$ for $g_{*}^{-1}\d f_{\infty}$ defined on $[0,\rho]$. In particular, $w_{n}(0,t)=v_{n}(-\ell_{n},t)$ converges uniformly to some point $x_{-}$. 

  Let $\gamma_{-}$ be the flow line starting at $x_{-}$ and ending at some critical point $y_{1}$. Uniqueness of flow lines starting at $x_{-}$ implies that $w_{n}$ converges uniformly to $\gamma_{-}|_{[0,\rho]}$.

  Pick a sequence $\rho_{k}\to \infty$, and repeatedly apply the preceding argument, obtaining a diagonal subsequence. As in Lemma \ref{lemma:little-trick}, for each $N \in \mathbb{N}$, let $k(N)$ be the maximal $k$ so that $\rho_{k}<\ell_{N}$ and 
  \begin{equation*}
    s\in [0,\rho_{k}]\text{ and }n\ge N\implies \mathrm{dist}(w_{n}(s,t),\gamma_{-}(s))<\frac{1}{k}.
  \end{equation*}
  For $k$ fixed, the above implication will eventually hold as $N\to\infty$, by the construction of the diagonal subsequence. This implies that $k(N)\to\infty$ as $N\to\infty$. Let $r_{n}^{-}=\rho_{k(n)}$, and define $\Sigma_{n}^{-}=[-\epsilon_{n}^{-1}\ell_{n},-\epsilon_{n}^{-1}(\ell_{n}-r_{n}^{-})]\times [0,1]$.

  Then $\Sigma_{n}^{-}$ will satisfy the first part of (iii). A similar argument constructs $\Sigma_{n}^{+}$. Denote $\epsilon_{n}^{-1}I_{n}^{\pm}\times [0,1]=\Sigma_{n}^{\pm}$.
  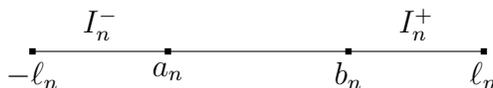
\begin{figure}[H]
    \centering
    \begin{tikzpicture}
      \path[every node/.style={draw,inner sep=1pt,fill}] (0,0)node{}--node(A)[pos=0.3]{}node[pos=0.7](B){}(6,0)node{};
      \node at (A) [below] {$a_{n}$};
      \node at (B) [below] {$b_{n}$};
      \draw (0,0)node[below]{$-\ell_{n}$}--node[pos=0.15,above]{$I_{n}^{-}$}node[pos=0.85,above]{$I_{n}^{+}$}(6,0)node[below]{$\ell_{n}$};
    \end{tikzpicture}
    \caption{The interval $I_{n}^{-}$ converges uniformly to the flow line $\gamma_{-}$ joining $x_{-}$ to some critical point, while $I_{n}^{+}$ converges uniformly to the flow line $\gamma_{+}$ joining some critical point to $x_{+}$.}
    \label{fig:first-step-done}
  \end{figure}
  Consider the complement $(a_{n},b_{n}):=[-\ell_{n},\ell_{n}]\setminus (I_{n}^{-}\cup I_{n}^{+})$. This interval has the property that $\mathrm{dist}(a_{n},-\ell_{n})\to \infty$ and $\mathrm{dist}(b_{n},\ell_{n})\to \infty$, and $v_{n}(a_{n},[0,\epsilon_{n}])$ and $v_{n}(b_{n},[0,\epsilon_{n}])$ converge uniformly to critical points.

  Our strategy has a few steps.

  Step 1 is to find a subinterval $I_{n}\subset (a_{n},b_{n})$ so that, for $I_{n}=s_{n}+J_{n}$ where $J_{n}$ is centered at $0$, the retranslated map $(s,t)\in J_{n}\times [0,\epsilon_{n}]\mapsto w_{n}(s,t)=v_{n}(s_{n}+s,t)$ converges uniformly to a \emph{nonconstant} flow line joining two critical points. If we cannot find such a nonconstant flow line, then we will show the \emph{Morse energy} (defined momentarily) of  $(a_{n},b_{n})$ converges to zero. 

  We iteratively repeat the argument to the two open intervals forming the complement $(a_{n},b_{n})\setminus I_{n}$. At each stage of the recursive process, we will either construct a closed interval $I_{n}'$ inside an open interval, and show that $v_{n}|I_{n}'$ converges to a non-constant flow line joining two critical points, or show that the open interval has Morse energy converging to zero. 

  Step 2 is to show that, whenever $I_{n}=s_{n}+J_{n}$, with $w_{n}(s,t)=v_{n}(s_{n}+s,t)$ (for $s\in J_{n}$), has $w_{n}$ converging uniformly to a non-constant flow line, then that flow line has a minimum positive amount of Morse energy $\hbar$.

  Thus the iteration starting of Step 1 will terminate after finitely many steps, and we will be left with a complementary region whose total Morse energy goes to zero.
  
  Step 3 is to show that sequences of open intervals $(a_{n},b_{n})$ have Morse energy convering to zero if and only if the \emph{diameter} of $v_{n}((a_{n},b_{n})\times [0,\epsilon_{n}])$ is also converging to zero. 

  The final step will be to define $\Sigma_{n}=\epsilon_{n}^{-1}I_{n}\times [0,1]$, and let $\sigma_{n}$ be the components of the complementary region $([-r_{n},r_{n}]\times [0,1])\setminus (\Sigma_{n}^{-}\cup\Sigma_{n}^{1}\cup \cdots \cup \Sigma_{n}^{k}\cup \Sigma_{n}^{+})$. This will complete the proof.
  
  Before we begin, define the Morse energy supported on $(a_{n},b_{n})\times [0,\epsilon_{n}]$ by:
  \begin{equation*}
    \mathrm{ME}(v_{n};(a_{n},b_{n}))=\frac{1}{\epsilon_{n}}\int_{0}^{\epsilon_{n}}\int_{a_{n}}^{b_{n}}v_{n}^{*}\omega\,\d s\d t=\frac{1}{\epsilon_{n}}\mathrm{E}(v_{n};(a_{n},b_{n})).
  \end{equation*}
  As in Proposition \ref{prop:low-e-strips}, the energy of $v_{n}$ supported on $(a_{n},b_{n})\times [0,\epsilon_{n}]$ is given by Stokes' theorem (see Figure \ref{fig:stokes}),
  \begin{equation*}
    \mathrm{E}(v_{n};(a_{n},b_{n}))=\epsilon_{n}(f_{n}(v_{n}(b_{n},\epsilon_{n}))-f_{n}(v_{n}(a_{n},\epsilon_{n})))+\text{error}_{n}.
  \end{equation*}
  The $\text{error}_{n}$ term is the integral of the primitive $\lambda_{n}$ over the two ends $\set{a_{n}}\times [0,\epsilon_{n}]$ and $\set{b_{n}}\times [0,\epsilon_{n}]$. Using $\lambda_{n}=-\lambda_{\mathrm{can}}+\epsilon_{n}\mathfrak{a}_{n}$ and the estimates:
  \begin{equation*}
    \abs{P_{n}}=O(\epsilon_{n})\text{ and }\abs{d v_{n}}=O(1),
  \end{equation*}
  which follow from the estimates \eqref{eq:my-estimates} (used in the proof of Lemma \ref{lemma:finite-number-case}), estimate:
  \begin{equation*}
    \abs{\int_{0}^{\epsilon}v_{n}^{*}\lambda_{n}}=\epsilon_{n}\max\abs{\lambda_{n}}\max\abs{\d v_{n}}=\epsilon_{n}O(\epsilon_{n})O(1)=o(\epsilon_{n}).
  \end{equation*}
  For this, use the fact that $\lambda_{\mathrm{can}}$ vanishes along the zero section (and hence is $O(\abs{P_{n}})$). Thus:
  \begin{equation*}
    \mathrm{ME}(v_{n};a_{n},b_{n})=f_{n}(v_{n}(b_{n},\epsilon_{n}))-f_{n}(v_{n}(a_{n},\epsilon_{n}))+o(1).
  \end{equation*}
  Since $v_{n}(a_{n},t)$ and $v_{n}(b_{n},t)$ are known to converge to critical points, say $y_{-}$ and $y_{+}$ (recall $a_{n}$ is the right boundary of $I_{n}^{-}$ and $b_{n}$ is the left boundary of $I_{n}^{+}$), we conclude that
  \begin{equation}\label{eq:morse-energy-limit}
    \lim_{n\to\infty}\mathrm{ME}(v_{n};a_{n},b_{n})=f(y_{+})-f(y_{-}).
  \end{equation}
  We will prove steps 1,2, and 3 from above.

  If the diameter of $v_{n}((a_{n},b_{n})\times [0,\epsilon_{n}])$ does not converge to zero, there is $s_{n}\in (a_{n},b_{n})$ so that $v_{n}(s_{n},0)$ converges to a non-critical point $p$ of $f$, after passing to a subsequence. This is because there are only finitely many critical points, and a minimum distance $\delta$ between any two critical points. For $r\le \delta$, any connected set of diameter $r$ must contain a non-critical point $p_{n}$ which remains $r/3$ far from all critical points (otherwise a set of diameter $r$ would be compressed into a ball of radius $r/3$, which is impossible due to the triangle inequality). We pick our subsequence so that $p_{n}$ converges to some point $p$.

  After passing to a further subsequence, the induced map $w_{n}(s,t)=v_{n}(s_{n}+s,t)$ converges on compact subsets to a flow line $\gamma:\R\to L$ passing with $\gamma(0)=p$. Since $v_{n}(a_{n},0)$ and $v_{n}(b_{n},0)$ converge to critical points, we conclude that $s_{n}-a_{n}$ and $b_{n}-s_{n}$ both diverge to $+\infty$ (i.e., $s_{n}$ remains far from the boundary of the interval under consideration).

  Similarly to how we have argued previously, pick any sequence $\rho_{k}\to \infty$, and for each $N\in \mathbb{N}$, define $k(N)$ to be the maximal $k$ satisfying
  \begin{enumerate}
  \item $\mathrm{dist}(w_{n}(s,t),\gamma(s))<1/k$ for $s,t\in [-\rho_{k},\rho_{k}]\times [0,\epsilon_{n}]$ and $n\ge N$,
  \item $s_{n}+[-\rho_{k},\rho_{k}]\subset (a_{n},b_{n})$ for $n\ge N$.
  \end{enumerate}
  By the above remarks, for any fixed $k$, the two conditions eventually hold for $N$ large enough, and so $k(N)\to \infty$. We then set $J_{n}=[-\rho_{k(n)},\rho_{k(n)}]$ and $I_{n}=s_{n}+J_{n}$. Then it is clear that the condition (ii) from the statement holds for $\Sigma_{n}=\epsilon_{n}^{-1}I_{n}\times [0,1]$. 

  Moreover, applying \eqref{eq:morse-energy-limit} to the interval $I_{n}$, we conclude that $\lim_{n\to\infty}ME(v_{n};I_{n})>0$, since $w_{n}$ converges to a \emph{non-constant} flow line.

  To summarize, if the diameter of $v_{n}((a_{n},b_{n})\times [0,\epsilon_{n}])$ does not converge to zero, then we there is a subinterval $I_{n}\subset (a_{n},b_{n})$ converging to a non-constant infinite flow line, and the Morse energy supported on $(a_{n},b_{n})$ converges to a positive number.

  If, on the other hand, the diameter \emph{does} converge to zero, then the Morse energy supported on $(a_{n},b_{n})$ will also converge to zero, as the limiting points $y_{\pm}$ will be the same. This completes the proof of Step 3.

  Write $I_{n}=[b_{n}',a_{n}']$, as shown in Figure \ref{fig:first-and-three-done}.
  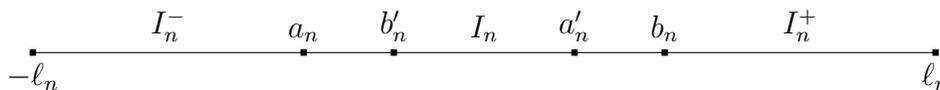
\begin{figure}[H]
    \centering
    \begin{tikzpicture}[xscale=2]
      \path[every node/.style={draw,inner sep=1pt,fill}] (0,0)node{}--node(A)[pos=0.3]{}node(Bp)[pos=0.4]{}node(Ap)[pos=0.6]{}node[pos=0.7](B){}(6,0)node{};
      \node at (A) [above] {$a_{n}$};
      \node at (B) [above] {$b_{n}$};
      \node at (Ap) [above] {$a^{\prime}_{n}$};
      \node at (Bp) [above] {$b^{\prime}_{n}$};
      \draw (0,0)node[below]{$-\ell_{n}$}--node[pos=0.5][above]{$I_{n}$}node[pos=0.15,above]{$I_{n}^{-}$}node[pos=0.85,above]{$I_{n}^{+}$}(6,0)node[below]{$\ell_{n}$};
    \end{tikzpicture}
    \caption{The interval $I_{n}$ converges uniformly to a \emph{nonconstant} flow line joining two critical points. We will then repeat the above arguments to $(a_{n},b_{n}')$ and $(a_{n}',b_{n})$.}
    \label{fig:first-and-three-done}
  \end{figure}
  Each interval $(a_{n},b_{n}')$ and $(a_{n}',b_{n})$ either has Morse energy going to zero, or we can find $I_{n}'$ contained within satisfying (i). In this fashion, the arguments given above can be iterated.

  For Step 2, it is clear each interval $I_{n}$ consumes a minimum amount $\hbar$ of Morse energy. Indeed one can take:
  \begin{equation*}
    \hbar := \inf\set{f(y_{+})-f(y_{-}):\text{ there exists a nonconstant flow line joining $y_{-}$ to $y_{+}$}}
  \end{equation*}
  which is positive as it is the infimum of a finite set of strictly positive numbers. This completes the proof.
\end{proof}


\subsubsection{Other cases of low energy strips}
\label{sec:other_low_energy_strips}
There are two other cases of low energy strips to consider. The first is when $u_{n}(s,0),u_{n}(s,1)\in L_{n}$ and $L_{n}$ converges to a Lagrangian $L$. In this case, one can apply Proposition \ref{prop:low-e-strips} with $\mathfrak{a}_{n}=\mathfrak{b}_{n}$. The following result classifies what happens when the first derivative tends to zero:
\begin{prop}\label{prop:low-e-strips-2}
  Let $K\subset (W,\omega)$ be a compact neighbourhood of a Lagrangian $L$, and let $J_{n}\to J$ be a $C^{\infty}$ convergent family of almost complex structures, and $L_{n}\to L$ be $C^{\infty}$ convergent family of Lagrangians. Suppose that $J$ is $\omega$-tame and $L$ is $J$-totally real. If $u_{n}:[-R_{n},R_{n}]\to K$ is a sequence of $J_{n}$-holomorphic curves with $u_{n}(s,0)\in L_{n}$ and $u_{n}(s,1)\in L_{n}$. If $\abs{\d u_{n}(s,t)}\to 0$ uniformly in $s,t$, as $n\to\infty$, then $u_{n}$ converges uniformly to a point.
\end{prop}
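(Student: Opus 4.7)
The plan is to adapt the proof of Proposition \ref{prop:low-e-cyl} to the strip setting with common Lagrangian boundary $L_{n}$. The adaptation will actually be somewhat cleaner than the cylinder case, because the primitive of $\omega$ that will be used in the exponential decay argument can be arranged to vanish automatically at the endpoints of each vertical arc (rather than requiring the auxiliary trick with $\lambda+\d f$). First, I would reduce to the situation that $u_{n}$ is contained in a fixed Weinstein tubular neighbourhood of $L$: since $L_{n}\to L$, the boundary values $u_{n}(s,0), u_{n}(s,1)$ lie within $o(1)$ of $L$, and the assumption $\abs{\d u_{n}}\to 0$ uniformly implies each vertical slice has diameter $o(1)$; thus the entire image lies in any fixed tubular neighbourhood of $L$ once $n$ is sufficiently large. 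Inside this neighbourhood identify $L_{n}$ with the graph $L_{\mathfrak{a}_{n}}$ of a closed one-form $\mathfrak{a}_{n}\to 0$, and define the primitive $\lambda_{n}:=-\lambda_{\mathrm{can}}+\pr^{*}\mathfrak{a}_{n}$, which satisfies $L_{n}^{*}\lambda_{n}=0$.

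Next, by Arzel\`a--Ascoli, pass to a subsequence so that $u_{n}(0,0)$ converges to some $p\in L$. The goal is to show that the diameter of $u_{n}([-R_{n},R_{n}]\times [0,1])$ tends to zero; since every vertical slice has diameter $o(1)$, this will give uniform convergence to $p$. Argue by contradiction: if the diameter does not converge to zero, one can extract radii $r_{n}\le R_{n}$ with $u_{n}([-r_{n},r_{n}]\times [0,1])\subset \cl{B}(p,\epsilon)$ but not contained in $B(p,\epsilon)$. Now set
\begin{equation*}
  E_{n}(r)=\int_{[-r,r]\times [0,1]}u_{n}^{*}\omega.
\end{equation*}
Stokes' theorem, combined with the vanishing of $\lambda_{n}$ on $L_{n}$, kills the top and bottom boundary contributions and yields
\begin{equation*}
  E_{n}(r)=\int_{0}^{1}u_{n}(-r,t)^{*}\lambda_{n}-\int_{0}^{1}u_{n}(r,t)^{*}\lambda_{n}.
\end{equation*}

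For each arc $\gamma_{n,r}(t):=u_{n}(r,t)$ based at $L_{n}$, estimate $\abs{\lambda_{n}(\gamma_{n,r}(t))}\le C\,\mathrm{dist}(\gamma_{n,r}(t),L_{n})\le C\int_{0}^{t}\abs{\partial_{t}\gamma_{n,r}}\,dt$, and combine with Cauchy--Schwarz and the holomorphic curve equation (which, since $J_{n}\to J$ is tame and $\abs{\d u_{n}}\to 0$, gives $\abs{\bd_{t}u_{n}}^{2}\le C'\abs{\bd_{s}u_{n}}^{2}+o(1)$) to obtain a differential inequality $E_{n}(r)\le CE_{n}'(r)$. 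Integrating gives the exponential decay $E_{n}(r)\le e^{-\delta(r_{n}-r)}E_{n}(r_{n})$. The boundary energy $E_{n}(r_{n})$ itself tends to zero because $\max\abs{\lambda_{n}}$ is uniformly bounded (in fact $\to 0$) and $\abs{\d u_{n}}\to 0$ uniformly, so the right-hand side of the Stokes expression at $r=r_{n}$ is $o(1)$.

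Finally, combine the exponential estimate with the mean-value property (Lemma \ref{lemma:mvp}) to conclude $\abs{\d u_{n}(s,t)}^{2}\le C''e^{-\delta(r_{n}-\abs{s})}E_{n}(r_{n})$ for $(s,t)$ away from the ends, and then $\abs{\d u_{n}(s,t)}\to 0$ uniformly in $n$ at the ends by hypothesis. Integrating $\abs{\d u_{n}}$ along horizontal lines, the integrand is bounded by $\sqrt{E_{n}(r_{n})}\,e^{-\delta(r_{n}-\abs{s})/2}$ on the interior, whose integral is $O(\sqrt{E_{n}(r_{n})})\to 0$; together with the vertical integrals which are likewise $o(1)$, the diameter of $u_{n}([-r_{n},r_{n}]\times [0,1])$ tends to zero, contradicting the assumption that the image leaves $B(p,\epsilon)$. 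The main technical point to watch is verifying that the constants in the inequality $E_{n}(r)\le CE_{n}'(r)$ can be taken uniform in $n$: this relies on $\mathfrak{a}_{n}\to 0$ in $C^{1}$ (so $\lambda_{n}\to -\lambda_{\mathrm{can}}$) together with $J_{n}\to J$ in $C^{\infty}$, both of which are given.
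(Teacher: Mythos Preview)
Your proposal is correct and follows essentially the same route as the paper: construct the primitive $\lambda_{n}=-\lambda_{\mathrm{can}}+\pr^{*}\mathfrak{a}_{n}$ (the paper writes $\beta_{n}$), use that it vanishes on $TW|_{L_{n}}$ (not merely on $TL_{n}$---this is what makes your distance estimate $\abs{\lambda_{n}(\gamma_{n,r}(t))}\le C\,\mathrm{dist}(\gamma_{n,r}(t),L_{n})$ work, and the paper highlights this point explicitly) to kill the horizontal boundary terms and bound the vertical ones by $C\int\abs{\gamma'_{r,n}}^{2}\,\d t$, obtain $E_{n}(r)\le CE_{n}'(r)$, and finish exactly as in Proposition~\ref{prop:low-e-cyl}. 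Two cosmetic slips: your Stokes formula has the signs reversed, and the $+o(1)$ in $\abs{\bd_{t}u_{n}}^{2}\le C'\abs{\bd_{s}u_{n}}^{2}+o(1)$ is unnecessary since $\bd_{t}u_{n}=J_{n}\bd_{s}u_{n}$ gives a clean multiplicative bound.
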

\begin{proof}
  The argument is similar to the proof of Proposition \ref{prop:low-e-cyl}. As in the proof of Proposition \ref{prop:low-e-strips}, let $\lambda_{n}$ be a primitive for $\omega$ which vanishes when restricted to $L_{n}$. For instance, we can fix a Weinstein neighbourhood of $L$ and take $\lambda_{n}=-\lambda_{\mathrm{can}}+\pr^{*}\beta_{n}$ where $L_{n}$ is the graph of the \emph{closed} one-form $\beta_{n}\to 0$. Note that $\lambda_{n}$ vanishes on $TW|_{L_{n}}$, not just $TL_{n}$. 

  Similarly to the proof of \ref{prop:low-e-cyl}, let
  \begin{equation*}
    E_{n}(r)=\int_{-r}^{r}\int_{0}^{1}u_{n}^{*}\omega=\int_{-r}^{r}\int_{0}^{1}\abs{\bd_{s}u}_{J,\omega}^{2}\d s\d t=\int_{0}^{1}\gamma_{r,n}^{*}\lambda_{n}-\int_{0}^{1}\gamma_{-r,n}^{*}\lambda_{n},
  \end{equation*}
  where $\abs{v}^{2}_{J,\omega}=\omega(v,J_{n}v)$ is strictly positive for $n$ sufficiently large. Note also that in our application of Stokes theorem, we use the fact that $u_{n}^{*}\lambda_{n}$ vanishes on \emph{both} horizontal boundary components.

  We also estimate $\gamma_{r,n}^{*}\lambda_{n}\le C\abs{\gamma'_{r,n}(t)}^{2}_{g}\d t\le C'\abs{\bd_{s}u}^{2}_{J,\omega}\d t$.

  One can then show that:
  \begin{equation*}
    E_{n}(r)\le CE_{n}'(r),
  \end{equation*}
  and the rest of the proof proceeds exactly as in the proof of \ref{prop:low-e-cyl}. 
\end{proof}

The other case is when the boundary conditions converge to different Lagrangians which have isolated intersections:
\begin{prop}\label{prop:low-e-strips-3}
  Let $K$ be a compact neighbourhood of $(W,\omega)$ containing two Lagrangians $L^{0},L^{1}$, with $L^{0}\cap L^{1}$ a finite set. Suppose that $J_{n}\to J$ is a convergent sequence of $\omega$-tame almost complex structures, and $u_{n}:[-R_{n},R_{n}]\times [0,1]\to K$ is sequence of $J_{n}$-holomorphic strips satisfying the boundary conditions:
  \begin{equation*}
    u_{n}(s,0)\in L_{n}^{0}\text{ and }u_{n}(s,1)\in L_{n}^{1},
  \end{equation*}
  where $L_{n}^{i}\to L^{i}$. Suppose moreover that:
  \begin{equation*}
    \limsup_{n\to \infty}\abs{\d u_{n}(-R_{n}+s,t)}+\abs{\d u_{n}(R_{n}-s,-t)}=0\text{ for $(s,t)$ in compact sets.}
  \end{equation*}
  Then there is a constant $\hbar>0$ depending on $L^{0},L^{1},J,\omega$ so that
  \begin{equation*}
    \limsup_{n\to\infty}E(u_{n})>0\implies \limsup_{n\to\infty}\sup_{s,t} \abs{\d u_{n}(s,t)}>0\implies \limsup_{n\to\infty}E(u_{n})\ge \hbar.
  \end{equation*}  
  Secondly, if $\abs{\d u_{n}(s,t)}\to 0$ uniformly in $s,t$ as $n\to\infty$, then $u_{n}$ converges uniformly to an intersection point $L^{0}\cap L^{1}$.
\end{prop}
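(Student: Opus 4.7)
The plan is to prove the two implications in \eqref{eq:energy-assumption-2} followed by the final convergence claim. The arguments parallel those of Propositions \ref{prop:low-e-cyl} and \ref{prop:low-e-strips-2}, but the distinct Lagrangians $L^0,L^1$ make a direct adaptation of the exponential-decay Stokes argument awkward; instead, the key new observation is that $\|du_n\|_{C^0}\to 0$ already forces the image to lie close to the finite set $L^0\cap L^1$, bypassing any need for exponential decay.

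The second implication, that $\limsup_n\sup|du_n|>0$ forces $\limsup_n E(u_n)\ge\hbar$, is a standard bubbling argument. The end hypothesis \eqref{eq:end-plate-assumption-2} ensures that any sequence $(s_n,t_n)$ realizing $|du_n(s_n,t_n)|\ge \eta>0$ stays a bounded distance from $\pm R_n$; applying Hofer's lemma near $(s_n,t_n)$ and passing to a $C^\infty_\mathrm{loc}$-limit of rescaled maps produces a non-constant $J$-holomorphic plane or half-plane with totally real boundary on $L^0$ or $L^1$, depending on whether $t_n$ stays away from $\{0,1\}$ or converges to one of them. Its $\omega$-energy is bounded below by a universal $\hbar=\hbar(L^0,L^1,J,\omega)>0$ by the standard isoperimetric bound, so $\liminf_n E(u_n)\ge\hbar$.

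For the final claim, assume $\|du_n\|_{C^0}\to 0$. Fix $s\in[-R_n,R_n]$: the bound $|u_n(s,1)-u_n(s,0)|\le\|\partial_t u_n\|_{C^0}\to 0$ together with $u_n(s,0)\in L^0_n\to L^0$ and $u_n(s,1)\in L^1_n\to L^1$ shows that $u_n(s,0)$ lies within a uniformly vanishing distance of both $L^0$ and $L^1$. Since $L^0,L^1$ are compact and $L^0\cap L^1$ is finite, compactness yields the separation property: for each $r>0$ there is $\delta>0$ such that any $x$ with $\mathrm{dist}(x,L^0)<\delta$ and $\mathrm{dist}(x,L^1)<\delta$ lies within distance $r$ of $L^0\cap L^1$. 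Hence for each $r>0$ and $n$ sufficiently large the continuous curve $s\mapsto u_n(s,0)$ on $[-R_n,R_n]$ lies entirely in the $r$-neighborhood of $L^0\cap L^1$. Taking $r$ less than half the minimum distance between distinct intersection points, this neighborhood is a disjoint union of balls about intersection points, and connectedness of $[-R_n,R_n]$ forces $u_n(\cdot,0)$ to lie inside a single such ball about some $p_n\in L^0\cap L^1$. Passing to a subsequence with $p_n\to p$ and combining with $\|\partial_t u_n\|_{C^0}\to 0$ gives $u_n(s,t)\to p$ uniformly on the full strip.

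The first implication---the contrapositive $\|du_n\|_{C^0}\to 0\Rightarrow E(u_n)\to 0$---then follows by a Stokes computation in a Darboux chart about $p$ identifying $L^0$ with the zero section of $T^*\R^n$, in which $L^1=\mathrm{graph}(df)$ with $f(p)=df(p)=0$, and for large $n$ we write $L^0_n=\mathrm{graph}(dh_n)$, $L^1_n=\mathrm{graph}(df_n)$ with $h_n\to 0$, $f_n\to f$ in $C^\infty$. The primitive $\lambda_n:=-\lambda_\mathrm{can}+\pr^*dh_n$ of $\omega$ satisfies $\lambda_n|_{L^0_n}=0$ and $\lambda_n|_{L^1_n}=d(h_n-f_n)$; Stokes' theorem expresses $E(u_n)$ as a combination of vertical-boundary integrals $\int_0^1\gamma_{\pm R_n,n}^*\lambda_n$, which vanish by \eqref{eq:end-plate-assumption-2} together with the bound $|\lambda_n(v)|\le C\,\mathrm{dist}(x,L^0_n)|v|$, and horizontal-top values $(h_n-f_n)(Q_n(\pm R_n,1))$, which vanish by the uniform convergence $u_n\to p$ combined with $(h_n-f_n)\to -f$ and $f(p)=0$. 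Thus $E(u_n)\to 0$. The main subtlety in the plan is the separation-of-Lagrangians argument in the third paragraph; the obstacle it sidesteps is that, unlike in Propositions \ref{prop:low-e-cyl} and \ref{prop:low-e-strips-2}, no choice of primitive $\lambda_n$ vanishes on both boundary components, so a direct exponential-decay Stokes argument runs into horizontal-boundary terms that are hard to absorb, and the discreteness of $L^0\cap L^1$ renders this Stokes approach unnecessary.
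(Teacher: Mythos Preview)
Your argument for the second implication is incomplete. Hofer's rescaling only produces a plane or half-plane when $\sup_{s,t}\abs{\d u_{n}}\to\infty$ along a subsequence; if instead $\sup_{s,t}\abs{\d u_{n}}$ remains bounded but $\limsup_{n}\sup_{s,t}\abs{\d u_{n}}>0$, the rescaled domains do not exhaust $\C$ or $\mathbb{H}$, and your bubble never forms. In that regime the correct move is simply to translate by $s_{n}$ and take a $C^{\infty}_{\mathrm{loc}}$ limit, which yields a non-constant finite-energy $J$-holomorphic \emph{strip} $\R\times[0,1]\to (K,L^{0},L^{1})$. The ``standard isoperimetric bound'' you invoke for spheres and disks does not directly give a positive lower bound on the energy of such strips; the paper's proof handles this by enlarging the definition of $\hbar$ to include the infimum of energies over non-constant strips (the class $\mathscr{U}_{3}$), and then proves this infimum is positive by exactly the convergence-to-intersection-point plus Stokes argument you already developed in your third and fourth paragraphs, applied now to a hypothetical sequence in $\mathscr{U}_{3}$ with energies tending to zero. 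Without this strip case your $\hbar$ controls only sphere and disk bubbles, and the implication can fail.

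Your arguments for the uniform convergence to $p\in L^{0}\cap L^{1}$ and for the first implication are correct and essentially match the paper. Your separation-of-Lagrangians argument in the third paragraph is in fact a bit more direct than the paper's Arzel\`a-Ascoli/subsequence formulation, and your Stokes computation is the same as the paper's (note that you do not need the refined bound $\abs{\lambda_{n}(v)}\le C\,\mathrm{dist}(x,L^{0}_{n})\abs{v}$ on the vertical edges, nor the normalisation $f(p)=0$: boundedness of $\lambda_{n}$ together with $\abs{\d u_{n}(\pm R_{n},t)}\to 0$ already kills the vertical integrals, and both horizontal endpoint values converge to the same number). Also, a phrasing slip: the end hypothesis forces $\abs{s_{n}\mp R_{n}}\to\infty$, not that $(s_{n},t_{n})$ stays a bounded distance from $\pm R_{n}$.
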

\begin{proof}
  The proof relies on slightly different techniques from the proofs of Propositions \ref{prop:low-e-strips} and \ref{prop:low-e-strips-2}, and is a bit more topological in nature.

  Suppose that $\abs{\d u_{n}(s,t)}\to 0$ uniformly in $s,t$ as $n\to\infty$. Then Arzel\`a-Ascoli implies that for any sequence $s_{n}\in (-R_{n},R_{n})$ a subsequence $v_{n}(s,t)=u_{n}(s_{n}+s,t)$ converges uniformly on compact sets to a single point \emph{which must lie on} $L^{0}\cap L^{1}$. Since the set of intersection points is isolated, a standard subsequence argument implies that $u_{n}(s,t)$ converges uniformly to a single limit point, say $p$. This proves the second part of the proposition, and will be used to establish the first part.

  Near $p$, pick $\omega$-primitives $\lambda_{n}\to \lambda_{\infty}$ so that $\lambda_{n}|_{L^{0}_{n}}=0$ and $\lambda_{n}|_{L^{1}_{n}}=\d f_{n}$ with $f_{n}$ convergent. Stokes' theorem yields:
  \begin{equation*}
    E(u_{n},\omega)\le \abs{f_{n}(u_{n}(R_{n},1))-f_{n}(u_{n}(-R_{n},1))}+C\sup_{t}\abs{\d u_{n}(\pm R_{n},t)}.
  \end{equation*}
  Taking the limit $n\to\infty$, conclude that $\limsup_{n\to\infty}E(u_{n})=0$. This proves the contrapositive to the first implication in the proposition. To complete the proof, it remains to prove the implication involving $\hbar$. Define:
  \begin{equation*}
    \begin{aligned}
      \mathscr{U}_{0}(K,J)&=\set{\text{nonconstant $J$-holomorphic spheres $S^{2}\to K$}},\\
      \mathscr{U}_{1}(K,J,L_{0})&=\set{\text{nonconstant $J$-holomorphic disks $(D(1),\bd D(1))\to (K,L_{0})$}},\\
      \mathscr{U}_{2}(K,J,L_{1})&=\set{\text{nonconstant $J$-holomorphic disks $(D(1),\bd D(1))\to (K,L_{1})$}},\\
      \mathscr{U}_{3}(K,J,L_{0},L_{1})&=\set{\text{nonconstant $J$-holomorphic strips $\R\times [0,1]\to (K,L_{0},L_{1})$}},
    \end{aligned}
  \end{equation*}
  where in the last case we require the top boundary is mapped to $L_{1}$ and the bottom boundary to $L_{0}$. Then we set:
  \begin{equation*}
    \hbar(K,L,J,\omega)=\inf\set{E(u,\omega)\text{ such that }u\in \mathscr{U}_{0}\cup \mathscr{U}_{1}\cup \mathscr{U}_{2}\cup \mathscr{U}_{3}}.
  \end{equation*}
  It follows from the mean-value property that the infimum over $\mathscr{U}_{0}\cup \mathscr{U}_{1}\cup \mathscr{U}_{2}$ is bounded below. To bound the infimum over $\mathscr{U}_{3}$, argue by contradiction: if not, there is a sequence $u_{n}\in \mathscr{U}_{3}$ whose energy tended to zero. The mean-value theorem still applies in this setting, indeed, on any half-disk of sufficiently small radius $u_{n}$ is a half-disk with boundary on $L_{0}$ or $L_{1}$. Thus $\abs{\d u_{n}(s,t)}$ converges uniformly to $0$, and, as argued above, $u_{n}$ converges uniformly to a point $p\in L^{0}\cap L^{1}$. 

  Let $n$ be large enough so that $u_{n}$ remains entirely in a small ball $B$ around $p$. We require that $p$ is the only intersection point of $L_{0}$ and $L_{1}$ in the ball. By applying similar arguments to the translations $u_{n,k,\pm}(s,t)=u_{n}(s\pm k,t)$, we conclude that $u_{n,k,\pm}(s,t)$ must both converge to $p$ as $k\to\infty$, uniformly in $t$. Writing $\omega=\d\lambda$, where $\lambda|_{L_{0}}=0$ and $\lambda|_{L_{1}}=\d f$ (on $B$), Stokes' theorem implies that $E(u_{n},\omega)=0$. Thus $u_{n}$ was not non-constant, contradicting the definition of $\mathscr{U}_{3}$. Thus $\hbar>0$.

  Finally, if $\limsup_{n}\sup_{s,t}\abs{\d u_{n}(s,t)}>0$, pick $s_{n}$ so that a subsequence of $u_{n}(s_{n}+s,t)$ either (i) forms a sphere or disk bubble (as in \ref{lemma:basic_bubble}), or (ii) converges on compact sets to a curve in $\mathscr{U}_{3}$. In either case, $\limsup_{n}E(u_{n},\omega)\ge \hbar$, as desired. This completes the proof.
\end{proof}
\appendix

\section{On elliptic regularity}
\label{sec:proof-of-bootstrappingone}

The proof of Lemma \ref{lemma:bootstrappingone} requires three analytical prerequisites: the \emph{Sobolev embedding theorem}, the \emph{elliptic estimates for the Laplacian}, and \emph{quadratic estimates for $W^{k,2}$ spaces}.

\begin{lemma}[Sobolev embedding theorem]\label{lemma:prereq1}
  For every bounded Lipshitz domain $\Omega\subset \R^{2}$ there exists constants $c_{2}(\Omega)$ $c_{1}(\Omega)>0$ so that
  \begin{equation*}
    \norm{f}_{C^{0}(\Omega)}\le c_{1}\norm{f}_{W^{1,4}(\Omega)}\le c_{2}\norm{f}_{W^{2,2}(\Omega)}.
  \end{equation*}
\end{lemma}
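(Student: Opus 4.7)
The plan is to derive both inequalities from the classical Sobolev embedding theorem on $\mathbb{R}^2$, combined with the standard extension operator for bounded Lipschitz domains. Specifically, since $\Omega$ is bounded Lipschitz, there exists a bounded linear extension operator $E:W^{k,p}(\Omega)\to W^{k,p}(\mathbb{R}^2)$ for each $k\ge 0$ and $p\in [1,\infty)$ (Stein's extension theorem). Consequently, it suffices to prove the analogous inequalities on all of $\mathbb{R}^2$ for compactly supported smooth functions, and then invoke density.

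For the first inequality $\norm{f}_{C^0(\Omega)}\le c_1\norm{f}_{W^{1,4}(\Omega)}$, I would appeal to Morrey's inequality: in dimension $n=2$, for any $p>2$, one has $W^{1,p}(\mathbb{R}^2)\hookrightarrow C^{0,1-2/p}(\mathbb{R}^2)$, and in particular into $C^0$ with a universal constant. Taking $p=4$ gives the desired continuous embedding $W^{1,4}(\Omega)\hookrightarrow C^0(\Omega)$. The standard argument proceeds by estimating $|f(x)-f(y)|$ via the integral of $\nabla f$ along the segment joining $x$ and $y$, applying H\"older's inequality with exponent $4$, and then controlling the pointwise value by averaging against a ball of radius $\mathrm{diam}(\Omega)$.

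For the second inequality $\norm{f}_{W^{1,4}(\Omega)}\le c_2\norm{f}_{W^{2,2}(\Omega)}$, the key ingredient is the Gagliardo--Nirenberg--Sobolev embedding in the critical dimension: in $\mathbb{R}^2$, $W^{1,2}$ embeds continuously into $L^q$ for every $q\in [2,\infty)$. (This is the $n=p$ borderline case, where the exponent $2^*=\infty$ is not attained, but every finite exponent $q$ is.) Applying this embedding with $q=4$ to both $f$ and each first partial derivative $\partial_i f$, both of which lie in $W^{1,2}(\Omega)$ when $f\in W^{2,2}(\Omega)$, yields
\begin{equation*}
  \norm{f}_{L^4(\Omega)}+\sum_i\norm{\partial_i f}_{L^4(\Omega)}\le c_2\,\norm{f}_{W^{2,2}(\Omega)},
\end{equation*}
which is exactly the claimed bound on $\norm{f}_{W^{1,4}(\Omega)}$.

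There is no genuine obstacle; both statements are standard entries in any reference on Sobolev spaces (e.g., Adams--Fournier or Evans' \emph{Partial Differential Equations}, Chapter~5), and we are simply packaging them in the form we need. The only mild subtlety is the borderline character of the second embedding in dimension two, but any finite exponent (in particular $q=4$) is admissible, so no sharp tracking of constants is necessary. In the write-up I would just cite the two classical theorems and record the composition.
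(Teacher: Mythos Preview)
Your proposal is correct and, in fact, provides considerably more detail than the paper, which simply cites \cite[Theorem B.1.11]{mcduffsalamon} for a more general result. Both approaches treat the lemma as a standard fact from Sobolev space theory; you have merely unpacked the two embeddings (Morrey for $W^{1,4}\hookrightarrow C^0$ and the critical-dimension Gagliardo--Nirenberg--Sobolev for $W^{2,2}\hookrightarrow W^{1,4}$) rather than deferring to a single reference.
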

\begin{proof}
  See \cite[Theorem B.1.11]{mcduffsalamon} for a more general result.
\end{proof}
\begin{lemma}[Elliptic estimates for the Laplacian]\label{lemma:prereq2}
  For every pair of domains $\Omega_{1},\Omega_{2}\subset \mathbb{H}$ with $\cl{\Omega}_{1}\subset \Omega_{2}$ there exists a constant $c(\ell,\Omega_{1},\Omega_{2})$ so that
  \begin{equation*}
    \norm{u}_{W^{k+2,2}(\Omega_{1})}\le c(\norm{\Delta u}_{W^{k,2}(\Omega_{2})}+\norm{u}_{W^{k+1,2}(\Omega_{2})}),
  \end{equation*}
  for all smooth functions $u:\Omega_{2}\to \R^{d}$ satisfying the Dirichlet boundary conditions $u(\R\cap \Omega_{2})=0$ or the Neumann boundary conditions $\bd_{t}u(\R\cap \Omega_{2})=0$.
\end{lemma}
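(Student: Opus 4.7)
The plan is to deduce Lemma~\ref{lemma:prereq2} from the classical \emph{interior} $W^{k+2,2}$-estimate for $\Delta$ on open subsets of $\R^{2}$ (where no boundary condition is imposed), via a reflection across $\R\cap\Omega_{2}$, supplemented when $k\ge 1$ by an induction on $k$ using tangential difference quotients. The interior ingredient is the standard statement that for $V\subset\subset U\subset\R^{2}$ and any $k\ge 0$ there is a constant $c(U,V,k)$ with $\norm{v}_{W^{k+2,2}(V)}\le c(\norm{\Delta v}_{W^{k,2}(U)}+\norm{v}_{W^{k+1,2}(U)})$. Its proof cuts $v$ off with $\chi\in C_{c}^{\infty}(U)$ equal to $1$ on $V$, applies Plancherel on $\R^{2}$ to obtain $\norm{D^{2}(\chi v)}_{L^{2}}\le \norm{\Delta(\chi v)}_{L^{2}}$ (the Fourier symbols of $D^{2}$ and $\Delta$ have the same order), expands $\Delta(\chi v)=\chi\Delta v+2\nabla\chi\cdot\nabla v+v\Delta\chi$, and iterates on $k$ by differentiating the equation.

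For the base case $k=0$ I would use reflection. Extend $u$ from the half-space to the doubled domain $\tilde\Omega_{2}:=\Omega_{2}\cup\{(s,-t):(s,t)\in\Omega_{2}\}\subset\R^{2}$ by setting $\tilde u(s,t)=\sigma\, u(s,-t)$ for $t<0$, with $\sigma=-1$ in the Dirichlet case (odd reflection) and $\sigma=+1$ in the Neumann case (even reflection). The boundary condition is exactly what is required to make $\tilde u\in W^{2,2}(\tilde\Omega_{2})$: for Dirichlet, $u(s,0)=0$ together with its tangential derivative $\partial_{s}u(s,0)=0$ makes $\tilde u$, $\partial_{s}\tilde u$, $\partial_{s}^{2}\tilde u$ and $\partial_{s}\partial_{t}\tilde u$ all continuous across the seam, while $\partial_{t}\tilde u$ is automatically even and continuous and $\partial_{t}^{2}\tilde u$ is classically bounded (hence in $L^{2}$, even if discontinuous at $t=0$); for Neumann, $\partial_{t}u(s,0)=0$ plays the analogous role. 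Moreover $\Delta\tilde u$ equals the $\sigma$-reflection of $\Delta u$, so $\norm{\Delta\tilde u}_{L^{2}(\tilde\Omega_{2})}\le 2\norm{\Delta u}_{L^{2}(\Omega_{2})}$. Choosing $\tilde V$ with $\tilde\Omega_{1}\subset\tilde V\subset\subset\tilde\Omega_{2}$, where $\tilde\Omega_{1}:=\Omega_{1}\cup\{(s,-t):(s,t)\in\Omega_{1}\}$, and applying the interior estimate to $\tilde u$ on this pair yields the $k=0$ case upon restriction to $\Omega_{1}\subset\tilde V$.

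For $k\ge 1$ the naive reflection is no longer in $W^{k+2,2}$ across the seam --- odd-order normal-derivative traces of $u$ need not vanish, and these generate jumps in the reflected higher derivatives --- so I proceed by induction on $k$ via tangential difference quotients. Writing $D_{s}^{h}u(s,t):=h^{-1}(u(s+h,t)-u(s,t))$, translation in $s$ preserves both the Dirichlet and Neumann boundary conditions, so $D_{s}^{h}u$ satisfies the same boundary condition as $u$. Applying the inductive hypothesis at index $k-1$ to $D_{s}^{h}u$ on slightly shrunken subdomains, uniformly in $h$, and letting $h\to 0$, produces the estimate for $\partial_{s}u$; bootstrapping gives $\partial_{s}^{a}u\in W^{k+2-a,2}$ for all tangential orders $a\le k$. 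The remaining derivatives involving $\partial_{t}^{b}$ with $b\ge 2$ are reconstructed from the equation rewritten as $\partial_{t}^{2}u=\Delta u-\partial_{s}^{2}u$, differentiated in $s$ as required; combining tangential and normal bounds yields the full $W^{k+2,2}$ estimate on $\Omega_{1}$.

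The main obstacle is the domain bookkeeping in the inductive step: at each stage one shrinks $\Omega_{1}\subset\Omega_{2}$ slightly, and the resulting family of constants must be packaged into a single $c=c(k,\Omega_{1},\Omega_{2})$. Particular care is needed in the $h\to 0$ passage, where the standard move is to note that the $W^{k+1,2}(V)$ bounds on $D_{s}^{h}u$ are uniform in $h$, extract a weak limit, and identify it with $\partial_{s}u$. These details are routine but finicky, and the argument can alternatively be quoted as a black box from the standard boundary elliptic regularity literature (cf.\ the Dirichlet version already invoked in the proof of Lemma~\ref{lemma:elliptic estimate}).
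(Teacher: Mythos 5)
The paper's own treatment of this lemma is a pure citation --- it refers to \cite[Lemma C.2]{robbinsalamon} and \cite[Appendix~B]{mcduffsalamon} and gives no argument --- so there is no ``paper proof'' to compare against directly. Your outline actually reconstructs the standard textbook argument for boundary elliptic estimates: reflection across the flat boundary piece (odd for Dirichlet, even for Neumann) reduces the $k=0$ case to the interior $W^{2,2}$ estimate, and then tangential difference quotients together with the identity $\partial_{t}^{2}u=\Delta u-\partial_{s}^{2}u$ supply the induction on $k$. This is correct, and the logic (translation in $s$ preserves both boundary conditions, apply the inductive hypothesis to $D_{s}^{h}u$, pass to the limit, then recover the remaining pure-$t$ derivative from the equation) is exactly how the estimate is usually proved when one does not simply cite it.

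Two small points. First, your diagnosis of why the naive reflection fails for $k\ge 1$ has the parity reversed in the Dirichlet case: with odd reflection, the derivatives $\partial_{s}^{a}\partial_{t}^{b}\tilde u$ with $b$ even become odd extensions, so the obstruction is that the \emph{even}-order normal traces $\partial_{t}^{2j}u(s,0)$ (which, via the equation, are forced to equal data involving $\Delta u$) need not vanish; the odd-order traces are the obstruction only in the Neumann (even-reflection) case. Second, in the $k=0$ step it is worth stating explicitly that the weak gluing criterion is being used: the reflected $\tilde u$ belongs to $W^{2,2}(\tilde\Omega_{2})$ precisely because $\tilde u$ and $\nabla\tilde u$ have matching one-sided traces along $\{t=0\}$, so no singular distribution appears on the seam either in $\nabla^{2}\tilde u$ or in $\Delta\tilde u$; you make the right pointwise continuity observations but do not quite name the principle. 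Since the paper itself treats the lemma as a black box, your closing remark that the result can be quoted from the standard literature is in line with the paper's level of detail, and the explicit argument you supply is a correct and useful expansion of it.
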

\begin{proof}
  See \cite[Lemma C.2]{robbinsalamon} for a short proof. This also follows from the general $L^{p}$, $p>1$, results in \cite[Appendix B]{mcduffsalamon}.
\end{proof}
\begin{lemma}\label{lemma:prereq3}
  Let $\Omega$ be a bounded Lipshitz domain. There are constants $Q_{k}=Q_{k}(\Omega)$ so that for all smooth functions $f,g$ on $\Omega$ we have
  \begin{equation}\label{eq:baby-quadratic}
    \norm{fg}_{W^{1,2}(\Omega)}\le Q_{1}(\norm{f}_{W^{1,2}(\Omega)}\norm{g}_{C^{0}(\Omega)}+\norm{f}_{C^{0}(\Omega)}\norm{g}_{W^{1,2}(\Omega)}),
  \end{equation}
  and for $k\ge 2$ we have
  \begin{equation}\label{eq:quadratic}
    \norm{fg}_{W^{k,2}(\Omega)}\le Q_{k}\norm{f}_{W^{k,2}(\Omega)}\norm{g}_{W^{k,2}(\Omega)}.
  \end{equation}
\end{lemma}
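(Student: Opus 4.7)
The first estimate \eqref{eq:baby-quadratic} is a direct computation. Write
\begin{equation*}
    \norm{fg}_{W^{1,2}(\Omega)}^{2}=\int_{\Omega}\abs{fg}^{2}+\int_{\Omega}\abs{\nabla(fg)}^{2},
\end{equation*}
apply the product rule $\nabla(fg)=f\nabla g+g\nabla f$, and in each resulting term pull out one factor in its $C^{0}$-norm while keeping the other in $L^{2}$. For the $\norm{fg}_{L^{2}}$ term we may pull out either factor; for concreteness bound it by $\norm{f}_{C^{0}}\norm{g}_{L^{2}}\le \norm{f}_{C^{0}}\norm{g}_{W^{1,2}}$. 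Summing the contributions and using $(a+b)^{2}\le 2(a^{2}+b^{2})$ yields the stated inequality with an absolute constant $Q_{1}$.

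\textbf{For the second estimate}, the plan is to exploit the fact that we are in the two-dimensional setting, so that $W^{k,2}$ is an algebra as soon as $k\ge 2$. The key inputs are the Sobolev embeddings available in dimension two, namely $W^{2,2}(\Omega)\hookrightarrow C^{0}(\Omega)$ (Lemma~\ref{lemma:prereq1}) and $W^{1,2}(\Omega)\hookrightarrow L^{4}(\Omega)$ (a standard Gagliardo--Nirenberg--Sobolev embedding in two dimensions). Using the multi-index Leibniz rule,
\begin{equation*}
    \partial^{\alpha}(fg)=\sum_{\beta\le\alpha}\binom{\alpha}{\beta}\partial^{\beta}f\cdot \partial^{\alpha-\beta}g,
\end{equation*}
for $\abs{\alpha}\le k$, it suffices to bound $\norm{\partial^{\beta}f\cdot \partial^{\gamma}g}_{L^{2}(\Omega)}$ for all multi-indices with $\abs{\beta}+\abs{\gamma}\le k$ by $C\norm{f}_{W^{k,2}}\norm{g}_{W^{k,2}}$.

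\textbf{Case analysis.} Proceed by cases on the sizes of $\beta$ and $\gamma$:
\begin{enumerate}
\item If $\abs{\beta}=0$, then $f\in W^{k,2}\subset C^{0}$ (using $k\ge 2$ and Lemma~\ref{lemma:prereq1}), and we estimate $\norm{f\cdot\partial^{\gamma}g}_{L^{2}}\le \norm{f}_{C^{0}}\norm{\partial^{\gamma}g}_{L^{2}}\le c\norm{f}_{W^{k,2}}\norm{g}_{W^{k,2}}$.
\item If $\abs{\gamma}=0$, argue symmetrically.
\item If $\abs{\beta},\abs{\gamma}\ge 1$ and $\min(\abs{\beta},\abs{\gamma})\le k-2$, place the smaller-derivative factor in $C^{0}$ using $\partial^{\beta}f\in W^{k-\abs{\beta},2}\subset W^{2,2}\hookrightarrow C^{0}$ (or symmetrically), and leave the other factor in $L^{2}$.
\item The remaining case is $\abs{\beta}=\abs{\gamma}=k-1$ with $k=2$, i.e.\ $\abs{\beta}=\abs{\gamma}=1$. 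Here both $\partial^{\beta}f$ and $\partial^{\gamma}g$ lie in $W^{1,2}$, hence in $L^{4}$ by the two-dimensional Sobolev embedding, and Hölder's inequality gives $\norm{\partial^{\beta}f\cdot\partial^{\gamma}g}_{L^{2}}\le \norm{\partial^{\beta}f}_{L^{4}}\norm{\partial^{\gamma}g}_{L^{4}}\le c\norm{f}_{W^{2,2}}\norm{g}_{W^{2,2}}$.
\end{enumerate}
Summing over all multi-indices $\alpha$ with $\abs{\alpha}\le k$ and over $\beta\le\alpha$, absorbing the binomial coefficients and Sobolev constants into $Q_{k}$, yields the desired inequality.

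\textbf{Main obstacle.} There is no deep analytic difficulty here; the proof is essentially a bookkeeping exercise. The one subtle point is the borderline case $k=2$ with $\abs{\beta}=\abs{\gamma}=1$, where neither factor can be placed in $C^{0}$ by $W^{2,2}\hookrightarrow C^{0}$ alone; this is where the two-dimensionality of $\Omega$ is genuinely used, via the Sobolev embedding $W^{1,2}\hookrightarrow L^{4}$. In higher dimensions the analogous statement requires $k>d/2$ rather than merely $k\ge 2$.
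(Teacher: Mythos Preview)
Your proof is correct and follows essentially the same approach as the paper: the $W^{1,2}$ estimate via the product rule, and for $k\ge 2$ the Leibniz expansion together with the Sobolev embeddings $W^{2,2}\hookrightarrow C^{0}$ and $W^{1,2}\hookrightarrow L^{4}$, with the $L^{4}$--H\"older trick handling the critical $\abs{\beta}=\abs{\gamma}=1$ term. The only cosmetic difference is that the paper treats $k=2$ explicitly and then inducts on $k$ via $\norm{fg}_{W^{k,2}}\le \norm{fg}_{W^{k-1,2}}+\norm{(\nabla f)g}_{W^{k-1,2}}+\norm{f\nabla g}_{W^{k-1,2}}$, whereas you handle all $k\ge 2$ in one pass by a direct case analysis on $(\abs{\beta},\abs{\gamma})$; the content is the same.
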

\begin{proof}
  The estimate on the $W^{1,2}$ norm holds by observing that
  \begin{equation*}
    \nabla (fg)=(\nabla f)\cdot g+f\cdot \nabla g,
  \end{equation*}
  and the estimate $\norm{ab}_{L^{2}}\le \norm{a}_{L^{2}}\norm{b}_{C^{0}}$. The estimate on the $W^{2,2}$ spaces follows from:
  \begin{equation*}
    \norm{fg}_{W^{2,2}(\Omega)}\le \norm{fg}_{W^{1,2}(\Omega)}+\norm{\nabla f\cdot \nabla g}_{L^{2}}+\norm{\nabla \nabla f\cdot g}_{L^{2}}+\norm{f\cdot \nabla \nabla g}_{L^{2}}.
  \end{equation*}
  We claim that each term can be bounded by some constant times $\norm{f}_{W^{2,2}}\norm{g}_{W^{2,2}}$. The first term can be estimated using the quadratic estimate on the $W^{1,2}$ norm \eqref{eq:baby-quadratic}, together with the Sobolev embedding theorem for $C^{0}\subset W^{2,2}$. The last two terms can be estimated using $\norm{ab}_{L^{2}}\le \norm{a}_{L^{2}}\norm{b}_{C^{0}}$ and the Sobolev embedding theorem. The hard term to estimate is $\norm{\nabla f\cdot \nabla g}_{L^{2}}$. To do so, use the Sobolev embedding theorem for $W^{1,4}\subset W^{2,2}$, and the H\"older-type inequality:
  \begin{equation*}
    \norm{\nabla f\cdot \nabla g}_{L^{2}}\le \norm{\nabla f}_{L^{4}}\norm{\nabla g}_{L^{4}}.
  \end{equation*}
  The quadratic estimates for $k>2$ follow easily by induction, using:
  \begin{equation*}
    \norm{fg}_{W^{k,2}}\le \norm{fg}_{W^{k-1,2}}+\norm{\nabla f\cdot g}_{W^{k-1,2}}+\norm{f\nabla g}_{W^{k-1,2}}.
  \end{equation*}
  This completes the proof.  
\end{proof}
With these prerequisites the proof Lemma \ref{lemma:bootstrappingone} is a straightforward bootstrapping argument. See \cite[Lemma C.3]{robbinsalamon}.

\begin{proof}[of Lemma \ref{lemma:bootstrappingone}]
  In search of a contradiction, suppose a subsequence $u_{n}$ whose energy decays to zero, but $\abs{\nabla^{\ell}\d u_{n}(z_{n})}$ is bounded from below, say $\epsilon_{\ell}>0$. Replace $u_{n}$ by this subsequence.

  Write $z_{n}=s_{n}+it_{n}$. Passing to a subsequence, either $t_{n}$ converges to $0$, or $z_{n}+it_{n}$ remains a minimal distance $\delta>0$ from $\bd\mathbb{H}$. We will prove the case $t_{\infty}=0$, and leave the other (simpler) case to the reader.

  Consider the function $v_{n}(s+it)=u_{n}(s_{n}+s+it)$. Since $z_{n}=s_{n}+it_{n}$ and $t_{n}$ converges to $0$, eventually $v_{n}$ is defined on the half-disk $D(0,r/2)\cap \cl{\mathbb{H}}$.
  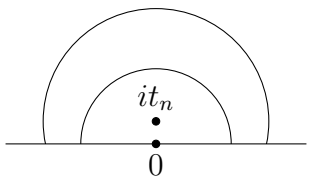
\begin{figure}[H]
    \centering
    \begin{tikzpicture}
      \begin{scope}
        \clip (-2,0)rectangle (2,1.9);
        \draw (0,0.3) circle (3/2) node (A){};
        \draw (0,0) circle (1);
      \end{scope}
      \draw (-2,0)--(2,0);
      \node[draw,circle,fill,inner sep=1pt] (Z)at (0,0) {};
      \node[draw,circle,fill,inner sep=1pt] at (A) {};
      \node at (A) [above] {$it_{n}$};
      \node at (Z) [below] {$0$};
    \end{tikzpicture}
    \caption{The half disk $D(0,r/2)\cap \cl{\mathbb{H}}$ is eventually contained in $D(it_{n},r)\cap \cl{\mathbb{H}}$.}
    \label{fig:half-disk}
  \end{figure}

  Since $u_{n}$ is valued in the compact neighbourhood $K$, so is $v_{n}$. Therefore we may pass to a subsequence where $v_{n}(0)\in L$ converges to a point $p\in L$. Choose now a coordinate chart $\varphi:\cl{U}\to \cl{B}\subset \R^{2d}$ centered at $p$ which identifies $L\cap \cl{U}$ with $(\R^{d}\times \set{0})\cap \cl{B}$ and so that the induced complex structure $\d\varphi\cdot J\cdot \d\varphi^{-1}$ is equal to $J_{0}$ along $\R^{d}\times \set{0}$.\footnote{To see that such a coordinate chart exists one can, e.g., pick the first $d$ coordinates $x_{1},\cdots,x_{d}$ for $L$ and then define the remaining coordinates $y_{1},\cdots,y_{d}$ by exponentiating the vector fields $J\bd_{x_{i}}$ (which are transverse to $L$ since $J$ is compatible with $\omega$).}

  The mean-value property for the energy density implies that the first derivative of $v_{n}$ is eventually bounded on its domain $D(0,r/2)\cap \cl{\mathbb{H}}$. In particular, there is a $\delta>0$ so that $v_{n}$ eventually maps $D(0,\delta)\cap\cl{\mathbb{H}}$ into $U$. Thus we may (eventually) define the $\R^{2d}$-valued function $w_{n}(z)=\varphi\circ v_{n}(z)$. 

  Then, abusing notation and letting $J:=\d\varphi\cdot J\cdot \d\varphi^{-1}$, conclude that $w_{n}$ satisfies the boundary value problem:
  \begin{equation*}\label{eq:bvp2}
    \left\{
      \begin{aligned}
        \bd_{s}w_{n}+J(w_{n})\cdot \bd_{t}w_{n}=0&\\
        w_{n}(s,0)\in \R^{d}\times \set{0}&
      \end{aligned}
    \right.
  \end{equation*}
  Decompose $w_{n}=(X_{n},Y_{n})$ into its real and imaginary parts, and compute $Y_{n}(s,0)=0$ and $0=\bd_{s}Y_{n}(s,0)=-\bd_{t}X_{n}(s,0)$. Thus $X_{n}$ satisfies Neumann boundary conditions and $Y_{n}$ satisfies Dirichlet boundary conditions. Therefore Lemma \ref{lemma:prereq2} implies that, for $k\ge 2$, $w_{n}$ satisfies the elliptic estimates:
  \begin{equation}\label{elliptic-estimates}
    \norm{w_{n}}_{W^{k,2}(\Omega_{k})}\le c_{k}(\norm{\Delta w_{n}}_{W^{k-2,2}(\Omega_{k-1})}+\norm{w_{n}}_{W^{k-1,2}(\Omega_{k-1})}),
  \end{equation}
  where we define $\Omega_{k}:=D(0,\delta/k)\cap \cl{\mathbb{H}}$. In order to use \eqref{elliptic-estimates}, we compute
  \begin{equation}\label{eq:delta-wn}
    \begin{aligned}
      &(\bd_{s}-J(w_{n})\bd_{t})(\bd_{s}w_{n}+J(w_{n})\bd_{t}w_{n})=0\\
      \implies &\Delta w_{n}=\bd_{t}[J(w_{n})]\bd_{s}w_{n}-\bd_{s}[J(w_{n})]\bd_{t}w_{n}.
    \end{aligned}
  \end{equation}
  The strategy is to use \eqref{elliptic-estimates} and \eqref{eq:delta-wn} to bootstrap the the $C^{1}$ bounds from the mean-value property to $W^{k,2}$ bounds on the half-disk $\Omega_{k}$. To be more precise, we will prove:
  \begin{equation}\label{eq:desired2}
    \sup_{n}\norm{w_{n}}_{W^{k,2}(\Omega_{k})}=0,
  \end{equation}
  by induction on $k$. It then follows from the Sobolev embedding theorem that the case $k=\ell+3$ will contradict our assumption that $\abs{\nabla^{\ell}\d u_{n}(z_{n})}\ge \epsilon_{\ell}$.

  Because of the mean-value property, the assumption that the energy tends to zero implies that the derivative of $w_{n}$ is converging to $0$ on its domain $\Omega_{1}$. Thus equation \eqref{eq:delta-wn} implies that $\Delta w_{n}$ is convergent to $0$ in $L^{2}(\Omega_{1})$. Therefore the elliptic estimates \eqref{elliptic-estimates} imply that \eqref{eq:desired2} holds with $k=2$.

  It is well-known that
  \begin{equation}
    \label{eq:smooth-compo}
    \sup_{n}\norm{w_{n}}_{W^{k,2}(\mathscr{D}(r))}<\infty\implies \sup_{n}\norm{J(w_{n})}_{W^{k,2}(\mathscr{D}(r))}<\infty,
  \end{equation}
  for all $k\ge 0$, since $J$ is smooth.
  
  Apply the quadratic estimate \eqref{eq:baby-quadratic} to the equation \eqref{eq:delta-wn} to conclude $$\limsup_{n}\norm{\Delta w_{n}}_{W^{1,2}(\Omega_{2})}=\limsup_{n}\norm{\bd_{t}[J(w_{n})]\bd_{s}w_{n}-\bd_{s}[J(w_{n})]\bd_{t}w_{n}}_{W^{1,2}(\Omega_{2}}=0.$$ This uses the fact that $J(w_{n})$ is uniformly bounded in $W^{2,2}(\Omega_{2})$ and $C^{1}$, and $w_{n}$ is converging to $0$ in $W^{2,2}(\Omega_{2})$ and $C^{1}$. Then the elliptic estimates \eqref{elliptic-estimates} imply that the desired result \eqref{eq:desired2} holds with $k=3$.

  Now conclude from \eqref{eq:smooth-compo} and the higher quadratic estimates \eqref{eq:quadratic} that
  \begin{equation*}
    \limsup_{n}\norm{\Delta w_{n}}_{W^{2,2}(\Omega_{2})}=\limsup_{n}\norm{\bd_{t}[J(w_{n})]\bd_{s}w_{n}-\bd_{s}[J(w_{n})]\bd_{t}w_{n}}_{W^{2,2}(\Omega_{2})}=0.
  \end{equation*}
  Then the elliptic estimates \eqref{elliptic-estimates} prove \eqref{eq:desired2} in the case $k=4$. The argument repeats, without any further modification, to conclude \eqref{eq:desired2} for all $k$.

  The proof is almost over. Recall the assumption that $$\limsup_{n\to\infty}\abs{\nabla^{\ell}\d u_{n}(z_{n})}\ge \epsilon_{\ell}.$$ Since $\varphi\circ u_{n}(z)=w_{n}(z-s_{n})$, $w_{n}$ is also bounded below\footnote{Since $\varphi$ is a diffeomorphism between compact domains, it distorts the $C^{\ell+1}$ size by a bounded amount.} in the $C^{\ell+1}$ norm near $z_{n}-s_{n}=it_{n}$.

  Since $t_{n}$ converges to $0$, $it_{n}$ eventually enters the disk $\Omega_{\ell+3}$. However, the $C^{\ell+1}(\Omega_{\ell+3})$ norm is bounded by the $W^{\ell+3,2}(\Omega_{\ell+3})$ norm (by the Sobolev embedding theorem). Then \eqref{eq:desired2} contradicts the fact that the $C^{\ell+1}$ size of $w_{n}$ is bounded below. This contradiction completes the proof.
\end{proof}

The next result we prove in this appendix is another elliptic type estimate.

\begin{lemma}\label{lemma:baby-ellreg}
  Let $\Phi_n:D(1)\to \C$ be a sequence of smooth maps satisfying

  (1) $\sup_{n,z}\abs{\d\Phi_n(z)}<\infty$, (2) $\Phi_n(0)=0$, and (3) $\Phi_n$ is approximately holomorphic in the sense that
  \begin{equation}
    \label{eq:appx-holo}
    \bd_s\Phi_n(z)+i\bd_t\Phi_n(z)=A_n(z)\cdot \nabla \Phi_n(z)+B_{n}(z),
  \end{equation}
  where $A_n,B_{n}\to 0$ in the $C^\infty_{\mathrm{loc}}$ topology. Then the $k$th derivative of $\Phi_n$ is bounded on any smaller disk $D(\rho)$ ($\rho<1$) as $n\to\infty$. 
\end{lemma}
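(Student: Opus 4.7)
The plan is a standard bootstrapping argument using interior elliptic estimates for the Cauchy--Riemann operator $\bar{\partial}:=\frac{1}{2}(\bd_s+i\bd_t)$, where the only non-routine feature is absorbing a ``top-order'' term of the form $A_n\cdot\nabla^{k+1}\Phi_n$ into the left-hand side, using the uniform smallness of $A_n$. First I would note that (1) and (2), together with integration along radial paths, yield a uniform $C^0$-bound on $\Phi_n$ over $D(1)$, and hence a uniform $W^{1,p}(D(\rho))$-bound for every $p\in(1,\infty)$ and every $\rho<1$.

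For the base step of the bootstrap, differentiate (\ref{eq:appx-holo}) tangentially to obtain
\begin{equation*}
  2\bar{\partial}(\nabla\Phi_n)=A_n\cdot\nabla(\nabla\Phi_n)+(\nabla A_n)\cdot\nabla\Phi_n+\nabla B_n,
\end{equation*}
where $\nabla$ denotes either $\bd_s$ or $\bd_t$. The standard Calder\'on--Zygmund interior estimate
\begin{equation*}
  \norm{u}_{W^{1,p}(D(r))}\le C\bigl(\norm{\bar{\partial} u}_{L^p(D(r'))}+\norm{u}_{L^p(D(r'))}\bigr),\qquad r<r'<1,
\end{equation*}
applied to $u=\nabla\Phi_n$, produces
\begin{equation*}
  \norm{\nabla\Phi_n}_{W^{1,p}(D(r))}\le C\norm{A_n}_{C^0(D(r'))}\norm{\nabla^2\Phi_n}_{L^p(D(r'))}+\text{(lower-order terms)}.
\end{equation*}
Since $\norm{A_n}_{C^0(D(r'))}\to 0$, for $n$ sufficiently large the $A_n$-term can be absorbed into the left-hand side on a sequence of concentric disks (by choosing a chain $r=r_1<r_2<r_3=r'$ and applying the estimate twice), yielding a uniform $W^{2,p}(D(r))$-bound on $\Phi_n$ for every $r<1$.

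With a uniform $W^{2,p}$-bound in hand (for some fixed $p>2$), Sobolev embedding upgrades this to a uniform $C^{1,\alpha}$-bound, which can be fed back into the differentiated equation. The inductive step is the obvious iteration: differentiate (\ref{eq:appx-holo}) a total of $k$ times, apply the $\bar{\partial}$-estimate to $\nabla^{k}\Phi_n$, and absorb the top-order term $A_n\cdot\nabla^{k+1}\Phi_n$ using $\norm{A_n}_{C^0}\to 0$. The nonlinear middle terms of the form $(\nabla^{j}A_n)\cdot\nabla^{k-j+1}\Phi_n$ with $j\ge 1$ are controlled by the quadratic estimates of Lemma \ref{lemma:prereq3}, together with the $C^{\infty}_{\mathrm{loc}}$-convergence $A_n,B_n\to 0$ and the inductively-established $W^{k,p}$-bounds on $\Phi_n$. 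By induction we obtain uniform $W^{k+1,p}(D(\rho))$-bounds on $\Phi_n$ for every $k\ge 0$ and every $\rho<1$; Sobolev embedding then gives the desired uniform $C^{k}(D(\rho))$-bounds.

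The main (and only) obstacle is precisely the circular appearance of $A_n\cdot\nabla^{k+1}\Phi_n$ on the right-hand side at each stage of the bootstrap. This is resolved in a uniform way by the hypothesis $A_n\to 0$ in $C^{\infty}_{\mathrm{loc}}$: once $\norm{A_n}_{C^{0}(D(r'))}$ is smaller than $(2C)^{-1}$, the absorption step closes. No new analytical input beyond the three ``prerequisites'' of \S\ref{sec:proof-of-bootstrappingone} is required, and in particular the argument closely parallels the proof of Lemma \ref{lemma:bootstrappingone} given earlier in the appendix.
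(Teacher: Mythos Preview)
Your bootstrapping argument is correct, and its overall architecture (absorb the top-order term $A_n\cdot\nabla^{k+1}\Phi_n$ using $\norm{A_n}_{C^0}\to 0$, then iterate) matches what the paper does. The implementation differs in two respects. First, you differentiate by $\nabla$ and apply the first-order Calder\'on--Zygmund interior estimate for $\bar\partial$ in $L^p$ with $p>2$; the paper instead applies $\partial=\bd_s-i\bd_t$ to \eqref{eq:appx-holo} to obtain $\Delta\Phi_n=\partial(A_n\cdot\nabla\Phi_n)+\partial B_n$, and then bootstraps using the second-order $L^2$ interior estimate for $\Delta$ (Lemma~\ref{lemma:prereq2}). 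Second, as a consequence, your argument invokes $L^p$ Calder\'on--Zygmund estimates which are not among the three ``prerequisites'' isolated at the start of the appendix, whereas the paper's route stays strictly within those three lemmas. Both approaches face the identical domain-mismatch issue in the absorption step (the $\nabla^{k+1}\Phi_n$ term lives on the larger disk $D(r')$), and both resolve it in the standard way; your mention of ``applying the estimate twice on a chain of disks'' is one valid mechanism, though a cutoff argument is cleaner. Neither route has any real advantage here; the paper's choice of $\Delta$ in $L^2$ is just internal consistency with the proof of Lemma~\ref{lemma:bootstrappingone}.
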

\begin{proof}
  The proof is very similar to the proof of the preceding lemma, and so we only sketch the result. One shows that $\norm{\Phi_{n}}_{W^{k,2}(D(\rho))}$ as $n\to\infty$ for all $k$ and $\rho<1$ using the elliptic estimates for $\Delta$. Indeed, by applying $\bd := \bd_{s}-i\bd_{t}$ to both sides of the above equation, we conclude that
  \begin{equation*}
    \Delta \Phi_{n}=\bd (A_{n}\cdot \nabla \Phi_{n})+\bd B_{n}.
  \end{equation*}
  The $C^{1}$ bound on $\Phi_{n}$ implies that $\Delta\Phi_{n}$ is bounded in $L^{2}$, and hence $\Phi_{n}$ and $\bd \Phi_{n}$ are bounded in $W^{1,2}$. Then the equation implies $\Delta\Phi_{n}$ is bounded in $W^{1,2}$, and hence $\Phi_{n}$ and $\bd\Phi_{n}$ are bounded in $W^{2,2}$, etc. To perform these estimates, one should use
  \begin{equation*}
    \norm{\bd (A_{n}\cdot \nabla \Phi_{n})}_{L^{2}(\Omega')}\le \epsilon_{n}\norm{\Phi_{n}}_{W^{2,2}(\Omega)},
  \end{equation*}
  where $\epsilon_{n}\to 0$. This completes the proof.  
\end{proof}
\section{Removal of singularities}
\label{sec:removal-of-singularities}
In this section we show how the $C^{1}$ exponential estimates from Lemma \ref{lemma:c1estimates} imply the \emph{removal of singularities} result for $J$ holomorphic curves $u:[0,\infty)\times [0,1]\to (K,L)$ with finite energy (a result we have used multiple times in this paper). Note that we work with a single Lagrangian $L$. A similar argument works for holomorphic curves $[0,\infty)\times \R/\Z\to K$.

The idea is to apply Lemma \ref{lemma:c1estimates} with $\mathfrak{a}_{n}=\mathfrak{b}_{n}=0$ and define a sequence of strips by
\begin{equation*}
  u_{n}:[-R_{n}-1,R_{n}+1]\times [0,1]\to (K,L)\text{ given by }u_{n}(s,t)=u(s+s_{n},t),
\end{equation*}
with arbitrary sequences $s_{n}\to\infty$ with $2R_{n}=s_{n}$. Since $\mathfrak{a}_{n}=\mathfrak{b}_{n}$, we are free to pick an arbitrary sequence $\epsilon_{n}\to 0$.

It is easy to show that the energy of $u_{n}$ is tending to $0$, because of our assumption that $R_{n}=s_{n}-R_{n}$ is the left endpoint of the strip. As a consequence, we can apply Lemma \ref{lemma:c1estimates} to conclude the following $C^{1}$ exponential bound on $P_n=\tilde{P}_n$ (for $n$ sufficiently large)
\begin{equation*}
  \abs{P_{n}(s+s_{n},t)}+\abs{\nabla_{s} P_n(s+s_{n},t)}+\abs{\nabla_{t} P_n(s+s_{n},t)}\le \kappa_{n}(e^{-d(R_{n}+s)}+e^{-d(R_{n}-s)}+\epsilon_{n}),
\end{equation*}
where $\kappa_{n}=\kappa_{n}(u_{n})$ converges to $0$. Crucially for us, $\kappa_{n}$ is independent of the arbitrary sequence $\epsilon_{n}$, and hence we can take the limit $\epsilon_{n}\to 0$ in the above estimate and conclude
\begin{equation*}
  \abs{P_{n}(s+s_{n},t)}+\abs{\nabla_{s} P_n(s+s_{n},t)}+\abs{\nabla_{t} P_n(s+s_{n},t)}\le \kappa_{n}(e^{-d(r_{n}+s)}+e^{-d(r_{n}-s)}),
\end{equation*}

Recalling equations \eqref{eq:perturbJ}, we similarly conclude that $\abs{\bd_{s}Q}$ and $\abs{\bd_{t}Q}$ decay exponentially in the same manner. This implies that we can bound
\begin{equation}\label{eq:subsequence-bound}
  \abs{\d u(s_{n},t)}\le a_{n}e^{-dR_{n}}=a_{n}e^{-(d/2)s_{n}}
\end{equation}
for $a_{n}\to 0$. Since $s_{n}$ was arbitrary, this implies
\begin{equation*}
  \lim_{s\to\infty}e^{(d/2) s}\abs{\d u(s,t)}=0.
\end{equation*}
In other words, if the above limit was not zero, then we could find some sequence $s_{n}\to \infty$ for which it was bounded below, contradicting the above argument.

With these preparations in place, we now introduce the function $w:\Omega^{\times}\to (K,L)$ defined by
\begin{equation*}
  w(e^{-i\pi(s+it)})=u(s,t),
\end{equation*}
where $\Omega=D(1)\cap (-\cl{\mathbb{H}})$ and $\Omega^{\times}$ is obtained by removing the origin from $\Omega$.
\begin{claim}
  The map $w$ admits a holomorphic extension to $\Omega\to (K,L)$.
\end{claim}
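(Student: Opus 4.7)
The plan is to first establish a continuous extension of $w$ across the origin using the exponential decay of $\abs{\d u}$, and then promote continuity to smoothness by a doubling argument that converts the boundary singularity into an interior one, where the standard removable singularity theorem for bounded approximately-holomorphic maps applies.

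First, using the already-established decay $e^{(d/2)s}\abs{\d u(s,t)}\to 0$ as $s\to\infty$, I would show that $u(s,t)$ converges uniformly in $t$ to some point $p\in L$ as $s\to\infty$. The argument is routine: for $s_{1}<s_{2}$ large,
\begin{equation*}
  \mathrm{dist}(u(s_{1},t_{1}),u(s_{2},t_{2}))\le \int_{s_{1}}^{s_{2}}\sup_{t}\abs{\bd_{s}u(s,t)}\,\d s+\int_{0}^{1}\abs{\bd_{t}u(s_{2},t)}\,\d t\le C(e^{-(d/2)s_{1}}+e^{-(d/2)s_{2}}),
\end{equation*}
so $\{u(s,\cdot)\}$ is Cauchy, producing the unique limit $p$. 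Because $u(s,0)\in L$ and $L$ is closed, $p\in L$. Translated to $\zeta$-coordinates, this means $w$ extends continuously to $\Omega$ with $w(0)=p$, and the arc $\partial D(1)\cap \partial \Omega$ parametrizing $\{t=0,1\}$ maps into $L$.

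Next I would upgrade continuity to holomorphic regularity. Work in a Weinstein neighbourhood of $p$ satisfying condition (iv) of \S\ref{sec:exponential-estimates}, so that $L$ is identified with the zero section of $T^{*}L$, and pick a coordinate chart in which $L$ becomes $\mathbb{R}^{d}\subset \mathbb{R}^{2d}$ and $J$ agrees with the standard complex structure $J_{0}$ along $L$. In these coordinates $w$ satisfies an equation of the form $\bar{\partial}w=A(w)\cdot \d w$ where $A$ vanishes along $\mathbb{R}^{d}$, and the boundary arc of $\Omega^{\times}$ maps into $\mathbb{R}^{d}$. Schwarz reflection across $\mathbb{R}^{d}$ (via the doubling $w(\bar\zeta)=\overline{w(\zeta)}$ componentwise) produces an extension $\tilde{w}$ on a punctured neighbourhood of $0$ in $\mathbb{C}$ which is bounded and satisfies a perturbed Cauchy--Riemann equation $\bar{\partial}\tilde w=\tilde A(\tilde w)\cdot \d \tilde w$ with $\tilde A$ smooth. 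Here one uses that $A$ vanishing on $L$ guarantees the reflected equation matches across the boundary.

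Finally, I would invoke the standard removable singularity theorem for bounded approximately-holomorphic maps: a bounded solution of $\bar{\partial}\tilde w=\tilde A(\tilde w)\cdot \d \tilde w$ on a punctured disk extends smoothly across the puncture. This follows from the finite-energy hypothesis together with the mean-value bound of Lemma \ref{lemma:mvp} (applied to $\tilde w$ on small disks) to show $\abs{\d\tilde w}$ is bounded near $0$, after which Lemma \ref{lemma:baby-ellreg} gives bounds on all higher derivatives. The resulting smooth extension restricts to the desired holomorphic extension of $w$ to $\Omega$. The principal obstacle is the doubling step, where one must verify that the reflected map indeed satisfies a smooth perturbed Cauchy--Riemann equation across the real axis; this uses the fact that $J-J_{0}$ vanishes along $L$ to first order in the fibre direction so that the extended $\tilde A$ is continuous (and in fact smooth) across $\mathbb{R}^{d}$.
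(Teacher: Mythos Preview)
Your continuous extension step is correct and matches the paper. The gap is in your final paragraph: the mean-value inequality of Lemma~\ref{lemma:mvp} does \emph{not} show that $\abs{\d\tilde w}$ is bounded near the puncture. Applied at $\zeta$ with $\abs{\zeta}=\rho$ on a disk of radius $\rho/2$, it yields only $\abs{\d\tilde w(\zeta)}^{2}\le 4c\rho^{-2}E(\tilde w;D(\zeta,\rho/2))$, and the energy in such disks merely tends to zero, not like $\rho^{2}$. Concretely, the change of variables $\zeta=e^{-\pi(s+it)}$ gives $\abs{\d w(\zeta)}\sim e^{\pi s}\abs{\d u(s,t)}$, and the established decay $\abs{\d u}=o(e^{-(d/2)s})$ only yields $\abs{\d w(\zeta)}=o(\abs{\zeta}^{-(1-d/(2\pi))})$, which still blows up since $d<2\pi$ (recall $d=\delta/2$ with $\delta^{2}=1/(3c_{\mathrm{pc}})$). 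Hence Lemma~\ref{lemma:baby-ellreg}, whose hypothesis is a uniform $C^{1}$ bound, cannot be invoked. Your doubling reduction would be salvageable if you cited the interior removable-singularity theorem as a black box (it needs only finite energy and bounded image, not a derivative bound), but the proof sketch you give for that theorem is incorrect. There is also a secondary issue with the doubling itself: for the reflected equation to have smooth coefficients one needs $J$ to intertwine with the reflection involution $(x,y)\mapsto(x,-y)$, which is a stronger condition than $J=J_{0}$ along $L$ and is not guaranteed by the Weinstein chart of \S\ref{sec:exponential-estimates}.

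The paper avoids both issues by working directly on the half-disk and not seeking a $C^{1}$ bound at all. It observes that although $\abs{\d w}$ is unbounded, the exponential decay makes $\abs{\d w}\in L^{p}(\Omega)$ for some $p>2$: the blow-up rate $\abs{\zeta}^{-(1-d/(2\pi))}$ is $L^{p}$-integrable in two dimensions whenever $p<2\pi/(\pi-d/2)$, and since $d>0$ this threshold exceeds $2$. One then appeals to the $W^{1,p}\Rightarrow C^{\infty}$ nonlinear elliptic regularity for $J$-holomorphic maps with totally real boundary (as in \cite[Appendix~B]{mcduffsalamon}). This is precisely where the specific decay rate $d/2$ coming from Lemma~\ref{lemma:c1estimates} is used; any positive rate suffices, but some rate is essential.
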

\begin{proof}
  First we observe that the exponential bound on $u$ implies
  \begin{equation*}
    \lim_{n\to\infty}\mathrm{diam}(u([n,\infty)\times [0,1]))=0,
  \end{equation*}
  and hence $w$ admits a continuous extension to $\Omega$.

  The strategy is to prove that $\abs{\d w}^{p}$ is integrable over $\Omega$ for $p>2$. This will enable us to appeal to the ``$W^{1,p}\implies \mathrm{smooth}$'' non-linear elliptic regularity theory to conclude that $w$ is smooth.

  We compute
  \begin{equation*}
    \abs{\d w(e^{-\pi(s+it)})}=e^{\pi s}\abs{\d u(s,t)},
  \end{equation*}
  and hence
  \begin{equation*}
    \begin{aligned}
      \int_{\Omega^{\times}} \abs{\d w(x,y)}^{p}\d x\wedge \d y
      &=\int_{0}^{\infty}\int_{0}^{1}e^{p\pi s}\abs{\d u(s,t)}^{p}\varphi^{*}\d x\wedge \d y,\\
      &=\int_{0}^{\infty}\int_{0}^{1}e^{p\pi s}\abs{\d u(s,t)}^{p}e^{2\pi s}\d x\wedge \d y<\infty,
    \end{aligned}
  \end{equation*}
  where $\varphi(s,t)=e^{-\pi(s+it)}$. Now, using the fact that $\abs{\d u(s,t)}^{p}$ decays exponentially with rate $e^{-p(d/2)s}$, we conclude that the right hand side is integrable for \emph{some} value of $p>2$. As explained in \cite[Appendix B]{mcduffsalamon}, the $W^{1,p}$ regularity of $w$ implies that it extends holomorphically to $\Omega$, as desired.
\end{proof}

\section{Hofer's bubbling lemma}
\label{sec:hofer-bubbling}
Suppose that $(\Sigma_{n},\mathfrak{e}_{n},\Theta_{n},u_{n})$ has tame ends (in the sense of \S\ref{sec:tame_sequences}) and $(\Sigma_{n},\Theta_{n},\mathfrak{e}_{n})$ converges with marked points to $(\Sigma_{\infty},\Theta_{\infty})$ via a map $\psi_{n}:\Sigma_{n}\to \Sigma_{\infty}$. The next lemma shows that we can add finitely many points $\Gamma$ to $\Sigma_{\infty}$ to make the derivative bounded on compact subsets of $\Sigma_{\infty}\setminus(\Theta_{\infty}\cup \Gamma)$. 

\begin{lemma}\label{lemma:basic_bubble}
  There exists a finite set $\Gamma\subset \Sigma_{\infty}\setminus \Theta_{\infty}$ so that $w_{n}=u_{n}\circ \psi^{-1}_{n}$ has bounded first derivative on compact subsets of $\Sigma_{\infty}\setminus (\theta_{\infty}\cup \Gamma)$, after passing to a subsequence.
\end{lemma}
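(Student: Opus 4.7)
The plan is to run the standard Hofer rescaling argument and extract $\Gamma$ as the finite set of interior bubbling points. First, observe that the tame ends hypothesis means $u_n$ converges uniformly in $C^1$ on each end, so $|dw_n|$ is automatically bounded on a neighborhood of each point of $\Theta_\infty$. Moreover, since $\psi_n$ is approximately holomorphic (by the convergence with marked points) and $\psi_n(\Theta_n) = \Theta_\infty$, for every compact $K \subset \Sigma_\infty \setminus \Theta_\infty$ the composition $w_n = u_n \circ \psi_n^{-1}$ is eventually defined on $K$ and, in local coordinates, satisfies an equation of the form $\bd_s w_n + J_n(w_n)\bd_t w_n = a_n \cdot \d w_n + b_n$ with $a_n,b_n \to 0$ in $C^\infty_{\mathrm{loc}}$; thus the elliptic regularity result in \S\ref{sec:ell_reg_appx} and Lemma \ref{lemma:baby-ellreg} both apply.

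Define $\Gamma$ as the set of $p \in \Sigma_\infty \setminus \Theta_\infty$ for which there exists $z_n \in \Sigma_n$ with $\psi_n(z_n) \to p$ and $|du_n(z_n)|$ unbounded along some subsequence. For each such $p$, fix a small (half-)disk chart around $p$ and pull back via $\psi_n$. Apply Hofer's lemma to extract $z_n' \to p$ and scales $R_n := |du_n(z_n')| \to \infty$ together with radii $\delta_n \to 0$ satisfying $R_n \delta_n \to \infty$ and $\sup_{D(z_n',\delta_n)} |du_n| \le 2 R_n$. The rescaled maps $v_n(z) := u_n(z_n' + R_n^{-1} z)$ are then approximately $J_n$-holomorphic on disks exhausting $\C$ (in the interior case) or $\mathbb{H}$ (in the boundary case), have first derivative bounded by $2$, and satisfy $|dv_n(0)| = 1$. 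By the elliptic regularity result in \S\ref{sec:ell_reg_appx}, a subsequence of $v_n$ converges in $C^\infty_{\mathrm{loc}}$ to a non-constant $J$-holomorphic plane $\C \to W$, or half-plane $(\mathbb{H},\bd \mathbb{H}) \to (W,L)$, with finite energy; by the removal of singularities (\S\ref{sec:removal-of-singularities}), this limit extends to a non-constant $J$-holomorphic sphere $S^2 \to W$ or disk $(D,\bd D) \to (W,L)$.

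By the mean-value inequality Lemma \ref{lemma:mvp} (applied to the bubble limit at a point where $|dv_\infty| = 1$), any such non-constant sphere or disk carries $\omega$-energy at least a fixed positive constant $\hbar = \hbar(K,J,L)$ depending only on the ambient data. Since each $p \in \Gamma$ accounts for at least $\hbar$ units of $\omega$-energy of $u_n$ concentrated in arbitrarily small neighborhoods of $p$, and since the total energy of $u_n$ is uniformly bounded, $\Gamma$ must be finite. Finally, by the very definition of $\Gamma$, every $q \in \Sigma_\infty \setminus (\Theta_\infty \cup \Gamma)$ admits a neighborhood on which $|du_n|$ remains bounded along every subsequence; a standard diagonal extraction over a countable exhaustion of $\Sigma_\infty \setminus (\Theta_\infty \cup \Gamma)$ by compacts yields a single subsequence for which $|dw_n|$ is uniformly bounded on each compact subset. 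The main obstacle to spell out carefully is the execution of Hofer's lemma in the boundary case, where one must verify that the rescaled maps are defined on half-disks exhausting $\mathbb{H}$ and that the limit is a non-constant half-plane rather than a plane escaping through the boundary; this is a routine adaptation of the interior argument once the half-disk chart near $\bd\Sigma_\infty$ is pulled back through $\psi_n$.
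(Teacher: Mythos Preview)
Your overall strategy---Hofer rescaling, bubble extraction, energy quantization---matches the paper's. However, there is a genuine logical gap in your finiteness argument for $\Gamma$. You define $\Gamma$ as the set of all $p$ at which blow-up occurs along \emph{some} subsequence, and then assert that each such $p$ ``accounts for at least $\hbar$ units of $\omega$-energy,'' forcing $|\Gamma|<\infty$. The difficulty is that different points of $\Gamma$ may blow up along different, mutually incompatible subsequences: given distinct $p_1,\dots,p_N\in\Gamma$, the bubble at $p_i$ forms only along its own subsequence $S_i$, and there is no reason the $S_i$ share a common subsequence along which all $N$ bubbles appear simultaneously. Without that, the additivity step $N\hbar\le\sup_n E(u_n)$ is unjustified. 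The paper avoids this by constructing $\Gamma$ iteratively: start with $\Gamma=\emptyset$; whenever the derivative is still unbounded on some compact set disjoint from $\Gamma\cup\Theta_\infty$ for the \emph{current} subsequence, adjoin a limit of the blow-up points and pass to a further subsequence. Each step then produces a bubble in a disjoint coordinate disk along a nested chain of subsequences, so the energies genuinely add and the process halts after at most $E/\hbar$ steps, delivering the finite $\Gamma$ and the final subsequence at once.

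A minor point: your opening sentence conflates the tame ends (which control $u_n$ near the punctures of $\Sigma_\infty$, i.e., the $\bd C_n$ boundary) with the marked points $\Theta_\infty$ (where $u_n$ is allowed to be singular). Tameness says nothing about boundedness of $|\d w_n|$ near $\Theta_\infty$; fortunately the lemma only asserts boundedness on compacta away from $\Theta_\infty\cup\Gamma$, so this confusion is harmless for the argument.
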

\begin{proof}
  The construction of $\Gamma$ is iterative, beginning with $\Gamma=\emptyset$. At each stage of the iterative process, if there exists a compact set $K$ disjoint from $\Gamma\cup \Pi_{\infty}$ and a sequence $z_{n}\in K$ so that $\abs{\d w_{n}(z_{n})}$ is unbounded, then we will add a point to $\Gamma$ (the point we add will be a limit point of the sequence $z_{n}$), pass to a subsequence of $w_{n}$, and repeat the iteration. On the other hand, if the derivative of $w_{n}$ is bounded on each compact set disjoint from $\Gamma\cup \Pi_{\infty}$, then the iterative process terminates, and the proof is complete.

  We now explain how to add the points to $\Gamma$. Suppose that $K$, $z_{n}$ are as above. By passing to a subsequence, we may suppose that $z_{n}$ converges to a limit $\zeta$ and $\abs{\d w_{n}(z_{n})}$ diverges to $+\infty$. Clearly $\zeta$ is disjoint from $\Gamma\cup \Pi_{\infty}$. We then set $\Gamma:=\Gamma\cup \set{\zeta}$.

  Clearly this iterative construction produces a set $\Gamma$ and a subsequence of $w_{n}$ with the property that, for each $\zeta\in \Gamma$, there exists $z_{n}^{\zeta}\to \zeta$ so that $\abs{\d w_{n}(z_{n}^{\zeta})}\to +\infty$.

  To complete the proof, it suffices to show that the process must terminate. Morally, the idea is that each point $\Gamma$ will be the location where a \emph{bubble} forms. A \emph{bubble} is a non-constant holomorphic map $\cl{\mathbb{H}}\to K$ or $\mathbb{C}\to K$. It follows from the mean-value property that bubbles consume $\omega$-energy at least $\hbar>0$, where $\hbar$ is small enough that
  \begin{equation}\label{eq:comparison}
    \hbar\le \epsilon_{0}\frac{\omega(v,J_{n}v)}{g(v,v)}\text{ for all $v\in TK$}
  \end{equation}
  for some fixed Riemann metric $g$. Here $\epsilon_{0}=\epsilon_{0}(K,J,L,g)$ is the constant guaranteed by Lemma \ref{lemma:mvp}. Note that this implies that: $$\int u_{n}^{*}\omega<\hbar\implies \int \abs{\bd_{s}u}^{2}\d s\d t<\epsilon_{0}\implies \text{mean-value theorem applies}$$ for all $J_{n}$-holomorphic maps $u_{n}:\Omega\to K$ for $\Omega\subset \C$. 

  Suppose that $N\hbar$ is greater than the limit supremum of the energy of $u_{n}$. We will show that $\abs{\Gamma}<N$, arguing by contradiction. Thus, suppose there are $N$ distinct points $\zeta_{1},\dots,\zeta_{N}$ in $\Gamma$.

  Around each point $\zeta_{k}$ we apply the results of \S\ref{sec:coordinate_disks} to obtain a convergent family of coordinate disks $c_{n}^{k}:D(1)\to \Sigma_{\infty}^{\mathrm{int}}\setminus \Theta_{\infty}$ or half-disks $(D(1)\cap \cl{\mathbb{H}},D(1)\cap \mathbb{R})\to (\Sigma_{\infty}\setminus \Theta_{\infty},\bd\Sigma_{\infty})$. We may suppose that all the coordinate disks are disjoint. Let us focus on the half-disk case, as the full-disk case is easier. As usual, we write $\Omega(r):=D(r)\cap \cl{\mathbb{H}}$ and $\bd\Omega(r)=D(1)\cap \mathbb{R}$. 
  
  Since $u_{n}$ is holomorphic and $\varphi_{n}\circ c_{n}^{k}$ is holomorphic, we conclude that $w_{n}\circ c_{n}^{k}$ is a holomorphic map $(\Omega(1),\bd \Omega(1))\to (K,L_{n},J_{n})$. It is clear that $w_{n}^{k}:=w_{n}\circ c_{n}^{k}$ has an unbounded first derivative as $n\to\infty$. In particular, we can find a sequence of points $z_{n}'\in \Omega(1/2)$ so that $\abs{\d w_{n}^{k}(z_{n}')}\to \infty$.

  One technical result needed is known as ``Hofer's lemma:''
  \begin{lemma}[Hofer's Lemma]\label{lemma:hofers-lemma}
    Let $d:X\to [0,\infty)$ be a continuous function on a complete metric space; and let $\epsilon'>0$ and $x'\in X$. One can find $0<\epsilon\le \epsilon'$ and $x\in X$ so that

    (i) $\mathrm{dist}(x,x')<2\epsilon'$,
    
    (ii) $d(y)\le 2 d(x)$ for all $y\in D(x,\epsilon)$.
    
    (iii) $\epsilon d(x)\ge \epsilon' d(x')$,
  \end{lemma}
  Hofer's lemma was introduced in \cite[Lemma 3.3]{hofer92} (moreover, they show that the lemma gives a \emph{characterization} of completeness).
  \begin{proof}[of Lemma \ref{lemma:hofers-lemma}]
    Let $\epsilon_{n}=2^{-n}\epsilon'$, and define a (potentially terminating) sequence $x_{n}$ as follows: let $x_{0}=x'$, and choose $x_{n+1}\in D(x_{n},\epsilon_{n})$ so that $d(x_{n+1})> 2d(x_{n})$. If no such $x_{n+1}$ exists (i.e., the sequence terminates at $x_{n}$), then we conclude that, for all $y\in D(x_{n},\epsilon_{n})$ we have $d(y)\le 2d(x_{n})$, so (ii) is satisfied with $x=x_{n}$, $\epsilon=\epsilon_{n}$. By construction, we have
    \begin{equation*}
      \epsilon_{n}d(x_{n})\ge 2\epsilon_{n}d(x_{n-1})=\epsilon_{n-1}d(x_{n-1})\ge \cdots \ge \epsilon_{0}d(x_{0})=\epsilon'd(x'),
    \end{equation*}
    so (iii) would also be satisfied. Since $\mathrm{dist}(x_{0},x_{n})\le \epsilon_{0}+\epsilon_{1}+\cdots+\epsilon_{n}\le 2\epsilon_{0}$, we conclude (i) also holds.

    Thus the proof of the lemma is reduced to proving that the above recursion terminates. In search of a contradiction suppose it does not converge. Then the sequence $x_{n}$ converges, however, $d(x_{n})$ is unbounded since $d(x_{n})>2d(x_{n-1})$. This is impossible, and so we complete the proof.
  \end{proof}

  Now returning to the proof, consider $k=1$, let $R_{n}':=\abs{\d w_{n}^{1}(z_{n}')}$, and pick $0<\epsilon_{n}'<1/6$ so that $\lim_{n\to\infty}\epsilon_{n}'=0$ but $\lim_{n\to\infty}\epsilon_{n}'R_{n}'=+\infty$.

  Introduce the function $d_{n}(z)=\abs{\d w^{1}_{n}(z)}$, and apply Hofer's lemma with $\epsilon'=\epsilon_{n}'$ and $x'=z_{n}'$ to conclude $\epsilon_{n}\le \epsilon_{n}'$ and $z_{n}$ so that
  \begin{enumerate}
  \item $\mathrm{dist}(z_{n},z_{n}')<2\epsilon_{n}'$,
  \item\label{item:hc2} $\abs{\d u_{n}(y)}\le 2\abs{\d w^{1}_{n}(z_{n})}$ for $y\in D(z_{n},\epsilon_{n})\cap \cl{\mathbb{H}}$,
  \item\label{item:hc3} $\epsilon_{n}\abs{\d w^{1}_{n}(z_{n})}\ge \epsilon_{n}'R_{n}'$.
  \end{enumerate}
  The reader may complain that $d_{n}$ is not defined on a complete metric space, but it is easy to see that every point and ball considered in the recursive proof of Hofer's lemma will remain entirely in $D(z_{n}',3\epsilon_{n}')$. Since we chose $\epsilon_{n}'<1/6$, we see that we can cut off $d_{n}$ outside of $\mathscr{D}(z_{n}',3\epsilon_{n}')\subset \Omega(1)$ (and obtain a continuous function defined on all of $\cl{\mathbb{H}}$) without affecting our conclusions.
  
  We abbreviate $R_{n}:=\abs{\d w_{n}^{1}(z_{n})}$. Note that by item \ref{item:hc3} $R_{n}$ is still diverging to $\infty$.

  The idea now is to rescale the domains of $w^{1}_{n}$ by the factor of $R_{n}^{-1}$; we introduce
  \begin{equation*}
    z\in D(0,\epsilon_{n}R_{n})\cap g_{n}^{-1}\cl{\mathbb{H}}\mapsto v_{n}^{1}(z):=w^{1}_{n}(z_{n}+R_{n}^{-1}z),
  \end{equation*}
  where $g_{n}(z)=z_{n}+R_{n}^{-1}z$.
  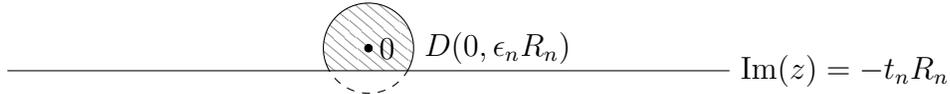
\begin{figure}[H]
    \centering
    \begin{tikzpicture}[scale=0.6]
      \begin{scope}
        \clip (-2,-0.5) rectangle (2,1);
        \draw[fill,pattern={Lines[angle=-40]},pattern color=black!40!white] (0,0) circle (1) node[draw,circle,fill=black,inner sep=1pt](X) {};
      \end{scope}
      \begin{scope}
        \clip (-2,-0.5) rectangle (2,-1.1);
        \draw[dashed](0,0) circle (1);
      \end{scope}
      \draw (-8,-0.5)--(8,-0.5) node [right] {$\mathrm{Im}(z)=-t_{n}R_{n}$};

      \node at (X) [right] {$0$};
      \node at (1,0) [right] {$D(0,\epsilon_{n}R_{n})$};
    \end{tikzpicture}
    \caption{The domain of the rescaled map $v^{1}_n$ is the shaded region. Depending on the limit of $t_{n}R_{n}$, the domain is either expanding to cover the entire complex plane $\C$, or an upper half-plane.}
    \label{fig:domain-rescaled-map}
  \end{figure}
  Pass to a further subsequence so that $t_{n}R_{n}$ converges in $[0,\infty]$. There are two cases to consider: if $t_{n}R_{n}$ converges to a finite number, then the domains of $v_{n}^{1}$ converge to an upper half-plane. On the other hand, if $t_{n}R_{n}$ diverges to $\infty$, then the domains of $v_{n}^{1}$ converge to $\C$.
  
  By ``converge to a half plane'' we mean that there exists a half plane $H$ so that any compact set in $H$ is eventually contained in the domain of $v_{n}^{1}$.

  It is straightforward to conclude that:
  \begin{equation}\label{eq:gradient-bound-147}
    \abs{\d v_{n}^{1}(0)}=1\text{ and }\abs{\d v_{n}^{1}(z)}\le 2
  \end{equation}
  for all $z$ in the domain of $v_{n}^{1}$ (using \ref{item:hc2}). The Arzel\`a-Ascoli theorem implies that $v_{n}^{1}$ converges in $C^{0}_{\mathrm{loc}}$ to a continuous function $v_{\infty}^{1}:\Omega\to \R\times Y$ with where $\Omega$ is either a half-plane $H$ or $\C$. If $\Omega$ is a half-plane, then the aforementioned $C^{0}_{\mathrm{loc}}$ convergence implies that $v_{\infty}^{1}$ maps the boundary onto $L$.

  Corollary \ref{corollary:apriori-estimates-2} implies the higher derivatives of $v_{n}^{1}$ are uniformly bounded on compact sets. These $C^{k}_{\mathrm{loc}}$ bounds upgrade the conclusion of the Arzel\`a-Ascoli theorem to conclude that the limit map $v_{\infty}^{1}$ is smooth and $v_{n}^{1}$ converges in $C^{\infty}_{\mathrm{loc}}$ to $v_{\infty}^{1}$. In particular, $v^{1}_{\infty}$ is holomorphic, and $\abs{\d v^{1}_{\infty}(0)}=1$. It follows that $v^{1}_{\infty}$ is non-constant. The energy of $v^{1}_{\infty}$ provides a lower bound for the limiting energy of $v^{1}_{n}$. Thus $\limsup E(w^{1}_{n})\ge \hbar$, as the energy of $w^{1}_{n}$ is at least the energy of $v^{1}_{n}$ since the domain of $v^{1}_{n}$ is a conformal reparametrization of part of the domain of $w^{1}_{n}$.

  By passing to a further subsequence, we conclude that a rescaled version of $w_{n}\circ c^{2}_{n}$ also converges to a bubble. By repeatedly performing this argument for $k=3,\dots,N$, and recalling that the images of $c^{k}_{n}$ are disjoint, we conclude that
  \begin{equation*}
    \limsup_{n\to\infty}\int w_{n}^{*}\omega\ge N\hbar, 
  \end{equation*}
  contradicting our assumption on $N$. This completes the proof.
\end{proof}

\bibliography{citations}
\bibliographystyle{alpha}
\end{document}